\let\@wraptoccontribs\wraptoccontribs
\newtheorem{thm}{{\bf Theorem}}[section]
\newtheorem{lemma}[thm]{{\bf Lemma}}
\newtheorem{prop}[thm]{{\bf Proposition}}
\newtheorem{cor}[thm]{{\bf Corollary}}
\newtheorem*{defn}{{\bf Definition}}
\newtheorem*{rmk}{{\bf Remark}}
\newtheorem*{conv}{{\bf Convention}}
\newcommand{\m}{\mathfrak m}
\begin{document}


\title[Normal Subgroups of Twisted Chevalley Groups]{On Normal Subgroups of Twisted Chevalley Groups over Commutative Rings}


\author{Shripad M. Garge}
\address{Shripad M. Garge. \newline \indent
        Department of Mathematics, 
	Indian Institute of Technology Bombay,\newline \indent
	Powai, Mumbai. 400076, India. \newline \indent
        School of Mathematics, 
	Harish Chandra Research Institute,\newline \indent
	Jhusi, Prayagraj 211019, India}
\email{\href{mailto:shripad@math.iitb.ac.in}{shripad@math.iitb.ac.in}, \href{mailto:smgarge@gmail.com}{smgarge@gmail.com}}

\author{Deep H. Makadiya}
\address{Deep H. Makadiya. \newline \indent
        Department of Mathematics, 
	Indian Institute of Technology Bombay,\newline \indent
	Powai, Mumbai 400076, India}
\email{\href{mailto:deepmakadia25.dm@gmail.com}{deepmakadia25.dm@gmail.com}}

\address{Pavel Gvozdevsky. \newline \indent
        Department of Mathematics, Bar-Ilan University, \newline \indent
        Ramat Gan 5290002, Israel}
\email{\href{mailto:gvozdevskiy96@gmail.com}{gvozdevskiy96@gmail.com}}


\subjclass[2020]{20G35}
\keywords{Twisted Chevalley groups, elementary subgroups, normal subgroups, commutator relations}


\begin{abstract}
    In this paper, we prove two structure theorems for twisted Chevalley groups $G_\sigma (R)$ over a commutative ring $R$ with unity. The first theorem concerns the normality of $E'_\sigma (R,J)$, the elementary congruence subgroups at level $J$, in the group $G_\sigma (R)$. The second theorem classifies all subgroups of $G_\sigma (R)$ normalized by its elementary subgroup $E'_\sigma (R)$. Along the way, we obtain several interesting results. For instance, when $R$ is a semilocal ring, we show that $G_\sigma(R)$ can be expressed as the (internal) product of $E'_\sigma (R)$ and the maximal torus $T_\sigma (R)$ of $G_\sigma (R)$.
\end{abstract}


\pagenumbering{arabic} 
\maketitle 
\tableofcontents


\setcounter{section}{0}

\section{Introduction}\label{sec:Intro}

\noindent Let $\Phi$ be a reduced irreducible root system and $R$ a commutative ring with unity. 
Let $\pi$ be a finite-dimensional faithful representation of the semisimple Lie algebra associated with $\Phi$. 
Consider the Chevalley group $G_\pi (\Phi, R)$ of type $\Phi$ over $R$, and let $E_\pi (\Phi, R)$ be its elementary subgroup (i.e., the subgroup generated by all elementary unipotent elements $x_\alpha (t)$ with $\alpha \in \Phi$ and $t \in R$).
For an ideal $J$ of $R$, the natural projection map $R \longrightarrow R/J$ induces a group homomorphism
\[
\phi: G_\pi (\Phi, R) \longrightarrow G_\pi (\Phi, R/J).
\]  
Define  
\[
G_\pi (\Phi, J) \coloneq \ker(\phi) \quad \text{and} \quad G_\pi (\Phi, R, J) \coloneq \phi^{-1}(Z(G_\pi (\Phi, R/J))),
\]  
where $Z(G_\pi (\Phi, R/J))$ denotes the center of $G_\pi (\Phi, R/J)$.  
The subgroup $G_\pi(\Phi, J)$ of $G_\pi (\Phi, R)$ is referred to as the \emph{principal congruence subgroup} of level $J$, while $G_\pi(\Phi, R, J)$ is called the \emph{full congruence subgroup} of level $J$.
Let $E_\pi (\Phi, J)$ denote the subgroup of $E_\pi (\Phi, R)$ generated by $x_\alpha(t)$ for all $\alpha \in \Phi$ and $t \in J$.  
Additionally, define $E_\pi (\Phi, R, J)$ as the normal subgroup of $E_\pi (\Phi, R)$ generated by $E_\pi (\Phi, J)$. The subgroup $E_\pi(\Phi, R, J)$ of $G_\pi (\Phi, R)$ is referred to as the \emph{elementary congruence subgroup} of level $J$. 

\medskip

We begin by recording two important structure theorems for Chevalley groups. Several variants of these theorems for different classical and exceptional groups over various rings are available in the literature (see N. A. Vavilov~\cite{NV1} for a historical perspective).

\begin{thm}[{L. N. Vaserstein \cite{LV}}]\label{thm:Intro1}
    Let $\Phi$ be an irreducible root system of rank $\geq 2$ and \(R\) a commutative ring with unity. If $J$ is an ideal of $R$, then the following commutator relations hold: 
    \[
        [E_\pi(\Phi, R, J), G_\pi(\Phi, R)] \subset E_\pi(\Phi, R, J) \quad \text{and} \quad [E_\pi(\Phi, R), G_\pi(\Phi, R, J)] \subset E_\pi(\Phi, R, J).
    \]
    Except in the cases where $\Phi = B_2$ or $G_2$ and $R$ has a residue field with two elements, these inclusions are equalities.
\end{thm}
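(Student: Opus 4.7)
The plan is to split the proof into three parts, handling the first containment, the second containment, and the sharpness assertion separately.  The fundamental tools throughout are the Chevalley commutator formulas
\[
[x_\alpha(s), x_\beta(t)] = \prod_{\substack{i,j>0 \\ i\alpha+j\beta \in \Phi}} x_{i\alpha+j\beta}\bigl(N_{\alpha\beta}^{ij}\, s^i t^j\bigr),
\]
the torus conjugation relation $h_\alpha(u)\, x_\beta(t)\, h_\alpha(u)^{-1} = x_\beta(u^{\langle \beta,\alpha^\vee\rangle} t)$, and localization of $R$ at maximal ideals.  The first inclusion $[E_\pi(\Phi,R,J), G_\pi(\Phi,R)] \subset E_\pi(\Phi,R,J)$ is equivalent to $E_\pi(\Phi,R,J)$ being normal in $G_\pi(\Phi,R)$.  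By construction this subgroup is already normal inside $E_\pi(\Phi,R)$, so it suffices to check stability under conjugation by representatives of $G_\pi(\Phi,R)/E_\pi(\Phi,R)$.  Passing to each localization $R_\m$ and using the factorization $G_\pi(\Phi,R_\m)=T_\pi(\Phi,R_\m)\cdot E_\pi(\Phi,R_\m)$ reduces the question to conjugation by the standard torus, which is handled by the displayed torus relation; a standard local-global principle then transfers the conclusion back to $R$.

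For the second inclusion $[E_\pi(\Phi,R), G_\pi(\Phi,R,J)] \subset E_\pi(\Phi,R,J)$ I would fix a generator $y=x_\alpha(t)$ of $E_\pi(\Phi,R)$ and an element $g \in G_\pi(\Phi,R,J)$.  Because the image of $g$ in $G_\pi(\Phi,R/J)$ is central, $[y,g]$ automatically lies in the principal congruence subgroup $G_\pi(\Phi,J)$; the real content is to upgrade this membership to the elementary congruence subgroup.  After localizing at a maximal ideal I would decompose $g$ via a Gauss (big-cell) decomposition into a torus factor and unipotent factors whose parameters lie in $J_\m$, and then repeatedly apply the Chevalley commutator formulas to rewrite $[y,g]$ as a word in generators $x_\beta(r)$ with $r \in J$.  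As in the previous step, local-global patching completes the argument.

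Finally, for equality it suffices to embed $E_\pi(\Phi,R,J)$ into each commutator group.  Given a root $\gamma$ and $u \in J$, since $\Phi$ has rank at least two one can choose roots $\alpha,\beta$ with $\alpha+\beta=\gamma$ and no other positive integral combination $i\alpha+j\beta$ lying in $\Phi$, so the commutator formula collapses to $[x_\alpha(s), x_\beta(r)] = x_\gamma(\pm N_{\alpha\beta}\, sr)$.  Expressing $u$ as a suitable sum of products $sr$ with $s \in R$, $r \in J$ then realizes every elementary generator of $E_\pi(\Phi,R,J)$ as a commutator of the required form, provided $N_{\alpha\beta}$ is a unit in $R$.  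The failure of this unit condition for $\Phi=B_2$ or $G_2$ over rings having a residue field $\F_2$ (where the forced structure constant is $\pm 2$) explains the excluded cases.  The step I expect to be the main obstacle is the middle one: passing from the local Gauss decomposition of an element of $G_\pi(\Phi,R,J)$ to a usable normal form over $R$ requires a careful Quillen-style patching combined with matching of the level of elementary generators against the ideal $J$.
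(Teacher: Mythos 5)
First, a point of reference: the paper does not prove Theorem~\ref{thm:Intro1} at all --- it is quoted from Vaserstein \cite{LV} as background --- so the only internal comparison available is with the paper's proof of the twisted analogue, Theorem~\ref{mainthm1}. Measured against either that proof or Vaserstein's, your sketch has a genuine gap at its core: membership in $E_\pi(\Phi,R,J)$ is \emph{not} a local condition, so the ``standard local--global principle'' you invoke for the first inclusion, and the ``Quillen-style patching'' you defer for the second, is the entire difficulty rather than a finishing step. Knowing that $\psi_{\mathfrak m}(gxg^{-1})$ lies in $E_\pi(\Phi,R_{\mathfrak m},J_{\mathfrak m})$ for every maximal ideal $\mathfrak m$ does not place $gxg^{-1}$ in $E_\pi(\Phi,R,J)$. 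The known ways around this are (i) Taddei's polynomial trick (Lemma~\ref{lemma:GT}): work over $R[X]$, compare $[x_\alpha(sXt),g]$ with an explicit product of elementary generators, and specialize $X\mapsto s'X$ to kill the localization kernel, combined with Vaserstein's double ring $R'=\{(r,s)\in R\times R \mid r-s\in J\}$, which reduces the relative normality statement to the absolute one (this is exactly how Proposition~\ref{normal} is proved here); or (ii) for the second inclusion, the group-theoretic shortcut: first prove $E_\pi(\Phi,R,J)=[E_\pi(\Phi,R),E_\pi(\Phi,J)]$, then note that $E_\pi(\Phi,R)$ is perfect and $(E_\pi(\Phi,R)\cap G_\pi(\Phi,J))/E_\pi(\Phi,R,J)$ is abelian (cf.\ Corollary~\ref{mixcom}), so for fixed $g\in G_\pi(\Phi,R,J)$ the map $h\mapsto [h,g]E_\pi(\Phi,R,J)$ is a homomorphism from a perfect group to an abelian group, hence trivial --- the route taken in the proof of Theorem~\ref{mainthm1}. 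Neither mechanism appears in your proposal; moreover, your reduction of the first inclusion to ``conjugation by representatives of $G_\pi(\Phi,R)/E_\pi(\Phi,R)$'' already presupposes that $E_\pi(\Phi,R)$ is normal in $G_\pi(\Phi,R)$, which is the absolute case of the statement being proved.

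The sharpness part is also not correct as stated. A one-term commutator $[x_\alpha(s),x_\beta(r)]=x_\gamma(\pm N_{\alpha\beta}sr)$ with $N_{\alpha\beta}$ a unit is simply unavailable for some roots in the doubly and triply laced cases: in $B_2$, the only decomposition of a long root $\gamma=\alpha+\beta$ with no further roots $i\alpha+j\beta$ uses two orthogonal short roots, and there $N_{\alpha\beta}=\pm 2$ (since $\beta-\alpha$ is a root), while the long--short decompositions produce two-term formulas; similar obstructions occur in $C_\ell$, $F_4$ and $G_2$. The actual argument (compare Cases B--E in the proof of Theorem~\ref{mainthm1}, or \cite{LV}, \cite{EA3}) combines multi-term commutator identities with cancellation of the unwanted factors, and, where that still fails, conjugation by torus elements $h_\alpha(u)$, using that every residue field with more than two elements contains a unit $u$ with $u^{\langle\gamma,\alpha\rangle}-1$ invertible. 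This is why the exceptional hypothesis is ``$R$ has a residue field with two elements'' and not ``$2\notin R^*$'': over the field $\F_4$ one has $2=0$, yet the theorem asserts equality there, which your unit-structure-constant criterion would wrongly exclude.
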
  

The first commutator relation in the above theorem is equivalent to saying that $E_\pi (\Phi, R, J)$ is a normal subgroup of $G_\pi (\Phi, R)$. In particular, $E_\pi (\Phi, R)$ is normal in $G_\pi (\Phi, R)$.
The second structure theorem characterizes all the subgroups of $G_\pi(\Phi, R)$ that are normalized by $E'_\sigma (\Phi, R)$.

\begin{thm}[{L. N. Vaserstein \cite{LV}}, E. Abe \cite{EA3}]\label{thm:Intro2}
    Let \(\Phi\) be an irreducible root system of rank $\geq 2$ and $R$ a commutative ring with unity. Assume $1/ 2 \in R$ if $\Phi \sim B_\ell, C_\ell, F_4$, and that $1/3 \in R$ and $R$ has no factor ring with two elements if $\Phi \sim G_2$. If $H$ is a subgroup of $G_\pi(\Phi, R)$ normalized by $E_\pi(\Phi, R)$, then there exists a unique ideal $J$ of $R$ such that 
    \[
        E_\pi(\Phi, R, J) \subset H \subset G_\pi (\Phi, R, J).
    \]
\end{thm}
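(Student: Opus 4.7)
The plan is to extract from $H$ a canonical ideal $J$ and then verify both inclusions and uniqueness. For each root $\alpha \in \Phi$, I set
\[
J_\alpha \coloneq \{\, t \in R : x_\alpha(t) \in H \,\},
\]
and take $J$ to be the ideal of $R$ generated by the union of the $J_\alpha$ as $\alpha$ ranges over $\Phi$. Using the Chevalley commutator formula $[x_\alpha(t), x_\beta(u)] = \prod_{i,j \geq 1} x_{i\alpha + j\beta}(c_{\alpha\beta ij}\, t^i u^j)$ together with conjugation by the semisimple torus elements $h_\alpha(\epsilon) \in E_\pi(\Phi, R)$ available for units $\epsilon \in R$, I first want to show that each $J_\alpha$ is an additive subgroup stable under multiplication by all of $R$, so that in fact $J_\alpha = J$ for every root $\alpha$. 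The hypotheses $1/2 \in R$ for $\Phi \sim B_\ell, C_\ell, F_4$ and $1/3 \in R$ (with no factor ring of order two) for $\Phi \sim G_2$ enter here, to allow free passage between short and long roots when extracting coefficients from the mixed commutators.

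Granting this, the inclusion $E_\pi(\Phi, R, J) \subset H$ is nearly formal: by construction $x_\alpha(t) \in H$ for every $\alpha \in \Phi$ and $t \in J$, and since $H$ is normalized by $E_\pi(\Phi, R)$, the entire normal closure of these generators, which is precisely $E_\pi(\Phi, R, J)$, lies in $H$. Uniqueness of $J$ then drops out: if $J'$ is any other ideal satisfying the same sandwich, then $x_\alpha(t) \in H \subset G_\pi(\Phi, R, J')$ for every $t \in J_\alpha$ forces the reduction of $x_\alpha(t)$ modulo $J'$ to be central, hence $t \in J'$ (a single root subgroup generates no nontrivial part of the center), giving $J \subset J'$, and symmetrically $J' \subset J$.

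The substance of the argument is the inclusion $H \subset G_\pi(\Phi, R, J)$, which asserts that the image $\bar H$ of $H$ in $G_\pi(\Phi, R/J)$ is central. Fixing $h \in H$, I form the commutators $[h, x_\alpha(t)]$ for varying $\alpha$ and $t$; these lie in $H$ by the normality hypothesis, and expanding them via the Chevalley commutator formula and the matrix coefficients of $h$ in the representation $\pi$ produces explicit elements of $J_\alpha = J$ from the entries of $h$. Running $\alpha$ over all roots and chaining the computations through overlapping rank-one $\mathrm{SL}_2$-subgroups --- the rank $\geq 2$ assumption is crucial here, since it provides pairs of roots with nontrivial commutator relations --- I expect to conclude that every off-diagonal matrix coefficient of $h$, with respect to a fixed split maximal torus, lies in $J$, and that $h$ modulo $J$ is central.

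The main obstacle will be to execute this level-extraction uniformly over an arbitrary commutative ring, particularly when $h$ does not lie in the big cell of the Bruhat decomposition or when units in $R$ are scarce. The standard remedy, following Suslin's localization technique and Abe's treatment, is to localize $R$ at each maximal ideal $\m$, carry out the extraction over the local ring $R_\m$ where $h$ can be brought into a convenient normal form, and then patch the local conclusions together via the injectivity of $R \hookrightarrow \prod_\m R_\m$. An alternative, in the spirit of Vaserstein, is to work instead with carefully chosen generic commutators that display each coefficient of $h$ directly as an element of $J$ without invoking localization. Either route concentrates the bulk of the technical burden squarely in this level-extraction step.
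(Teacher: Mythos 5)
First, a point of reference: the paper does not prove this statement at all — it is quoted as background from Vaserstein \cite{LV} and Abe \cite{EA3} — so the only meaningful comparison is with the paper's proof of the twisted analogue (Theorem~\ref{mainthm}, Sections~\ref{sec:Pf of main thm}--\ref{sec:Pf of prop 3}), which follows the same Vaserstein--Abe scheme you are reconstructing. Your skeleton matches that scheme: define $J$ from the root elements lying in $H$, get $E_\pi(\Phi,R,J)\subset H$ by showing the normal closure of a single $x_\alpha(z)$ is a full elementary congruence subgroup, and prove uniqueness exactly as you do. (Two caveats on the easy half: establishing that $J_\alpha$ is an $R$-ideal independent of $\alpha$ is not achieved by torus conjugation alone over a ring with few units — it is the rank-two commutator case analysis of the analogue of Proposition~\ref{z to Rz}/Lemma~\ref{z to Rz in phi'} — and the passage from ``commutators of $h$ with root elements lie in $G_\pi(\Phi,J)$'' to ``$h$ is central mod $J$'' needs the Abe--Hurley description of $Z(G_\pi(\Phi,R/J))$ as the centralizer of $E_\pi(\Phi,R/J)$.)

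The genuine gap is in your treatment of the inclusion $H\subset G_\pi(\Phi,R,J)$. Your plan is to localize at each maximal ideal $\mathfrak{m}$, put $h$ in normal form over $R_\mathfrak{m}$, extract coefficients, and then ``patch the local conclusions together via the injectivity of $R\hookrightarrow\prod_\mathfrak{m} R_\mathfrak{m}$.'' As stated this step fails for two reasons. First, the image $\psi_\mathfrak{m}(H)$ in $G_\pi(\Phi,R_\mathfrak{m})$ need not be normalized by $E_\pi(\Phi,R_\mathfrak{m})$, so you cannot invoke a local-ring version of the theorem, nor the extraction lemmas that depend on that normalization, for $\psi_\mathfrak{m}(H)$; the paper flags exactly this obstruction before Proposition~\ref{prop:U(RS) cap psi(H) subset U(JS)}. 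Second, the statements one actually proves locally are commutator containments in the elementary congruence subgroup $E_\pi(\Phi,R,J)$, and membership in that subgroup is not a matrix-entry condition, so it cannot be glued by injectivity of localization; only conditions like ``entries lie in $J$'' glue. The working remedy — which is what Vaserstein, Taddei and the paper's Propositions~\ref{local:[x,g] in E(R,J)}, \ref{general:[x,g] in E(R,J)} and the final argument of Section~\ref{sec:Pf of main thm} implement — is to prove, for each $g\in H$ and each $\mathfrak{m}$, a statement of the form $[x_\alpha(st),g]\in E_\pi(\Phi,R,J)$ for some $s\notin\mathfrak{m}$, where the denominator is removed by introducing a polynomial variable and applying Taddei's Lemma~\ref{lemma:GT} (Suslin's trick), and then to observe that the set of admissible $s$ is an ideal contained in no maximal ideal, hence all of $R$. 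Moreover, the ``level extraction'' over the localized ring is not a matter of chaining $\mathrm{SL}_2$'s: it requires the decompositions $E(R,J)=U(J)H(R,J)U^-(J)$ and $G(R,J)=U(J)T(R,J)U^-(J)$ over semilocal rings together with the lemmas $U(R)\cap H\subset U(J)$ and $U(\mathrm{rad}\,R)T(R)U^-(R)\cap H\subset U(J)T(R,J)U^-(J)$ (the analogues of Proposition~\ref{prop:U(R) cap H subset U(J)} and Section~\ref{sec:Pf of prop 2}), which carry most of the weight and are absent from your outline. Until this mechanism replaces the naive patching, the hard inclusion is not proved.
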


Now, let $\sigma$ be the composition of a graph automorphism $\rho$ and a ring automorphism $\theta$ of $G_\pi (\Phi, R)$, where $\rho$ and $\theta$ have the same order. The twisted Chevalley group $G_{\pi, \sigma} (\Phi, R)$ is defined to be the set of all elements in $G_\pi (\Phi, R)$ that are fixed by the automorphism $\sigma$. 
Let $E'_{\pi, \sigma} (\Phi, R)$ be the subgroup of $G_{\pi, \sigma} (\Phi, R)$ generated by $x_{[\alpha]}(t)$ where $[\alpha] \in \Phi_\rho$ and $t \in R_{[\alpha]}$ (see Section~\ref{sec:TCG} for the notation).
Consider an ideal $J$ of $R$ that is invariant under $\theta$ (i.e., $\theta (J) \subset J$). 
The natural projection map $R \longrightarrow R/J$ induces a group homomorphism  
\[
\phi: G_{\pi, \sigma} (\Phi, R) \longrightarrow G_{\pi, \sigma} (\Phi, R/J).
\]  
Define the subgroups 
\[
G_{\pi, \sigma} (\Phi, J) \coloneq \ker(\phi) \quad \text{and} \quad G_{\pi, \sigma} (\Phi, R, J) \coloneq \phi^{-1} (Z(G_{\pi, \sigma} (\Phi, R/J))),
\]  
where $Z(G_{\pi, \sigma} (\Phi, R/J))$ is the center of $G_{\pi, \sigma} (\Phi, R/J)$.
The subgroup $G_{\pi, \sigma} (\Phi, J)$ of $G_{\pi, \sigma} (\Phi, R)$ is called a \emph{principal congruence subgroup} of level $J$, while $G_{\pi, \sigma} (\Phi, R, J)$ is referred to as a \emph{full congruence subgroup} of level $J$. 
Let $E'_{\pi, \sigma} (\Phi, J)$ be the subgroup of $E'_{\pi, \sigma} (\Phi, R)$ generated by elements $x_{[\alpha]}(t)$ for all $[\alpha] \in \Phi_\rho$ and $t \in R_{[\alpha]}$. Additionally, define $E'_{\pi, \sigma} (\Phi, R, J)$, called the \emph{elementary congruence subgroup} of level $J$, as the normal subgroup of $E'_{\pi, \sigma} (\Phi, R)$ generated by $E'_{\pi, \sigma} (\Phi, J)$. 

\medskip

The primary goal of this paper is to establish results analogous to Theorems~\ref{thm:Intro1} and \ref{thm:Intro2} for twisted Chevalley groups over commutative rings. Our main results are as follows.

\begin{thm}[Main Theorem 1]\label{mainthm1}
    Let $\Phi_\rho$ be one of the following types: ${}^2 A_n \ (n \geq 3), {}^2 D_n \ (n \geq 4), {}^2 E_6$, or ${}^3 D_4$. Assume that $1/2 \in R$ and in addition, $1/3 \in R$ if $\Phi_\rho \sim {}^3 D_4$.
    Let $J$ be a $\theta$-invariant ideal of $R$. Then 
    \begin{equation*}
        \begin{split}
            E'_{\pi,\sigma} (\Phi, R, J) = [E'_{\pi, \sigma} (\Phi, R), E'_{\pi,\sigma}(\Phi, J)] = [E'_{\pi,\sigma} (\Phi, R), G_{\pi,\sigma} (\Phi, R, J)] \\
            = [G_{\pi,\sigma} (\Phi, R), E'_{\pi,\sigma} (\Phi, R, J)].
        \end{split}
    \end{equation*}
\end{thm}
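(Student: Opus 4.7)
The goal is to prove a four-way equality of subgroups. My plan is to organize the proof around the circular chain
\[
[E'_{\pi,\sigma}(\Phi, R), E'_{\pi,\sigma}(\Phi, J)] \;\subset\; E'_{\pi,\sigma}(\Phi, R, J) \;\subset\; [E'_{\pi,\sigma}(\Phi, R), G_{\pi,\sigma}(\Phi, R, J)] \;\subset\; [G_{\pi,\sigma}(\Phi, R), E'_{\pi,\sigma}(\Phi, R, J)],
\]
and close the loop via $[G_{\pi,\sigma}(\Phi, R), E'_{\pi,\sigma}(\Phi, R, J)] \subset E'_{\pi,\sigma}(\Phi, R, J)$. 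The first inclusion is immediate from the definition of $E'_{\pi,\sigma}(\Phi, R, J)$ as the normal closure of $E'_{\pi,\sigma}(\Phi, J)$ in $E'_{\pi,\sigma}(\Phi, R)$, and the two middle inclusions follow from the nestings $E'_{\pi,\sigma}(\Phi, J) \subset G_{\pi,\sigma}(\Phi, R, J)$, $E'_{\pi,\sigma}(\Phi, R) \subset G_{\pi,\sigma}(\Phi, R)$, and $E'_{\pi,\sigma}(\Phi, J) \subset E'_{\pi,\sigma}(\Phi, R, J)$.

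The key technical identity is $E'_{\pi,\sigma}(\Phi, R, J) \subset [E'_{\pi,\sigma}(\Phi, R), E'_{\pi,\sigma}(\Phi, J)]$. I would prove it by showing that every normal generator $g\, x_{[\alpha]}(t)\, g^{-1}$, with $g \in E'_{\pi,\sigma}(\Phi, R)$ and $t$ in the appropriate $J$-component of $R_{[\alpha]}$, can be rewritten as a product of commutators $[y, z]$ with $y \in E'_{\pi,\sigma}(\Phi, R)$ and $z \in E'_{\pi,\sigma}(\Phi, J)$. The identity $gxg^{-1} = [g, x]\cdot x$ shows this modulo the subgroup $E'_{\pi,\sigma}(\Phi, J)$, so the task reduces to expressing each $x_{[\alpha]}(t)$ at level $J$ as such a commutator. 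For every orbit $[\alpha]$ I would choose a decomposition $[\alpha] = [\beta] + [\gamma]$ into shorter orbits and invoke the twisted Chevalley commutator formula to obtain an identity expressing $x_{[\alpha]}(t)$ as a commutator $[x_{[\beta]}(s), x_{[\gamma]}(u)]$ with one of $s, u$ lying in $J$, possibly modified by higher-weight elementary factors that are then treated by induction on root height. The hypotheses $1/2 \in R$, and $1/3 \in R$ for ${}^3D_4$, are exactly what permits the resulting linear equations relating $t$ to $(s, u)$ to be solved over $R_{[\alpha]}$, since the twisted formulas involve norm- and trace-type expressions in which small integer denominators appear naturally.

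For the closing inclusion, i.e., normality of $E'_{\pi,\sigma}(\Phi, R, J)$ in $G_{\pi,\sigma}(\Phi, R)$, I would argue locally: it suffices to verify normality after localization at every maximal ideal. Over a semilocal ring, the decomposition $G_{\pi,\sigma}(\Phi, R) = E'_{\pi,\sigma}(\Phi, R)\, T_{\pi,\sigma}(\Phi, R)$ advertised in the abstract reduces the problem to showing that conjugation by a maximal torus element preserves $E'_{\pi,\sigma}(\Phi, R, J)$. Torus elements act on the twisted root subgroups by scaling parameters through characters valued in units of the ring $R_{[\alpha]}$; since $J$ is $\theta$-invariant, the associated $J$-submodules of $R_{[\alpha]}$ are stable under these scalings, so the generating set of $E'_{\pi,\sigma}(\Phi, J)$ is preserved. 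The remaining inclusion $[E'_{\pi,\sigma}(\Phi, R), G_{\pi,\sigma}(\Phi, R, J)] \subset E'_{\pi,\sigma}(\Phi, R, J)$ then uses the fact that elements of $G_{\pi,\sigma}(\Phi, R, J)$ are central modulo $J$: any commutator $[x_{[\alpha]}(s), h]$ with $h \in G_{\pi,\sigma}(\Phi, R, J)$ lies in $G_{\pi,\sigma}(\Phi, J)$, and a root-by-root analysis with the commutator formula lifts this residue information to place the commutator inside $E'_{\pi,\sigma}(\Phi, R, J)$.

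The main obstacle I anticipate is the fine bookkeeping imposed by the twisted structure, especially in the case $\Phi_\rho = {}^3D_4$, where the triality automorphism produces root subgroups parameterized by cubic ring extensions and the commutator formulas involve trilinear norm and trace expressions that require $3$ to be invertible. The semilocal decomposition underlying the local-global reduction, coupled with these intricate root-level computations over the modules $R_{[\alpha]}$, is where the bulk of the technical work will reside.
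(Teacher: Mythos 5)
Your treatment of the core inclusion $E'_{\sigma}(R,J)\subset[E'_{\sigma}(R),E'_{\sigma}(J)]$ (rank-two subsystem decompositions, the twisted commutator formulas, induction on height, with the denominators $1/2$ and $1/3$ doing the work) is essentially the paper's part (i), and that part of your plan is sound. The genuine gaps are in the two absorbing inclusions. For normality, $[G_{\sigma}(R),E'_{\sigma}(R,J)]\subset E'_{\sigma}(R,J)$, your plan ``verify after localization at every maximal ideal, then use $G_{\sigma}=E'_{\sigma}T_{\sigma}$ over semilocal rings'' fails as stated: membership in the relative elementary subgroup $E'_{\sigma}(R,J)$ is not a local condition, so knowing that $\psi_{\mathfrak{m}}(gxg^{-1})$ lies in $E'_{\sigma}(R_{\mathfrak{m}},J_{\mathfrak{m}})$ for every maximal ideal $\mathfrak{m}$ gives no information about $gxg^{-1}$ itself. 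To make localization work one needs the polynomial-ring patching machinery (Taddei's Lemma~\ref{lemma:GT} and arguments of the kind carried out in Sections~\ref{sec:Pf of main thm}--\ref{sec:Pf of prop 3}), none of which appears in your sketch. The paper avoids this entirely: Proposition~\ref{normal} is deduced from Suzuki's absolute normality theorem (Corollary~\ref{cor:KS2}) by the relativization trick with the ring $R'=\{(r,s)\in R\times R \mid r-s\in J\}$ and the ideal $J'=\{(r,0)\mid r\in J\}$, the key step being the identification $E'_{\sigma}(R',J')=E'_{\sigma}(R')\cap G_{\sigma}(J')$. That step, which is the real content, has no counterpart in your proposal.

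The other absorbing inclusion $[E'_{\sigma}(R),G_{\sigma}(R,J)]\subset E'_{\sigma}(R,J)$ is also not established by what you describe. For a general $h\in G_{\sigma}(R,J)$ there is no Chevalley commutator formula for $[x_{[\alpha]}(s),h]$, and knowing that this commutator lies in $G_{\sigma}(J)$ does not place it in $E'_{\sigma}(R,J)$: the whole difficulty is that $E'_{\sigma}(R)\cap G_{\sigma}(J)$ could a priori be strictly larger than $E'_{\sigma}(R,J)$, so ``lifting the residue information root by root'' is not an argument. The paper closes this by first proving $[E'_{\sigma}(R),G_{\sigma}(J)]\subset E'_{\sigma}(R,J)$ (Corollary~\ref{mixcom}, again via the double ring) and then running the perfect-to-abelian trick: for fixed $g\in G_{\sigma}(R,J)$ the map $h\mapsto[h,g]\,E'_{\sigma}(R,J)$ is a homomorphism from the perfect group $E'_{\sigma}(R)$ (perfectness being part (i) with $J=R$) to the abelian group $(E'_{\sigma}(R)\cap G_{\sigma}(J))/E'_{\sigma}(R,J)$, hence trivial. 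You would need this, or an equivalent substitute, and your sketch supplies neither. A smaller logical slip: the inclusion $[E'_{\sigma}(R),G_{\sigma}(R,J)]\subset[G_{\sigma}(R),E'_{\sigma}(R,J)]$ in your opening chain does not follow from the stated nestings (it would require $G_{\sigma}(R,J)\subset E'_{\sigma}(R,J)$); this happens to be harmless only because the theorem ultimately shows both sides equal $E'_{\sigma}(R,J)$, but it should not be presented as an immediate containment.
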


\begin{thm}[Main Theorem 2]\label{mainthm}
    Let $\Phi_\rho$ be one of the following types: ${}^2 A_n \ (n \geq 3), {}^2 D_n \ (n \geq 4), {}^2 E_6$, or ${}^3 D_4$. Assume that $1/2 \in R$, and in addition, $1/3 \in R$ if $\Phi_\rho \sim {}^3 D_4$. If $H$ is a subgroup of $G_{\pi, \sigma} (\Phi, R)$ normalized by $E'_{\pi, \sigma} (\Phi, R)$, then there exists a unique $\theta$-invariant ideal $J$ of $R$ such that 
    \[
        E'_{\pi, \sigma} (\Phi, R, J) \subset H \subset G_{\pi, \sigma} (\Phi, R, J).
    \]
\end{thm}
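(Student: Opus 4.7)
The plan is to adapt the classical Abe--Vaserstein strategy to the twisted setting. From $H$ I first extract a canonical $\theta$-invariant ideal $J$, then verify the two inclusions of the sandwich separately; uniqueness is built into the construction of $J$.

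To construct the level ideal, for each $h \in H$ and each class $[\alpha] \in \Phi_\rho$ I would extract the $[\alpha]$-coordinate of $h$ via a twisted Gauss/Bruhat-type decomposition in $G_{\pi,\sigma}(\Phi, R)$, and let $J \subset R$ be the ideal generated by all such coordinates together with their $\theta$-translates. Because $H$ is normalized by $E'_{\pi,\sigma}(\Phi, R)$, conjugating $h$ by elementary elements $x_{[\beta]}(s)$ and comparing Gauss decompositions shows that the resulting ideal is independent of the choice of $[\alpha]$ and of the particular extraction, so $J$ is well-defined and $\theta$-invariant.

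For the upper inclusion $H \subset G_{\pi,\sigma}(\Phi, R, J)$, reducing modulo $J$ makes every $\bar h = \phi(h)$ have vanishing root-subgroup coordinates, hence $\bar h$ lies in the image of the maximal torus $T_{\pi,\sigma}(\Phi, R/J)$; the fact that $[\bar h, x_{[\beta]}(s)] \in \bar H$ and must also have zero coordinates modulo $J$ then forces $\bar h$ to commute with every elementary generator, giving $\bar h \in Z(G_{\pi,\sigma}(\Phi, R/J))$. For the lower inclusion $E'_{\pi,\sigma}(\Phi, R, J) \subset H$, it suffices to produce $x_{[\alpha]}(t) \in H$ for every $[\alpha]$ and every $t$ in a generating set of $J \cap R_{[\alpha]}$, since Main Theorem~\ref{mainthm1} then yields that the $E'_{\pi,\sigma}(\Phi, R)$-normal closure of these elements is exactly $E'_{\pi,\sigma}(\Phi, R, J)$, and this closure lies in $H$ by hypothesis. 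Starting from $h \in H$ whose $[\alpha]$-coordinate equals $t$, iterated commutators $[h, x_{[\beta_i]}(s_i)]$, chosen via the twisted Chevalley commutator formulas so as to cancel higher-height contributions, isolate a pure $x_{[\alpha]}(t)$ factor inside $H$. The assumptions $1/2 \in R$ (and $1/3 \in R$ for ${}^3 D_4$) are used precisely to invert the structure constants appearing in these cancellations; uniqueness of $J$ follows because any ideal satisfying the sandwich is forced to coincide with the coordinate ideal just constructed.

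The main obstacle will be the coordinate extraction in the short (and extra-short) orbit types, present in ${}^2 A_{2n}$ via $n = 4, 6, \ldots$ and in ${}^3 D_4$, where the coefficient sets $R_{[\alpha]}$ are not subrings of $R$ but additive subgroups cut out by trace or norm conditions, while $J$ must nevertheless be a genuine $\theta$-invariant two-sided ideal of $R$. Showing that coordinates arising from different orbit types generate the same ideal, and that the twisted commutator calculus still produces the needed elementary elements with the correct coefficients in these orbits, is the technical heart of the argument and is where the characteristic hypothesis becomes essential.
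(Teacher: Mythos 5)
Your overall sandwich strategy matches the paper's in outline, but the mechanism you propose both for constructing $J$ and for the upper inclusion does not exist over an arbitrary commutative ring, and this is exactly where the paper has to do its hardest work. You extract the level ideal from the ``$[\alpha]$-coordinates'' of arbitrary $h \in H$ via a ``twisted Gauss/Bruhat-type decomposition'' of $G_{\pi,\sigma}(\Phi,R)$, and later argue that an element of $G_{\pi,\sigma}(\Phi,R/J)$ with ``vanishing root-subgroup coordinates'' must lie in the torus and hence in the centre. No such global decomposition exists for general $R$: in the paper a decomposition of type $U_\sigma T_\sigma U^{-}_\sigma$ is available only for congruence subgroups at a level contained in $\mathrm{rad}(R)$ (Proposition~\ref{G=UTV}, Corollary~\ref{G(R,J)=UTV}) or, in the form $E'_\sigma T_\sigma$, for semilocal rings (Proposition~\ref{G=G'}). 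That is precisely why the actual proof of $H \subset G_{\sigma}(R,J)$ is a localization argument: one first shows $\psi_{\mathfrak m}(H) \subset G_\sigma(R_S,J_S)$ for every maximal ideal (Proposition~\ref{local:H subset G(R,J)}, which in turn needs the extraction statements of Proposition~\ref{prop:U(RS) cap psi(H) subset U(JS)}), then proves a commutator statement over the localization (Proposition~\ref{local:[x,g] in E(R,J)}), pulls it back to $R$ via Taddei's polynomial lemma (Lemma~\ref{lemma:GT}, Proposition~\ref{general:[x,g] in E(R,J)}), patches over all maximal ideals by an ideal-of-multipliers argument, and finally invokes $G_\sigma(R,J)=C_\sigma(R,J)$ (Corollary~\ref{C=G}), which itself rests on Theorem~\ref{mainthm1} and the centralizer computation (Theorem~\ref{thm:KS3}). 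Your proposal offers no substitute for this machinery; ``comparing Gauss decompositions'' after conjugation and ``must also have zero coordinates modulo $J$'' are steps with no meaning for a general element of $G_{\pi,\sigma}(\Phi,R)$ or of $G_{\pi,\sigma}(\Phi,R/J)$.

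A second, related gap concerns the definition of $J$ and the lower inclusion. The paper defines $J_{[\alpha]}(H)$ from the root elements that actually lie in $H$ (for instance $\{t \mid x_{[\alpha]}(t)\in H\}$, with both coordinates admitted in the $A_2$ case), so that $E'_\sigma(R,J)\subset H$ follows from the single-element normal closure computation (Proposition~\ref{z to Rz}, whose rank-two case analysis in Lemma~\ref{z to Rz in phi'} is the real work there), not directly from Theorem~\ref{mainthm1}. Your coordinate-based ideal would in addition require showing that every coordinate of every $h\in H$ is realized as the parameter of a root element belonging to $H$; that is the content of the extraction lemmas (Lemma~\ref{lemma:U cap H}, Lemma~\ref{lemma:UTV cap H}), and the paper can prove them only for elements of the special shapes $U_\sigma(R)\cap H$ and $U_\sigma(\mathrm{rad}(R))\,T_\sigma(R)\,U^{-}_\sigma(R)\cap H$, never for arbitrary $h\in H$. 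So while your sketch of the lower inclusion (iterated commutators guided by the twisted Chevalley formulas, with $1/2$ and $1/3$ used to invert structure constants) is in the right spirit, the construction of $J$ and the entire upper inclusion as you describe them would fail for general $R$; closing that gap is what Sections~\ref{sec:Pf of main thm}, \ref{sec:Pf of prop 2} and \ref{sec:Pf of prop 3} of the paper are devoted to.
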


We acknowledge that Theorem~\ref{mainthm} was established by K. Suzuki~\cite{KS1} in the context of local rings. Our approach to proving these results is inspired by the works of L. N. Vaserstein~\cite{LV} and E. Abe~\cite{EA3}. However, the computations presented here are significantly more intricate, particularly in the case where $\Phi_\rho$ is of type ${}^2A_{2n} \ (n \geq 3)$.

The structure of the paper is as follows:  
Sections~\ref{sec:TCG}, \ref{sec:E(R)}, and \ref{sec:SKR} introduce definitions, fundamental properties, and key results related to Chevalley groups and twisted Chevalley groups.  
In Section~\ref{sec:E=UHV}, we examine certain subgroups of $G_{\pi, \sigma} (\Phi, R)$ and establish key properties of these subgroups.  
Section~\ref{sec:E(R,J)} investigates several important properties of the subgroup $E'_{\pi, \sigma} (\Phi, R, J)$ and provides a proof of Theorem~\ref{mainthm1}.
Sections~\ref{sec:Pf of main thm}, \ref{sec:Pf of prop 1}, \ref{sec:Pf of prop 2}, and \ref{sec:Pf of prop 3} are dedicated to proving Theorem~\ref{mainthm}.

Finally, the appendix presents an application of Theorem~\ref{mainthm1} by proving that $E'_{\pi, \sigma}(\Phi, R)$ is a characteristic subgroup of $G_{\pi, \sigma}(\Phi, R)$. 
This result holds great significance as it enables us to reduce the study of the automorphism group of $G_{\pi, \sigma} (\Phi, R)$ to the (presumably simpler) task of studying the automorphism group of $E'_{\pi, \sigma} (\Phi, R)$. 
While we had previously established this result in the case of Noetherian rings, the general version presented here is due to Pavel Gvozdevsky.


\section{Twisted Chevalley Groups}\label{sec:TCG}

In this section, we provide a formal definition of twisted Chevalley groups. 
For a more comprehensive discussion of this topic, the reader is referred to \cite{EA1}, \cite{EB12:main}, \cite{RC}, \cite{RS} or \cite{RSTCG}.


\subsection{Chevalley Groups}\label{Subsec:CG}

Let $\mathcal{L}$ be a semisimple Lie algebra over $\mathbb{C}$ with root system $\Phi$, and let its Cartan decomposition be given by
$$ 
\mathcal{L} = \mathcal{H} \oplus \coprod_{\alpha \in \Phi} \mathcal{L}_{\alpha}, 
$$
where $\mathcal{H}$ is a Cartan subalgebra of $\mathcal{L}$. 
Fix a simple system $\Delta$ of $\Phi$. Consider a Chevalley basis (for a precise definition, see, for instance, page 147 of \cite{JH} or page 7 of \cite{RS}) given by
$$
\{ H_i = H_{\alpha_i}, X_{\alpha} \mid \alpha_i \in \Delta, \, \alpha \in \Phi \}.
$$
Define $\mathcal{L}(\mathbb{Z})$ to be the $\mathbb{Z}$-span of this Chevalley basis.

Let $\mathcal{U} = \mathcal{U} (\mathcal{L})$ denote the universal enveloping algebra of $\mathcal{L}$, and let $\mathcal{U}_\mathbb{Z}$ denote the corresponding Kostant's $\mathbb{Z}$-form, generated by the elements $X_\alpha^m/m!$ for $m \in \mathbb{Z}_{\geq 0}$ and $\alpha \in \Phi$.

Consider a (finite-dimensional) faithful representation $\pi: \mathcal{L} \longrightarrow \text{GL}(V)$. 
This induces a natural action of $\mathcal{U}$ and, consequently, of $\mathcal{U}_\mathbb{Z}$ on $V$. 
Note that $V$ contains a lattice $M$ that is invariant under the action of $\mathcal{U}_\mathbb{Z}$, called an \textbf{admissible lattice}. 
Let $\mathcal{L}_{\pi} (\mathbb{Z})$ denote the stabilizer of $M$ in $\mathcal{L}$.
It follows that $\mathcal{L}_{\pi} (\mathbb{Z})$ is a lattice in $\mathcal{L}$ and can be expressed as 
$$
\mathcal{L}_{\pi} (\mathbb{Z}) = \mathcal{H}_{\pi} (\mathbb{Z}) \oplus \coprod_{\alpha \in \Phi} \mathbb{Z} X_{\alpha},
$$
where 
$$
\mathcal{H}_{\pi} (\mathbb{Z}) = \mathcal{H} \cap \mathcal{L}_{\pi} (\mathbb{Z}) = \{ H \in \mathcal{H} \mid \mu(H) \in \mathbb{Z} \text{ for all weights } \mu \text{ of the representation } \pi \}.
$$
This shows that $\mathcal{L}_{\pi} (\mathbb{Z})$ depends only on the weight lattice $\Lambda_\pi$, making the notation independent of the particular choice of $M$.

Let $R$ be a commutative ring with unity. 
Let $V(R) = M \otimes_\mathbb{Z} R$, $\mathcal{L} (R) = \mathcal{L} (\mathbb{Z}) \otimes_{\mathbb{Z}} R$ and $\mathcal{L}_{\pi} (R) = \mathcal{L}_{\pi} (\mathbb{Z}) \otimes_{\mathbb{Z}} R$.
Consider the automorphisms of $V(R)$ of the form $x_\alpha (t): = \exp {(t \pi (X_\alpha))} \ (t \in R, \alpha \in \Phi)$, where 
$$\exp {(t \pi(X_\alpha))} = \sum_{n=0}^{\infty} \frac{t^n \pi(X_\alpha)^n}{n!} .$$ 
The action of $x_\alpha (t)$ on $V(R)$ is the same as the action described in \cite[Chapter 3]{RS}. 
The subgroup of $\operatorname{Aut}(V(R))$ generated by all $x_\alpha(t) \ (t \in R, \alpha \in \Phi)$ is called an \textbf{elementary Chevalley group} and is denoted by $E_\pi (\Phi, R)$. 
For a representation $\pi$, let $\Lambda_\pi$ denote the weight lattice of $\pi$, i.e., the lattice generated by all weights of $\pi$. 
If $\pi$ and $\pi'$ are representations of $\mathcal{L}$ such that $\Lambda_\pi = \Lambda_{\pi'}$, then $E_\pi (\Phi, R) \cong E_{\pi'} (\Phi, R)$.
Let $\Lambda_r$ be the lattice generated by roots and $\Lambda_{sc}$ be the lattice generated by fundamental weights. 
If $\pi$ is such that $\Lambda_\pi = \Lambda_{r}$ (resp., $\Lambda_\pi = \Lambda_{sc}$), then $E_\pi(\Phi, R) = E_{\text{ad}}(\Phi, R)$ (resp., $E_\pi(\Phi, R) = E_{sc}(\Phi, R)$) is called an \textbf{adjoint elementary Chevalley group} (resp., \textbf{universal} (or \textbf{simply connected}) \textbf{elementary Chevalley group}). 

Let $U$ (resp., $U^{-}$) to be the subgroup of $E_\pi(\Phi, R)$ generated by all $x_\alpha(t), \alpha \in \Phi^+ \ (\text{resp., } \alpha \in \Phi^-), t \in R$. Let $H$ be the subgroup generated by all $h_\alpha (t) = w_\alpha (t) w_\alpha(1)^{-1},$ where $w_\alpha (t) = x_\alpha(t) x_{-\alpha}(-t^{-1}) x_\alpha(t), \ t \in R^*$ (the group of units in $R$). If $B$ is the subgroup generated by $U$ and $H$, then $U \cap H = 1$, $U$ is normal in $B$ and $B = UH$. Let $N$ be the subgroup generated by all $w_\alpha (t)$ and $W$ be the Weyl group $W(\Phi)$. Then $H$ is normal in $N$ and $W \cong N/H$ with the map $s_\alpha \mapsto H w_\alpha(1), \forall \alpha \in \Phi$. We sometimes use more precise notation such as $U_\pi (\Phi, R), U(\Phi, R)$ or $U(R)$ instead of just $U$. Similarly, this applies to $U^{-}, H, B$ and $N$.

Let $k$ be an algebraically closed field. Then the semisimple linear algebraic groups over the field $k$ are precisely the elementary Chevalley groups $E_\pi (\Phi, k)$ (see \cite[Chapter 5]{RS}). All these groups can be viewed as subgroups of $GL_n(k)$ defined as a common set of zeros of polynomials of matrix entries $x_{ij}$ with integer coefficients. Note that the multiplication map and the inverse map are also defined by polynomials with integer coefficients. Therefore, these polynomials can be considered as polynomials over an arbitrary commutative ring with unity. 

Let $E_\pi (\Phi, \mathbb{C})$ be an elementary Chevalley group viewed as a subgroup of $GL_n(\mathbb{C})$ defined by zero locus of polynomials $p_1(x_{ij}), \dots ,p_m(x_{ij})$. Note that these polynomials can be chosen to have integer coefficients. Let $R$ be a commutative ring with unity and let us consider the groups $$G(R) = \{ (a_{ij}) \in GL_n (R) \mid \widetilde{p}_1(a_{ij}) = 0, \dots, \widetilde{p}_m(a_{ij}) = 0\},$$
where $\widetilde{p}_1(x_{ij}), \dots , \widetilde{p}_m(x_{ij})$ are polynomials having the same coefficients as $p_1(x_{ij}), \dots ,p_m(x_{ij})$, but considered over a ring $R$. This group is called the \textbf{Chevalley group} $G_\pi (\Phi, R)$ of the type $\Phi$ over the ring $R$. If $\pi$ is a representation such that $\Lambda_{\pi} = \Lambda_r$ then $G_{\pi}(\Phi, R) = G_{ad}(\Phi, R)$ is called an \textbf{adjoint Chevalley group}.  If $\pi$ is a representation such that $\Lambda_\pi = \Lambda_{sc}$ then $G_\pi(\Phi, R) = G_{sc}(\Phi, R)$ is called a \textbf{universal} (or \textbf{simply connected}) \textbf{Chevalley group}.

Note that $E_\pi (\Phi, R) \subseteq G_\pi (\Phi, R)$. If $k$ is an algebraically closed field then $E_\pi (\Phi, k) = G_\pi (\Phi, k)$. But in general, equality may not hold (even for a field). 



The subgroup of diagonal matrices (in the standard basis of weight vectors) of the Chevalley group $G_\pi (\Phi, R)$ is called the \textbf{standard maximal torus} of $G_\pi (\Phi, R)$ and it is denoted by $T_\pi (\Phi, R)$. 
This group is isomorphic to Hom$(\Lambda_\pi, R^*)$ where $R^*$ is the group of units in $R$ and the isomorphism is given as follows: Let $\chi \in $ Hom$(\Lambda_{\pi}, R^*)$ be a character of $\Lambda_\pi$. Let $V_\mu$ be a weight space corresponding to weight $\mu$ of $\pi$ and let $V_{\mu}(R) = (V_\mu \cap M) \otimes_\mathbb{Z} R$. 
Define an automorphism $h(\chi)$ of $V(R)$ given by $$ h(\chi) \cdot v = \chi (\mu) v, $$ where $\mu$ is a weight of $\pi$ and $v \in V_{\mu} (R)$. 
Note that, $h(\chi)$ can be extented to $V(R)$ as $V(R) = \coprod_{\mu \in \Omega_\pi} V_{\mu}(R),$ where $\Omega_\pi$ is the collection of weights corresponding to representation $\pi$. 
Therefore, $$T_\pi (\Phi, R) = \{ h(\chi) \mid \chi \in  \text{Hom}(\Lambda_\pi, R^*) \}.$$ 

Note that $H_\pi (\Phi, R)$ is contained in $T_{\pi}(\Phi, R)$. The element $h(\chi) \in H_\pi (\Phi, R) \subset E_\pi(\Phi, R)$ if and only if $\chi \in \text{Hom}(\Lambda_\pi, R^*)$ can be extented to a character $\chi'$ of $\Lambda_{sc}$, that is, $\chi' \in \text{Hom}(\Lambda_{sc}, R^*)$ such that $\chi'|_{\Lambda_{\pi}} = \chi$. Moreover, $h_\alpha(t) = h(\chi_{\alpha, t}) \ (t \in R^*, \alpha \in \Phi),$ where $$ \chi_{\alpha, t}: \lambda \mapsto t^{\langle \lambda, \alpha \rangle} \ (\lambda \in \Lambda_\pi).$$ Therefore $H_\pi (\Phi, R) = E_\pi (\Phi, R) \cap T_\pi(\Phi, R).$ Consider a subgroup $G_{\pi}^{0}(\Phi, R) = E_\pi (\Phi, R) T_\pi (\Phi, R)$ of $G_\pi(\Phi, R)$. If $R$ is a semilocal ring, then $G_\pi (\Phi, R) = G_\pi^0 (\Phi, R)$ (see \cite[Corollary 2.4]{EA2}). The element $h(\chi)$ acts on $\mathfrak{X}_{\alpha} = \{ x_{\alpha} (t) \mid t \in R \}$ by conjugation as follows: $$ h(\chi) x_\alpha (\zeta) h(\chi)^{-1} = x_\alpha (\chi (\alpha) \zeta).$$ 

A subgroup $H$ of a group $G$ is called \textit{characteristic}, if it is mapped into itself under any automorphism of $G$. In particular, any characteristic subgroup is normal. If the rank of $\Phi$ is $\geq 2$, then $E_\pi (\Phi, R)$ is a characteristic subgroup of $G_\pi (\Phi, R)$ (see \cite[Theorem 5]{LV}).

A group $G$ is said to be \textit{perfect} if $[G,G] = G$, where $[G,G]$ denotes the commutator subgroup of $G$. If the rank of $\Phi$ is $\geq 2$, then the elementary Chevalley group $E_\pi (\Phi, R)$ is perfect, i.e., $[E_\pi (\Phi, R), E_\pi (\Phi, R)]$ (see \cite[Theorem 5]{LV}). 

For abusive use of notations, we sometimes write $E(R)$ or $E(\Phi, R)$ instead of $E_\pi(\Phi, R)$, similar for $G_\pi (\Phi, R), G^0_\pi (\Phi, R)$ and $T_\pi (\Phi, R)$.


\subsection{Twisted Root System}\label{Subsec:TRS}

Let $V$ be a finite-dimensional real Euclidean vector space and let $\Phi$ be a crystallographic root system. Let $\Delta$ and $\Phi^+$ be the simple and positive root systems, respectively, with respect to some fixed ordering on $V$. Let $\rho$ be a non-trivial angle preserving permutation of $\Delta$ (such a $\rho$ exists only when $\Phi$ is of type $A_l \ (l \geq 1), D_l \ (l \geq 4), E_6, B_2, F_4$ or $G_2$). Note that the possible order of $\rho$ is either $2$ or $3$, with the latter possible only when $\Phi$ is of type $D_4$. We define an isometry $\hat{\rho} \in GL (V)$ as follows:
\begin{enumerate}
    \item If $\Phi$ has one root length, then define $\hat{\rho}(\alpha)= \rho(\alpha)$ for each $\alpha \in \Delta$.
    \item If $\Phi$ has two root lengths. Then define $\hat{\rho}(\alpha)= \rho(\alpha)/ \sqrt{p}$ for each short root $\alpha \in \Delta$ and $\hat{\rho}(\alpha)= \sqrt{p} \hspace{1mm} \rho(\alpha)$ for each long root $\alpha \in \Delta$, where $p = ||\alpha||^2 / ||\beta||^2$, $\alpha$ is a long root and $\beta$ is a short root. 
\end{enumerate} 

Clearly, the order of $\hat{\rho}$ is the same as that of $\rho$ and $\hat{\rho}$ preserves the sign. Note that $\hat{\rho} w_{\alpha} \hat{\rho}^{-1} = w_{\rho(\alpha)}$, hence $\hat{\rho}$ normalizes $W$. Define $V_\rho = \{ v \in V \mid \hat{\rho}(v)=v \}$ and $W_\rho = \{ w \in W \mid \hat{\rho}w\hat{\rho}^{-1} =w \}$. Let $\hat{\alpha} = 1/o(\rho) \sum_{i=0}^{o(\rho)-1} \hat{\rho}^i (\alpha),$ the average of the elements in the $\hat{\rho}$-orbit of $\alpha$. Then $(\beta, \hat{\alpha})=(\beta, \alpha)$ for all $\beta \in V_\rho$. Hence the projection of $\alpha$ on $V_\rho$ is $\hat{\alpha}$. 

Note that $W_\rho$ acts faithfully on $V_\rho$. Let $J = J_\alpha \subset \Phi$ be the $\rho$-orbit of $\alpha$ and let $W_{J}$ be the group generated by all $w_\beta \ (\beta \in J_\alpha)$. Let $w_{J}$ be the unique element of $W_J$ such that $w_J (P_\alpha) = - P_\alpha$, where $P_\alpha$ is a positive system generated by $J_\alpha$ (such a $w_J$ exists and is of highest length element in $W_J$). Then $w_J|_{V_\rho} = w_{\hat{\alpha}}|_{V_\rho}$ and $w_J|_{V_\rho} \in W_{\rho}$. In fact, $\{ w_{\hat{\alpha}}|_{V_\rho} \mid \alpha \in \Delta \}$ forms a generating set of $W_\rho$. Therefore the group $W_\rho|_{V_\rho}$ is a reflection group. Define $\Tilde{\Phi}_\rho = \{ \hat{\alpha} \mid \alpha \in \Phi \}$ and $\Tilde{\Delta}_\rho = \{ \hat{\alpha} \mid \alpha \in \Delta \}$. Then $\Tilde{\Phi}_\rho$ is the (possibly non-reduced) root system corresponding to the Weyl group $W_\rho|_{V_\rho}$ and $\Tilde{\Delta}_\rho$ is the corresponding simple system. In order to make $\Tilde{\Phi}_\rho$ reduced, we can stick to the set of shortest projections of various directions, and denote it by $\Phi_\rho$. Define an equivalence relation $R$ on $\Phi$ by $\alpha \equiv \beta$ iff $\hat{\alpha}$ is a positive multiple of $\hat{\beta}$. If $\Phi/R$ denotes the collection of all equivalence classes of this relation, then $\Phi_\rho$ is in one-to-one correspondence with $\Phi / R$ by identifying a root $\hat{\alpha}$ of $\Phi_\rho$ with a class $[\alpha]$ of $\Phi / R$. Similarly, there exists a one-to-one correspondence between $\Tilde{\Phi}_\rho$ and $\{ J_\alpha \mid \alpha \in \Phi \}$ by sending a root $\hat{\alpha}$ of $\Tilde{\Phi}_\rho$ to $J_\alpha$. Clearly $-[\alpha] = [-\alpha]$ and $-J_{\alpha} = J_{-\alpha}.$

\begin{lemma}[{\cite[page 103]{RS}}]
    \normalfont
    If $\Phi$ is irreducible, then an element of $\Phi/R$ is the positive system of roots of a system of one of the following types:
    \begin{enumerate}[(a)]
        \item $A_1^n, \hspace{1mm} n=1, 2$ or $3$.
        \item $A_2$ (this occurs only if $\Phi$ is of type $A_{2n}$).
        \item $C_2$ (this occurs if $\Phi$ is of type $C_{2}$ or $F_4$).
        \item $G_2$ (this occurs only if $\Phi$ is of type $G_2$).
    \end{enumerate}
\end{lemma}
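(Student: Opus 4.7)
The plan is to identify each equivalence class $[\alpha]\in\Phi/R$ with the positive system $P_\alpha$ of the root subsystem generated by the $\rho$-orbit $J_\alpha$, and then enumerate the possibilities as $J_\alpha$ varies. The inclusion $P_\alpha\subseteq[\alpha]$ is immediate, since any positive integer combination of $J_\alpha$ that is itself a root projects to a positive multiple of $\hat{\alpha}$. For the reverse inclusion, one verifies by inspection, using the structure of root systems and the fact that $\hat{\rho}$ permutes $\Phi$, that any root whose projection to $V_\rho$ is a positive multiple of $\hat{\alpha}$ must already lie in $P_\alpha$.

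Having made this identification, the problem reduces to classifying the $\hat{\rho}$-stable root subsystems of $\Phi$ of rank at most $o(\rho)\leq 3$ that are generated by a single $\rho$-orbit. Since the irreducible $\Phi$ admitting a non-trivial angle-preserving $\rho$ are of types $A_l, D_l, E_6, B_2, F_4, G_2$, an explicit case-by-case analysis is feasible.

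For $\rho$ of order $2$ preserving root lengths (types $A_l, D_l, E_6$): if $\rho$ fixes $\alpha$, then $[\alpha]=\{\alpha\}$ has type $A_1$; if $\rho(\alpha)$ is orthogonal to $\alpha$, then $[\alpha]=\{\alpha,\rho(\alpha)\}$ has type $A_1\times A_1$; otherwise, $|\alpha|=|\rho(\alpha)|$ forces the angle between them to be $2\pi/3$, whence $\alpha+\rho(\alpha)\in\Phi$ and $[\alpha]$ is a positive system of type $A_2$. A direct check shows that this last possibility occurs precisely in $\Phi=A_{2n}$, where the two middle simple roots are swapped by $\rho$ and joined by an edge of the Dynkin diagram. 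For $\rho$ of order $3$ (which forces $\Phi=D_4$), the orbit $J_\alpha$ has size $1$ or $3$, and in the size-$3$ case the three roots must be pairwise orthogonal, since no irreducible rank-$3$ root system admits a fixed-point-free order-$3$ symmetry; this yields $A_1^3$. For $\rho$ swapping short and long simple roots (types $B_2, F_4, G_2$), I would compute $P_\alpha$ directly inside the relevant rank-$2$ subsystem and identify it as a positive system of type $C_2$ (in types $B_2, F_4$) or $G_2$ (in type $G_2$).

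The main obstacle is the last case: the rescaling factor $\sqrt{p}$ in the definition of $\hat{\rho}$ prevents $\hat{\rho}$ from being a lattice automorphism, so one must verify by hand that the full positive system of the rank-$2$ subsystem is captured by $P_\alpha$, rather than some proper subset, and that the identification $[\alpha]=P_\alpha$ survives the rescaling. Once this is settled, the remaining verifications reduce to explicit geometric computations in rank at most three, and the finite list of orbit types above is exhaustive.
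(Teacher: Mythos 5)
The paper does not actually prove this lemma: it is quoted from Steinberg (\cite[page 103]{RS}) and used as a black box, so there is no in-paper argument to compare yours against. Your overall strategy --- identify the class $[\alpha]$ with the positive system of the subsystem generated by the orbit $J_\alpha$, then classify the possible orbit configurations case by case --- is the standard route and the reduction itself is sound.

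Two of your supporting claims, however, are wrong or unjustified as stated, and both are repaired by the same missing ingredient: $\rho$ permutes the simple roots and hence maps positive roots to positive roots. First, in the length-preserving order-$2$ case you say that $|\alpha|=|\rho(\alpha)|$ with $\rho(\alpha)\neq\pm\alpha$ non-orthogonal \emph{forces} the angle $2\pi/3$; equal lengths are equally compatible with the angle $\pi/3$. The correct argument is: if $(\alpha,\rho\alpha)>0$ then $\beta=\alpha-\rho\alpha$ is a root with $\rho\beta=-\beta$, contradicting preservation of $\Phi^{+}$. Second, in the order-$3$ ($D_4$) case your cited principle ``no irreducible rank-$3$ root system admits a fixed-point-free order-$3$ symmetry'' is false: the $3$-cycle $(2\,3\,4)\in S_4$ acts on $A_3=\{e_i-e_j\}$ with no fixed roots; moreover, even if it were true it would not exclude the orbit spanning an $A_2$ (three roots at mutual angle $2\pi/3$ summing to zero), where a fixed-point-free order-$3$ rotation certainly exists. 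Positivity again gives the correct proof: by $\rho$-invariance all pairwise inner products within the orbit are equal, and a nonzero common value would produce three roots of the same sign (the orbit itself, or the differences $\alpha-\rho\alpha$, $\rho\alpha-\rho^{2}\alpha$, $\rho^{2}\alpha-\alpha$) summing to zero, which is impossible; hence the orbit is pairwise orthogonal and gives $A_1^3$. Finally, be aware that in the two-length cases ($B_2$, $F_4$, $G_2$) the isometry $\hat\rho$ does not preserve $\Phi$, so ``the $\rho$-orbit $J_\alpha$'' must be read through the projection-defined equivalence relation; your plan to compute $P_\alpha$ directly in the relevant rank-$2$ subsystem is the right move there, but as written both that computation and the reverse inclusion $[\alpha]\subseteq P_\alpha$ are left at the level of ``by inspection'', which is precisely where the remaining content of the lemma sits.
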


If a class $[\alpha]$ in $\Phi/R$ is the positive system of roots of a system of type $X$ (where $X$ is any of the above root systems) then we write $[\alpha] \sim X$. Similarly, if $\Phi \sim X$ then we write $\Phi_\rho \sim {}^n X$ where $n$ is the order of $\rho$. In the following table we describe some root systems $\Phi_\rho$ and $\Tilde{\Phi}_\rho$ after the twist:

\begin{center}
    \begin{tabular}{|c|c|c|c|c|}
        \hline
        \multirow{2}{*}{\textbf{Type}} & \multirow{2}{*}{\textbf{$\Tilde{\Phi}_\rho$}} & \multirow{2}{*}{\textbf{$\Phi_\rho$}} & \multicolumn{2}{c|}{\textbf{Type of Roots}} \\
        \cline{4-5}
        & & & \textbf{Long} & \textbf{Short}  \\
        \hline 
        ${}^2 A_{2n-1} \ (n \geq 2)$ & $C_n$ & $C_n$ & $A_1$ & $A_1^2$ \\
        \hline 
        ${}^2 A_{2n} \ (n \geq 2)$ & $BC_n$ & $B_n$ & $A_1^2$ & $A_2$ \\
        \hline 
        ${}^2 D_{n} \ (n \geq 4)$ & $B_{n-1}$ & $B_{n-1}$ & $A_1$ & $A_1^2$ \\
        \hline
        ${}^3 D_{4}$ & $G_2$ & $G_2$ & $A_1$ & $A_1^3$ \\
        \hline
        ${}^2 E_{6}$ & $F_4$ & $F_4$ & $A_1$ & $A_1^2$ \\
        \hline
    \end{tabular}
\end{center}

Finally, let us discuss the action of $\rho$ on the weight lattice $\Lambda_{sc}$. Assume that $\Phi$ has one root length. Since $\rho$ permutes simple roots (hence all roots), the action of $\rho$ on root lattice $\Lambda_r$ is clear. The fundamental dominant weights $\lambda_1, \dots, \lambda_l$ forms a $\mathbb{Z}$-basis of the weight lattice $\Lambda_{sc}$. We can define the action of $\rho$ on $\lambda_i$ by $\rho (\lambda_i) = \lambda_j$ if $\rho (\alpha_i) = \alpha_j$. This action can be naturally extended to a $\mathbb{Z}$-linear automorphism $\rho$ of $\Lambda_{sc}$ such that $\rho (\Lambda_{r}) = \Lambda_r$. Thus $\rho$ can be thought as a group automorphism of the fundamental group $\Lambda_{sc}/\Lambda_r$ of $\Phi$. Now let $\Lambda$ be a sublattice of $\Lambda_{sc}$ which contains $\Lambda_r$. Then $\Lambda / \Lambda_r$ is a subgroup of $\Lambda_{sc} / \Lambda_r$ which is cyclic except for the case of $\Phi = D_{2n}$. Therefore $\rho(\Lambda / \Lambda_r) = \Lambda / \Lambda_r$ and hence $\rho (\Lambda) = \Lambda$. For the case of $\Phi = D_{2n}$, the fundamental group $\Lambda_{sc} / \Lambda_r$ is isomorphic to $\mathbb{Z}_2 \times \mathbb{Z}_2$. Hence there are exactly two proper sublattices $\Lambda_1$ and $\Lambda_2$ of $\Lambda_{sc}$ which contains $\Lambda_r$ as proper sublattice with the property that $\rho (\Lambda_i) \not\subset \Lambda_i$ for $i=1,2$. Therefore, if $\Lambda_\pi = \Lambda_1$ or $\Lambda_2$, then the graph automorphism of $G_\pi (\Phi, R)$ and $E_\pi (\Phi, R)$ do not exist even when $1/2 \in R$ (see \cite[page 91]{RS}).


\subsection{Twisted Chevalley Groups}\label{Subsec:TCG}

Assume that $\Phi$ is of type $A_n (n \geq 2), D_n (n \geq 4)$ or $E_6$ and let $G(R) = G_\pi (\Phi, R)$ (resp., $E(R) = E_\pi (\Phi, R)$) be a Chevalley group (resp., an elementary Chevalley group) over a commutative ring $R$. Let $\sigma$ be an automorphism of $G(R)$ which is the product of a graph automorphism $\rho$ and a ring automorphism $\theta$ such that $o(\theta) = o(\rho)$. Denote the corresponding permutation of the roots also by $\rho$. Since $\rho \circ \theta = \theta \circ \rho$, we have $o(\theta) = o(\rho) = o(\sigma)$. Since $E(R)$ is a characteristic subgroup of $G(R)$, $\sigma$ is also an automorphism of $E(R)$. 

Define $G_\sigma (R) = \{ g \in G(R) \mid \sigma(g)=g \}$. Clearly, $G_\sigma (R)$ is a subgroup of $G(R)$. We call $G_\sigma (R)$ the \textbf{twisted Chevalley group} over the ring $R$. The notion of the adjoint twisted Chevalley group and the universal (or simply connected) twisted Chevalley group is clear.

Write $E_\sigma (R) = E(R) \cap G_\sigma (R)$. Consider the subgroups $U, H, B, U^-$ and $N$ of $E(R)$. Then $\sigma$ preserves $U, H, B, U^-$ and $N$. Hence we can make sense of $U_\sigma, H_\sigma, B_\sigma, U^-_\sigma$ and $N_\sigma$ (if $A \subset G(R)$ then we define $A_\sigma = A \cap G_\sigma(R)$). Note that $\sigma$ preserves $N/H \cong W$ (as it preserves $N$ and $H$). The action thus induced on $W$ is concordant with the permutation $\rho$ of the roots. Finally, let us define $E_\sigma' (R) = \langle U_\sigma, U_\sigma^- \rangle$, a subgroup of $E_\sigma (R)$ generated by $U_\sigma$ and $U_\sigma^-$. We call $E'_\sigma (R)$ the \textbf{elementary twisted Chevalley group} over the ring $R$. Write $H'_\sigma = H \cap E'_\sigma(R), N'_\sigma = N \cap E'_\sigma(R)$ and $B'_\sigma = B \cap E'_\sigma (R)$. Then $B'_\sigma = U_\sigma H'_\sigma$. 

Let $T(R) = T_\pi (\Phi, R)$ be the standard maximal torus of $G(R)$. Define $T_\sigma (R) = T(R) \cap G_\sigma (R)$ and called it the \textbf{standard maximal torus} of $G_\sigma (R).$ 
For a character $\chi \in $ Hom$(\Lambda_\pi, R^*)$, we define its conjugation $\bar{\chi}_\sigma$ with respect to $\sigma$ by $\bar{\chi}_\sigma (\lambda) = \theta (\chi (\rho^{-1}(\lambda)))$ for every $\lambda \in \Lambda_\pi$. 
If $h(\chi) \in T(R)$, then $\sigma (h(\chi)) = h(\bar{\chi}_\sigma)$.
A character $\chi \in $ Hom$(\Lambda_\pi, R^*)$ is called \textbf{self-conjugate with respect to $\sigma$} if $\chi = \bar{\chi}_\sigma$, i.e., $\chi (\rho(\lambda)) = \theta (\chi (\lambda)),$ for every $\lambda \in \Lambda_\pi$. We denote the set of such characters by $\text{Hom}_1 (\Lambda_\pi, R^*) = \{ \chi \in \text{Hom} (\Lambda_\pi, R^*) \mid \chi = \bar{\chi}_\sigma \}$.
Thus we have $T_\sigma (R) = \{ h(\chi) \mid \chi \in \text{Hom}_1 (\Lambda_\pi, R^*) \}$. Note that, an element $h(\chi) \in H_\sigma \subset T_\sigma(R)$ if and only if $\chi$ is a self-conjugate character of $\Lambda_\pi$ (with respect to $\sigma$) that can be extended to a self-conjugate character of $\Lambda_{sc}$.

For the sake of completeness, let us also define $G_\sigma^0 (R) = G^0_\pi(\Phi, R) \cap G_\sigma (R)$ and $G'_\sigma (R) = T_\sigma (R) E'_\sigma (R)$. 

If $G(R)$ is of type $X$ and $\sigma$ is of order $n$, we say $G_\sigma(R)$ is of type $^nX.$ We write $G(R) \sim X$ and $G_\sigma(R) \sim {}^nX$. We use a similar notation for $E(R), E_\sigma(R)$ and $E'_\sigma(R)$. 

\begin{rmk}
    Sometimes we use more detailed notations such as $G_{\pi, \sigma} (\Phi, R)$ or $G_{\sigma} (\Phi, R)$ to refer to the group $G_\sigma(R)$. This convention similarly applies to other groups described above.
\end{rmk}


\subsection{Identifying Certain Twisted and Untwisted Chevalley Groups}\label{subsec:Chevalley as Twisted Chevalley}

This subsection will examine isomorphisms between certain Chevalley and twisted Chevalley groups.

Let \(\Phi\) be a root system of type \(A_n \ (n \geq 2)\), \(D_n \ (n \geq 4)\), or \(E_6\), and let \(\Delta\) be the corresponding simple system. Let \(\rho\) be the non-trivial angle-preserving permutation of the simple roots of \(\Phi\). Let \(R\) be a commutative ring with unity. If \(o(\rho) = 2\) (respectively, \(o(\rho) = 3\)), define the ring automorphism \(\theta: R \times R \longrightarrow R \times R\) by \((a,b) \mapsto (b,a)\) (respectively, \(\theta: R \times R \times R \longrightarrow R \times R \times R\) by \((a,b,c) \mapsto (b,c,a)\)).

Next, consider the automorphisms of the group \(G(R \times R)\) (resp., \(G(R \times R \times R)\)) induced by \(\rho\) and \(\theta\), which we shall also denote by the same symbols. Set \(\sigma = \rho \circ \theta\). Now, consider the twisted Chevalley group \(G_\sigma(R \times R)\) (resp., \(G_\sigma(R \times R \times R)\)).

\begin{prop}\label{prop:Chevalley as TChevalley}
    \normalfont
    Retaining the above notations, we establish the following isomorphisms:
    \begin{enumerate}[(a)]
        \item $G_\sigma(R \times R) \cong G(R)$ (respectively, $G_\sigma(R \times R \times R) \cong G(R)$). 
        \item $E'_\sigma(R \times R) \cong E(R)$ (respectively, $E'_\sigma(R \times R \times R) \cong E(R)$).
    \end{enumerate}
\end{prop}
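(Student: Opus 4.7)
The plan is to exploit the fact that $G_\pi(\Phi,-)$ is the functor of points of an affine group scheme cut out by polynomials with integer coefficients, and therefore commutes with finite products of commutative rings. Concretely, the two ring projections $R\times R\to R$ induce a group isomorphism
\[
    \Psi : G(R\times R) \xrightarrow{\ \sim\ } G(R)\times G(R),
\]
and analogously $G(R\times R\times R)\cong G(R)^3$ in the order-$3$ case. This reduces the proposition to computing the $\sigma$-fixed locus inside such a product.

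Through $\Psi$, the ring automorphism $\theta$ is carried to the factor-swap $(g_1,g_2)\mapsto(g_2,g_1)$, while the graph automorphism $\rho$—being defined from root combinatorics with integer coefficients—acts diagonally, $(g_1,g_2)\mapsto(\rho(g_1),\rho(g_2))$. By functoriality these two commute, so
\[
    \Psi\,\sigma\,\Psi^{-1}(g_1,g_2) \;=\; \bigl(\rho(g_2),\,\rho(g_1)\bigr),
\]
and a pair is fixed iff $g_1=\rho(g_2)$; since $\rho^2=\mathrm{id}$, the other condition $g_2=\rho(g_1)$ is automatic. The first projection therefore identifies $G_\sigma(R\times R)$ with $\{(g,\rho(g)) : g\in G(R)\}\cong G(R)$, proving (a) in the order-$2$ case. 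The order-$3$ case is structurally identical: $\theta$ becomes the $3$-cycle on factors, the fixed locus is $\{(g,\rho^{-1}(g),\rho(g)) : g\in G(R)\}$, and the first projection is again an isomorphism onto $G(R)$.

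For (b) I would restrict $\Psi$ to the unipotent subgroups. The identity $x_\alpha((t_1,t_2))=(x_\alpha(t_1),x_\alpha(t_2))$, obtained by writing $\exp$ componentwise on $M\otimes_{\mathbb Z}(R\times R)\cong V(R)\oplus V(R)$, shows that $\Psi$ carries $U(R\times R)$ onto $U(R)\times U(R)$ and likewise for $U^-$. Taking $\sigma$-fixed points and projecting onto the first factor gives $U_\sigma\cong U(R)$ and $U_\sigma^-\cong U^-(R)$, so
\[
    E'_\sigma(R\times R) \;=\; \langle U_\sigma,U_\sigma^-\rangle \;\cong\; \langle U(R),U^-(R)\rangle \;=\; E(R),
\]
the last equality because $E(R)$ is generated by its positive and negative root subgroups. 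The triple-product case is parallel.

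The argument is essentially definition-chasing and I do not anticipate any serious obstacle. The only subtle points are verifying that $\rho$ and $\theta$ commute on $G(R\times R)$, which follows from functoriality (the graph automorphism arises from an automorphism of the scheme $G$ over $\mathbb Z$, hence commutes with any $G(\theta)$), and confirming that the first-projection map $E'_\sigma(R\times R)\to E(R)$ is surjective rather than merely injective, which is immediate from $E(R)=\langle U(R),U^-(R)\rangle$.
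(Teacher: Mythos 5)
Your argument is correct and follows essentially the same route as the paper: identify $G(R\times R)$ with $G(R)\times G(R)$, observe that $\sigma$ acts by $(g_1,g_2)\mapsto(\rho(g_2),\rho(g_1))$, and recognize the fixed locus as the twisted diagonal $\{(g,\rho(g))\}$, with the analogous computation in the order-$3$ case. Your explicit treatment of part (b) via the root subgroups simply fills in what the paper dismisses as ``established in a similar manner,'' and it does so correctly.
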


\begin{proof}
    We shall prove the isomorphism \(G_\sigma(R \times R) \cong G(R)\). The remaining assertions can be established in a similar manner and are therefore omitted.

    Let \(x \in G_\sigma(R \times R)\). Since \(G_\sigma(R \times R) \subseteq G(R \times R) \cong G(R) \times G(R)\), there exist elements \(x_1, x_2 \in G(R)\) such that \(x\) corresponds to the pair \((x_1, x_2)\). By definition, \(x\) satisfies \(\sigma(x) = x\), which implies \((\rho \circ \theta)(x) = x\). Therefore, we have \(\theta(x) = \rho^{-1}(x) = \rho(x)\), meaning \( \theta((x_1, x_2)) = \rho((x_1, x_2)) \).

    Note that, $\theta (x) = \theta (x_1, x_2) = (x_2, x_1)$ and the permutation $\rho$ of simple roots induces an automorphism of $G(R)$ such that $\rho ((x_1, x_2)) = (\rho (x_1), \rho(x_2))$.
    Therefore, we have \[ (x_2, x_1) = \theta((x_1, x_2)) = \rho((x_1, x_2)) = (\rho(x_1), \rho(x_2)). \] 
    This implies that $x_2 = \rho(x_1)$, and hence $x = (x_1, \rho(x_1))$. Conversely, for any $x_1 \in G(R)$, the element $x = (x_1, \rho(x_1))$ belongs to $G_\sigma (R \times R)$. Thus, the map $\phi: G(R) \longrightarrow G_\sigma (R \times R)$ defined by $x \mapsto (x, \rho(x))$ establishes the desired isomorphism of groups.
\end{proof}


\section{The Group \texorpdfstring{$E'_\sigma (R)$}{E(R)}}\label{sec:E(R)}

Let us establish some notations before proceeding further. Denote $\bar{\alpha}, \bar{\bar{\alpha}}, \bar{t}, \bar{\bar{t}}$ as $\rho (\alpha)$, $\rho^2(\alpha)$, $\theta(t)$, $\theta^2(t)$, respectively. Write $R_\theta= \{ t \in R \mid t=\bar{t} \}$. Recall that for $\sigma$ as defined earlier, $\sigma (x_\alpha (t)) = x_{\bar{\alpha}}(\epsilon_\alpha \bar{t})$ holds for all $\alpha \in \Phi$. We can conveniently select a Chevalley basis that fully specifies the values of $\epsilon_\alpha$ for all $\alpha \in \Phi$.

The angle preserving permutation $\rho$ of $\Phi$, induced an automorphism of $\mathcal{L}$ (also denote by $\rho$) such that $$ \rho (H_\alpha) = H_{\bar{\alpha}}, \hspace{5mm} \rho (X_{\alpha}) = X_{\bar{\alpha}}, \hspace{5mm} \rho (X_{-\alpha}) = X_{-\bar{\alpha}}$$ for all $\alpha \in \Delta.$ Then we have $\rho (X_\alpha) = \epsilon_\alpha X_{\bar{\alpha}}$, where $\epsilon_\alpha = \pm 1$ for any $\alpha \in \Phi$ (these $\epsilon_\alpha$s coincide with those mentioned above, for example see the proof of Theorem 29 of \cite{RS}).

\begin{lemma}[{\cite[Proposition 3.1]{EA1}}]\label{epsilonalpha}
    \normalfont
    We can choose a Chevalley basis of $\mathcal{L}$ which satisfies the following conditions
    \begin{enumerate}[(a)]
        \item $\epsilon_\alpha = \epsilon_{\bar{\alpha}}$; 
        \item $\epsilon_\alpha = -1$, if $[\alpha] \sim A_2$ and $\alpha = \bar{\alpha}$;
        \item $\epsilon_\alpha = 1$, otherwise.
    \end{enumerate}
\end{lemma}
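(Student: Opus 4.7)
The plan is as follows. First, I extend $\rho$ (originally a diagram automorphism) to a Lie algebra automorphism of $\mathcal{L}$ by setting $\rho(H_{\alpha_i}) = H_{\rho(\alpha_i)}$ and $\rho(X_{\pm \alpha_i}) = X_{\pm \rho(\alpha_i)}$ for $\alpha_i \in \Delta$, and extending via brackets; in particular $\epsilon_{\alpha_i} = 1$ for every simple root. Writing $\rho(X_\alpha) = \epsilon_\alpha X_{\bar\alpha}$ and applying $\rho$ to $[X_\alpha, X_\beta] = N_{\alpha,\beta}\, X_{\alpha + \beta}$ yields the key compatibility relation
\[
\epsilon_{\alpha + \beta}\, N_{\alpha, \beta} \;=\; \epsilon_\alpha\, \epsilon_\beta\, N_{\bar\alpha, \bar\beta}
\qquad \text{whenever } \alpha + \beta \in \Phi,
\]
from which the $\epsilon$'s at non-simple roots are determined inductively from the data at simple roots.

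For condition (a), applying $\rho^n = \mathrm{id}$ (where $n = o(\rho) \in \{2, 3\}$) to $X_\alpha$ gives $\prod_{i=0}^{n-1} \epsilon_{\rho^i(\alpha)} = 1$. When $n = 2$ this forces $\epsilon_{\bar\alpha} = \epsilon_\alpha$ immediately (both are $\pm 1$). When $n = 3$ (the ${}^3 D_4$ case), a short case analysis over the $(\pm 1)$-triples satisfying the product-one relation, combined with the sign freedom $X_\alpha \mapsto -X_\alpha$ within a $\rho$-orbit (which flips exactly two of the three factors), shows that one may always normalize to $\epsilon_\alpha = \epsilon_{\bar\alpha} = \epsilon_{\bar{\bar\alpha}}$.

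For conditions (b) and (c), I would choose the Chevalley basis so that $N_{\alpha, \beta} = N_{\bar\alpha, \bar\beta}$ holds for as many pairs as possible. This is done by ordering the positive roots compatibly with the $\rho$-orbit-class structure, then inductively picking $X_\alpha$ using brackets of previously chosen vectors, exploiting the sign freedom at each step to impose the desired equivariance of the structure constants. With such a basis, the compatibility relation together with $\epsilon_{\alpha_i} = 1$ at simple roots inductively forces $\epsilon_\alpha = 1$ for every $\alpha$ outside the exceptional $A_2$-orbit configuration, which is (c). Condition (b) then arises precisely in the ${}^2 A_{2n}$ case for a class $[\alpha_0] \sim A_2$ containing a $\rho$-fixed root $\alpha_0 = \alpha_1 + \alpha_2$ with $\bar\alpha_1 = \alpha_2$; normalizing $X_{\alpha_0} := [X_{\alpha_1}, X_{\alpha_2}]$ yields
\[
\rho(X_{\alpha_0}) \;=\; [\rho(X_{\alpha_1}), \rho(X_{\alpha_2})] \;=\; [X_{\alpha_2}, X_{\alpha_1}] \;=\; -X_{\alpha_0},
\]
so $\epsilon_{\alpha_0} = -1$ is unavoidable: the antisymmetry $N_{\alpha_2, \alpha_1} = -N_{\alpha_1, \alpha_2}$ is the one obstruction that cannot be absorbed by any choice of signs.

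The main obstacle I anticipate is step three: producing a single Chevalley basis that achieves $N_{\alpha,\beta} = N_{\bar\alpha,\bar\beta}$ for all pairs not involving the $A_2$-orbit configuration, consistently and simultaneously. Each rescaling $X_\alpha \mapsto -X_\alpha$ affects every structure constant in which $\alpha$ appears, so one must work orbit-by-orbit along the chosen ordering and verify that the signs imposed along different chains of brackets do not conflict. This reduces to a finite but delicate case analysis using the explicit orbit patterns of the types ${}^2 A_n, {}^2 D_n, {}^2 E_6, {}^3 D_4$ enumerated in the table above.
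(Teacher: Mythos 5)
The paper itself gives no argument for this lemma; it simply cites Abe's Proposition 3.1 (the relevant computation is also essentially the one in the proof of Theorem 29 of Steinberg), so your attempt can only be measured against that standard argument, whose shape you have correctly reproduced. Your reduction to the relation $\epsilon_{\alpha+\beta}N_{\alpha,\beta}=\epsilon_\alpha\epsilon_\beta N_{\bar\alpha,\bar\beta}$ is right, part (a) is handled correctly (automatic from $\rho^2=\mathrm{id}$ when $o(\rho)=2$; your orbit sign-flip normalization for ${}^3D_4$ works, since flipping one vector in a triality orbit flips exactly two of the three $\epsilon$'s and $\epsilon$'s are orbit-local), and your derivation of (b) is correct and in fact basis-independent: for any $A_2$-class $\{\beta,\bar\beta,\beta+\bar\beta\}$ one gets $\epsilon_{\beta+\bar\beta}=-\epsilon_\beta\epsilon_{\bar\beta}=-1$ because $\epsilon_\beta\epsilon_{\bar\beta}=1$ is forced by $\rho^2=\mathrm{id}$, so the $-1$ cannot be removed by any sign choice.

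The genuine gap is exactly the step you flag as the "main obstacle," which is the actual content of the lemma: part (c). Your plan says you will "impose the desired equivariance" $N_{\alpha,\beta}=N_{\bar\alpha,\bar\beta}$ by exploiting the sign freedom, but for a $\rho$-fixed root $\alpha$ (e.g.\ the fixed roots of class $A_1$ in ${}^2A_{2n-1}$, ${}^2D_n$, ${}^2E_6$, ${}^3D_4$) the quantity $\epsilon_\alpha$ is invariant under every rescaling $X_\gamma\mapsto\pm X_\gamma$, so nothing can be imposed there: after normalizing the non-fixed orbits, one must \emph{prove} that $\epsilon_\alpha=+1$, and this requires the structure-constant identities (the Jacobi-type relations $N_{\alpha,\beta}N_{\alpha+\beta,\gamma}=N_{\beta,\gamma}N_{\alpha,\beta+\gamma}$ etc., together with $N_{\alpha,\beta}=\pm(p+1)$ and $N_{-\alpha,-\beta}=-N_{\alpha,\beta}$) fed into an induction on height, with the case distinctions ${}^2A_{2n}$ versus the other types. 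In addition, the consistency of the orbit-by-orbit sign normalizations along different bracket chains is asserted but not verified. Since these two points constitute the whole difficulty of Abe's Proposition 3.1, the proposal as written is a correct outline with the central verification missing rather than a complete proof.
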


We shall always fix a Chevalley basis of $\mathcal{L}$ with the above properties. Therefore the image of $x_\alpha (t)$ under the map $\sigma$ is precisely given as follows:
\[
    \sigma (x_\alpha (t)) = \begin{cases}
        x_{\bar{\alpha}}(-\bar{t}) & \text{if } [\alpha] \sim A_2 \text{ and } \alpha = \bar{\alpha}, \\
        x_{\bar{\alpha}}(\bar{t}) & \text{otherwise}.
    \end{cases}
\]


\subsection{The Generators of \texorpdfstring{$E'_\sigma(R)$}{E(R)}}

We fix a total order on $\Phi$ defined by $\Delta$. 
For a class $[\beta] \in \Phi_\rho$, we choose a representative $\alpha$ such that $\alpha$ is the smallest element in this class. 
Whenever $\alpha$ satisfies this condition, we denote the class $[\beta]$ by $[\alpha]$.
Now we define some special elements of $E'_\sigma (R)$ as follows: 
\begin{enumerate}
    \item If $[\alpha] \sim A_1$ (that is, $[\alpha]=\{ \alpha \}$), then define $x_{[\alpha]}(t) = x_\alpha (t), t \in R_\theta.$ In this case, $x_{[\alpha]}(t)x_{[\alpha]}(u)=x_{[\alpha]}(t+u)$ for every $t,u \in R_\theta$.

    \item If $[\alpha] \sim A_1^2$ (that is, $[\alpha]=\{ \alpha, \bar{\alpha} \}$), then define $x_{[\alpha]}(t) = x_\alpha (t) \ x_{\bar{\alpha}} (\bar{t}), t \in R$. In this case, $x_{[\alpha]}(t)x_{[\alpha]}(u)=x_{[\alpha]}(t+u)$ for every $t,u \in R$. 
    
    \item If ${[\alpha]} \sim A_1^3$ (that is, $[\alpha]=\{ \alpha, \bar{\alpha}, \bar{\bar{\alpha}} \}$), then define $x_{[\alpha]}(t) = x_\alpha (t) \ x_{\bar{\alpha}} (\bar{t}) \ x_{\bar{\bar{\alpha}}} (\bar{\bar{t}}), \ t \in R.$ 
    In this case, $x_{[\alpha]}(t)x_{[\alpha]}(u)=x_{[\alpha]}(t+u)$ for every $t,u \in R$. 
    
    \item If ${[\alpha]} \sim A_2$ with $\alpha \neq \bar{\alpha}$ (that is, ${[\alpha]}=\{ \alpha, \bar{\alpha}, \alpha + \bar{\alpha} \}$), then define $$x_{[\alpha]}(t,u) = x_\alpha (t) x_{\bar{\alpha}} (\bar{t}) x_{\alpha + \bar{\alpha}}(N_{\bar{\alpha}, \alpha} u), \hspace{1mm} \text{where } t,u \in R \text{ such that } t \bar{t} = u + \bar{u}.$$ 
    In this case, $x_{[\alpha]}(t,u)x_{[\alpha]}(t',u')=x_{[\alpha]}(t+t', u+u' + \bar{t} t')$ for every $t,u,t',u' \in R$ such that $t \bar{t} = u + \bar{u}$ and $t' \bar{t'} = u' + \bar{u'}$.
\end{enumerate}

Define $\mathcal{A}(R) := \{ (t,u) \mid t, u \in R \text{ such that } t \bar{t} = u + \bar{u} \}$. Note that, for $[\alpha] \sim A_2$ we define $x_{[\alpha]} (t,u)$ only in the case of $(t,u) \in \mathcal{A}(R)$. The product of $x_{[\alpha]}(t,u)$ and $x_{[\alpha]}(t',u')$ suggest the operation on $\mathcal{A}(R)$ as follows: let $(t,u), (t',u') \in \mathcal{A}(R)$, then define an operation $\oplus$ on $\mathcal{A}(R)$ by $(t,u) \oplus (t',u') = (t + t', u+u'+\bar{t}t').$ With this operation $\mathcal{A}(R)$ becomes a group with $(0,0)$ as an identity and $(-t, \bar{u})$ as an inverse of $(t,u)$. From this we can say that $(x_{[\alpha]}(t,u))^{-1} = x_{[\alpha]} (-t, \bar{u})$. Further, we can define an action of the monoid $(R, \times)$ on the set $\mathcal{A}(R)$ by $$r \cdot (t,u) = (rt, r \bar{r} u)$$ for any $r \in R$ and $(t,u) \in \mathcal{A}(R)$. (For more information, please refer to \cite{EA1}.)

For $[\alpha] \in \Phi_\rho$, we write 
\begin{align*}
    R_{[\alpha]} = \begin{cases}
        R_\theta & \text{if } [\alpha] \sim A_1, \\
        R & \text{if } [\alpha] \sim A_1^2 \text{ or } A_1^3, \\
        \mathcal{A}(R) & \text{if } [\alpha] \sim A_2.
    \end{cases}
\end{align*}
If $[\alpha] \sim A_2$ then $t \in R_{[\alpha]}$ means that $t$ is a pair $(t_1, t_2)$ such that $(t_1, t_2) \in \mathcal{A}(R)$. Additionally, for $r \in R$ and $t \in R_{[\alpha]}$, the notation $r \cdot t$ means $rt$ if $[\alpha] \sim A_1, A_1^2, A_1^3$, and it means $(rt_1, r \bar{r} t_2)$ if $[\alpha] \sim A_2$.

Define $\mathfrak{X}_{[\alpha]}$ to be the subset of $E_\sigma'(R)$ consists of all $x_{[\alpha]}(t), t \in R_{[\alpha]}$. Clearly, $\mathfrak{X}_{[\alpha]}$ is a subgroup of $E_\sigma'(R)$. For a subset $S$ of $\Phi_\rho$, define $\mathfrak{X}_S$ to be the subgroup generated by $\mathfrak{X}_{[\alpha]}, \ [\alpha] \in S$.

\begin{prop}\label{lemma:X_alpha and R_alpha}
    \normalfont
    The subgroup $\mathfrak{X}_{[\alpha]}$ is isomorphic to the additive group $R_{[\alpha]}$.
\end{prop}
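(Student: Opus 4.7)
The plan is to define the map
\[
\phi \colon R_{[\alpha]} \longrightarrow \mathfrak{X}_{[\alpha]}, \qquad t \longmapsto x_{[\alpha]}(t),
\]
and prove it is a group isomorphism, where the group law on $R_{[\alpha]}$ is ordinary addition in the $A_1, A_1^2, A_1^3$ cases and the operation $\oplus$ in the $A_2$ case. Surjectivity is immediate from the definition of $\mathfrak{X}_{[\alpha]}$, and the homomorphism property is precisely the content of the four product formulas stated just above the proposition; in particular, in the $A_2$ case the identity
\[
x_{[\alpha]}(t,u) \, x_{[\alpha]}(t',u') = x_{[\alpha]}(t+t',\ u+u'+\bar t t')
\]
is the definition of $(t,u) \oplus (t',u')$ on $\mathcal{A}(R)$.

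The substantive step is injectivity. For this I would invoke the standard uniqueness of the canonical product decomposition in the unipotent group $U$ of the ambient untwisted Chevalley group $E_\pi(\Phi, R)$: for any fixed total ordering of the positive roots, the multiplication map $\prod_{\beta \in \Phi^+} \mathfrak{X}_\beta \to U$ is a bijection, and each parameterization $t \mapsto x_\beta(t)$ is itself injective. Since each defining expression of $x_{[\alpha]}(t)$ is a product of elementary root elements $x_\beta(\cdot)$ indexed by distinct roots lying in a common positive system, the equation $x_{[\alpha]}(t) = 1$ forces every elementary factor to be the identity, from which one reads off $t = 0$ (or $(t,u) = (0,0)$) in each of the four cases.

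The only place where a further comment is needed is the $A_2$ case: triviality of the product there yields $t = 0$, $\bar t = 0$, and $N_{\bar\alpha,\alpha}\, u = 0$, and to conclude $u = 0$ one uses that $\alpha, \bar\alpha$ generate a simply laced $A_2$-subsystem, for which the Chevalley structure constant $N_{\bar\alpha,\alpha}$ equals $\pm 1$. I do not anticipate any genuine obstacle: the proof reduces entirely to normal-form uniqueness in $U$ together with this one remark on the structure constant.
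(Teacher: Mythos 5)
Your proposal is correct and is exactly the ``straightforward'' argument the paper has in mind (the paper omits the proof): surjectivity and the homomorphism property are immediate from the definition of $\mathfrak{X}_{[\alpha]}$ and the four product formulas, while injectivity follows from uniqueness of expression in the unipotent subgroup of the ambient Chevalley group together with injectivity of each $t \mapsto x_\beta(t)$. Your extra remark that $N_{\bar\alpha,\alpha} = \pm 1$ in the simply laced $A_2$-subsystem, needed to pass from $N_{\bar\alpha,\alpha}u = 0$ to $u = 0$, is the right observation and closes the only nontrivial point.
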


The proof of this proposition is straightforward. Next, we present the following result from Steinberg \cite{RS}.

\begin{prop}[{\cite[Lemma 62]{RS}}]\label{lemma 62 of RS}
    \normalfont
    Let $S$ be a closed subset of $\Phi_\rho$, i.e., if $[\alpha], [\beta] \in S$ then $[\alpha] + [\beta] \in S$. Moreover, assume that if $[\alpha] \in S$ then $-[\alpha] \notin S$. Then $\mathfrak{X}_S = \prod_{[\alpha] \in S} \mathfrak{X}_{[\alpha]}$ with the product taken in any fixed order and there is uniqueness of expression on the right. In particular, $U_\sigma = \prod_{[\alpha] > 0} \mathfrak{X}_{[\alpha]}$.
\end{prop}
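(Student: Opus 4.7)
The plan is to follow the strategy of Lemma 62 in \cite{RS}, adapted from the untwisted setting by replacing root group generators $x_\alpha(t)$ with their twisted analogues $x_{[\alpha]}(t)$. The two ingredients needed are (i) a Chevalley-style commutator formula showing that $[x_{[\alpha]}(t), x_{[\beta]}(u)]$ lies in the subgroup generated by the $\mathfrak{X}_{[\gamma]}$ with $[\gamma]$ a positive integer combination of $[\alpha]$ and $[\beta]$, and (ii) the fact, already known in the untwisted group, that the map $(t_\alpha)_{\alpha > 0} \mapsto \prod_{\alpha > 0} x_\alpha(t_\alpha)$ is a bijection onto $U$ for any chosen order.

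For the equality $\mathfrak{X}_S = \prod_{[\alpha] \in S} \mathfrak{X}_{[\alpha]}$, I would argue as follows. Since $S$ is closed under addition of equivalence classes and contains no pair $[\alpha], -[\alpha]$, every positive integer combination of classes in $S$ that is again a root lies in $S$. Therefore every commutator $[x_{[\alpha]}(t), x_{[\beta]}(u)]$ for $[\alpha], [\beta] \in S$ expands, by (i), into a product of elements of the $\mathfrak{X}_{[\gamma]}$ with $[\gamma] \in S$. A routine induction (first on the number of out-of-order adjacent pairs, then on the word length) then lets us push every factor into its proper position in the chosen order, so that any element of $\mathfrak{X}_S$ is expressible as an ordered product $\prod_{[\alpha] \in S} x_{[\alpha]}(t_{[\alpha]})$.

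For uniqueness, I would pass back to the untwisted group. Refine the order on $S$ to a compatible total order on the underlying roots $\alpha \in \bigcup_{[\beta] \in S} [\beta]$ so that all roots in a given class are grouped consecutively. Each $x_{[\alpha]}(t)$ is, by its definition, an ordered product of the untwisted generators $x_\beta(\cdot)$ with $\beta$ ranging over $[\alpha]$ (and, in the $A_2$ case, also over the sum $\alpha + \bar\alpha$). Thus an equality $\prod_{[\alpha] \in S} x_{[\alpha]}(t_{[\alpha]}) = \prod_{[\alpha] \in S} x_{[\alpha]}(t'_{[\alpha]})$, once rewritten in the untwisted group, becomes an equality of two ordered products $\prod_\alpha x_\alpha(\cdot)$ in the closed subset $\bigcup_{[\beta] \in S} [\beta]$ of $\Phi$; the classical uniqueness in the untwisted group forces the untwisted entries to match, and reading off the distinguished entry in each orbit (namely the index of smallest root in the class) recovers $t_{[\alpha]} = t'_{[\alpha]}$. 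Finally, the ``in particular'' clause is obtained by taking $S = \Phi_\rho^+$: one inclusion $\prod_{[\alpha] > 0} \mathfrak{X}_{[\alpha]} \subseteq U_\sigma$ is immediate because each $x_{[\alpha]}(t)$ lies in $U \cap G_\sigma(R)$, and the reverse inclusion follows by taking $u \in U_\sigma$, writing $u = \prod_{\alpha > 0} x_\alpha(t_\alpha)$ uniquely in the untwisted $U$ (with an orbit-compatible order), and using $\sigma(u) = u$ together with untwisted uniqueness to conclude that the tuple $(t_\alpha)$ is $\sigma$-equivariant across orbits, so that the product regroups into $\prod_{[\alpha] > 0} x_{[\alpha]}(t_{[\alpha]})$.

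The main obstacle is the classes $[\alpha] \sim A_2$, which occur for $\Phi \sim A_{2n}$. There $x_{[\alpha]}(t,u)$ involves simultaneously the roots $\alpha, \bar\alpha, \alpha + \bar\alpha$ subject to $t\bar{t} = u + \bar{u}$, and the commutator formula between two $A_2$-type classes is noticeably more intricate than in the $A_1^n$ cases. Verifying that the closure/no-negative hypothesis on $S$ really does keep every commutator of such $x_{[\alpha]}(t,u)$'s inside $\mathfrak{X}_S$, and checking that the $\sigma$-equivariance argument for $U_\sigma$ correctly accounts for the $(t,u) \in \mathcal{A}(R)$ constraint, will be the technical heart of the argument.
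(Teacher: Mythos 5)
The paper itself gives no proof of this proposition: it is quoted from \cite[Lemma 62]{RS}, and your argument is essentially Steinberg's, namely reduction to the untwisted unique-expression lemma by refining the order so that the roots of each class are grouped consecutively, together with a $\sigma$-equivariance argument on the untwisted coefficients for the statement about $U_\sigma$. In outline this is correct, and you rightly identify the $A_2$-type classes (and the constraint $t\bar t = u+\bar u$) as the only delicate point; the block-by-block comparison of $u$ and $\sigma(u)$, using that $\rho$ preserves each class and that only intra-class reordering is needed, does recover exactly the relations defining $x_{[\alpha]}(t,u)$.

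One step you should make precise. In the existence part you justify that commutators stay inside $\mathfrak{X}_S$ by saying that ``every positive integer combination of classes in $S$ that is again a root lies in $S$''. This does not cover relation (a$_2$-ii): for two $A_1^2$-classes $[\alpha],[\beta]$ with $[\alpha]+[\beta]\notin\Phi_\rho$, the commutator $[x_{[\alpha]}(t),x_{[\beta]}(u)]$ lands in $\mathfrak{X}_{[\gamma]}$ with $[\gamma]=\tfrac12([\alpha]+[\beta])$, which is not an integer combination of $[\alpha]$ and $[\beta]$. So for a general $S$ the closedness hypothesis must be read in the non-reduced system $\tilde\Phi_\rho$ (i.e.\ $[\gamma]\in S$ whenever $[\alpha],[\beta]\in S$ and $[\alpha]+[\beta]\in\tilde\Phi_\rho$), exactly as in Steinberg's formulation; with the literal reading in $\Phi_\rho$, the set $S=\{[e_1-e_2],[e_1+e_2]\}$ in type ${}^2A_{2n}$ would be closed and miss its negatives, yet $\mathfrak{X}_S$ contains nontrivial elements of $\mathfrak{X}_{[e_1]}$, so the asserted product decomposition would fail. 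For the ``in particular'' clause $S=\Phi_\rho^+$ this issue is vacuous, and the rest of your plan goes through as written.
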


\begin{cor}
    \normalfont
    The group $E'_\sigma (R)$ is generated by $x_{[\alpha]}(t)$ for all $[\alpha] \in \Phi_\rho$ and $t \in R_{[\alpha]}$.
\end{cor}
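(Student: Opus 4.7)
The proof I have in mind is immediate from Proposition \ref{lemma 62 of RS} together with the definition $E'_\sigma(R) = \langle U_\sigma, U_\sigma^-\rangle$ recorded in Subsection~\ref{Subsec:TCG}. First, I would observe that the positive system $\Phi_\rho^+$ is a closed subset of $\Phi_\rho$ (the sum of two positive roots, if it lies in $\Phi_\rho$, is positive) and contains no element of the form $-[\alpha]$ with $[\alpha] \in \Phi_\rho^+$. Applying Proposition \ref{lemma 62 of RS} to $S = \Phi_\rho^+$ therefore yields
\[
U_\sigma = \prod_{[\alpha] > 0} \mathfrak{X}_{[\alpha]},
\]
so that $U_\sigma$ is generated by the elements $x_{[\alpha]}(t)$ with $[\alpha] \in \Phi_\rho^+$ and $t \in R_{[\alpha]}$.

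Next, the same argument applied to $S = \Phi_\rho^-$ (which is closed and contains no positive root, using the identity $-[\alpha] = [-\alpha]$ recorded in Subsection~\ref{Subsec:TRS}) gives an analogous factorization of $U_\sigma^-$, from which it follows that $U_\sigma^-$ is generated by the elements $x_{[\alpha]}(t)$ with $[\alpha] \in \Phi_\rho^-$ and $t \in R_{[\alpha]}$. Combining the two generating sets and invoking the definition $E'_\sigma(R) = \langle U_\sigma, U_\sigma^-\rangle$ shows that $E'_\sigma(R)$ is generated by $x_{[\alpha]}(t)$ for all $[\alpha] \in \Phi_\rho$ and $t \in R_{[\alpha]}$, as claimed.

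I do not foresee any real obstacle here, since the entire content of the statement is packaged inside Proposition \ref{lemma 62 of RS}. The only minor point worth verifying explicitly is that the chosen representatives used to define $x_{[\alpha]}(t)$ for negative classes $[\alpha]$ are compatible with the relation $-[\alpha] = [-\alpha]$; this is already built into the conventions of Subsections~\ref{Subsec:TRS} and \ref{Subsec:TCG}, so no additional work is required.
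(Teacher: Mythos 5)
Your proposal is correct and follows essentially the same route as the paper: the paper's one-line proof invokes exactly the definition $E'_\sigma(R) = \langle U_\sigma, U_\sigma^- \rangle$ together with Proposition~\ref{lemma 62 of RS} applied to $\Phi_\rho^+$ and (symmetrically) to $\Phi_\rho^-$, which is precisely your argument. Nothing further is needed.
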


The proof of this corollary follows directly from the definition of $E'_\sigma (R)$ and the preceding propositions. 
Finally, we highlight another useful consequence of Proposition~\ref{lemma 62 of RS}, which is also stated in Steinberg \cite{RS}.

\begin{cor}\label{Che-Comm}
    \normalfont
    Let $[\alpha], [\beta] \in \Phi_\rho$ be such that $[\alpha] \neq \pm [\beta]$. Then $[\mathfrak{X}_{[\alpha]}, \mathfrak{X}_{[\beta]}] \subset \mathfrak{X}_S$, where $S = \{ i[\alpha] + j[\beta] \in \Phi_\rho \mid i,j \in \frac{1}{2} \mathbb{Z}_{> 0} \}$.
\end{cor}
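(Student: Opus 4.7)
The plan is to reduce to the standard Chevalley commutator formula in the ambient untwisted group $E_\pi(\Phi,R)$ and then use $\sigma$-invariance to repackage the resulting product. Since $[\alpha] \neq \pm[\beta]$, for any $\alpha' \in [\alpha]$ and $\beta' \in [\beta]$ we have $\alpha' \neq \pm\beta'$ in $\Phi$, so the untwisted commutator formula $[x_{\alpha'}(s), x_{\beta'}(r)] = \prod_{i,j > 0} x_{i\alpha' + j\beta'}(C_{ij}\,s^i r^j)$ applies, the product taken over roots $i\alpha' + j\beta' \in \Phi$ in a fixed order.

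First I would expand $[x_{[\alpha]}(t), x_{[\beta]}(u)]$ by substituting the definitions of the twisted generators, which are products of two or three untwisted generators $x_{\alpha'}(\cdot)$ (with the $A_2$ case carrying an additional $x_{\alpha + \bar\alpha}$ factor). Iterated use of $[ab,c] = {}^a[b,c]\cdot[a,c]$ and its companion identity reduces the expansion to a finite product of conjugates of untwisted commutators $[x_{\alpha'}(s), x_{\beta'}(r)]$. Each such commutator, and each subsequent conjugation by some $x_{\alpha''}(\cdot)$ or $x_{\beta''}(\cdot)$, produces $x_\gamma(\cdot)$ factors whose root $\gamma$ is a nonnegative integer combination of elements of $[\alpha]\cup[\beta]$ with a strictly positive contribution from both classes. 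Because the projection $\gamma \mapsto \hat\gamma$ is $\mathbb{Z}$-linear and every root in $[\alpha]$ (resp.\ $[\beta]$) projects to a positive multiple of $\hat\alpha$ (resp.\ $\hat\beta$), the class $[\gamma]$ of every such $\gamma$ has the form $i[\alpha] + j[\beta]$ with $i,j \in \frac{1}{2}\mathbb{Z}_{>0}$, i.e.\ $[\gamma] \in S$.

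Finally I would repackage the expansion as an element of $\mathfrak{X}_S$. A short check shows that $S$ is closed in $\Phi_\rho$ and satisfies $[\gamma] \in S \Rightarrow -[\gamma] \notin S$, so Proposition~\ref{lemma 62 of RS} yields $\mathfrak{X}_S = \prod_{[\gamma]\in S} \mathfrak{X}_{[\gamma]}$ with uniqueness of expression. Since $[x_{[\alpha]}(t), x_{[\beta]}(u)]$ is $\sigma$-invariant by construction, grouping the $x_\gamma$-factors according to equivalence class and applying uniqueness forces each grouped block to itself be $\sigma$-invariant, hence to lie in $\mathfrak{X}_{[\gamma]}$. The main obstacle is the $[\gamma] \sim A_2$ case, where one must identify the resulting block with a genuine $x_{[\gamma]}(\cdot)$, verifying that the pair of parameters extracted from the Chevalley expansion satisfies the constraint $t\bar t = u + \bar u$ defining $\mathcal{A}(R)$; this requires a careful match between the Chevalley constants $C_{ij}$ and the normalization $N_{\bar\alpha,\alpha}$ used in the definition of $x_{[\alpha]}(t,u)$, together with the sign conventions $\epsilon_\alpha$ from Lemma~\ref{epsilonalpha}. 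It is bookkeeping, but the computation must be done root system by root system.
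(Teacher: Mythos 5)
Your proposal is correct and follows essentially the route the paper takes: the paper derives this corollary from Proposition~\ref{lemma 62 of RS} (Steinberg's Lemma~62, which rests on the same untwisted uniqueness of expression you invoke), and the explicit case-by-case bookkeeping you defer is exactly what is carried out in Subsection~\ref{subsec:CheComm} via the untwisted commutator formula, the Chevalley-basis normalization of Lemma~\ref{epsilonalpha}, and regrouping of $\sigma$-invariant blocks. The only minor imprecision is that the uniqueness needed to split the $\sigma$-fixed product into $\sigma$-fixed class blocks is the untwisted statement (Steinberg's Lemma~17 for the closed set of roots of $\Phi$ lying over $S$) rather than Proposition~\ref{lemma 62 of RS} itself, but this is the same standard fact and does not affect the argument.
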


\begin{rmk}
    The precise commutator relations will be given in \ref{subsec:CheComm}.
\end{rmk}


\subsection{Types of Pairs of Roots in \texorpdfstring{$\Phi_\rho$}{Phi rho}}

We aim to categorize pairs of roots $[\alpha]$ and $[\beta] (\neq \pm [\alpha])$ in $\Phi_\rho$ according to their placement within the subsystem generated by them. The positions of $[\alpha]$ and $[\beta]$ can be fully determined by considering their possible sums and differences. Hence we classify them into the following types (see \cite{EA1}):
\begin{enumerate}[(a)]
    \item $[\alpha], [\beta] \in \Phi_\rho$, but $[\alpha]+[\beta],[\alpha]-[\beta] \notin \Phi_\rho$.
        \begin{enumerate}
            \item[(a$_1$)] $[\alpha]+[\beta],[\alpha]-[\beta] \notin \Tilde{\Phi}_\rho$.
            \item[(a$_2$)] $[\alpha]+[\beta],[\alpha]-[\beta] \in \Tilde{\Phi}_\rho$.
                \begin{enumerate}[(i)]
                    \item $[\alpha] \sim A_1, [\beta] \sim A_1, 1/2([\alpha]+[\beta])=[\gamma] \sim A_1^2,$ where $\gamma + \bar{\gamma} = \alpha + \beta \notin \Phi$;
                    \item $[\alpha] \sim A_1^2, [\beta] \sim A_1^2, 1/2([\alpha]+[\beta])=[\gamma] \sim A_2,$ where $\gamma + \bar{\gamma} = \alpha + \beta \text{ or } \alpha + \bar{\beta} \in \Phi$.
                \end{enumerate}
        \end{enumerate}
        
    \item $[\alpha], [\beta] \in \Phi_\rho, [\alpha]+[\beta] \in \Phi_\rho,$ but $[\alpha]-[\beta] \notin \Phi_\rho$.
        \begin{enumerate}[(i)]
            \item $[\alpha] \sim A_1, [\beta] \sim A_1, [\alpha]+[\beta]=[\alpha + \beta] \sim A_1;$
            \item $[\alpha] \sim A_1^2, [\beta] \sim A_1^2, [\alpha]+[\beta] = [\alpha + \beta] \text{ or } [\alpha + \bar{\beta}] \text{ or } [\bar{\alpha} + \beta] \sim A_1^2.$
        \end{enumerate}
        
    \item $[\alpha], [\beta] \in \Phi_\rho, [\alpha]+[\beta] \in \Phi_\rho, [\alpha]-[\beta] \in \Phi_\rho$.
        \begin{enumerate}[(i)]
            \item $[\alpha] \sim A_1^2, [\beta] \sim A_1^2, [\alpha]+[\beta]=[\alpha + \bar{\beta}] \sim A_1;$
            \item $[\alpha] \sim A_2, [\beta] \sim A_2, [\alpha]+[\beta]=[\alpha + \bar{\beta}] \text{ or } [\bar{\alpha} + \beta] \sim A_1^2.$
        \end{enumerate}
        
    \item $[\alpha], [\beta] \in \Phi_\rho, [\alpha]+[\beta] \in \Phi_\rho, [\alpha]+ 2[\beta] \in \Phi_\rho$.
        \begin{enumerate}[(i)]
            \item $[\alpha] \sim A_1, [\beta] \sim A_1^2, [\alpha]+[\beta]=[\alpha + \beta] \sim A_1^2, [\alpha]+2[\beta]=[\alpha + \beta + \bar{\beta}] \sim A_1;$
            \item $[\alpha] \sim A_1^2, [\beta] \sim A_2, [\alpha]+[\beta]= [\alpha + \beta] \text{ or }[\alpha + \bar{\beta}] \text{ or } [\bar{\alpha} + \beta] \sim A_2, [\alpha]+2[\beta]=[\alpha + \beta + \bar{\beta}] \sim A_1^2.$
        \end{enumerate}
        
    \item $[\alpha], [\beta] \in \Phi_\rho, [\alpha]+[\beta] \in \Phi_\rho, [\alpha]+ 2[\beta] \in \Phi_\rho, [\alpha]+ 3[\beta] \in \Phi_\rho, 2[\alpha]+ 3[\beta] \in \Phi_\rho$. 
        \begin{itemize}
            \item $[\alpha] \sim A_1, [\beta] \sim A_1^3, [\alpha]+[\beta]= [\alpha + \beta] \sim A_1^3, [\alpha]+2[\beta]=[\alpha + \beta + \bar{\beta}] \text{ or } [\alpha + \beta + \bar{\bar{\beta}}]\sim A_1^3, [\alpha]+3[\beta]=[\alpha + \beta + \bar{\beta} + \bar{\bar{\beta}}] \sim A_1, 2[\alpha]+3[\beta]=[2\alpha + \beta + \bar{\beta} + \bar{\bar{\beta}}] \sim A_1.$
        \end{itemize}
        
    \item $[\alpha], [\beta] \in \Phi_\rho, [\alpha]+[\beta] \in \Phi_\rho, 2[\alpha]+ [\beta] \in \Phi_\rho,  [\alpha]+ 2[\beta] \in \Phi_\rho, [\alpha]-[\beta] \in \Phi_\rho.$
        \begin{itemize}
            \item $[\alpha] \sim A_1^3, [\beta] \sim A_1^3, [\alpha]+[\beta] = [\alpha + \bar{\beta}] \text{ or } [\alpha + \bar{\bar{\beta}}] \sim A_1^3, 2[\alpha]+[\beta]=[\alpha + \bar{\alpha} + \bar{\bar{\beta}}] \sim A_1, [\alpha]+2[\beta]=[\alpha + \bar{\beta} + \bar{\bar{\beta}}] \sim A_1, [\alpha]-[\beta] = [\alpha - \beta] \sim A_1.$
        \end{itemize}
        
    \item $[\alpha], [\beta] \in \Phi_\rho, [\alpha]+[\beta] \in \Phi_\rho, [\alpha]-[\beta] \in \Phi_\rho, 2[\alpha]- [\beta] \in \Phi_\rho,  [\alpha]- 2[\beta] \in \Phi_\rho.$
        \begin{itemize}
            \item $[\alpha] \sim A_1^3, [\beta] \sim A_1^3, [\alpha]+[\beta] = [\alpha + \bar{\beta}] \text{ or } [\alpha + \bar{\bar{\beta}}] \sim A_1, [\alpha]-[\beta] = [\alpha - \bar{\beta}] \text{ or } [\alpha - \bar{\bar{\beta}}] \sim A_1^3, 2[\alpha]-[\beta]=[\alpha + \bar{\alpha} - \beta] \text{ or } [\alpha + \bar{\alpha} - \bar{\beta}] \sim A_1, [\alpha]-2[\beta]= [\bar{\alpha} - \beta -\bar{\beta}] \text{ or } [\alpha - \beta - \bar{\beta}] \sim A_1.$
        \end{itemize}
\end{enumerate}

\begin{rmk}
    Cases $(e), (f)$ and $(g)$ occur exclusively in the context of ${}^3 D_4$.
\end{rmk}


\subsection{Chevalley Commutator Formulas}\label{subsec:CheComm} 

We now present pivotal formulas that will serve as a cornerstone for this paper. The numbering in the following formulas reflects the types of root pairs mentioned in the preceding subsection. 

\begin{enumerate}
    \item[\textbf{(a$_1$)}] $[x_{[\alpha]}(t),x_{[\beta]}(u)]=1$, where $t \in R_{[\alpha]}$ and $u \in R_{[\beta]}$.
    \item[\textbf{(a$_2-$i)}] $[x_{[\alpha]}(t),x_{[\beta]}(u)]=1$, where $t,u \in R_\theta$.
    \item[\textbf{(a$_2-$ii)}] $[x_{[\alpha]}(t),x_{[\beta]}(u)]= x_{[\gamma]}(0, N_{\alpha,\beta} N_{\bar{\gamma},\gamma}(t \bar{u} - \bar{t} u))$, where $1/2([\alpha]+[\beta]) = [\gamma] =\{ \gamma, \bar{\gamma}, \gamma + \bar{\gamma} \}$ and $t,u \in R$.
    \item[\textbf{(b$-$i)}] $[x_{[\alpha]}(t),x_{[\beta]}(u)]= x_{[\alpha]+[\beta]}(N_{\alpha,\beta}tu)$, where $t,u \in R_\theta$.
    \item[\textbf{(b$-$ii)}] $[x_{[\alpha]}(t),x_{[\beta]}(u)]= x_{[\alpha]+[\beta]}(N_{\alpha,\beta}tu)$ or $x_{[\alpha]+[\beta]}(N_{\alpha,\bar{\beta}} t \bar{u})$ or $x_{[\alpha]+[\beta]}(N_{\bar{\alpha},\beta} \bar{t} u)$, where $t,u \in R$.
    \item[\textbf{(c$-$i)}] $[x_{[\alpha]}(t),x_{[\beta]}(u)]= x_{[\alpha]+[\beta]}(N_{\alpha,\bar{\beta}}(t \bar{u} + \bar{t} u))$, where $t,u \in R$.
    \item[\textbf{(c$-$ii)}] $[x_{[\alpha]}(t_1, t_2),x_{[\beta]}(u_1, u_2)]= x_{[\alpha]+[\beta]}(N_{\alpha,\bar{\beta}} t_1 \bar{u_1})$ or $x_{[\alpha]+[\beta]}(N_{\bar{\alpha},\beta} \bar{t_1} u_1)$, where \\ $(t_1, t_2), (u_1, u_2) \in \mathcal{A}(R)$.
    \item[\textbf{(d$-$i)}] $[x_{[\alpha]}(t),x_{[\beta]}(u)]= x_{[\alpha]+[\beta]}(N_{\alpha,\beta}tu) x_{[\alpha]+2[\beta]}(N_{\alpha,\beta} N_{\beta,\alpha + \bar{\beta}} t u \bar{u})$, where $t \in R_\theta, u \in R$.
    \item[\textbf{(d$-$ii)}] 
        \begin{align*} 
                [x_{[\alpha]}(t),x_{[\beta]}(u_1, u_2)] = \begin{cases}
                    x_{[\alpha]+[\beta]}((N_{\alpha,\beta}t u_1, t \bar{t} u_2))x_{[\alpha]+2[\beta]}(N_{\beta,\bar{\beta}} N_{\beta+ \bar{\beta},\alpha} t u_2),  \text{ or }  \\
                    x_{[\alpha]+[\beta]}((N_{\alpha,\bar{\beta}} t \bar{u_1}, t \bar{t} \bar{u_2}))x_{[\alpha]+2[\beta]}(N_{\beta,\bar{\beta}} N_{\beta+ \bar{\beta},\alpha} t \bar{u_2}), \text{ or } \\
                    x_{[\alpha]+[\beta]}((N_{\bar{\alpha},\beta} \bar{t} u_1, t \bar{t} u_2))x_{[\alpha]+2[\beta]}(N_{\beta,\bar{\beta}} N_{\beta+ \bar{\beta},\alpha} t \bar{u_2});
                \end{cases}
        \end{align*}
        where $t \in R$ and $(u_1, u_2) \in \mathcal{A}(R)$.
    \item [\textbf{(e)}] 
        \begin{align*} 
                [x_{[\alpha]}(t),x_{[\beta]}(u)] = \begin{cases}
                    x_{[\alpha]+[\beta]}(N_{\alpha,\beta}tu) x_{[\alpha]+2[\beta]}(N_{\alpha, \bar{\beta}} N_{\beta,\alpha + \bar{\beta}} t u \bar{u}) \\
                    \hspace{5mm} x_{[\alpha]+3[\beta]}(N_{\alpha, \bar{\bar{\beta}}} N_{\beta,\alpha + \bar{\bar{\beta}}} N_{\beta,\alpha + \bar{\beta} + \bar{\bar{\beta}}} t u \bar{u} \bar{\bar{u}}) \\
                    \hspace{10mm} x_{2[\alpha]+3[\beta]}(N_{\beta,\alpha + \bar{\beta}} N_{\alpha + \beta + \bar{\beta}, \alpha + \bar{\bar{\beta}}} t^2 u \bar{u} \bar{\bar{u}}), \text{ or }  \\
                    x_{[\alpha]+[\beta]}(N_{\alpha,\beta}tu) x_{[\alpha]+2[\beta]}(N_{\alpha, \bar{\bar{\beta}}} N_{\beta,\alpha + \bar{\bar{\beta}}} t u \bar{\bar{u}}) \\
                    \hspace{5mm} x_{[\alpha]+3[\beta]}(N_{\alpha, \bar{\bar{\beta}}} N_{\beta,\alpha + \bar{\bar{\beta}}} N_{\beta,\alpha + \bar{\beta} + \bar{\bar{\beta}}} t u \bar{u} \bar{\bar{u}}) \\
                    \hspace{10mm} x_{2[\alpha]+3[\beta]}(N_{\beta,\alpha + \bar{\beta}} N_{\alpha + \beta + \bar{\beta}, \alpha + \bar{\bar{\beta}}} t^2 u \bar{u} \bar{\bar{u}});
                \end{cases}
        \end{align*}
    where $t \in R_\theta, u \in R$.
    \item [\textbf{(f)}] 
        \begin{align*} 
            [x_{[\alpha]}(t),x_{[\beta]}(u)] = \begin{cases}
                x_{[\alpha]+[\beta]}(N_{\alpha,\bar{\beta}} \ t \bar{u} + N_{\bar{\alpha}, \beta} \ \bar{t} u) \\
                \hspace{5mm} x_{2[\alpha]+[\beta]}(N_{\alpha, \bar{\alpha} + \bar{\bar{\beta}}} N_{\bar{\alpha}, \bar{\bar{\beta}}} (t \bar{t} \bar{\bar{u}} + \bar{t} \bar{\bar{t}} u + t \bar{\bar{t}} \bar{u})) \\
                \hspace{10mm} x_{[\alpha]+2[\beta]}(N_{\bar {\beta}, \alpha + \bar{\bar{\beta}}} N_{\alpha, \bar{\bar{\beta}}} (t \bar{u} \bar{\bar{u}} + \bar{t} u \bar{\bar{u}} + \bar{\bar{t}} u \bar{u})), \text{ or }  \\
                x_{[\alpha]+[\beta]}(N_{\alpha,\bar{\bar{\beta}}} t \bar{\bar{u}} + N_{\bar{\bar{\alpha}}, \beta} \bar{\bar{t}} u) \\
                \hspace{5mm} x_{2[\alpha]+[\beta]}(N_{\alpha, \bar{\alpha} + \bar{\bar{\beta}}} N_{\bar{\alpha}, \bar{\bar{\beta}}} (t \bar{t} \bar{\bar{u}} + \bar{t} \bar{\bar{t}} u + t \bar{\bar{t}} \bar{u})) \\
                \hspace{10mm} x_{[\alpha]+2[\beta]}(N_{\bar {\beta}, \alpha + \bar{\bar{\beta}}} N_{\alpha, \bar{\bar{\beta}}} (t \bar{u} \bar{\bar{u}} + \bar{t} u \bar{\bar{u}} + \bar{\bar{t}} u \bar{u}));
                \end{cases}
        \end{align*}
    where $t,u \in R$.
    \item [\textbf{(g)}] $[x_{[\alpha]}(t),x_{[\beta]}(u)] = x_{[\alpha]+[\beta]}(N_{\alpha,\bar{\beta}} (t \bar{u} + \bar{t} \bar{\bar{u}} + \bar{\bar{t}} u))$ or $x_{[\alpha]+[\beta]}(N_{\alpha,\bar{\bar{\beta}}} (t \bar{\bar{u}} + \bar{t} u + \bar{\bar{t}} \bar{u})),$ where $t, u \in R$.
\end{enumerate}

\begin{rmk}
    For the proof of $(a_1)$ to $(d-ii)$, see \cite{EA1}. We will give a proof of part $(e), (f)$ and $(g)$. 
\end{rmk}

\noindent \textit{Proof of $(e):$} Using commutator relations in $G_{\pi}(\Phi,R)$, we can show that
\begin{align*}
    [x_{[\alpha]}(t), x_{[\beta]}(u)] &= \{ x_{\alpha + \beta}(N_{\alpha, \beta} t u) x_{\alpha + \bar{\beta}}(N_{\alpha, \bar{\beta}} t \bar{u}) x_{\alpha + \bar{\bar{\beta}}}(N_{\alpha, \bar{\bar{\beta}}} t \bar{\bar{u}}) \} \{ x_{\alpha + \beta + \bar{\beta}}(N_{\alpha, \bar{\beta}} N_{\beta, \alpha + \bar{\beta}}  t u \bar{u}) \\
    & \hspace{10mm} x_{\alpha + \beta + \bar{\bar{\beta}}}(N_{\alpha, \bar{\bar{\beta}}} N_{\beta, \alpha + \bar{\bar{\beta}}}  t u \bar{\bar{u}}) x_{\alpha + \bar{\beta} + \bar{\bar{\beta}}}(N_{\alpha, \bar{\bar{\beta}}} N_{\bar{\beta}, \alpha + \bar{\bar{\beta}}}  t \bar{u} \bar{\bar{u}}) \} \\
    & \hspace{15mm} \{ x_{\alpha + \beta + \bar{\beta} + \bar{\bar{\beta}}}(N_{\alpha, \bar{\bar{\beta}}} N_{\beta,\alpha + \bar{\bar{\beta}}} N_{\beta, \alpha + \bar{\beta} + \bar{\bar{\beta}}} t u \bar{u} \bar{\bar{u}}) \} \\
    & \hspace{20mm} \{ x_{2\alpha + \beta + \bar{\beta} + \bar{\bar{\beta}}}(N_{\alpha, \bar{\beta}} N_{\alpha, \bar{\bar{\beta}}} N_{\beta,\alpha + \bar{\beta}} N_{\alpha + \beta + \bar{\beta}, \alpha + \bar{\bar{\beta}}} t^2 u \bar{u} \bar{\bar{u}}) \}.
\end{align*} 
From the choice of Chevalley bases (see Lemma \ref{epsilonalpha}), we have $N_{\alpha, \beta} = N_{\alpha, \bar{\beta}} = N_{\alpha, \bar{\bar{\beta}}}$. 
For $X,Y,Z \in \mathcal{L}$, we have Jacobi identity 
$$[X,[Y,Z]] + [Y,[Z,X]] + [Z,[X,Y]] = 0.$$
By taking $X = X_\alpha, Y= X_\beta$ and $Z= X_{\bar{\beta}}$, we get $N_{\beta, \alpha + \bar{\beta}} = N_{\bar{\beta}, \alpha + \beta}$. But then 
\begin{align*}
    N_{\bar{\beta}, \alpha + \bar{\bar{\beta}}} = N_{\beta, \alpha + \bar{\beta}} = N_{\bar{\beta}, \alpha + \beta} = N_{\bar{\bar{\beta}}, \alpha + \bar{\beta}} = N_{\beta, \alpha + \bar{\bar{\beta}}}.
\end{align*}
Now our assertion follows readily. \qed

\vspace{2mm}

\noindent \textit{Proof of $(f):$} Using commutator relations in $G_{\pi}(\Phi,R)$ and the fact that $\alpha + \bar{\beta} = \bar{\alpha} + \beta$, $\alpha + \bar{\bar{\beta}} = \bar{\bar{\alpha}} + \beta$, $\bar{\alpha} + \bar{\bar{\beta}} = \bar{\bar{\alpha}} + \bar{\beta}$, we can show that
\begin{align*}
    [x_{[\alpha]}(t), x_{[\beta]}(u)] &= \{ x_{\alpha + \bar{\beta}} (N_{\alpha, \bar{\beta}} t \bar{u} + N_{\bar{\alpha}, \beta} \bar{t} u) x_{\bar{\alpha} + \bar{\bar{\beta}}} (N_{\bar{\alpha}, \bar{\bar{\beta}}} \bar{t} \bar{\bar{u}} + N_{\bar{\bar{\alpha}}, \bar{\beta}} \bar{\bar{t}} \bar{u}) x_{\bar{\bar{\alpha}} + \beta} (N_{\bar{\bar{\alpha}}, \beta} \bar{\bar{t}} u + N_{\alpha, \bar{\bar{\beta}}} t \bar{\bar{u}}) \} \\
    & \hspace{10mm} \{ x_{\alpha + \bar{\alpha} + \bar{\bar{\beta}}} (N_{\alpha, \bar{\alpha} + \bar{\bar{\beta}}} N_{\bar{\alpha}, \bar{\bar{\beta}}} \ t \bar{t} \bar{\bar{u}} + N_{\bar{\alpha}, \bar{\bar{\alpha}} + \beta} N_{\bar{\bar{\alpha}}, \beta} \ \bar{t} \bar{\bar{t}} u + N_{\alpha, \bar{\bar{\alpha}} + \bar{\beta}} N_{\bar{\bar{\alpha}}, \bar{\beta}} \ t \bar{\bar{t}} \bar{u}) \} \\
    & \hspace{15mm} \{ x_{\alpha + \bar{\beta} + \bar{\bar{\beta}}} (N_{\bar{\beta}, \alpha + \bar{\bar{\beta}}} N_{\alpha, \bar{\bar{\beta}}} \ t \bar{u} \bar{\bar{u}} + N_{\beta, \bar{\alpha} + \bar{\bar{\beta}}} N_{\bar{\alpha}, \bar{\bar{\beta}}} \ \bar{t} u \bar{\bar{u}} + N_{\beta, \bar{\bar{\alpha}} + \bar{\beta}} N_{\bar{\bar{\alpha}}, \bar{\beta}} \ \bar{\bar{t}} u \bar{u}) \}. 
\end{align*}
From the choice of Chevalley basis (see Lemma \ref{epsilonalpha}), we have $N_{\alpha, \bar{\beta}} = N_{\bar{\alpha}, \bar{\bar{\beta}}} = N_{\bar{\bar{\alpha}}, \beta}$ and $N_{\bar{\alpha}, \beta} = N_{\bar{\bar{\alpha}}, \bar{\beta}} = N_{\alpha, \bar{\bar{\beta}}}$. 
For $X,Y,Z \in \mathcal{L}$, we have Jacobi identity 
$$[X,[Y,Z]] + [Y,[Z,X]] + [Z,[X,Y]] = 0.$$
By taking $X = X_{\bar{\alpha}}, Y= X_{\bar{\bar{\alpha}}}$ and $Z= X_{\beta}$, we get $N_{\bar{\alpha}, \bar{\bar{\alpha}} + \beta} N_{\bar{\bar{\alpha}}, \beta} = N_{\bar{\bar{\alpha}}, \bar{\alpha} + \beta} N_{\bar{\alpha}, \beta}$. But then 
\begin{align*}
    N_{\alpha, \bar{\alpha} + \bar{\bar{\beta}}} N_{\bar{\alpha}, \bar{\bar{\beta}}} = N_{\bar{\alpha}, \bar{\bar{\alpha}} + \beta} N_{\bar{\bar{\alpha}}, \beta} = N_{\bar{\bar{\alpha}}, \bar{\alpha} + \beta} N_{\bar{\alpha}, \beta} =
    N_{\alpha, \bar{\bar{\alpha}} + \bar{\beta}} N_{\bar{\bar{\alpha}}, \bar{\beta}}.
\end{align*}
Similarly, by taking $X = X_{\bar{\beta}}, Y= X_{\bar{\bar{\beta}}}, Z= X_{\alpha}$, we get $N_{\bar{\beta}, \alpha + \bar{\bar{\beta}}} N_{\alpha, \bar{\bar{\beta}}} = N_{\bar{\bar{\beta}}, \alpha + \bar{\beta}} N_{\alpha, \bar{\beta}}$. But then 
\begin{align*}
    N_{\beta, \bar{\bar{\alpha}} + \bar{\beta}} N_{\bar{\bar{\alpha}}, \bar{\beta}} = N_{\bar{\beta}, \alpha + \bar{\bar{\beta}}} N_{\alpha, \bar{\bar{\beta}}} = N_{\bar{\bar{\beta}}, \alpha + \bar{\beta}} N_{\alpha, \bar{\beta}} =
    N_{\beta, \bar{\alpha} + \bar{\bar{\beta}}} N_{\bar{\alpha}, \bar{\bar{\beta}}}.
\end{align*}
Now our assertion follows readily. \qed

\vspace{2mm}

\noindent \textit{Proof of $(g):$} Observe that, either $\alpha + \bar{\beta} \in \Phi$ and $\alpha + \bar{\bar{\beta}} \not\in \Phi$ or vice versa. We consider a case where $\alpha + \bar{\beta} \in \Phi$ (the proof of other case is similar and hence omitted). Now using commutator relations in $G_{\pi}(\Phi,R)$ and the fact that $\alpha + \bar{\beta} = \bar{\alpha} + \bar{\bar{\beta}} = \bar{\bar{\alpha}} + \beta$, we can show that 
$$[x_{[\alpha]}(t), x_{[\beta]}(u)] = x_{\alpha+\bar{\beta}}(N_{\alpha,\bar{\beta}} t \bar{u} + N_{\bar{\alpha}, \bar{\bar{\beta}}} \bar{t} \bar{\bar{u}} + N_{\bar{\bar{\alpha}}, \beta} \bar{\bar{t}} u).$$
From the choice of Chevalley basis (see Lemma \ref{epsilonalpha}), we have $N_{\alpha, \bar{\beta}} = N_{\bar{\alpha}, \bar{\bar{\beta}}} = N_{\bar{\bar{\alpha}}, \beta}$ and hence the result follows. \qed


\subsection{The Subgroups \texorpdfstring{$N'_\sigma$}{N} and \texorpdfstring{$H'_\sigma$}{H}}\label{N and H} 

We now turn our attention to studying the subgroups $N'_{\sigma}$ and $H'_{\sigma}$ of the group $E'_\sigma (R)$. Understanding the structure of $N'_\sigma$ and $H'_\sigma$ are relatively straightforward when $\Phi_\rho \sim {}^2 A_{2n-1} \ (n \geq 2), {}^2 D_n \ (n \geq 4), {}^2 E_6$ or ${}^3 D_4$. However, it becomes more complex when $\Phi_\rho \sim {}^2 A_{2n} \ (n \geq 1)$.

\begin{conv}
    \normalfont
    At this point, we want to establish a convention regarding some notation. The classes $[-\alpha]$ and $-[\alpha]$ denote the same set, but they may differ as ordered sets. If $[\alpha] \sim A_1$, then both notations are identical. If $[\alpha] \sim A_1^2$ or $A_1^3$, then $\alpha'$ represents $[-\alpha]$ where $\alpha' = \min \{ -\alpha, -\bar{\alpha}, -\bar{\bar{\alpha}} \}$. In these cases, as an ordered set, $[-\alpha] = [\alpha'] = \{ \alpha', \bar{\alpha'} \}$ or $\{ \alpha', \bar{\alpha'}, \bar{\bar{\alpha'}} \}$ depending on whether $[\alpha] \sim A_1^2$ or $A_1^3$, respectively. Whence, for $-[\alpha]$, as an ordered set, $-[\alpha] = \{ -\alpha, -\bar{\alpha} \}$ or $\{ -\alpha, -\bar{\alpha}, -\bar{\bar{\alpha}} \}$ depending on whether $[\alpha] \sim A_1^2$ or $A_1^3$, respectively. Finally, if $[\alpha] \sim A_2$, both the notations represent the same ordered set: $\{ - \bar{\alpha}, -\alpha, -\alpha - \bar{\alpha} \}$ if $\alpha < \bar{\alpha}$.
\end{conv}

Write $R^*= \{r \in R \mid \exists s \in R \text{ such that } rs = 1\}$, $R_\theta^* = R_\theta \cap R^*$ and $\mathcal{A}(R)^* := \{ (t,u) \in \mathcal{A}(R) \mid u \in R^* \}.$ For given $[\alpha] \in \Phi_\rho$, we also write 
\[
    R_{[\alpha]}^* = \begin{cases}
        R^*_\theta & \text{if } [\alpha] \sim A_1, \\
        R^* & \text{if } [\alpha] \sim A_1^2 \text{ or } A_1^3, \\
        \mathcal{A}(R)^* & \text{if } [\alpha] \sim A_2;
    \end{cases} \quad \text{and} \quad     R_{[\alpha]}^{\star} = \begin{cases}
        R^*_\theta & \text{if } [\alpha] \sim A_1, \\
        R^* & \text{if } [\alpha] \sim A_1^2, A_1^3 \text{ or } A_2.
    \end{cases}
\]

With these notations established, we proceed to define the following special elements of $N_\sigma$ and $H_\sigma$:
\begin{enumerate}
    \item[$\textbf{(W1)}$] If $[\alpha] \sim A_1$, then define $w_{[\alpha]}(t) := x_{[\alpha]}(t) x_{-[\alpha]}(-t^{-1}) x_{[\alpha]}(t) = w_\alpha (t), t \in R_\theta^*.$
    \item[$\textbf{(W2)}$] If $[\alpha] \sim A_1^2$, then define $w_{[\alpha]}(t) := x_{[\alpha]}(t) x_{-[\alpha]}(-t^{-1}) x_{[\alpha]}(t) = w_{\alpha} (t) w_{\bar{\alpha}} (\bar{t}), t \in R^*.$
    \item[$\textbf{(W3)}$] If $[\alpha] \sim A_1^3$, then define $w_{[\alpha]}(t) := x_{[\alpha]}(t) x_{-[\alpha]}(-t^{-1}) x_{[\alpha]}(t) = w_{\alpha} (t) w_{\bar{\alpha}} (\bar{t}) w_{\bar{\bar{\alpha}}} (\bar{\bar{t}}), t \in R^*.$
    \item[$\textbf{(W4)}$] If $[\alpha] \sim A_2$, then define $w_{[\alpha]}(t,u) := x_{[\alpha]}(t,u) x_{-[\alpha]}( -\bar{u}^{-1} \cdot (t,u)) x_{[\alpha]}( u \bar{u}^{-1} \cdot (t,u)) = x_{[\alpha]}(t,u) x_{-[\alpha]}(- (\bar{u}^{-1}) t, (\bar{u}^{-1})) x_{[\alpha]}(u \bar{u}^{-1} t, u),$ where $(t,u) \in \mathcal{A}(R)^*$. 
    \item[\textbf{(W4$'$)}] If $[\alpha] \sim A_2$ such that $\alpha \neq \bar{\alpha}$, then define $w_{[\alpha]} (t) := w_\alpha (\bar{t}) w_{\bar{\alpha}}(1) w_{\alpha} (t), t \in R^*$. \\
    
    \item[$\textbf{(H1)}$] If $[\alpha] \sim A_1$, then define $h_{[\alpha]}(t) := w_{[\alpha]}(t) w_{[\alpha]}(-1) = h_\alpha (t), \ t \in R^*_{\theta}$.
    \item[$\textbf{(H2)}$] If $[\alpha] \sim A_1^2$, then define $h_{[\alpha]}(t) = w_{[\alpha]}(t) w_{[\alpha]}(-1) = h_\alpha (t) h_{\bar{\alpha}}(\bar{t}), \ t \in R^*$.
    \item [$\textbf{(H3)}$] If $[\alpha] \sim A_1^3$, then define $h_{[\alpha]}(t) = w_{[\alpha]}(t) w_{[\alpha]}(-1) = h_\alpha (t) h_{\bar{\alpha}}(\bar{t}) h_{\bar{\bar{\alpha}}}(\bar{\bar{t}}), \ t \in R^*$.
    \item[$\textbf{(H4)}$] If $[\alpha] \sim A_2$, then define $h_{[\alpha]}((t,u),(t',u')) = w_{[\alpha]}(t,u) w_{[\alpha]}(t',u')$, where $(t,u), (t',u') \in \mathcal{A}(R)^*$. 
    \item[\textbf{(H4$'$)}] If $[\alpha] \sim A_2$ such that $\alpha \neq \bar{\alpha}$, then define $h_{[\alpha]} (t) := h_\alpha (t) h_{\bar{\alpha}}(\bar{t}), t \in R^*$.
\end{enumerate}

\begin{rmk}
    \normalfont
    \begin{enumerate}[(a)]
        \item One can easily verify that the last equality holds in $(W1), (W2), (W3),$ $(H1),$ $(H2)$ and $(H3)$.
        \item Recall that, $\sigma (h_\alpha (t)) = h_{\bar{\alpha}} (\bar{t})$ and $\sigma (w_\alpha (t)) = w_{\bar{\alpha}} (\epsilon_\alpha \bar{t})$, where $\epsilon_\alpha = \pm 1$ (note that $\epsilon_\alpha = -1$ if and only if $[\alpha] \sim A_2$ and $\alpha \neq \bar{\alpha}$ (see Lemma \ref{epsilonalpha})). Hence it is clear that $w_{[\alpha]}(t) \in N'_\sigma \subset N_\sigma$ and $h_{[\alpha]}(t) \in H'_\sigma \subset H_\sigma$, if $[\alpha] \sim A_1, A_1^2, A_1^3.$ Similarly, $w_{[\alpha]}(t,u) \in N'_\sigma \subset N_\sigma$ and $h_{[\alpha]}((t,u),(t',u')) \in H'_\sigma \subset H_\sigma$, if $[\alpha] \sim A_2$  (see Lemma $\ref{lemmaW4'}$ and Lemma $\ref{lemmaRel-of-WandH}$).
        \item If $[\alpha] \sim A_2$ then $w_{[\alpha]}(t) \in N_\sigma$ and $h_{[\alpha]}(t) \in H_\sigma$ (see Lemma $\ref{lemmaW4'}$), but it is not necessary that every $w_{[\alpha]}(t)$ (resp., $h_{[\alpha]} (t)$), $t \in R^*$ contained in $N'_\sigma$ (resp., $H'_\sigma$).
        \item Each $h_{[\alpha]}(t)$ defined in $(H1), (H2), (H3)$ and $(H4')$ is multiplicative as a function of $t$. In particular, $h_{[\alpha]}(t)^{-1} = h_{[\alpha]} (t^{-1})$.
        \item If $[\alpha] \sim A_1, A_1^2$ or $A_1^3$, then $x_{[\alpha]} (t)^{-1} = x_{[\alpha]}(-t)$ and $w_{[\alpha]} (t)^{-1} = w_{[\alpha]}(-t)$.
    \end{enumerate}
\end{rmk}

\begin{lemma}[see {\cite[Proposition 2.1]{EA1}}]\label{w^-1}
    If $[\alpha] \sim A_2$ and $(t,u) \in \mathcal{A}(R)^*$ then $w_{[\alpha]}(t,u)^{-1} = w_{[\alpha]} (-t u \bar{u}^{-1}, \bar{u})$.
\end{lemma}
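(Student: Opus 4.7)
The plan is a direct computation unwinding the definition of $w_{[\alpha]}(t,u)$ in case $(W4)$, together with the inverse formula $x_{[\alpha]}(s,v)^{-1}=x_{[\alpha]}(-s,\bar v)$ on $\mathcal{A}(R)$ (which follows from the group law on $\mathcal{A}(R)$ noted in the excerpt). No structural input beyond these is needed.

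First I verify that the right-hand side is defined, i.e., $(-tu\bar u^{-1},\bar u)\in\mathcal{A}(R)^*$. The second coordinate is a unit since $u\in R^{*}$, and
\[
(-tu\bar u^{-1})\cdot\overline{(-tu\bar u^{-1})} \;=\; t\bar t\cdot u\bar u^{-1}\cdot\bar u\,u^{-1} \;=\; t\bar t \;=\; u+\bar u \;=\; \bar u+u,
\]
using commutativity of $R$ and $(t,u)\in\mathcal{A}(R)$.

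Second, I expand both sides. By $(W4)$, writing out the action $r\cdot(t,u)=(rt,r\bar r u)$,
\[
w_{[\alpha]}(t,u)\;=\;x_{[\alpha]}(t,u)\,x_{-[\alpha]}(-\bar u^{-1}t,\,\bar u^{-1})\,x_{[\alpha]}(u\bar u^{-1}t,\,u).
\]
Inverting and reversing the product, and applying $x_{\pm[\alpha]}(s,v)^{-1}=x_{\pm[\alpha]}(-s,\bar v)$ termwise, yields
\[
w_{[\alpha]}(t,u)^{-1}\;=\;x_{[\alpha]}(-u\bar u^{-1}t,\,\bar u)\,x_{-[\alpha]}(\bar u^{-1}t,\,u^{-1})\,x_{[\alpha]}(-t,\,\bar u).
\]
For the right-hand side, set $t'=-tu\bar u^{-1}$ and $u'=\bar u$, and compute the two required scalar actions: $-\bar{u'}^{-1}\cdot(t',u')=(-u^{-1}t',\,u^{-1}\bar u^{-1}\bar u)=(t\bar u^{-1},\,u^{-1})$ and $u'\bar{u'}^{-1}\cdot(t',u')=(\bar u u^{-1}t',\,\bar u)=(-t,\,\bar u)$. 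Plugging into $(W4)$ gives
\[
w_{[\alpha]}(-tu\bar u^{-1},\,\bar u)\;=\;x_{[\alpha]}(-tu\bar u^{-1},\,\bar u)\,x_{-[\alpha]}(t\bar u^{-1},\,u^{-1})\,x_{[\alpha]}(-t,\,\bar u),
\]
which agrees factor-by-factor with $w_{[\alpha]}(t,u)^{-1}$ by commutativity of $R$.

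The whole argument is mechanical; the only place one can slip up is in applying the $\mathcal{A}(R)$-action $r\cdot(t,u)=(rt,r\bar r u)$ with the correct conjugate factor $r\bar r$ when $r$ itself involves $u$ and $\bar u$. I expect this sign/conjugate bookkeeping to be the main obstacle, and the key identity $\overline{u\bar u^{-1}}=\bar u u^{-1}$ (so that $u\bar u^{-1}\cdot\overline{u\bar u^{-1}}\cdot u=u$) is what makes the definition of $w_{[\alpha]}(t,u)$ well-posed and the computation close up cleanly.
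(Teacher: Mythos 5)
Your proposal is correct and follows essentially the same route as the paper: expand $w_{[\alpha]}(t,u)$ via (W4), invert the product termwise using $x_{[\alpha]}(s,v)^{-1}=x_{[\alpha]}(-s,\bar v)$, and identify the result with $w_{[\alpha]}(-tu\bar u^{-1},\bar u)$. Your additional checks (that $(-tu\bar u^{-1},\bar u)\in\mathcal{A}(R)^*$ and the explicit factor-by-factor expansion of the right-hand side) merely make explicit what the paper leaves implicit.
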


\begin{proof}
     Note that $w_{[\alpha]}(t,u) = x_{[\alpha]}(t,u) x_{-[\alpha]}(- \bar{u}^{-1} t, \bar{u}^{-1}) x_{[\alpha]}(u \bar{u}^{-1} t, u)$. Hence 
    \begin{align*}
        (w_{[\alpha]}(t,u))^{-1} &= [{x_{[\alpha]}(t,u) x_{-[\alpha]}(- (\bar{u}^{-1}) t, (\bar{u}^{-1})) x_{[\alpha]}(u (\bar{u}^{-1}) t, u)}]^{-1} \\
        &= [x_{[\alpha]}(u (\bar{u}^{-1}) t, u)]^{-1} [x_{-[\alpha]}(- (\bar{u}^{-1}) t, (\bar{u}^{-1}))]^{-1} [x_{[\alpha]}(t,u)]^{-1} \\
        &= x_{[\alpha]}(- u (\bar{u}^{-1}) t, \bar{u}) x_{-[\alpha]}((\bar{u}^{-1}) t, (u^{-1})) x_{[\alpha]}(-t, \bar{u}) \\
        &= w_{[\alpha]} (- u (\bar{u}^{-1}) t, \bar{u}).
    \end{align*}
    This proves our lemma.
\end{proof}


\begin{lemma}\label{lemmaW4'}
    \normalfont
    If $[\alpha] \sim A_2$ such that $\alpha \neq \bar{\alpha}$, then
    \begin{enumerate}[(a)]
        \item $w_{[\alpha]}(t) \in N_\sigma$ and $w_{[\alpha]} (t) ^{-1} = w_{[\alpha]} (\bar{t})$, for every $t \in R^*$.
        \item $h_{[\alpha]}(t) \in H_\sigma, h_{[\alpha]}(t) = w_{[\alpha]} (\bar{t}) w_{[\alpha]} (1)$ and $h_{[\alpha]}(t)^{-1} = h_{[\alpha]}(t^{-1})$, for every $t \in R^*$.
    \end{enumerate} 
\end{lemma}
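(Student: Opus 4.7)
The plan is to verify all four identities by direct computation inside the $A_2$-subsystem Chevalley subgroup spanned by $\alpha$ and $\bar{\alpha}$; all manipulations reduce to identities in $SL_3$ (or $PGL_3$), so the strategy is to pick the right normal form and track the $H$-factors carefully. The core tools are the standard Chevalley relations $w_\beta(s)=h_\beta(s)w_\beta(1)$, $w_\beta(s)^{-1}=w_\beta(-s)$, $w_\beta(1)^2 = h_\beta(-1)$, and the conjugation rule $w_\beta(1)h_\gamma(u)w_\beta(1)^{-1}=h_{s_\beta(\gamma)}(u)$, combined with the abelianness of $H$ and the $A_2$-specific identity $h_{\alpha+\bar{\alpha}}(s) = h_\alpha(s)h_{\bar{\alpha}}(s)$ (coming from $(\alpha+\bar{\alpha})^\vee = \alpha^\vee+\bar{\alpha}^\vee$). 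Since $[\alpha] \sim A_2$ with $\alpha \neq \bar{\alpha}$, Lemma~\ref{epsilonalpha} gives $\epsilon_\alpha = \epsilon_{\bar{\alpha}} = 1$, so $\sigma$ acts on $w_\alpha(s)$ and $h_\alpha(s)$ without sign: $\sigma(w_\alpha(s)) = w_{\bar{\alpha}}(\bar{s})$ and $\sigma(h_\alpha(s)) = h_{\bar{\alpha}}(\bar{s})$.

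For the $\sigma$-invariance in part (a), I first apply $\sigma$ termwise to $w_{[\alpha]}(t) = w_\alpha(\bar{t}) w_{\bar{\alpha}}(1) w_\alpha(t)$ to obtain $w_{\bar{\alpha}}(t) w_\alpha(1) w_{\bar{\alpha}}(\bar{t})$. I then bring both expressions to a common normal form by pulling all interior $h_\beta$-factors to the extreme left via the conjugation rule (the relevant Weyl reductions being $s_\alpha s_{\bar\alpha}(\alpha) = \bar\alpha$ and $s_{\bar\alpha}s_\alpha(\bar\alpha) = \alpha$); both sides collapse to the same $H$-prefix times a triple product of the form $w_\alpha(1)w_{\bar\alpha}(1)w_\alpha(1)$ or $w_{\bar\alpha}(1)w_\alpha(1)w_{\bar\alpha}(1)$. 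The equality of these two triple products modulo $H$ (a Chevalley-group refinement of the $A_2$ braid relation, easily checked in $SL_3$) then finishes the proof. For $w_{[\alpha]}(t)^{-1} = w_{[\alpha]}(\bar{t})$, I expand the inverse as $w_\alpha(-t)w_{\bar{\alpha}}(-1)w_\alpha(-\bar{t})$, convert each $w_\beta(-s) = h_\beta(-1)w_\beta(s)$, and propagate the $h_\beta(-1)$-factors leftward through the remaining $w$'s; using $h_{\alpha+\bar\alpha}(-1) = h_\alpha(-1)h_{\bar\alpha}(-1)$ together with $h_\beta(-1)^2 = 1$, all accumulated factors cancel and the remainder is exactly $w_\alpha(t)w_{\bar{\alpha}}(1)w_\alpha(\bar{t}) = w_{[\alpha]}(\bar{t})$.

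For part (b), the membership $h_{[\alpha]}(t) \in H_\sigma$ is immediate: expanding $h_{[\alpha]}(t) = h_\alpha(t)h_{\bar\alpha}(\bar t)$ as in definition $(H4')$, the formula for $\sigma$ on $h$-factors combined with commutativity of $H$ gives $\sigma(h_{[\alpha]}(t)) = h_{\bar\alpha}(\bar t)h_\alpha(t) = h_{[\alpha]}(t)$. For $h_{[\alpha]}(t) = w_{[\alpha]}(\bar t)w_{[\alpha]}(1)$, I expand the right side as $w_\alpha(t)w_{\bar\alpha}(1)w_\alpha(\bar t)w_\alpha(1)w_{\bar\alpha}(1)w_\alpha(1)$, collapse interior pairs via $w_\alpha(\bar t)w_\alpha(1) = h_\alpha(-\bar t)$ and $w_\alpha(t)w_\alpha(1) = h_\alpha(-t)$, and sweep the resulting $h$-factors through the surrounding $w_{\bar\alpha}(1)$-conjugations using the conjugation rule and $h_{\alpha+\bar\alpha} = h_\alpha h_{\bar\alpha}$. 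The computation terminates at $h_\alpha(t)h_{\bar\alpha}(\bar t)$, as required. Finally, $h_{[\alpha]}(t)^{-1} = h_{[\alpha]}(t^{-1})$ follows from the multiplicativity of $h_\alpha$ and $h_{\bar\alpha}$ in their arguments, combined with $\overline{t^{-1}} = \bar t^{-1}$.

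The main obstacle is the careful sign book-keeping: every application of $w_\beta(1)^2 = h_\beta(-1)$ and of the conjugation rule on a root-sum argument introduces $h_\beta(-1)$-factors, and the precise cancellation relies on the $A_2$-specific reduction $h_{\alpha+\bar\alpha}(-1) = h_\alpha(-1)h_{\bar\alpha}(-1)$, which is what ultimately makes the identities close up.
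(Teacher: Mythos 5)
Your proposal is correct in substance, but it takes a genuinely different route from the paper. The paper reduces everything to an explicit $3\times 3$ matrix computation: it forms the subgroup $K=\langle \mathfrak{X}_{\alpha},\mathfrak{X}_{-\alpha},\mathfrak{X}_{\bar{\alpha}},\mathfrak{X}_{-\bar{\alpha}}\rangle$, invokes a surjection $\psi\colon E_3(R)\to K$ intertwining $\sigma'$ and $\sigma$, and then verifies $w'_{[\alpha]}(t)\,w'_{[\alpha]}(\bar{t})=1$ and $w'_{[\alpha]}(\bar{t})\,w'_{[\alpha]}(1)=h'_{[\alpha]}(t)$ by direct matrix multiplication, with membership in $N_\sigma$ and $H_\sigma$ coming for free from the model. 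You instead stay inside the ambient group and work with the Steinberg relations ($w_\beta(u)=h_\beta(u)w_\beta(1)$, $w_\beta(1)^2=h_\beta(-1)$, $w_\beta(1)h_\gamma(u)w_\beta(1)^{-1}=h_{s_\beta(\gamma)}(u)$, commutativity of $H$) together with the $A_2$ coroot additivity $h_{\alpha+\bar{\alpha}}(s)=h_\alpha(s)h_{\bar{\alpha}}(s)$. I checked your reductions: both $w_{[\alpha]}(t)$ and $\sigma(w_{[\alpha]}(t))$ normalize to the common prefix $h_\alpha(\bar{t})h_{\bar{\alpha}}(t)$ times $w_\alpha(1)w_{\bar{\alpha}}(1)w_\alpha(1)$, respectively $w_{\bar{\alpha}}(1)w_\alpha(1)w_{\bar{\alpha}}(1)$; the inverse computation produces the factor $h_\alpha(-1)h_{\alpha+\bar{\alpha}}(-1)h_{\bar{\alpha}}(-1)=1$ and leaves $w_\alpha(t)w_{\bar{\alpha}}(1)w_\alpha(\bar{t})=w_{[\alpha]}(\bar{t})$; and the sweep in (b) terminates at $h_\alpha(t)h_{\bar{\alpha}}(\bar{t})$. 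Your reading of Lemma~\ref{epsilonalpha} ($\epsilon_\alpha=\epsilon_{\bar{\alpha}}=1$ for the non-$\rho$-fixed roots of an $A_2$-class, hence $\sigma(w_\alpha(s))=w_{\bar{\alpha}}(\bar{s})$ and $\sigma(h_\alpha(s))=h_{\bar{\alpha}}(\bar{s})$) is exactly the input the computation needs. What your route buys is independence from the homomorphism $\psi$, whose existence over an arbitrary commutative ring the paper asserts rather than proves; what the paper's route buys is that all sign bookkeeping disappears into one transparent matrix product.

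One point of precision: equality of the two triple products ``modulo $H$'' would not finish part (a) — since the $H$-prefixes already agree, you need the \emph{exact} identity $w_\alpha(1)w_{\bar{\alpha}}(1)w_\alpha(1)=w_{\bar{\alpha}}(1)w_\alpha(1)w_{\bar{\alpha}}(1)$. Fortunately it holds, and it follows from your own toolkit rather than from an $SL_3$ check: writing $w_\alpha(1)w_{\bar{\alpha}}(1)w_\alpha(1)^{-1}=w_{\alpha+\bar{\alpha}}(N_{\alpha,\bar{\alpha}})$ and using $w_\beta(1)^2=h_\beta(-1)$, $N_{\bar{\alpha},\alpha}=-N_{\alpha,\bar{\alpha}}$ and $h_{\alpha+\bar{\alpha}}(-1)=h_\alpha(-1)h_{\bar{\alpha}}(-1)$, the two sides agree regardless of the sign of the structure constant. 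Deriving it abstractly in this way is preferable, because transferring a verification ``in $SL_3$'' to $E_\pi(\Phi,R)$ is precisely the reduction via $\psi$ that the paper performs and that your argument was otherwise designed to avoid.
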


\begin{proof}
    Define $E_3 (R)$ be the subgroup of $SL_3(R)$ generated by 
    \begin{gather*}
        x'_{\alpha}(t) := 1 + t E_{23}, \quad x'_{\bar{\alpha}} (t):= 1 + t E_{12}, \quad x'_{\alpha + \bar{\alpha}} (t):= 1 + t E_{13}, \\
        x'_{-\alpha}(t):= 1 + t E_{32}, \quad x'_{-\bar{\alpha}} (t):= 1 + t E_{21}, \quad x'_{-\alpha - \bar{\alpha}} (t):= 1 + t E_{31}.
    \end{gather*}
    
    Consider a subgroup $K = \langle \mathfrak{X}_{\alpha}, \mathfrak{X}_{-\alpha}, \mathfrak{X}_{\bar{\alpha}}, \mathfrak{X}_{-\bar{\alpha}} \rangle$ of $E_\pi(\Phi, R)$. Then there is a natural surjective homomorphism of groups $$ \psi: E_3 (R) \longrightarrow K$$ given by 
    \[
        x'_{\beta}(t) \longmapsto x_{\beta}(t)
    \]
    for all $\beta \in \{ \pm \alpha, \pm \bar{\alpha}, \pm (\alpha + \bar{\alpha}) \}$ and $t \in R$.
    
    Note that $\sigma = \sigma|_K$ is an automorphism of the subgroup $K$ and there exists a natural automorphism $\sigma'$ of $E_3 (R)$ such that $\sigma \circ \psi = \psi \circ \sigma'$. We have $\psi(E_{3, \sigma'}(R)) = K_{\sigma}(R)$ and $\psi(E'_{3, \sigma'}(R)) = K'_{\sigma} (R)$. Hence it is enough to prove the corresponding results in the group $S(R)$.  
    
    The notation of $w'_{[\alpha]}(t)$ and $h'_{[\alpha]}(t)$ is clear. Note that $w'_{[\alpha]}(t) \in N_{S, \sigma'}(R)$ and $h'_{[\alpha]}(t) \in H_{S, \sigma'}(R),$ hence $w_{[\alpha]}(t) \in N_\sigma$ and $h_{[\alpha]}(t) \in H_\sigma$. Now the proof is immediate from below: $$w'_{[\alpha]} (t) w'_{[\alpha]} (\bar{t}) = \begin{pmatrix}
        0 & 0 & t \\
        0 & -t^{-1}\bar{t} & 0 \\
        \bar{t}^{-1} & 0 & 0
    \end{pmatrix} \begin{pmatrix}
        0 & 0 & \bar{t} \\
        0 & -\bar{t}^{-1}t & 0 \\
        t^{-1} & 0 & 0
    \end{pmatrix} = \begin{pmatrix}
        1 & 0 & 0 \\
        0 & 1 & 0 \\
        0 & 0 & 1
    \end{pmatrix},$$ 
    $$ w'_{[\alpha]} (\bar{t}) w'_{[\alpha]} (1) = \begin{pmatrix}
        0 & 0 & \bar{t} \\
        0 & -\bar{t}^{-1}t & 0 \\
        t^{-1} & 0 & 0
    \end{pmatrix} \begin{pmatrix}
        0 & 0 & 1 \\
        0 & -1 & 0 \\
        1 & 0 & 0
    \end{pmatrix} = \begin{pmatrix}
        \bar{t} & 0 & 0 \\
        0 & t \bar{t}^{-1} & 0 \\
        0 & 0 & t^{-1}
    \end{pmatrix} = h'_{[\alpha]}(t).$$
\end{proof}

\begin{lemma}\label{lemmaRel-of-WandH}
    \normalfont Let $[\alpha] \sim A_2$ such that $\alpha \neq \bar{\alpha}$.
    \begin{enumerate}[(a)]
        \item If $(t, u) \in \mathcal{A}(R)^*$, then $w_{[\alpha]}(t,u) = w_{[\alpha]}(u)$.
        \item If $(t, u), (t', u') \in \mathcal{A}(R)^*$, then $h_{[\alpha]}((t,u), (t',u')) = h_{[\alpha]}(\bar{u} {u'}^{-1}).$
    \end{enumerate}
\end{lemma}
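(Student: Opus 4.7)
The plan is to reduce everything to a matrix calculation in $SL_3(R)$, exactly as in the proof of Lemma \ref{lemmaW4'}. All the elements appearing in (a) and (b) lie in the subgroup $K = \langle \mathfrak{X}_\alpha, \mathfrak{X}_{-\alpha}, \mathfrak{X}_{\bar\alpha}, \mathfrak{X}_{-\bar\alpha}\rangle$ of $E_\pi(\Phi, R)$, so via the surjection $\psi\colon E_3(R)\twoheadrightarrow K$ constructed there it suffices to verify the corresponding identities inside $E_3(R)\subset SL_3(R)$. For (a), I would first expand each factor in
\[
    w'_{[\alpha]}(t,u) = x'_{[\alpha]}(t,u)\, x'_{-[\alpha]}(-\bar u^{-1} t,\, \bar u^{-1})\, x'_{[\alpha]}(u \bar u^{-1} t,\, u)
\]
as an explicit $3\times 3$ matrix, using the Chevalley basis fixed in Lemma \ref{epsilonalpha} and the defining relation $t\bar t = u + \bar u$. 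The key point is that on multiplication the $t$-dependence of the three factors cancels, and one obtains
\[
    w'_{[\alpha]}(t,u) = \begin{pmatrix} 0 & 0 & u \\ 0 & -\bar u u^{-1} & 0 \\ \bar u^{-1} & 0 & 0 \end{pmatrix},
\]
which matches the independent short computation of $w'_\alpha(\bar u)\, w'_{\bar\alpha}(1)\, w'_\alpha(u) = w'_{[\alpha]}(u)$. Pulling back via $\psi$ then yields $w_{[\alpha]}(t,u) = w_{[\alpha]}(u)$.

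For (b), combining (a) with the definition $h_{[\alpha]}((t,u),(t',u')) = w_{[\alpha]}(t,u)\, w_{[\alpha]}(t',u')$ reduces the claim to the identity $w_{[\alpha]}(u)\, w_{[\alpha]}(u') = h_{[\alpha]}(\bar u\, {u'}^{-1})$. From Lemma \ref{lemmaW4'}(a),(b) we have $w_{[\alpha]}(1)^{-1} = w_{[\alpha]}(1)$ and $h_{[\alpha]}(s) = w_{[\alpha]}(\bar s)\, w_{[\alpha]}(1)$, and rearranging gives $w_{[\alpha]}(u) = h_{[\alpha]}(\bar u)\, w_{[\alpha]}(1)$ and similarly for $u'$. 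Substituting,
\[
    w_{[\alpha]}(u)\, w_{[\alpha]}(u') = h_{[\alpha]}(\bar u)\,\bigl(w_{[\alpha]}(1)\, h_{[\alpha]}(\bar{u'})\, w_{[\alpha]}(1)\bigr).
\]
A short $SL_3(R)$ computation (equivalently, the standard Weyl action on the torus combined with definition (H4$'$) of $h_{[\alpha]}$) gives the reflection rule $w_{[\alpha]}(1)\, h_{[\alpha]}(s)\, w_{[\alpha]}(1) = h_{[\alpha]}(\bar s^{-1})$, so the bracketed factor equals $h_{[\alpha]}({u'}^{-1})$. Multiplicativity of $h_{[\alpha]}$ (Remark (d) after (H4$'$)) then yields $h_{[\alpha]}(\bar u)\, h_{[\alpha]}({u'}^{-1}) = h_{[\alpha]}(\bar u\, {u'}^{-1})$, completing (b).

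The main obstacle is the bookkeeping in (a): one must choose the signs of the structure constant $N_{\bar\alpha,\alpha}$ and track the action of $\theta$ precisely, so that the relation $t\bar t = u + \bar u$ makes every $t$-dependent entry of the product disappear cleanly. Once this matrix form of $w'_{[\alpha]}(t,u)$ is pinned down, part (b) is a mechanical consequence of Lemma \ref{lemmaW4'}, the reflection rule for $w_{[\alpha]}(1)$, and multiplicativity of $h_{[\alpha]}$.
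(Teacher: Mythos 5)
Your proposal is correct, and part (a) is essentially the paper's own argument: both reduce to $E_3(R)\subset SL_3(R)$ via the surjection $\psi$ from the proof of Lemma~\ref{lemmaW4'}, and both rest on the fact that the product $x'_{[\alpha]}(t,u)\,x'_{-[\alpha]}(-\bar u^{-1}t,\bar u^{-1})\,x'_{[\alpha]}(u\bar u^{-1}t,u)$ collapses, via $t\bar t=u+\bar u$, to the anti-diagonal matrix with entries $u,\,-u^{-1}\bar u,\,\bar u^{-1}$, which the paper matches against the matrix of $w'_\alpha(\bar u)w'_{\bar\alpha}(1)w'_\alpha(u)$; you merely make explicit the cancellation that the paper leaves implicit. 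For part (b) your route differs: the paper simply multiplies the diagonal matrices $h'_\alpha(\bar u(u')^{-1})h'_{\bar\alpha}(u(\bar{u'})^{-1})$ and compares with the product of the two anti-diagonal $w'$-matrices, whereas you deduce (b) formally from (a), the identities $w_{[\alpha]}(1)^{-1}=w_{[\alpha]}(1)$ and $h_{[\alpha]}(s)=w_{[\alpha]}(\bar s)w_{[\alpha]}(1)$ of Lemma~\ref{lemmaW4'}, the reflection rule $w_{[\alpha]}(1)h_{[\alpha]}(s)w_{[\alpha]}(1)=h_{[\alpha]}(\bar s^{-1})$, and multiplicativity of $h_{[\alpha]}$. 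This works: the reflection rule is a genuine extra input, but it is verified by the same one-line conjugation in $SL_3(R)$ (swapping the first and third diagonal entries of $\operatorname{diag}(\bar s, s\bar s^{-1}, s^{-1})$), and your use of $\overline{\bar u}=u$ is legitimate since $[\alpha]\sim A_2$ forces $o(\theta)=2$. The paper's version of (b) is shorter as a raw computation; yours isolates a reusable torus-normalizer identity and makes (b) a purely formal consequence of (a) and Lemma~\ref{lemmaW4'}, at the cost of one additional small matrix check.
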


\begin{proof}
    As we pointed out earlier, it is enough to prove the corresponding results in $sl_3(R)$. Note that 
    \begin{align*}
        w'_{[\alpha]}(u) &= w'_{\alpha}(\bar{u}) w'_{\bar{\alpha}}(1) w'_{\alpha} (u) \\
        &= \begin{pmatrix}
        1 & 0 & 0 \\
        0 & 0 & \bar{u} \\
        0 & -\bar{u}^{-1} & 0
    \end{pmatrix} \begin{pmatrix}
        0 & 1 & 0 \\
        -1 & 0 & 0 \\
        0 & 0 & 1
    \end{pmatrix} \begin{pmatrix}
        1 & 0 & 0 \\
        0 & 0 & u \\
        0 & -{u}^{-1} & 0
    \end{pmatrix} \\
    &= \begin{pmatrix}
        0 & 0 & u \\
        0 & -u^{-1}\bar{u} & 0 \\
        \bar{u}^{-1} & 0 & 0
    \end{pmatrix} \\
    &= w'_{[\alpha]}(t,u),
    \end{align*} which proves $(a)$. 
    Again note that 
    \begin{align*}
        h'_{[\alpha]}(\bar{u} (u')^{-1}) &= h'_{\alpha}(\bar{u} (u')^{-1}) h'_{\bar{\alpha}}(u (\bar{u'})^{-1}) \\ 
        &= \begin{pmatrix}
        1 & 0 & 0 \\
        0 & \bar{u} (u')^{-1} & 0 \\
        0 & 0 & u' \bar{u}^{-1}
        \end{pmatrix} \begin{pmatrix}
        u (\bar{u'})^{-1} & 0 & 0 \\
        0 & u^{-1} (\bar{u'}) & 0 \\
        0 & 0 & 1
        \end{pmatrix} \\
        &= \begin{pmatrix}
        u(\bar{u'})^{-1} & 0 & 0 \\
        0 & u^{-1}\bar{u} (u')^{-1}\bar{u'}  & 0 \\
        0 & 0 & u' (\bar{u})^{-1}
        \end{pmatrix} \\
        &= h'_{[\alpha]}((t,u),(t',u')),
    \end{align*} which proves $(b)$.
\end{proof}

\begin{defn}
    \normalfont
    Let $[\alpha] \sim A_2$ such that $\alpha \neq \bar{\alpha}$. We define 
    \begin{enumerate}[(a)]
        \item $\mathcal{R}_1 = \{ u \in R^* \mid \exists \ t \in R \text{ such that } (t,u) \in \mathcal{A}(R)^* \}$;
        \item $\mathcal{R}_k = \{ u \in R^* \mid \exists \ u_1, \dots, u_k \in \mathcal{R}_1 \text{ such that } u = u_1 \dots u_k \}$, for given $k \in \mathbb{N}$;
        \item $\mathcal{R}^{(l)} = \{ u \in R^* \mid \exists \ k \in \mathbb{N} \text{ such that } u \in \mathcal{R}_{kl} \} = \cup_{k \in \mathbb{N}} \mathcal{R}_{kl}$, for given $l \in \mathbb{N}$;
        \item $\mathcal{R} := \mathcal{R}^{(1)}, \mathcal{R}':= \mathcal{R}^{(2)}$ and $\mathcal{R}'':=  \cup_{k \in \mathbb{N}} \mathcal{R}_{2k-1}$.
    \end{enumerate}
\end{defn}

\begin{rmk}
    \normalfont
    The following are immediate consequences of the definition, provided $\mathcal{R}_1 \neq \phi$:
    \begin{enumerate}[(a)]
        \item If $u$ is in $\mathcal{R}_k$, then so are $\bar{u}, u^{-1}$ and $a \bar{a} u \ (a \in R^*)$.
        \item For any $l \in \mathbb{N}$, $\mathcal{R}^{(l)}$ is a subgroup of the multiplicative group $R^*$ generated by $\mathcal{R}_l$. In particular, $\mathcal{R}$ and $\mathcal{R}'$ are subgroups of $R^*$.
        \item If $1 \in \mathcal{R}_k$, then $\mathcal{R}_m \subset \mathcal{R}_{k+m},$ for all $m \in \mathbb{N}$. In particular, since $1 \in \mathcal{R}_2$, we have $\mathcal{R}_1 \subset \mathcal{R}_{3} \subset \mathcal{R}_5 \subset \cdots$ and $\mathcal{R}_2 \subset \mathcal{R}_{4} \subset \mathcal{R}_6 \subset \cdots$.
        \item If $R$ is a field then $R^* = \mathcal{R} = \mathcal{R}' = \mathcal{R}_{2k}$, for all $k \in \mathbb{N}$. To see this, it is enough to see that $\mathcal{R}_2 = R^*$ (by part $(c)$). Let $u \in R^*$. If $u = \bar{u}$ then we choose $u_1 \in R^*$ such that $u_1 = - \bar{u_1}$ (such a $u_1$ exists in field) and $u_2 = u (u_1)^{-1}$. Similarly, if $u \neq \bar{u}$ then we can choose $u_1 = (\bar{u} - u)^{-1}$ and $u_2= u (u_1)^{-1}$. In both the cases, we have $u = u_1 u_2$ such that $u_1, u_2 \in \mathcal{R}_1$ as $(0, u_1), (u - \bar{u}, u_2) \in \mathcal{A}(R)^*$. Therefore $u \in \mathcal{R}_2$, as desired.  
    \end{enumerate}
\end{rmk}

\begin{cor}\label{cor:H' subset H}
    \normalfont
    Let $[\alpha] \sim A_2$. If $u \in \mathcal{R}'$ then $h_{[\alpha]}(u) \in H'_\sigma$. Similarly, if $u \in \mathcal{R}''$ then $w_{[\alpha]}(u) \in N'_\sigma$.
\end{cor}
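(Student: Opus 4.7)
The plan is to establish the $\mathcal{R}'$ assertion first and then bootstrap it to obtain the $\mathcal{R}''$ assertion. The workhorse for the first part is Lemma \ref{lemmaRel-of-WandH}: part (b) yields
\[
    h_{[\alpha]}\bigl(\bar{v}_1\, v_2^{-1}\bigr) = h_{[\alpha]}\bigl((s_1, v_1),(s_2, v_2)\bigr) = w_{[\alpha]}(s_1, v_1)\, w_{[\alpha]}(s_2, v_2)
\]
whenever $(s_i, v_i) \in \mathcal{A}(R)^*$, and part (a) rewrites each factor on the right as $w_{[\alpha]}(v_i)$, which lies in $N'_\sigma$ by the defining expression $(W4)$. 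Fix $u \in \mathcal{R}_2$ and write $u = ab$ with $a, b \in \mathcal{R}_1$. Because $[\alpha] \sim A_2$ forces the ambient type to be ${}^2A_{2n}$, the graph automorphism $\rho$ is an involution; hence $\bar{\bar a} = a$, and setting $v_1 := \bar a$, $v_2 := b^{-1}$ (both in $\mathcal{R}_1$ by the remark following the definition of $\mathcal{R}_k$), I obtain $\bar v_1 v_2^{-1} = ab = u$. Therefore
\[
    h_{[\alpha]}(u) = w_{[\alpha]}(v_1)\, w_{[\alpha]}(v_2) \in N'_\sigma \cap H_\sigma = H'_\sigma.
\]
For general $u \in \mathcal{R}'$, write $u = u_1 \cdots u_{2k}$ with $u_i \in \mathcal{R}_1$, group the factors pairwise as $(u_1 u_2)(u_3 u_4)\cdots(u_{2k-1} u_{2k})$, and use the multiplicativity of $h_{[\alpha]}$ (noted in the remark following $(H1)$--$(H4')$) to express $h_{[\alpha]}(u)$ as a product of elements already placed in $H'_\sigma$.

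For the $\mathcal{R}''$ assertion, the base case $u \in \mathcal{R}_1$ is immediate: picking $t$ with $(t,u) \in \mathcal{A}(R)^*$, the definition $(W4)$ combined with Lemma \ref{lemmaRel-of-WandH}(a) gives $w_{[\alpha]}(u) = w_{[\alpha]}(t,u) \in N'_\sigma$. For $u \in \mathcal{R}_{2k-1}$ with $k \geq 2$, I factor $u = u_1 v$ with $u_1 \in \mathcal{R}_1$ and $v := u_2 \cdots u_{2k-1} \in \mathcal{R}_{2k-2} \subseteq \mathcal{R}'$, and reduce to the key identity
\[
    w_{[\alpha]}(u_1 v) = h_{[\alpha]}(\bar v)\, w_{[\alpha]}(u_1).
\]
Given this identity, the right-hand side is a product of $h_{[\alpha]}(\bar v) \in H'_\sigma$ (by the first assertion, since $\bar v \in \mathcal{R}'$ by remark (a)) and $w_{[\alpha]}(u_1) \in N'_\sigma$ (the base case), so $w_{[\alpha]}(u) \in N'_\sigma$.

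The main obstacle is the clean derivation of the key identity. My route is to first rewrite Lemma \ref{lemmaW4'}(b) as $w_{[\alpha]}(s) = h_{[\alpha]}(\bar s)\, w_{[\alpha]}(1)$ for all $s \in R^*$; this follows from the stated identity $h_{[\alpha]}(t) = w_{[\alpha]}(\bar t)\, w_{[\alpha]}(1)$ after observing $w_{[\alpha]}(1)^{-1} = w_{[\alpha]}(1)$ (specialize Lemma \ref{lemmaW4'}(a) at $t = 1$, using $\bar 1 = 1$). Then the chain
\[
    w_{[\alpha]}(u_1 v) = h_{[\alpha]}(\overline{u_1 v})\, w_{[\alpha]}(1) = h_{[\alpha]}(\bar v)\, h_{[\alpha]}(\bar u_1)\, w_{[\alpha]}(1) = h_{[\alpha]}(\bar v)\, w_{[\alpha]}(u_1),
\]
using the multiplicativity of $h_{[\alpha]}$ and the commutativity of the torus (which contains $H$), completes the derivation.
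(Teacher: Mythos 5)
Your proposal is correct and rests on the same ingredients the paper invokes, namely Lemma~\ref{lemmaRel-of-WandH} together with Lemma~\ref{lemmaW4'} and the fact that each $w_{[\alpha]}(t,u)$ with $(t,u)\in\mathcal{A}(R)^*$ lies in $N'_\sigma$; the paper compresses this into the single identity $w_{[\alpha]}(t_1,u_1)\,w_{[\alpha]}(t_2,u_2)^{-1}\,w_{[\alpha]}(t_3,u_3)=w_{[\alpha]}(u_1u_2^{-1}u_3)$, while you spell out the same manipulations as a pairing argument for $\mathcal{R}'$ and an induction via $w_{[\alpha]}(u_1v)=h_{[\alpha]}(\bar v)\,w_{[\alpha]}(u_1)$ for $\mathcal{R}''$. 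All the auxiliary steps you use (closure of $\mathcal{R}_1$ under $\theta$ and inverses, $\bar{\bar a}=a$ in type ${}^2A_{2n}$, $w_{[\alpha]}(1)^2=1$, multiplicativity of $h_{[\alpha]}$, and $N'_\sigma\cap H_\sigma=H'_\sigma$) check out, so this is essentially the paper's proof written out in detail.
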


\begin{proof}
    Immediate from Lemma \ref{lemmaRel-of-WandH} and the fact that $w_{[\alpha]}(t_1,u_1) w_{[\alpha]}(t_2,u_2)^{-1} w_{[\alpha]}(t_3, u_3) = w_{[\alpha]}(u_1 u_2^{-1} u_3) \in N'_\sigma$.
\end{proof}


\subsection{The Steinberg Relations} In this subsection we present some useful relations in the group $E'_\sigma (R)$. 
Recall that $w_\alpha (t) X_\beta w_{\alpha}(t)^{-1} = c t^{-\langle \beta, \alpha \rangle} X_{s_\alpha(\beta)}$, where $c = c(\alpha, \beta) = \pm 1$ is independent of $t, R$ and the representation chosen, and $c(\alpha, \beta)= c(\alpha, -\beta)$ (see Lemma $19(a)$ of \cite{RS}). 
If $\Phi$ is a root system with one root length, then the function $c: \Phi \times \Phi \longrightarrow \{\pm 1 \}$ can be preciously given as follows: 
\[
    c(\alpha, \beta) = c(\alpha, -\beta) = \begin{cases}
        -1 & \text{ if } \alpha = \beta, \\
        1 & \text{ if } \alpha \neq \beta \text{ and } \alpha \pm \beta \not\in \Phi, \\
        N_{\alpha, \beta} & \text{ if } \alpha \neq \beta \text{ and } \alpha + \beta \in \Phi, \alpha - \beta \not\in \Phi.
    \end{cases}
\]

Our objective is to establish relations for twisted Chevalley groups analogous to those in Lemma 20 of \cite{RS} for Chevalley groups. But before we do that, let us consider the function $d: \Phi_\rho \times \Phi \longrightarrow \{ \pm 1 \}$ given by
\[
    d([\alpha], \beta) = \begin{cases}
        c(\alpha, \beta) & \text{ if } [\alpha] \sim A_1, \\
        c(\alpha, \beta) c(\bar{\alpha}, s_\alpha (\beta)) & \text{ if } [\alpha] \sim A_1^2, \\
        c(\alpha, \beta) c(\bar{\alpha}, s_\alpha (\beta)) c(\bar{\bar{\alpha}}, s_{\bar{\alpha}}s_\alpha (\beta)) & \text{ if } [\alpha] \sim A_1^3, \\
        c(\alpha, \beta) c(\bar{\alpha}, s_\alpha (\beta)) c(\alpha, s_{\bar{\alpha}}s_\alpha (\beta)) & \text{ if } [\alpha] \sim A_2.
    \end{cases} 
\]

\begin{lemma}\label{lemma on d}
    \normalfont
    For every $\alpha, \beta \in \Phi$, $d([\alpha], \beta) = d([\alpha], \bar{\beta})$. Moreover if $\beta \in \Phi$ is such that $\beta \neq \bar{\beta}$ and $[\beta] \sim A_2$, then $d([\alpha], \beta + \bar{\beta}) = N_{s_{\alpha}(\bar{\beta}), s_{\alpha}(\beta)} N_{\bar{\beta}, \beta}$. 
\end{lemma}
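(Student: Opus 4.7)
The plan is to interpret $d([\alpha], \beta)$ as the scalar occurring in the conjugation action of $w_{[\alpha]}(1)$ on the Chevalley generator $X_\beta$. By unfolding the definition of $w_{[\alpha]}(1)$ in (W1)--(W4) at $t=1$ as a product of Weyl elements $w_\gamma(1)$ with $\gamma \in [\alpha]$, and iterating the standard relation $w_\gamma(1) X_\delta w_\gamma(1)^{-1} = c(\gamma, \delta) X_{s_\gamma(\delta)}$, I first establish the key identity
\[
    w_{[\alpha]}(1) \, X_\beta \, w_{[\alpha]}(1)^{-1} \;=\; d([\alpha], \beta) \, X_{\phi_{[\alpha]}(\beta)},
\]
where $\phi_{[\alpha]} \in W_\rho$ is the Weyl reflection associated with $[\alpha]$ (equal to $s_\alpha$, $s_\alpha s_{\bar\alpha}$, $s_\alpha s_{\bar\alpha} s_{\bar{\bar\alpha}}$, or $s_\alpha s_{\bar\alpha} s_\alpha$ in the four respective types).

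For the first equality, I apply the graph automorphism $\rho$ to both sides of the identity above. Since $w_{[\alpha]}(1) \in G_\sigma(R)$ is fixed by $\sigma = \rho \circ \theta$ and $\theta$ acts trivially on its rational matrix entries, $\rho$ fixes $w_{[\alpha]}(1)$. Combining this with $\rho(X_\gamma) = \epsilon_\gamma X_{\bar\gamma}$ and the $\rho$-equivariance $\overline{\phi_{[\alpha]}(\beta)} = \phi_{[\alpha]}(\bar\beta)$ (which holds because $\phi_{[\alpha]} \in W_\rho$), I obtain
\[
    d([\alpha], \bar\beta) \;=\; \epsilon_\beta \, \epsilon_{\phi_{[\alpha]}(\beta)} \, d([\alpha], \beta).
\]
By Lemma~\ref{epsilonalpha}, $\epsilon_\gamma = -1$ precisely when $[\gamma] \sim A_2$ and $\gamma = \bar\gamma$; since $\phi_{[\alpha]}$ preserves both the $\rho$-orbit type and the property of being $\rho$-fixed, $\epsilon_\beta = \epsilon_{\phi_{[\alpha]}(\beta)}$, giving $d([\alpha], \beta) = d([\alpha], \bar\beta)$.

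For the second equality, I invoke the Lie bracket relation $[X_{\bar\beta}, X_\beta] = N_{\bar\beta, \beta} \, X_{\beta + \bar\beta}$ (valid since $[\beta] \sim A_2$ with $\beta \neq \bar\beta$ forces $\beta + \bar\beta \in \Phi$) and conjugate both sides by $w_{[\alpha]}(1)$. Applying the key identity twice and using $d([\alpha], \beta)\, d([\alpha], \bar\beta) = 1$ from the first equality, together with the fact that $[\beta] \sim A_2$ forces $\Phi$ to be of type $A_{2n}$ (which is simply laced, so all $N_{\gamma, \delta} \in \{\pm 1\}$ and $N_{\bar\beta, \beta}^{-1} = N_{\bar\beta, \beta}$), I arrive at
\[
    d([\alpha], \beta + \bar\beta) \;=\; N_{\bar\beta, \beta} \, N_{\phi_{[\alpha]}(\bar\beta), \phi_{[\alpha]}(\beta)}.
\]
A case-by-case analysis on the type of $[\alpha]$ (trivial when $[\alpha] \sim A_1$; for the remaining types using the Jacobi identity together with the $\rho$-equivariance relation $N_{\bar\gamma, \bar\delta} = \epsilon_\gamma \epsilon_\delta \epsilon_{\gamma + \delta} N_{\gamma, \delta}$ derived from $\rho([X_\gamma, X_\delta]) = [\rho(X_\gamma), \rho(X_\delta)]$) then reduces $N_{\phi_{[\alpha]}(\bar\beta), \phi_{[\alpha]}(\beta)}$ to $N_{s_\alpha(\bar\beta), s_\alpha(\beta)}$, completing the proof. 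The main obstacle I anticipate is precisely this last reduction: for $[\alpha]$ not of type $A_1$, the element $\phi_{[\alpha]}$ involves additional reflections by $\bar\alpha$ (and $\bar{\bar\alpha}$), and one must carefully track signs by exploiting the commutation of these reflections with $s_\alpha$ and the specific values of the $\epsilon_\gamma$-signs for self-conjugate roots in $A_{2n}$.
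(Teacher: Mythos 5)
Your reading of $d([\alpha],\beta)$ as the sign in the conjugation identity $w_{[\alpha]}(1)\,X_\beta\,w_{[\alpha]}(1)^{-1}=d([\alpha],\beta)\,X_{\phi_{[\alpha]}(\beta)}$ is correct (this is exactly how $d$ enters Proposition~\ref{prop wxw^{-1}}; the only point to check is that the order of the factors $w_\alpha(1),w_{\bar\alpha}(1),\dots$ matches the order of the $c$-factors in the definition of $d$, which it does because the reflections in a class of type $A_1^k$ commute and (W4$'$) at $t=1$ is $w_\alpha(1)w_{\bar\alpha}(1)w_\alpha(1)$). Your proof of the first equality is genuinely different from the paper's and cleaner: the paper verifies $d([\alpha],\beta)=d([\alpha],\bar\beta)$ by evaluating the $c$-products case by case according to the values of $\langle\beta,\alpha\rangle$ and $\langle\bar\beta,\alpha\rangle$ (writing out only $[\alpha]\sim A_1,A_1^2$ and omitting the rest), whereas you obtain it in one stroke by applying $\rho$, using that $w_{[\alpha]}(1)$ is $\rho$-fixed (Lemma~\ref{lemmaW4'} together with $\theta$ acting trivially on its integral entries), that $\phi_{[\alpha]}\in W_\rho$ so that $\overline{\phi_{[\alpha]}(\beta)}=\phi_{[\alpha]}(\bar\beta)$, and that by Lemma~\ref{epsilonalpha} the sign $\epsilon_\gamma$ depends only on whether $\gamma$ is the $\rho$-fixed root of an $A_2$-class, a condition preserved by $\phi_{[\alpha]}$. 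That half of your argument is complete, uniform in the type of $[\alpha]$, and I would regard it as an improvement.

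For the second equality, your conjugation of $[X_{\bar\beta},X_\beta]=N_{\bar\beta,\beta}X_{\beta+\bar\beta}$ correctly gives $d([\alpha],\beta+\bar\beta)=N_{\bar\beta,\beta}\,N_{\phi_{[\alpha]}(\bar\beta),\phi_{[\alpha]}(\beta)}$, so the lemma now rests entirely on the identity
\[
N_{\phi_{[\alpha]}(\bar\beta),\,\phi_{[\alpha]}(\beta)}\;=\;N_{s_\alpha(\bar\beta),\,s_\alpha(\beta)},
\]
which you only gesture at, and this is a genuine gap. Unwinding it with the rule $N_{s_\eta\gamma,s_\eta\delta}=c(\eta,\gamma)c(\eta,\delta)c(\eta,\gamma+\delta)N_{\gamma,\delta}$ turns it (already for $[\alpha]\sim A_1^2$, where $\phi_{[\alpha]}=s_\alpha s_{\bar\alpha}$) into a product-of-$c$'s identity such as $c(\alpha,s_{\bar\alpha}\bar\beta)c(\alpha,s_{\bar\alpha}\beta)c(\alpha,s_{\bar\alpha}(\beta+\bar\beta))\,c(\bar\alpha,\bar\beta)c(\bar\alpha,\beta)c(\bar\alpha,\beta+\bar\beta)=c(\alpha,\bar\beta)c(\alpha,\beta)c(\alpha,\beta+\bar\beta)$, whose verification again splits into cases according to $\langle\beta,\alpha\rangle,\langle\bar\beta,\alpha\rangle$; for $[\alpha]\sim A_2$ one has $\phi_{[\alpha]}=s_{\alpha+\bar\alpha}$ and the subcase $[\alpha]=\pm[\beta]$ needs a separate explicit computation. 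In other words, your reduction does not eliminate the computational content of the paper's proof (which treats $[\alpha]\sim A_2$, splitting off $[\alpha]=\pm[\beta]$, and $[\alpha]\sim A_1^2$ according to $\langle\beta+\bar\beta,\alpha\rangle\in\{0,\pm1\}$); it reorganizes it into a structure-constant identity that you still have to prove, and the Jacobi identity plus $\rho$-equivariance alone do not visibly deliver it without that case analysis. Two small remarks: since $[\beta]\sim A_2$ with $\beta\neq\bar\beta$ forces $\Phi\sim A_{2n}$, only $[\alpha]\sim A_1^2$ or $A_2$ can occur, so your ``trivial $A_1$ case'' is vacuous; and the equality $d([\alpha],\beta)d([\alpha],\bar\beta)=1$ you use is fine since $d$ is $\{\pm1\}$-valued. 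To make the proposal a complete proof you must supply the displayed identity in both cases $[\alpha]\sim A_1^2$ and $[\alpha]\sim A_2$.
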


\begin{proof}
    First, we observe that \( c(\alpha, \beta) = c(\bar{\alpha}, \bar{\beta}) \) since \( N_{\alpha, \beta} = N_{\bar{\alpha}, \bar{\beta}} \).
    Assume \( [\alpha] \sim A_1 \). Then \( c(\alpha, \beta) = c(\alpha, \bar{\beta}) \) and hence \( d([\alpha], \beta) = d([\alpha], \bar{\beta}) \).
    Now, consider \( [\alpha] \sim A_1^2 \). In this case, depending on the possible values of \( \langle \beta, \alpha \rangle \) and \( \langle \bar{\beta}, \alpha \rangle \), we can address several subcases to establish our result. For instance, if \( \alpha \) and \( \beta \) are such that \( \langle \beta, \alpha \rangle = -1 \) and \( \langle \bar{\beta}, \alpha \rangle = 0 \), then 
    \( d([\alpha], \beta) = c(\alpha, \beta) c(\bar{\alpha}, s_\alpha (\beta)) = N_{\alpha, \beta}. \)
    On the other hand, under the same assumption on \( \alpha \) and \( \beta \), we have 
    \( d([\alpha], \bar{\beta}) = c(\alpha, \bar{\beta}) c(\bar{\alpha}, s_\alpha (\bar{\beta})) = c(\alpha, \bar{\beta}) c(\bar{\alpha}, \bar{\beta}) = N_{\bar{\alpha}, \bar{\beta}}. \)
    Therefore, \( d([\alpha], \beta) = d([\alpha], \bar{\beta}) \). One can similarly verify all the other subcases.
    Finally, the cases where \( [\alpha] \sim A_1^3 \) and \([\alpha] \sim A_2 \) can be proved in a similar manner and are thus omitted.

    The second assertion is only valid in the case where \( \Phi_\rho \sim {}^2 A_{2n} \ (n \geq 2) \). First, assume that \( [\alpha] \sim A_2 \). Unless \( [\alpha] = \pm [\beta] \), we have \( \langle \beta, \alpha \rangle = 0 \) and hence \( s_\alpha (\beta) = \beta \) and \( s_\alpha (\bar{\beta}) = \bar{\beta} \). Therefore, 
    \[
        d([\alpha], \beta + \bar{\beta}) = 1 = N_{s_{\alpha}(\bar{\beta}), s_{\alpha}(\beta)} N_{\bar{\beta}, \beta}.
    \]
    Now, if \( [\alpha] = \pm [\beta] \), then since \( d([\alpha], \beta + \bar{\beta}) = d([\alpha], - \beta - \bar{\beta}) \), we can assume that \( \alpha = \beta \) or \( \bar{\beta} \). Without loss of generality, assume \( \alpha = \beta \), then 
    \begin{align*}
        d([\alpha], \beta + \bar{\beta}) &= c(\alpha, \alpha + \bar{\alpha}) c(\bar{\alpha}, s_\alpha (\alpha + \bar{\alpha})) c(\alpha, s_{\bar{\alpha}} s_{\alpha} (\alpha + \bar{\alpha})) \\
        &= c(\alpha, \alpha + \bar{\alpha}) c(\bar{\alpha}, \bar{\alpha}) c(\alpha, -\bar{\alpha}) \\
        &= - c(\alpha, - \alpha - \bar{\alpha}) c(\alpha, \bar{\alpha}) \\
        &= - N_{\alpha, - \alpha - \bar{\alpha}} N_{\alpha, \bar{\alpha}} \\
        &= N_{s_{\alpha}(\bar{\beta}), s_\alpha (\beta)} N_{\bar{\beta}, \beta}.
    \end{align*}
    Now, assume that \( [\alpha] \sim A_1^2 \). In this case, the possible values of \( \langle \beta + \bar{\beta}, \alpha \rangle \) are \(-1, 0\), or \(1\). If \( \langle \beta + \bar{\beta}, \alpha \rangle = 0 \), then \( \langle \beta, \alpha \rangle = 0 = \langle \bar{\beta}, \alpha \rangle \) (since \( [\alpha] \sim A_1^2 \)). Therefore,
    \(
        N_{s_{\alpha}(\bar{\beta}), s_\alpha (\beta)} N_{\bar{\beta}, \beta} = (N_{\bar{\beta}, \beta})^2 = 1.
    \)
    On the other hand,
    \(
        d([\alpha], \beta + \bar{\beta}) = c(\alpha, \beta) c(\bar{\alpha}, s_\alpha (\beta)) = 1.
    \)
    Therefore,
    \(
        d([\alpha], \beta + \bar{\beta}) = N_{s_{\alpha}(\bar{\beta}), s_\alpha (\beta)} N_{\bar{\beta}, \beta}.
    \)
    The cases where \( \langle \beta + \bar{\beta}, \alpha \rangle = -1 \) or \( 1 \) can be proved similarly and are thus omitted.
\end{proof}

A version of the following Proposition is provided in \cite{EA1}. Here, we present more general formulas that cover all cases, unlike those in \cite[4.1 and 4.3]{EA1}.

\begin{prop}\label{prop wxw^{-1}}
    \normalfont 
    For $\alpha, \beta \in \Phi, t \in R^{\star}_{[\alpha]}$ and $u \in R_{[\beta]}$, we have the following relations:
    \[  
        w_{[\alpha]}(t) x_{[\beta]}(u) w_{[\alpha]}(t)^{-1} = 
        \begin{cases}
            x_{s_{[\alpha]}([\beta])}(d([\alpha],\beta') t^{-\langle \beta', \alpha \rangle} \cdot u') & \text{if } [\alpha] \sim A_1, \\
            x_{s_{[\alpha]}([\beta])}(d([\alpha],\beta') t^{-\langle \beta', \alpha \rangle} {(\bar{t})}^{-\langle \beta', \bar{\alpha} \rangle} \cdot u') & \text{if } [\alpha] \sim A_1^2, \\
            x_{s_{[\alpha]}([\beta])}(d([\alpha],\beta') t^{-\langle \beta', \alpha \rangle} {(\bar{t})}^{-\langle \beta', \bar{\alpha} \rangle} {(\bar{\bar{t}})}^{-\langle \beta', \bar{\bar{\alpha}} \rangle} \cdot u') & \text{if } [\alpha] \sim A_1^3, \\
            x_{s_{[\alpha]}([\beta])}(d([\alpha],\beta') t^{-\langle \beta', \alpha \rangle} {(\bar{t})}^{- \langle \beta', \bar{\alpha} \rangle} \cdot u' ) & \text{if } [\alpha] \sim A_2.
        \end{cases} 
    \]

    \noindent Where the values of $\beta'$ and $u'$ depend on the representing element of the class $s_{[\alpha]}([\beta])$. To be precious, if $[\beta] \sim A_1$, then $\beta' = \beta$ and $u' = u$; if $[\beta] \sim A_1^2$, then $\beta' = \beta$ or $\bar{\beta}$ and $u' = u$ or $\bar{u}$, respectively; if $[\beta] \sim A_1^3$, then $\beta' = \beta, \bar{\beta}$ or $\bar{\bar{\beta}}$ and $u' = u, \bar{u}$ or $\bar{\bar{u}}$, respectively; if $[\beta] \sim A_2$, then $\beta' = \beta$ or $\bar{\beta}$ and $u' = u = (u_1, u_2)$ or $(\bar{u_1}, \bar{u_2})$, respectively.
\end{prop}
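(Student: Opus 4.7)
The plan is to reduce everything to the classical Steinberg conjugation formula for untwisted Chevalley groups and then to collect the resulting signs and exponents into the function $d$. Recall the formula
\[
w_\alpha(t)\,x_\beta(u)\,w_\alpha(t)^{-1} = x_{s_\alpha(\beta)}\!\bigl(c(\alpha,\beta)\, t^{-\langle \beta, \alpha\rangle}\, u\bigr),
\]
which holds in $E_\pi(\Phi,R)$ for all $\alpha,\beta \in \Phi$, $t \in R^*$, $u \in R$. The strategy is, in each of the four cases, to expand $w_{[\alpha]}(t)$ into an ordered product of untwisted $w_{\alpha_i}(t_i)$ as in items (W1)--(W4$'$) and to expand $x_{[\beta]}(u)$ into an ordered product of $x_{\beta_j}(u_j)$ as in the four definitions preceding Proposition~\ref{lemma:X_alpha and R_alpha}, then apply the classical formula to each factor in turn and recollect the output in the form $x_{[\gamma]}(\cdot)$ for $[\gamma] = s_{[\alpha]}([\beta])$.

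For $[\alpha] \sim A_1$ the proposition is literally the classical formula. For $[\alpha] \sim A_1^2$ or $A_1^3$, the roots in the orbit $[\alpha]$ are pairwise orthogonal, so the elements $w_\alpha(t),\, w_{\bar\alpha}(\bar t)$ (and $w_{\bar{\bar\alpha}}(\bar{\bar t})$) commute and the reflections $s_\alpha,\,s_{\bar\alpha}$ (and $s_{\bar{\bar\alpha}}$) commute. Applying the classical formula once per factor, the exponent of $t$ collects to $t^{-\langle\beta',\alpha\rangle}$, the exponent of $\bar t$ to $\bar t^{-\langle\beta',\bar\alpha\rangle}$, and similarly for $\bar{\bar t}$, while the accumulated sign is exactly the definition of $d([\alpha],\beta')$. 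A short case analysis on the possible values of $\langle\beta',\alpha_i\rangle$, combined with Lemma~\ref{lemma on d}, shows that replacing $\beta'$ by another representative of its class leaves the formula unchanged, which is essential for the answer to be well-defined as an element indexed by the class $s_{[\alpha]}([\beta])$.

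For $[\alpha] \sim A_2$, I would use the form $w_{[\alpha]}(t) = w_\alpha(\bar t)\, w_{\bar\alpha}(1)\, w_\alpha(t)$ from item (W4$'$); here $\alpha$ and $\bar\alpha$ generate a subsystem of type $A_2$, and the composition $s_\alpha s_{\bar\alpha} s_\alpha = s_{\alpha+\bar\alpha}$ realises the reflection $s_{[\alpha]}$ on $V_\rho$. Applying the classical formula three times to each factor $x_{\beta_j}(u_j)$ in the expansion of $x_{[\beta]}(u)$ yields an output of the form $x_{s_{\alpha+\bar\alpha}(\beta_j)}(\epsilon\cdot t^{a}\bar t^{b}\,u_j)$, where $(a,b)$ is determined by $\langle\beta_j,\alpha\rangle,\langle\beta_j,\bar\alpha\rangle$ and the three-fold product of $c$-values collapses to the three-term product defining $d([\alpha],\beta')$. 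When $[\beta]\sim A_2$ and $[\beta]=\pm[\alpha]$, the argument $u$ is a pair $(u_1,u_2)$, so I must additionally track how the conjugation acts on the $x_{\beta+\bar\beta}$-component; for this I invoke the identity $d([\alpha],\beta+\bar\beta) = N_{s_\alpha(\bar\beta),s_\alpha(\beta)}\,N_{\bar\beta,\beta}$ from Lemma~\ref{lemma on d}, which guarantees that the conjugation respects the scaling $r\cdot(u_1,u_2)=(ru_1,\,r\bar r u_2)$ introduced in \S\ref{Subsec:TCG}.

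The main obstacle is precisely the $[\alpha]\sim A_2$ case when $[\beta]\sim A_2$ and $[\beta]=\pm[\alpha]$, because three successive reflections act on a pair $(u_1,u_2)$ and the cross-term supported on $\beta+\bar\beta$ must combine coherently with the terms on $\beta$ and $\bar\beta$. Controlling the signs in this sub-case requires repeated use of structure-constant identities derived from the Jacobi identity in $\mathcal{L}$ together with Lemma~\ref{epsilonalpha}, in the same style as the proofs of the commutator formulas (e), (f), (g) above. Once those identities are in hand, independence of the choice of representative $\beta'$ follows from Lemma~\ref{lemma on d}, and the remaining verifications are straightforward bookkeeping.
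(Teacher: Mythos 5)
Your proposal is correct and follows essentially the same route as the paper: the paper likewise reduces to Steinberg's conjugation formula (Lemma 19(a) of \cite{RS}), first recording how $w_{[\alpha]}(t)$ conjugates a single untwisted factor $x_\beta(u)$ in each of the four cases for $[\alpha]$, then expanding $x_{[\beta]}(u)$ into its untwisted factors (including the $x_{\beta+\bar\beta}(N_{\bar\beta,\beta}u_2)$ term when $[\beta]\sim A_2$) and recombining, with Lemma~\ref{lemma on d} supplying exactly the sign identities (including $d([\alpha],\beta+\bar\beta)=N_{s_\alpha(\bar\beta),s_\alpha(\beta)}N_{\bar\beta,\beta}$) that make the result independent of the chosen representative $\beta'$. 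Your explicit expansion of $w_{[\alpha]}(t)$ via (W1)--(W4$'$) is just the calculation the paper leaves implicit in its first step, so there is no substantive difference.
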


\begin{proof}
    By simple calculations using Lemma $19(a)$ of \cite{RS}, we have 
    \[
        w_{[\alpha]}(t) x_{\beta}(u) w_{[\alpha]}(t)^{-1} = \begin{cases}
            x_{s_{\alpha}(\beta)}(d([\alpha], \beta) t^{-\langle \beta, \alpha \rangle} u) & \text{ if } [\alpha] \sim A_1, \\
            x_{s_{\alpha}s_{\bar{\alpha}}(\beta)}(d([\alpha], \beta) t^{-\langle \beta, \alpha \rangle} \bar{t}^{-\langle \beta, \bar{\alpha} \rangle} u) & \text{ if } [\alpha] \sim A_1^2, \\
            x_{s_{\alpha}s_{\bar{\alpha}}s_{\bar{\bar{\alpha}}}(\beta)}(d([\alpha], \beta) t^{-\langle \beta, \alpha \rangle} \bar{t}^{-\langle \beta, \bar{\alpha}  \rangle} \bar{\bar{t}}^{-\langle \beta, \bar{\bar{\alpha}}  \rangle} u) & \text{ if } [\alpha] \sim A_1^2, \\
            x_{s_{\alpha + \bar{\alpha}}(\beta)}(d([\alpha], \beta) t^{-\langle \beta, \alpha \rangle} \bar{t}^{ - \langle \beta, \bar{\alpha} \rangle} u) & \text{ if } [\alpha] \sim A_2.
        \end{cases}
    \]
    If $[\beta] \sim A_1$ then our result follows the above equations. Now if $[\beta] \sim A_1^2$, then
    \begin{align*}
        w_{[\alpha]}(t) x_{[\beta]}(u) w_{[\alpha]}(t)^{-1} &= w_{[\alpha]}(t) x_{\beta}(u) x_{\bar{\beta}}(\bar{u}) w_{[\alpha]}(t)^{-1} \\
        &= (w_{[\alpha]}(t) x_{\beta}(u) w_{[\alpha]}(t)^{-1}) (w_{[\alpha]}(t) x_{\bar{\beta}}(\bar{u}) w_{[\alpha]}(t)^{-1}) \\
        &=
        \begin{cases}
            x_{s_{\alpha}(\beta)}(d([\alpha], \beta) t^{-\langle \beta, \alpha \rangle} u) x_{s_{\alpha}(\bar{\beta})}(d([\alpha], \bar{\beta}) t^{-\langle \bar{\beta}, \alpha \rangle} \bar{u}) & \text{ if } [\alpha] \sim A_1, \\
            x_{s_{\alpha}s_{\bar{\alpha}}(\beta)}(d([\alpha], \beta) t^{-\langle \beta, \alpha \rangle} \bar{t}^{-\langle \beta, \bar{\alpha} \rangle} u) \\ 
            \hspace{10mm} x_{s_{\alpha}s_{\bar{\alpha}}(\bar{\beta})}(d([\alpha], \bar{\beta}) t^{-\langle \bar{\beta}, \alpha \rangle} \bar{t}^{-\langle \bar{\beta}, \bar{\alpha} \rangle} \bar{u}) & \text{ if } [\alpha] \sim A_1^2, \\
            x_{s_{\alpha}s_{\bar{\alpha}}s_{\bar{\bar{\alpha}}}(\beta)}(d([\alpha], \beta) t^{-\langle \beta, \alpha \rangle} \bar{t}^{-\langle \beta, \bar{\alpha}  \rangle} \bar{\bar{t}}^{-\langle \beta, \bar{\bar{\alpha}}  \rangle} u) \\ 
            \hspace{10mm} x_{s_{\alpha}s_{\bar{\alpha}}s_{\bar{\bar{\alpha}}}(\bar{\beta})}(d([\alpha], \bar{\beta}) t^{-\langle \bar{\beta}, \alpha \rangle} \bar{t}^{-\langle \bar{\beta}, \bar{\alpha}  \rangle} \bar{\bar{t}}^{-\langle \bar{\beta}, \bar{\bar{\alpha}}  \rangle} \bar{u}) & \text{ if } [\alpha] \sim A_1^2, \\
            x_{s_{\alpha + \bar{\alpha}}(\beta)}(d([\alpha], \beta) t^{-\langle \beta, \alpha \rangle} \bar{t}^{- \langle \beta, \bar{\alpha} \rangle} u) \\
            \hspace{10mm} x_{s_{\alpha + \bar{\alpha}}(\bar{\beta})}(d([\alpha], \bar{\beta}) t^{-\langle \bar{\beta}, \alpha \rangle} \bar{t}^{- \langle \bar{\beta}, \bar{\alpha} \rangle} \bar{u}) & \text{ if } [\alpha] \sim A_2.
        \end{cases} \\
        &= 
        \begin{cases}
            x_{s_{[\alpha]}([\beta])}(d([\alpha],\beta') t^{-\langle \beta', \alpha \rangle} u') & \text{if } [\alpha] \sim A_1, \\
            x_{s_{[\alpha]}([\beta])}(d([\alpha],\beta') t^{-\langle \beta', \alpha \rangle} {(\bar{t})}^{-\langle \beta', \bar{\alpha} \rangle} u') & \text{if } [\alpha] \sim A_1^2, \\
            x_{s_{[\alpha]}([\beta])}(d([\alpha],\beta') t^{-\langle \beta', \alpha \rangle} {(\bar{t})}^{-\langle \beta', \bar{\alpha} \rangle} {(\bar{\bar{t}})}^{-\langle \beta', \bar{\bar{\alpha}} \rangle} u') & \text{if } [\alpha] \sim A_1^3, \\
            x_{s_{[\alpha]}([\beta])}(d([\alpha],\beta') t^{-\langle \beta', \alpha \rangle} {(\bar{t})}^{- \langle \beta', \bar{\alpha} \rangle} u' ) & \text{if } [\alpha] \sim A_2.
        \end{cases}
    \end{align*}
    Where $u'$ is either $u$ or $\bar{u}$, depending on the representative of the class $s_{[\alpha]}([\beta])$. The last equality follows from Lemma \ref{lemma on d} and the fact that $\langle \alpha, \beta \rangle = \langle \bar{\alpha}, \bar{\beta} \rangle$ for every root $\alpha, \beta \in \Phi$. The proof for the case when $[\beta] \sim A_1^3$ is similar and will therefore be omitted.
    Now if $[\beta] \sim A_2$, then
    \begin{align*}
        & \hspace{-3mm} w_{[\alpha]}(t) x_{[\beta]}(u) w_{[\alpha]}(t)^{-1} \\
        &= w_{[\alpha]}(t) x_{[\beta]}(u_1,u_2) w_{[\alpha]}(t)^{-1} \\
        &= w_{[\alpha]}(t) x_{\beta}(u_1) x_{\bar{\beta}}(\bar{u_1}) x_{\beta + \bar{\beta}}(N_{\bar{\beta},\beta}u_2) w_{[\alpha]}(t)^{-1} \\
        &= (w_{[\alpha]}(t) x_{\beta}(u_1) w_{[\alpha]}(t)^{-1}) (w_{[\alpha]}(t) x_{\bar{\beta}}(\bar{u_1}) w_{[\alpha]}(t)^{-1}) (w_{[\alpha]}(t) x_{\beta + \bar{\beta}}(N_{\bar{\beta},\beta}u_2) w_{[\alpha]}(t)^{-1}) \\
        &=
        \begin{cases}
            x_{s_{\alpha}(\beta)}(d([\alpha], \beta) t^{-\langle \beta, \alpha \rangle} u_1) x_{s_{\alpha}(\bar{\beta})}(d([\alpha], \bar{\beta}) t^{-\langle \bar{\beta}, \alpha \rangle} \bar{u_1}) \\ 
            \hspace{10mm} x_{s_{\alpha}(\beta + \bar{\beta})}(d([\alpha], \beta + \bar{\beta}) N_{\bar{\beta},\beta} t^{-\langle \beta + \bar{\beta}, \alpha \rangle} u_2) & \text{ if } [\alpha] \sim A_1, \\
            x_{s_{\alpha}s_{\bar{\alpha}}(\beta)}(d([\alpha], \beta) t^{-\langle \beta, \alpha \rangle} \bar{t}^{-\langle \beta, \bar{\alpha} \rangle} u_1) x_{s_{\alpha}s_{\bar{\alpha}}(\bar{\beta})}(d([\alpha], \bar{\beta}) t^{-\langle \bar{\beta}, \alpha \rangle} \bar{t}^{-\langle \bar{\beta}, \bar{\alpha} \rangle} \bar{u_1}) \\ 
            \hspace{10mm} x_{s_{\alpha}s_{\bar{\alpha}}(\beta+ \bar{\beta})}(d([\alpha], \beta + \bar{\beta}) N_{\bar{\beta},\beta} t^{-\langle \beta + \bar{\beta}, \alpha \rangle} \bar{t}^{-\langle \beta + \bar{\beta}, \bar{\alpha} \rangle} u_2) & \text{ if } [\alpha] \sim A_1^2, \\
            x_{s_{\alpha}s_{\bar{\alpha}}s_{\bar{\bar{\alpha}}}(\beta)}(d([\alpha], \beta) t^{-\langle \beta, \alpha \rangle} \bar{t}^{-\langle \beta, \bar{\alpha}  \rangle} \bar{\bar{t}}^{-\langle \beta, \bar{\bar{\alpha}}  \rangle} u_1) \\ 
            \hspace{5mm} x_{s_{\alpha}s_{\bar{\alpha}}s_{\bar{\bar{\alpha}}}(\bar{\beta})}(d([\alpha], \bar{\beta}) t^{-\langle \bar{\beta}, \alpha \rangle} \bar{t}^{-\langle \bar{\beta}, \bar{\alpha}  \rangle} \bar{\bar{t}}^{-\langle \bar{\beta}, \bar{\bar{\alpha}}  \rangle} \bar{u_1}) \\
            \hspace{10mm} x_{s_{\alpha}s_{\bar{\alpha}}s_{\bar{\bar{\alpha}}}(\beta + \bar{\beta})}(d([\alpha], \beta + \bar{\beta}) N_{\bar{\beta},\beta} t^{-\langle \beta + \bar{\beta}, \alpha \rangle} \bar{t}^{-\langle \beta + \bar{\beta}, \bar{\alpha}  \rangle} \bar{\bar{t}}^{-\langle \beta + \bar{\beta}, \bar{\bar{\alpha}}  \rangle} u_2) & \text{ if } [\alpha] \sim A_1^3, \\
            x_{s_{\alpha + \bar{\alpha}}(\beta)}(d([\alpha], \beta) t^{-\langle \beta, \alpha \rangle} \bar{t}^{ - \langle \beta, \bar{\alpha} \rangle} u_1) x_{s_{\alpha + \bar{\alpha}}(\bar{\beta})}(d([\alpha], \bar{\beta}) t^{-\langle \bar{\beta}, \alpha \rangle} \bar{t}^{-\langle \bar{\beta}, \bar{\alpha} \rangle} \bar{u_1}) \\
            \hspace{10mm} x_{s_{\alpha + \bar{\alpha}}(\beta + \bar{\beta})}(d([\alpha], \beta + \bar{\beta}) N_{\bar{\beta},\beta} t^{-\langle \beta + \bar{\beta}, \alpha \rangle} \bar{t}^{-\langle \beta + \bar{\beta}, \bar{\alpha} \rangle} u_2) & \text{ if } [\alpha] \sim A_2.
        \end{cases} \\
        &= 
        \begin{cases}
            x_{s_{[\alpha]}([\beta])}(d([\alpha],\beta') t^{-\langle \beta', \alpha \rangle} \cdot u') & \text{if } [\alpha] \sim A_1, \\
            x_{s_{[\alpha]}([\beta])}(d([\alpha],\beta') t^{-\langle \beta', \alpha \rangle} {(\bar{t})}^{-\langle \beta', \bar{\alpha} \rangle} \cdot u') & \text{if } [\alpha] \sim A_1^2, \\
            x_{s_{[\alpha]}([\beta])}(d([\alpha],\beta') t^{-\langle \beta', \alpha \rangle} {(\bar{t})}^{-\langle \beta', \bar{\alpha} \rangle} {(\bar{\bar{t}})}^{-\langle \beta', \bar{\bar{\alpha}} \rangle} \cdot u') & \text{if } [\alpha] \sim A_1^3, \\
            x_{s_{[\alpha]}([\beta])}(d([\alpha],\beta') t^{-\langle \beta', \alpha \rangle} {(\bar{t})}^{- \langle \beta', \bar{\alpha} \rangle} \cdot u' ) & \text{if } [\alpha] \sim A_2.
        \end{cases}
    \end{align*}
    Where $u'$ is either $(u_1, u_2)$ or $(\bar{u}_1, \bar{u_2})$, depending on the representative of the class $s_{[\alpha]}([\beta])$. The last equality follows from Lemma \ref{lemma on d} and the fact that $\langle \alpha, \beta \rangle = \langle \bar{\alpha}, \bar{\beta} \rangle$ for every $\alpha, \beta \in \Phi$. 
\end{proof}

\begin{prop}
    \normalfont 
    For $\alpha, \beta \in \Phi, t \in R^{\star}_{[\alpha]}$ and $u \in R^{\star}_{[\beta]}$, we have the following relations:
    \[  
        w_{[\alpha]}(t) w_{[\beta]}(u) w_{[\alpha]}(t)^{-1} = 
        \begin{cases}
            w_{s_{[\alpha]}([\beta])}(d([\alpha],\beta') t^{-\langle \beta', \alpha \rangle} \cdot u') & \text{if } [\alpha] \sim A_1, \\
            w_{s_{[\alpha]}([\beta])}(d([\alpha],\beta') t^{-\langle \beta', \alpha \rangle} {(\bar{t})}^{-\langle \beta', \bar{\alpha} \rangle} \cdot u') & \text{if } [\alpha] \sim A_1^2, \\
            w_{s_{[\alpha]}([\beta])}(d([\alpha],\beta') t^{-\langle \beta', \alpha \rangle} {(\bar{t})}^{-\langle \beta', \bar{\alpha} \rangle} {(\bar{\bar{t}})}^{-\langle \beta', \bar{\bar{\alpha}} \rangle} \cdot u') & \text{if } [\alpha] \sim A_1^3, \\
            w_{s_{[\alpha]}([\beta])}(d([\alpha],\beta') t^{-\langle \beta', \alpha \rangle} {(\bar{t})}^{- \langle \beta', \bar{\alpha} \rangle} \cdot u' ) & \text{if } [\alpha] \sim A_2.
        \end{cases} 
    \]

    \noindent Where $\beta' = \beta, \bar{\beta}$ or $\bar{\bar{\beta}}$ and $u' = u, \bar{u}$ or $\bar{\bar{u}}$, respectively, depending on the representing element of class $s_{[\alpha]}([\beta]).$ 
\end{prop}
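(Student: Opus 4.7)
The proof plan is to mimic the strategy of the previous proposition: expand $w_{[\beta]}(u)$ as a product of three $x_{\pm[\beta]}$-elements, conjugate each factor by $w_{[\alpha]}(t)$ using the previous proposition, and reassemble the resulting product into a $w$-element attached to $s_{[\alpha]}([\beta])$. Concretely, for $[\beta]\sim A_1,A_1^2,A_1^3$, write
\[
w_{[\beta]}(u)=x_{[\beta]}(u)\,x_{-[\beta]}(-u^{-1})\,x_{[\beta]}(u),
\]
and for $[\beta]\sim A_2$, either use the defining formula (W4) or, via Lemma~\ref{lemmaRel-of-WandH}(a), the form (W4$'$) $w_{[\beta]}(u)=w_\beta(\bar u)w_{\bar\beta}(1)w_\beta(u)$. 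Distributing conjugation gives a product of three conjugated $x$-factors, to each of which the previous proposition applies directly.

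Let $v$ denote the argument predicted by the statement (for instance, $v=d([\alpha],\beta')\,t^{-\langle\beta',\alpha\rangle}\cdot u'$ in the $A_1$-case, with the obvious adjustment for $A_1^2,A_1^3,A_2$). The outer two factors are then $x_{s_{[\alpha]}([\beta])}(v)$ by the previous proposition. For the middle factor $w_{[\alpha]}(t)\,x_{-[\beta]}(-u^{-1})\,w_{[\alpha]}(t)^{-1}$, I would verify that it equals $x_{-s_{[\alpha]}([\beta])}(-v^{-1})$ using two observations: first, $s_{[\alpha]}(-[\beta])=-s_{[\alpha]}([\beta])$ because reflections are linear; second, $d([\alpha],-\beta')=d([\alpha],\beta')$, which is immediate from $c(\alpha,\beta)=c(\alpha,-\beta)$ and the definition of $d$ (and from Lemma~\ref{lemma on d} one also reads off $d([\alpha],\cdot)$ being $\pm 1$, hence self-inverse). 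Combined with the elementary identities $t^{-\langle-\beta',\alpha\rangle}=t^{\langle\beta',\alpha\rangle}$ and $-(u^{-1})=(-v)^{-1}\cdot(\text{sign factors})$, the middle factor collapses to $x_{-s_{[\alpha]}([\beta])}(-v^{-1})$, and the three factors then recombine into $w_{s_{[\alpha]}([\beta])}(v)$ by definition of $w$.

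The main obstacle is the $A_2$-on-$A_2$ case (and, to a lesser extent, any case in which $[\beta]\sim A_2$): here the notation $v^{-1}$ must be interpreted in the group $\mathcal{A}(R)$, and the argument $u$ of $w_{[\beta]}(u)$ is really shorthand for a pair, with the scalar action $r\cdot(u_1,u_2)=(ru_1,r\bar ru_2)$ governing how the formulas of the previous proposition interact with $\bar u^{-1}$ and $u\bar u^{-1}$ appearing in (W4). The bookkeeping is tedious but purely mechanical: one checks that applying the previous proposition to each of $x_{[\beta]}(t,u)$, $x_{-[\beta]}(-\bar u^{-1}\cdot(t,u))$, and $x_{[\beta]}(u\bar u^{-1}\cdot(t,u))$ produces, respectively, $x_{s_{[\alpha]}([\beta])}(\tilde t,\tilde u)$, $x_{-s_{[\alpha]}([\beta])}(-\overline{\tilde u}^{\,-1}\cdot(\tilde t,\tilde u))$, and $x_{s_{[\alpha]}([\beta])}(\tilde u\,\overline{\tilde u}^{\,-1}\cdot(\tilde t,\tilde u))$ for the appropriate pair $(\tilde t,\tilde u)$, at which point the three factors reassemble into $w_{s_{[\alpha]}([\beta])}(\tilde t,\tilde u)$. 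Once this is confirmed, Lemma~\ref{lemmaRel-of-WandH}(a) converts the pair-form back into the single-argument form required by the statement.
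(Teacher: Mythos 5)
The paper itself omits this proof, stating only that it is analogous to the proof of Proposition~\ref{prop wxw^{-1}}, and your factor-wise strategy is exactly that intended analogue: for $[\beta]\sim A_1, A_1^2, A_1^3$ your argument is correct as written, since $d([\alpha],-\beta')=d([\alpha],\beta')$, $d=d^{-1}$, and the sign conventions for the ordered set $-[\beta]$ make the conjugated middle factor come out as $x_{-s_{[\alpha]}([\beta])}(-v^{-1})$, so the three factors recombine into $w_{s_{[\alpha]}([\beta])}(v)$ by the defining relations (W1)--(W3).

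There is, however, a genuine gap in your treatment of the case $[\beta]\sim A_2$. The statement takes $u\in R^{\star}_{[\beta]}=R^*$, i.e.\ an arbitrary unit, and for such a $u$ the decomposition (W4) that your main computation relies on need not exist: there may be no companion element $p$ with $p\bar p=u+\bar u$ (this is precisely why the paper introduces (W4$'$) and the sets $\mathcal{R}_1,\mathcal{R}$), and Lemma~\ref{lemmaRel-of-WandH}(a) only converts (W4) into (W4$'$) when such a pair exists, not conversely. So your primary route covers only $u\in\mathcal{R}_1$. For the fallback via (W4$'$), note that its factors $w_\beta(\bar u)$, $w_{\bar\beta}(1)$, $w_\beta(u)$ are \emph{untwisted} $w$-elements, to which Proposition~\ref{prop wxw^{-1}} does not apply directly; you must instead use the untwisted conjugation formula established in the first display of its proof (equivalently Lemma~19/20 of Steinberg). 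Doing so yields $w_{\gamma}(c\bar u)\,w_{\bar\gamma}(\bar c)\,w_{\gamma}(cu)$ with $c=d([\alpha],\beta')t^{-\langle\beta',\alpha\rangle}(\bar t)^{-\langle\beta',\bar\alpha\rangle}$, whose middle parameter is $\bar c\neq 1$ in general, so the product is not literally in (W4$'$) shape and does not "reassemble by definition"; one further step is needed, e.g.\ passing to the rank-one $SL_3$ model of Lemma~\ref{lemmaW4'} and checking by matrix multiplication that this product equals $w_{[\gamma]}(c\bar c\,u')$. That computation also makes explicit that the resulting parameter is $c\bar c\,u'$ (the dot in the statement acting as on second coordinates of $\mathcal{A}(R)$), which your pair bookkeeping produces in the $\mathcal{R}_1$ case but which should be stated; with these two points supplied, your proof is complete and coincides with the paper's intended argument.
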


\begin{prop}
    \normalfont 
    For $\alpha, \beta \in \Phi, t \in R^{\star}_{[\alpha]}$ and $u \in R^{\star}_{[\beta]}$, we have 
    \[  
        w_{[\alpha]}(t) h_{[\beta]}(u) w_{[\alpha]}(t)^{-1} = h_{s_{[\alpha]}([\beta])} (u).
    \] 
\end{prop}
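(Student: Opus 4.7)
\medskip

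\noindent\textbf{Proof Plan.}
The plan is to reduce the identity to the standard Chevalley formula
\[
w_\gamma(s)\, h_\delta(v)\, w_\gamma(s)^{-1} = h_{s_\gamma(\delta)}(v),
\]
valid for $\gamma,\delta \in \Phi$, $s, v \in R^*$ (Lemma~20 of Steinberg~\cite{RS}), combined with the explicit product decompositions of $w_{[\alpha]}(t)$ and $h_{[\beta]}(u)$ provided by (W1)--(W4$'$) and (H1)--(H4$'$). Both $w_{[\alpha]}(t)$ and $h_{[\beta]}(u)$ lie in the ambient untwisted Chevalley group, where these decompositions are available.

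First, I would expand $w_{[\alpha]}(t)$ as a product of untwisted $w_\gamma$-factors over the $\rho$-orbit of $\alpha$: the cases $[\alpha]\sim A_1, A_1^2, A_1^3$ are immediate from (W1)--(W3), and the $A_2$ case uses (W4$'$), giving $w_{[\alpha]}(t) = w_\alpha(\bar t)\, w_{\bar\alpha}(1)\, w_\alpha(t)$. In each case, the composite reflection on $\Phi$ obtained from these factors is precisely $s_\alpha, \ s_\alpha s_{\bar\alpha}, \ s_\alpha s_{\bar\alpha}s_{\bar{\bar\alpha}}, \ s_\alpha s_{\bar\alpha}s_\alpha = s_{\alpha+\bar\alpha}$ respectively, which is exactly the action of $s_{[\alpha]}$ on the $\rho$-invariant subspace (and hence on $\rho$-orbits of roots). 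Similarly, expand $h_{[\beta]}(u)$ via (H1)--(H4$'$) as a commuting product $\prod_j h_{\beta_j}(u_j)$ over the $\rho$-orbit of $\beta$, with $u_j \in \{u, \bar u, \bar{\bar u}\}$ as appropriate.

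Next, conjugating a single $h_{\beta_j}(u_j)$ by $w_{[\alpha]}(t)$ via the standard Chevalley identity iteratively through the factors of $w_{[\alpha]}(t)$ yields $h_{s_{[\alpha]}(\beta_j)}(u_j)$, since the twist parameter $v$ in $h_\delta(v)$ is unchanged under each conjugation. Because $H$ is abelian, we may apply this to each factor of $h_{[\beta]}(u)$ independently and then reassemble. The result is $\prod_j h_{s_{[\alpha]}(\beta_j)}(u_j)$, which—using that $s_{[\alpha]}$ commutes with $\rho$ on $\Phi$, so that $\{s_{[\alpha]}(\beta_j)\}$ is exactly the $\rho$-orbit of $s_{[\alpha]}(\beta)$ with twists matching those prescribed by (H1)--(H4$'$)—is by definition $h_{s_{[\alpha]}([\beta])}(u)$.

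The main obstacle is the bookkeeping with representatives. One must verify that the ordered expansion of $h_{s_{[\alpha]}([\beta])}(u)$ produced by the reassembly step matches the ordered expansion dictated by the chosen representative of the class $s_{[\alpha]}([\beta])$, and in particular that the twists $u_j$ of $u$ distribute over the orbit of $s_{[\alpha]}(\beta)$ in the same pattern as they did over the orbit of $\beta$. This is routine for $[\beta]\sim A_1$ but requires a short check for $[\beta]\sim A_1^2, A_1^3, A_2$, where the formulas for $h_{[\beta]}(u)$ involve $\bar u$ and $\bar{\bar u}$. Since the elements $h_{\beta_j}(u_j)$ commute in $H$, the order of reassembly does not matter, and the identity $\bar{\bar{\bar u}} = u$ (resp. $\bar{\bar u} = u$) closes the loop in the $A_1^3$ (resp. $A_1^2$, $A_2$) cases. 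In the $A_2$ case for $[\alpha]$, one additionally uses Lemma~\ref{epsilonalpha} to verify that the middle factor $w_{\bar\alpha}(1)$ contributes no sign correction to the $h$-term.
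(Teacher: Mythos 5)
Your proposal is correct and follows essentially the same route the paper intends: the paper omits this proof, noting it is analogous to that of Proposition~\ref{prop wxw^{-1}}, which is exactly your strategy of decomposing $w_{[\alpha]}(t)$ and $h_{[\beta]}(u)$ into untwisted factors via (W1)--(W4$'$) and (H1)--(H4$'$), applying Steinberg's untwisted conjugation formula factor by factor, and reassembling using that the $h$-factors commute and that $s_{[\alpha]}$ commutes with $\rho$. The only cosmetic difference is that for the $h$-conjugation the signs and $t$-powers cancel automatically (so no analogue of the function $d([\alpha],\beta')$ is needed), which your plan correctly observes.
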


The proofs of the above two propositions are analogous to the proof of Proposition \ref{prop wxw^{-1}}; therefore, we omit them.
Finally, we conclude this section by stating the following well-known relations.

\begin{prop}[see {\cite[2.4]{KS1}}]\label{prop h(chi)xh(chi)^-1}
    \normalfont
    Let $[\alpha] \in \Phi_\rho$ and $h(\chi) \in T_\sigma (R)$. Then for $u \in R_{[\alpha]}$, we have
    \[
        h(\chi) x_{[\alpha]}(u) h(\chi)^{-1} = x_{[\alpha]} (\chi (\alpha) \cdot u).
    \]
\end{prop}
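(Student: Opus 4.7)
The plan is to proceed by a direct case analysis on the type of $[\alpha]$, reducing the twisted identity to the classical untwisted relation
\[
h(\chi) x_\beta(s) h(\chi)^{-1} = x_\beta(\chi(\beta)\, s), \qquad \beta \in \Phi,\ s \in R,
\]
which was recorded earlier in the text, and then repackaging the result using the self-conjugacy of $\chi$. The defining condition $\chi(\rho(\lambda)) = \theta(\chi(\lambda))$ translates, applied to roots, to $\chi(\bar\beta) = \overline{\chi(\beta)}$ for every $\beta \in \Phi$; combined with multiplicativity, this is the only fact about $\chi$ that the proof requires.

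First I would dispatch the case $[\alpha] \sim A_1$. Here $\bar\alpha = \alpha$, so $\chi(\alpha) = \overline{\chi(\alpha)}$ lies in $R_\theta$, hence $\chi(\alpha) u \in R_\theta = R_{[\alpha]}$, and a single application of the untwisted relation gives $h(\chi) x_\alpha(u) h(\chi)^{-1} = x_\alpha(\chi(\alpha) u) = x_{[\alpha]}(\chi(\alpha)\cdot u)$. Next I would handle $[\alpha] \sim A_1^2$ by expanding $x_{[\alpha]}(u) = x_\alpha(u) x_{\bar\alpha}(\bar u)$ and conjugating each factor; the resulting second factor has coefficient $\chi(\bar\alpha) \bar u = \overline{\chi(\alpha)}\,\bar u = \overline{\chi(\alpha) u}$, so the product collapses to $x_{[\alpha]}(\chi(\alpha) u)$. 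The case $[\alpha] \sim A_1^3$ is identical modulo adding a third factor and using $\chi(\bar{\bar\alpha}) = \overline{\overline{\chi(\alpha)}}$.

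The main (and essentially the only interesting) step is $[\alpha] \sim A_2$, where $u = (u_1, u_2) \in \mathcal{A}(R)$ and
\[
x_{[\alpha]}(u_1, u_2) = x_\alpha(u_1)\, x_{\bar\alpha}(\bar u_1)\, x_{\alpha + \bar\alpha}\bigl(N_{\bar\alpha, \alpha}\, u_2\bigr).
\]
Conjugating factor by factor, the third coefficient becomes $\chi(\alpha+\bar\alpha)\, N_{\bar\alpha, \alpha} u_2 = N_{\bar\alpha, \alpha}\,\chi(\alpha)\chi(\bar\alpha)\, u_2 = N_{\bar\alpha, \alpha}\, \chi(\alpha)\overline{\chi(\alpha)}\, u_2$ by multiplicativity of $\chi$ and self-conjugacy. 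I would then recognise the output as $x_{[\alpha]}\bigl(\chi(\alpha) u_1,\ \chi(\alpha)\overline{\chi(\alpha)} u_2\bigr)$, which is exactly $x_{[\alpha]}(\chi(\alpha) \cdot (u_1, u_2))$ by the definition of the monoid action $r \cdot (t,u) = (rt, r\bar r u)$ on $\mathcal{A}(R)$.

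The only technical point worth verifying explicitly is that $\chi(\alpha) \cdot (u_1, u_2)$ still lies in $\mathcal{A}(R)$, i.e.\ that $(\chi(\alpha) u_1)\overline{(\chi(\alpha) u_1)} = \chi(\alpha)\overline{\chi(\alpha)} u_2 + \overline{\chi(\alpha)\overline{\chi(\alpha)} u_2}$; this is immediate from $u_1 \bar u_1 = u_2 + \bar u_2$ together with the fact that $\chi(\alpha)\overline{\chi(\alpha)} \in R_\theta$. I do not anticipate any genuine obstacle: the entire argument is bookkeeping driven by the two facts $\chi(\bar\beta) = \overline{\chi(\beta)}$ and $\chi(\beta + \gamma) = \chi(\beta)\chi(\gamma)$, applied to the explicit product decompositions of $x_{[\alpha]}(u)$ supplied in Subsection~\ref{Subsec:TCG}.
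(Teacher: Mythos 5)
Your proof is correct. The paper itself gives no argument for this proposition — it is stated as a known relation with a citation to \cite[2.4]{KS1} — and your case-by-case verification is exactly the standard one: expand $x_{[\alpha]}(u)$ via its defining product of untwisted root elements, apply the untwisted relation $h(\chi)x_\beta(\zeta)h(\chi)^{-1}=x_\beta(\chi(\beta)\zeta)$, and recombine using $\chi(\bar\beta)=\overline{\chi(\beta)}$ and multiplicativity of $\chi$, with the monoid action $r\cdot(t,u)=(rt,r\bar r u)$ accounting for the $A_2$ case. The two points you single out are indeed the only ones needing care, and you handle both: that $\chi(\alpha)u\in R_\theta$ when $[\alpha]\sim A_1$, and that $\chi(\alpha)\cdot(u_1,u_2)$ stays in $\mathcal{A}(R)$ (note that $A_2$ classes occur only when $o(\theta)=2$, which is what makes $\chi(\alpha)\overline{\chi(\alpha)}\in R_\theta$ automatic there).
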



\section{Some Remarks on Known Results}\label{sec:SKR}

In \cite{KS2} and \cite{KS3}, K. Suzuki focuses on root system of types $\Phi_\rho \sim {}^2 A_n \ (n \geq 3), {}^2 D_n \ (n \geq 4)$ and ${}^2 E_6$. It is easy to see that the analogous versions of the main theorems in both papers hold for $\Phi_\rho \sim {}^3 D_4$. In this section, we specify the conditions under which the corresponding results are valid, and we also state a consequence of both main results which we utilize in the present paper.

First, consider the following condition on a ring $R$: 
\begin{enumerate}
    \item[\textbf{(A1)}] For any maximal ideal $\m$ of $R$, the natural map $R_{\theta} \longrightarrow (R/(\m \cap \overline{\m}))_{\theta}$ is surjective if $\Phi_\rho$ is not of type ${}^2A_{2n}$, and the natural map $\mathcal{A}(R) \longrightarrow \mathcal{A}(R/(\m \cap \overline{\m}))$ is surjective and $\mathcal{A}(R)^* \neq \phi$ if $\Phi_\rho$ is of type ${}^2A_{2n}$.
    \item[\textbf{(A2)}] For any maximal ideal $\m_\theta$ of $R_\theta$, we have $\m_\theta = R_{\theta} \cap \m_\theta R$.
\end{enumerate}

\begin{thm}[see \cite{KS2}]\label{thm:KS2}
    Let $G_\sigma(R)$ and $E'_\sigma(R)$ be as above. Assume that $\Phi_\rho \sim {}^2A_{n} \ (n \geq 3), {}^2D_{n} \ (n \geq 4)$ or ${}^2E_6$, and $R$ satisfies the condition $(A1)$ and $(A2)$ above. Then $E'_\sigma (R)$ is a normal subgroup of $G_\sigma (R)$.
\end{thm}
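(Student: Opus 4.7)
It suffices to show that for every $g\in G_\sigma(R)$ and every generator $x_{[\alpha]}(t)$ of $E'_\sigma(R)$ (with $[\alpha]\in\Phi_\rho$, $t\in R_{[\alpha]}$), the conjugate $y := g\,x_{[\alpha]}(t)\,g^{-1}$ lies in $E'_\sigma(R)$. The strategy is to establish the conclusion first in the semilocal case and then descend to the general case by localization at maximal ideals of $R_\theta$, exploiting (A1) and (A2).

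\emph{Semilocal case.} Suppose first that $R$ is semilocal. By Abe's theorem (quoted as \cite[Corollary 2.4]{EA2} in Section~\ref{Subsec:CG}), one has $G_\pi(\Phi,R)=E_\pi(\Phi,R)\cdot T_\pi(\Phi,R)$. Intersecting with $G_\sigma(R)$ and using that $\sigma$ preserves both factors, a short argument combined with Proposition~\ref{prop:Chevalley as TChevalley}-style Galois descent and the definition of $E'_\sigma(R)$ via the $x_{[\alpha]}$ generators yields the twisted analogue
\[
G_\sigma(R) \;=\; E'_\sigma(R)\cdot T_\sigma(R).
\]
Granted this, any $g\in G_\sigma(R)$ can be written $g = e\,h(\chi)$ with $e\in E'_\sigma(R)$ and $h(\chi)\in T_\sigma(R)$, and then Proposition~\ref{prop h(chi)xh(chi)^-1} gives
\[
g\,x_{[\alpha]}(t)\,g^{-1} \;=\; e\,x_{[\alpha]}\!\bigl(\chi(\alpha)\cdot t\bigr)\,e^{-1}\;\in\; E'_\sigma(R).
\]
So normality holds whenever $R$ is semilocal.

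\emph{Descent to arbitrary $R$.} Now let $R$ be as in the theorem. For every maximal ideal $\mathfrak{m}_\theta$ of $R_\theta$, set $S:=R_\theta\setminus\mathfrak{m}_\theta$; then $S$ is a $\theta$-invariant multiplicative subset of $R$, and the localization $S^{-1}R$ is semilocal (its maximal ideals correspond to those maximal ideals of $R$ lying over $\mathfrak{m}_\theta$, which are finite in number and permuted by $\theta$). The hypothesis (A2) guarantees that distinct $\mathfrak{m}_\theta$ produce genuinely different localizations, so that an element of $R$ (or $R_\theta$) that becomes zero in every $S^{-1}R$ is already zero; in other words, $R_\theta\hookrightarrow\prod_{\mathfrak{m}_\theta}(R_\theta)_{\mathfrak{m}_\theta}$ is faithful in the usual sense. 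Consider the canonical map $\phi_{\mathfrak{m}_\theta}:G_\sigma(R)\to G_\sigma(S^{-1}R)$. By the semilocal case, $\phi_{\mathfrak{m}_\theta}(y)\in E'_\sigma(S^{-1}R)$, so $\phi_{\mathfrak{m}_\theta}(y)$ can be written as a finite product of elements $x_{[\beta]}(s_\beta)$ with parameters $s_\beta\in R_{[\beta]}\otimes_{R_\theta}(R_\theta)_{\mathfrak{m}_\theta}$. Surjectivity (A1) lets us lift these parameters (after possibly multiplying by a common denominator from $S$) to honest elements of $R_{[\alpha]}$ or $\mathcal{A}(R)$.

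\emph{Patching and the main obstacle.} The final step is to glue: clear denominators and form an element $z\in E'_\sigma(R)$ whose localization at $\mathfrak{m}_\theta$ equals $\phi_{\mathfrak{m}_\theta}(y)$; then $z^{-1}y$ dies locally everywhere and so lies in a residual subgroup whose faithfulness is controlled by (A2). The main obstacle is precisely this patching of the factorizations $y = \prod x_{[\beta]}(s_\beta)$ across different $\mathfrak{m}_\theta$: the factorizations are highly non-unique, the number of factors depends on the local data, and in the ${}^2A_{2n}$-case the parameters live in the non-abelian set $\mathcal{A}(R)$, so even writing down the denominator-clearing step requires the hypothesis that $\mathcal{A}(R)\to\mathcal{A}(R/(\mathfrak{m}\cap\overline{\mathfrak{m}}))$ be surjective, which is exactly the non-obvious half of (A1). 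Once this surjectivity is established, a standard Quillen–Suslin style local-global induction on the number of factors, using the Chevalley commutator formulas of Section~\ref{subsec:CheComm} to reorder and absorb denominators, finishes the proof.
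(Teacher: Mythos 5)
First, a remark on scope: the paper does not prove this statement at all; Theorem~\ref{thm:KS2} is quoted from Suzuki \cite{KS2}, and the text only supplements it with Lemma~\ref{A1&A2} and the remark on ${}^3D_4$. So your proposal has to be judged against the standard localization proof (Suzuki's, modelled on Taddei \cite{GT}), whose machinery the paper does use elsewhere (Lemma~\ref{lemma:GT}, Propositions~\ref{local:[x,g] in E(R,J)} and \ref{general:[x,g] in E(R,J)}).

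Your semilocal step is essentially right in spirit (it is the content of Proposition~\ref{G=G'}, which itself needs the residue-field analysis of \cite{KS2} and Proposition~\ref{G=UTV}, so it is not quite a "short argument", and this is exactly where (A1) enters: one needs $R_\theta \to (R/(\m\cap\overline{\m}))_\theta$, resp.\ $\mathcal{A}(R)\to\mathcal{A}(R/(\m\cap\overline{\m}))$, surjective to compare $G_\sigma$ of $R$ with $G_\sigma$ of the residue ring). The genuine gap is the descent step. Your plan is to lift, for each maximal ideal, a factorization of $\phi_{\mathfrak{m}_\theta}(y)$ into elementary generators, clear denominators, and then "glue" a global $z\in E'_\sigma(R)$ with $z^{-1}y$ locally trivial everywhere. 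This does not work as stated: the element $z$ you build from the data at one $\mathfrak{m}_\theta$ has no reason to agree with $y$ after localizing at a different maximal ideal, so $z^{-1}y$ is not locally trivial everywhere, and there is no mechanism for patching the (highly non-unique, variable-length) factorizations across different localizations. You acknowledge this as "the main obstacle" and then dismiss it with "a standard Quillen--Suslin style local-global induction", but that induction is precisely the missing proof. The correct route avoids patching altogether: one considers $\epsilon(X)=[x_{[\alpha]}(Xt),g]$ (or $g\,x_{[\alpha]}(Xt)\,g^{-1}$ times a candidate elementary product) over the polynomial ring $R[X]$, uses the semilocal case to write its localization as an explicit product of elementary generators whose parameters vanish at $X=0$, applies Taddei's Lemma~\ref{lemma:GT} to produce a single $s\in S_\theta$ with an identity already over $R$, and then observes that for fixed $g$ the set of such multipliers $s$ generates the unit ideal (here (A2), $\m_\theta=R_\theta\cap\m_\theta R$, is what lets one work with maximal ideals of $R_\theta$); additivity of $t\mapsto x_{[\alpha]}(t)$ (and the $\mathcal{A}(R)$-analysis in the ${}^2A_{2n}$ case) then yields $g\,x_{[\alpha]}(t)\,g^{-1}\in E'_\sigma(R)$. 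Relatedly, your reading of (A2) as guaranteeing faithfulness of $R_\theta\hookrightarrow\prod(R_\theta)_{\mathfrak{m}_\theta}$ is off target: that injectivity holds for any commutative ring and is not what (A2) is for. As it stands, the proposal outlines the framework but leaves the crucial local-to-global step unproved and aimed in a direction that cannot be completed by patching.
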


Now consider the following condition:
\begin{enumerate}
    \item[\textbf{(A1$'$)}] For any maximal ideal $m$ of $R$, the natural map $R_{\theta} \longrightarrow (R/(\m \cap \overline{\m} \cap \overline{\overline{\m}}))_{\theta}$ is surjective if $\Phi_\rho$ is of type ${}^3 D_{4}$. 
\end{enumerate}

\begin{rmk}
    We can state a similar result to Theorem \ref{thm:KS2} for the case of $\Phi_\rho \sim {}^3 D_4$, assuming that $R$ satisfies conditions $(A1')$ and $(A2)$. The proof follows similar lines as those in \cite{KS2}.
\end{rmk}

\begin{lemma}\label{A1&A2}
    \normalfont
    If $2$ (resp., $3$) is invertible in $R$, then $(A1)$ (resp., $(A1')$) and $(A2)$ are satisfied. 
\end{lemma}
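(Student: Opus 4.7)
The plan is to construct an $R_\theta$-linear projection $\pi \colon R \to R_\theta$ splitting the inclusion $R_\theta \hookrightarrow R$, and to deduce both conditions mechanically from it. When $2 \in R^*$ (order-$2$ case) set $\pi(r) := (r + \bar{r})/2$; when $3 \in R^*$ (order-$3$ case, covering $\Phi_\rho \sim {}^3D_4$) set $\pi(r) := (r + \bar{r} + \bar{\bar{r}})/3$. In either setting $\pi$ is a well-defined $R_\theta$-linear map that restricts to the identity on $R_\theta$, and it carries every $\theta$-invariant ideal $I$ of $R$ into $I \cap R_\theta$, since $\theta(I) = I$ forces $\bar{r}, \bar{\bar{r}} \in I$ whenever $r \in I$.

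Condition (A2) then falls out at once: given a maximal ideal $\m_\theta$ of $R_\theta$ and $x \in R_\theta \cap \m_\theta R$, write $x = \sum a_i r_i$ with $a_i \in \m_\theta$ and apply $\pi$ to obtain $x = \pi(x) = \sum a_i \pi(r_i) \in \m_\theta$.

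For (A1) outside the ${}^2A_{2n}$ case, and for (A1') when $\Phi_\rho \sim {}^3D_4$, the ideal $J := \m \cap \bar{\m}$ (resp.\ $J := \m \cap \bar{\m} \cap \bar{\bar{\m}}$) is $\theta$-invariant, so $\theta$ descends to $R/J$. A $\theta$-fixed class is represented by some $x \in R$ with $\theta(x) - x \in J$; in the order-$3$ case the identity $\theta^2(x) - x = \theta(\theta(x) - x) + (\theta(x) - x)$ together with $\theta$-invariance of $J$ gives $\theta^2(x) - x \in J$ as well. Hence $\pi(x) - x$ is a combination of elements of $J$ divided by $2$ or $3$, so $\pi(x) \in R_\theta$ is the required lift.

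The only case carrying extra content is (A1) when $\Phi_\rho \sim {}^2A_{2n}$, which involves $\mathcal{A}(R)$ rather than $R_\theta$. Non-emptiness of $\mathcal{A}(R)^*$ is witnessed by $(1, 1/2)$, since $1 \cdot \bar{1} = 1 = 1/2 + 1/2$ and $1/2 \in R^*$. For surjectivity of $\mathcal{A}(R) \to \mathcal{A}(R/(\m \cap \bar{\m}))$, lift a given class to an arbitrary pair $(t_0, u_0) \in R \times R$; the defect $a := t_0 \bar{t_0} - u_0 - \bar{u_0}$ is $\theta$-fixed and lies in $\m \cap \bar{\m}$, so $a/2$ belongs simultaneously to $R_\theta$ and to $\m \cap \bar{\m}$. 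Replacing $u_0$ by $u' := u_0 + a/2$ preserves the class modulo $\m \cap \bar{\m}$ while arranging $u' + \bar{u'} = t_0 \bar{t_0}$. The only mild obstacle is this last step: one must observe that the defect $a$ is automatically $\theta$-fixed, which is exactly what makes halving it a legitimate correction that stays in $R_\theta$ and in the ideal $\m \cap \bar{\m}$.
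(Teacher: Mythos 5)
Your proof is correct and follows essentially the same approach as the paper: your projection $\pi(r) = (r+\bar{r})/2$ (resp.\ $(r+\bar{r}+\bar{\bar{r}})/3$) is exactly the averaging element the paper uses for (A1), (A1$'$) and (A2), and your correction $u' = u_0 + a/2$ of the second coordinate coincides with the paper's choice $y_2 = x_2 + (x_1\bar{x}_1)/2 - (x_2+\bar{x}_2)/2$ in the $\mathcal{A}(R)$ case. The only differences are cosmetic: you package the averaging as an $R_\theta$-linear projection and explicitly record the routine points (non-emptiness of $\mathcal{A}(R)^*$ via $(1,1/2)$, and that $\bar{\bar{x}}-x\in J$ follows automatically in the order-three case) which the paper leaves implicit.
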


\begin{proof}
    Assume that $2$ (resp., $3$) is invertible in $R$. Let $I = \m \cap \overline{\m}$ (resp., $I = \m \cap \overline{\m} \cap \overline{\overline{\m}}$) then $I = \bar{I}$. Let $x \in R$ such that $x + I = \bar{x} + I \implies x-\bar{x} \in I$ (resp., $x - \bar{x}, x - \bar{\bar{x}} \in I$). Set $y = (x+\bar{x})/2 \in R_\theta$ (resp., $y = (x + \bar{x} + \bar{\bar{x}})/3 \in R_\theta$). Then $x - y = (x - \bar{x})/2 \in I$ (resp., $x-y = ((x-\bar{x}) + (x-\bar{\bar{x}}))/3 \in I$), so $x+I = y+I$. Since $y \in R_\theta$, it serves as a pre-image of $x+I$. Therefore, the map $R_\theta \longrightarrow (R/I)_\theta$ is surjective. 

    We now prove that if $2$ is invertible in $R$, then the natural map $\mathcal{A}(R) \longrightarrow \mathcal{A}(R/I)$ is surjective. Let $(x_1 + I, x_2 + I) \in \mathcal{A}(R/I)$. Then 
    \begin{align*}
        (x_1 + I) (\bar{x}_1 + I) = (x_2 + I) + (\bar{x}_2 + I) 
        \implies & x_1 \bar{x}_1 + I = (x_2 + \bar{x}_2) + I \\
        \implies & x_1 \bar{x}_1 - (x_2 + \bar{x}_2) \in I.
    \end{align*}
    Set $y_1 = x_1$ and $y_2 = x_2 + (x_1 \bar{x}_1)/ 2 - (x_2 + \bar{x}_2)/2$. Then $(y_1, y_2) \in \mathcal{A}(R)$, and clearly $(y_1 + I, y_2 + I) = (x_1 + I, x_2 + I)$, showing that $(y_1, y_2) \in \mathcal{A}(R)$ serves as a pre-image of $(x_1 + I, x_2 + I)$. Hence, the natural map $\mathcal{A}(R) \longrightarrow \mathcal{A}(R/I)$ is a surjective.

    We now return to the assumption that $2$ (resp., $3$) is invertible in $R$, and verify condition $(A2)$. It is clear that $\m_\theta \subset R_\theta \cap \m_\theta R$. Let $x \in R_\theta \cap \m_\theta R.$ Then $x = \bar{x}$ and $x = \sum_{i=1}^k m_i x_i$ where $m_i \in \m_\theta$ and $x_i \in R$. Define $y_i = (x_i + \bar{x}_i)/2$ (resp., $y_i = (x_i + \bar{x}_i + \bar{\bar{x}}_i)/3)$ and set $y = \sum_{i=1}^{k} m_i y_i \in \m_\theta$. Then $x - y = \sum_{i=1}^k m_i (x_i - y_i) = \sum_{i=1}^{k} m_i (x_i - \bar{x_i})/2 = (x - \bar{x})/2 = 0$ (resp., $x-y = \sum_{i=1}^{k} m_i ((x_i - \bar{x_i}) + (x_i - \bar{\bar{x_i}}))/3 = ((x - \bar{x}) + (x- \bar{\bar{x}}))/3 = 0$). Therefore $x = y \in \m_\theta$, as desired.
\end{proof}

\begin{cor}\label{cor:KS2}
    Assume that $1/2 \in R$ if $\Phi_\rho \sim {}^2 A_{n} \ (n \geq 3), {}^2 D_n \ (n \geq 4)$ or ${}^2 E_6$, and that $1/3 \in R$ if $\Phi_\rho \sim {}^3 D_4$. Then $E'_\sigma (R)$ is a normal subgroup of $G_\sigma (R)$.
\end{cor}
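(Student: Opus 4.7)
The statement is a direct corollary that assembles two already-established pieces: the normality theorem (Theorem \ref{thm:KS2} for the double-twisted types and its ${}^3D_4$ analogue stated in the remark immediately after it), and Lemma \ref{A1&A2} which translates the invertibility hypotheses into the hypotheses of those theorems. So the plan is essentially a two-step chain, with no real computational work left to do.

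The plan is as follows. First, I would split into cases according to the type of $\Phi_\rho$. In the case $\Phi_\rho \sim {}^2A_n\ (n\geq 3),\ {}^2D_n\ (n\geq 4)$, or ${}^2E_6$, the hypothesis gives $1/2 \in R$; I then invoke Lemma \ref{A1&A2} to conclude that $R$ satisfies both conditions (A1) and (A2). With those conditions in hand, Theorem \ref{thm:KS2} applies directly and yields the normality of $E'_\sigma(R)$ in $G_\sigma(R)$. In the remaining case $\Phi_\rho \sim {}^3D_4$, the hypothesis instead gives $1/3 \in R$; again Lemma \ref{A1&A2} supplies conditions (A1$'$) and (A2), and then the ${}^3D_4$-analogue of Theorem \ref{thm:KS2} (mentioned in the remark following it, with the same proof as in \cite{KS2}) yields the conclusion.

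There is no genuine obstacle here: the content has been packaged into Lemma \ref{A1&A2} and Theorem \ref{thm:KS2} already, and this corollary is simply the convenient specialization to rings containing $1/2$ (or $1/3$), which is the standing hypothesis used throughout the rest of the paper. The only thing worth being careful about is ensuring that the ${}^3D_4$ case really is handled by the appropriate analogue of \cite{KS2}, which is why the authors explicitly include the remark above the lemma; one should point the reader to that remark when citing it.
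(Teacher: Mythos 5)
Your proposal is correct and is exactly the argument the paper intends: combine Lemma \ref{A1&A2} (invertibility of $2$, resp.\ $3$, gives conditions (A1), resp.\ (A1$'$), together with (A2)) with Theorem \ref{thm:KS2} and its ${}^3D_4$ analogue noted in the remark following it. No further comment is needed.
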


Recall that in Section \ref{Subsec:TCG}, we defined the set $\text{Hom}_1 (\Lambda_\pi, R^*) = \{ \chi \in \text{Hom} (\Lambda_\pi, R^*) \mid \chi = \bar{\chi}_\sigma \}$. We also define $\text{Hom}_1(\Lambda_\pi / \Lambda_r, R^*)$ as $\{ \chi \in \text{Hom}_1 (\Lambda_\pi, R^*) \mid \chi|_{\Lambda_r} = 1 \}$. The following theorem is a special case of the main theorem of \cite{KS3}.
\begin{thm}\label{thm:KS3}
    \normalfont
    Let $G=G_\sigma (R)$ be a twisted Chevalley group of type $\Phi_\rho \sim {}^2 A_{n} \ (n \geq 3), {}^2 D_n \ (n \geq 4)$ or ${}^2 E_6$ and let $E = E'_\sigma(R)$ be its elementary subgroup. Assume that $1/2 \in R$. Then $Z(G) = C_G (E) = \text{Hom}_1(\Lambda_\pi / \Lambda_r, R^*)$, where $Z(G)$ is a centre of $G$ and $C_G (E)$ is a centralizer of $E$ in $G$. 
\end{thm}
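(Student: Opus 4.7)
The plan is to establish the three equalities via the cycle of inclusions
\[
    \text{Hom}_1(\Lambda_\pi/\Lambda_r, R^*) \ \subseteq\ Z(G)\ \subseteq\ C_G(E)\ \subseteq\ \text{Hom}_1(\Lambda_\pi/\Lambda_r, R^*),
\]
where a character $\chi$ on the outside is identified with $h(\chi) \in T_\sigma(R) \subseteq G$. The middle inclusion is immediate since $E \subseteq G$, so only the two outer inclusions carry content.

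For the first inclusion, fix $\chi \in \text{Hom}_1(\Lambda_\pi/\Lambda_r, R^*)$. The condition $\chi = \bar{\chi}_\sigma$ places $h(\chi)$ in $T_\sigma(R) \subseteq G$. Because $\chi|_{\Lambda_r} = 1$, the scalar $\chi(\mu)$ depends only on the coset of $\mu$ in $\Lambda_\pi/\Lambda_r$; since the weights of any single irreducible constituent of $\pi$ all lie in one such coset, $h(\chi)$ acts as a single scalar on each irreducible summand of $V(R)$. Hence $h(\chi)$ is central already in the ambient untwisted Chevalley group $G_\pi(\Phi, R)$, and a fortiori in $G$.

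The substantive inclusion is $C_G(E) \subseteq \text{Hom}_1(\Lambda_\pi/\Lambda_r, R^*)$. Let $g \in C_G(E)$; I would proceed in two stages. First, show that $g$ is diagonal with respect to the weight decomposition $V(R) = \coprod_\mu V_\mu(R)$, so that $g = h(\chi)$ for some $\chi \in \text{Hom}(\Lambda_\pi, R^*)$. This diagonalisation is obtained by testing $g \cdot x_{[\alpha]}(t) \cdot v = x_{[\alpha]}(t) \cdot g \cdot v$ on a weight vector $v \in V_\mu(R)$, expanding $x_\alpha(t) = \sum t^n \pi(X_\alpha)^n/n!$ (which shifts weight by $\alpha$), and comparing weight-space components to force $g$ to preserve each $V_\mu(R)$ and act on it by a unit scalar. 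Second, once $g = h(\chi)$, Proposition \ref{prop h(chi)xh(chi)^-1} rewrites $[g, x_{[\alpha]}(t)] = 1$ as $\chi(\alpha) \cdot t = t$ for every $t \in R_{[\alpha]}$ and every $[\alpha] \in \Phi_\rho$. Choosing explicit $t$ (namely $t = 1 \in R_\theta$ for $[\alpha] \sim A_1$, $t = 1 \in R$ for $[\alpha] \sim A_1^2$, and $t = (1, 1/2) \in \mathcal{A}(R)$ for $[\alpha] \sim A_2$, which exists precisely because $1/2 \in R$) yields $\chi(\alpha) = 1$ for every $\alpha \in \Phi$, hence $\chi|_{\Lambda_r} = 1$. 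The condition $\sigma(g) = g$ then translates to $\chi = \bar{\chi}_\sigma$.

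The principal obstacle is the diagonalisation step: a twisted generator $x_{[\alpha]}(t)$ is typically a product of two or three untwisted $x_\beta(\cdot)$'s whose arguments are linked through $\theta$, so commutation with $x_{[\alpha]}(t)$ alone only controls combined shifts of weight by the roots of a single $\rho$-orbit, not their individual contributions. Separating them requires playing off different choices of $t$ and of $[\alpha]$ against one another via the Chevalley commutator formulas of \ref{subsec:CheComm}; the assumption $1/2 \in R$ is exactly what is needed to invert the factors of $2$ and the structure constants $N_{\alpha,\beta}$ that arise. An alternative approach, closer to the untwisted classical argument, would be to first prove that $g$ normalizes each root subgroup $\mathfrak{X}_{[\alpha]}$ and then conjugate by the Weyl representatives $w_{[\alpha]}(t) \in N'_\sigma$ of \ref{N and H} to reduce to commutation with a single torus element; in either route the invertibility of $2$ is indispensable.
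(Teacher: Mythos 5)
Your proposal has the right skeleton, but the decisive step is missing, and it is worth noting that the paper itself offers no internal proof to compare against: Theorem~\ref{thm:KS3} is quoted as a special case of the main theorem of Suzuki~\cite{KS3}, so your argument has to stand on its own. The step you yourself flag as ``the principal obstacle'' --- showing that an element of $C_G(E)$ is diagonal with respect to the weight decomposition, i.e.\ lies in $T_\sigma(R)$, so that $g=h(\chi)$ --- is precisely the substantive content of Suzuki's theorem, and your text describes the difficulty (``compare weight-space components'', ``play off different choices of $t$ and $[\alpha]$ against one another via the commutator formulas'') without resolving it. In the twisted setting, especially for ${}^2A_{2n}$ where the short classes are of type $A_2$ and the generators $x_{[\alpha]}(t,u)$ have linked arguments $(t,u)\in\mathcal{A}(R)$, this reduction is genuinely nontrivial over an arbitrary commutative ring, and as written the proof defers it rather than carrying it out. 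Once that step is granted, the remainder is fine: Proposition~\ref{prop h(chi)xh(chi)^-1} with the test values $t=1$, $t=1$ and $t=(1,1/2)\in\mathcal{A}(R)$ (this is where $1/2\in R$ enters) gives $\chi(\alpha)=1$ for every $\alpha\in\Phi$, hence $\chi|_{\Lambda_r}=1$, and $\sigma(h(\chi))=h(\chi)$ together with the injectivity of $\chi\mapsto h(\chi)$ gives $\chi=\bar{\chi}_\sigma$.

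There is also a gap in your first inclusion $\text{Hom}_1(\Lambda_\pi/\Lambda_r,R^*)\subseteq Z(G)$: from ``$h(\chi)$ acts by a single scalar on each irreducible summand'' you conclude that $h(\chi)$ is central in the ambient group $G_\pi(\Phi,R)$, but this requires every element of the group of $R$-points to preserve those summands (equivalently, the eigenspace decomposition of $h(\chi)$), which is not automatic from the definition of $G_\pi(\Phi,R)$ by polynomial equations over an arbitrary commutative ring; the scalar-action heuristic only settles the case of an algebraically closed field. This inclusion is part of the Abe--Hurley theorem \cite{EA&JH}, which the paper itself invokes for exactly this kind of statement (cf.\ the proof of Lemma~\ref{lemma on T(J)}); you should cite it (or reproduce its argument) rather than assert it. In summary: your outer-inclusion strategy and Stage 2 are sound, but the diagonalisation of elements of $C_G(E)$ --- the heart of the theorem, supplied in the paper only by the reference to \cite{KS3} --- remains unproved in your proposal.
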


\begin{rmk}
    As before, we can state a similar result to Theorem \ref{thm:KS3} for the case of $\Phi_\rho \sim {}^3D_4$, assuming $1/3 \in R$. The proof follows similar lines as those in \cite{KS3}.
\end{rmk}


\section{Certain Subgroups of \texorpdfstring{$G_\sigma(R)$}{G(R)}}\label{sec:E=UHV}

Let $R$ be a commutative ring with unity and let $\theta$ be an automorphism of a ring $R$ of order $2$ or $3$. Let $J$ be a $\theta$-invariant ideal of $R$ (that is, $J$ is an ideal of $R$ such that $\theta (J) \subset J$). For $[\alpha] \in \Phi_{\rho}$, define \[ J_{[\alpha]} = \begin{cases}
    J_{\theta} & \text{if } [\alpha] \sim A_1, \\
    J & \text{if } [\alpha] \sim A_1^2 \text{ or } A_1^3, \\
    \mathcal{A}(J) & \text{if } [\alpha] \sim A_2; \\
\end{cases}\]
where $J_{\theta} = \{ r \in J \mid r = \overline{r} \} = J \cap R_{\theta}$ and $\mathcal{A}(J) = \{ (a,b) \in \mathcal{A}(R) \mid a,b \in J \}.$

The natural projection map $R \longrightarrow R/J$ gives the canonical map $\phi: G_\sigma (R) \longrightarrow G_{\sigma} (R/J)$. Write $G_\sigma(J) = \ker{\phi}$ and $G_{\sigma}(R,J) = \phi^{-1}(Z(G_{\sigma}(R/J))).$ Let $E'_{\sigma}(J)$ denote the subgroup of $E'_{\sigma}(R) \cap G_{\sigma}(J)$ generated by all $x_{[\alpha]}(t)$ where $[\alpha] \in \Phi_{\rho}$ and $t \in J_{[\alpha]}$. Let $E'_{\sigma}(R,J)$ be the normal subgroup of $E'_{\sigma}(R)$ generated by $E'_{\sigma}(J)$. Note that $E'_{\sigma}(R,J)$ is also a subgroup of $E'_{\sigma}(R) \cap G_{\sigma}(J)$ as the later subgroup is normal in $E'_{\sigma}(R)$ and it contains $E'_{\sigma}(J)$.

Let $U_\sigma (J)$ (resp., $U^{-}_\sigma (J)$) be the subgroup of $E'_\sigma (R)$ generated by $x_{[\alpha]}(t)$ (resp., $x_{-[\alpha]}(t)$) where $[\alpha] \in \Phi^{+}_\rho$ and $t \in J_{[\alpha]}$. Define $T_\sigma (J) = G_\sigma(J) \cap T_\sigma (R), T_\sigma (R, J) = G_\sigma (R, J) \cap T_\sigma(R), H_\sigma(J) = E_\sigma (J) \cap T_\sigma (R), H_\sigma (R,J) = E_\sigma (R,J) \cap T_\sigma (R), H'_\sigma(J) = E'_\sigma (J) \cap T_\sigma (R)$ and $H'_\sigma (R,J) = E'_\sigma (R,J) \cap T_\sigma (R).$

\begin{lemma}\label{structure of U}
    \normalfont
    Let $J$ be any $\theta$-invariant ideal of $R$. Then each element of $U_\sigma (J)$ is uniquely expressible in the form
        $$ x_{[\alpha_1]}(t_1) \dots x_{[\alpha_n]}(t_n)$$ where $[\alpha_i] \in \Phi^+_\rho$ and $t_i \in J_{[\alpha_i]}$, the ordering of the roots is arbitrary chosen and fixed once for all. 
\end{lemma}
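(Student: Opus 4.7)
The plan is to reduce everything to Proposition~\ref{lemma 62 of RS}, which already furnishes a unique ordered factorization in $U_\sigma$ over an arbitrary commutative ring, and then to descend to $J$ via the canonical projection $\phi: G_\sigma(R) \longrightarrow G_\sigma(R/J)$. The key observation is that, since $J$ is $\theta$-invariant, the quotient $R/J$ carries an induced automorphism of the same order, so the whole machinery (twisted root system, the sets $R_{[\alpha]}$, and Proposition~\ref{lemma 62 of RS} itself) is available over $R/J$ as well.

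First I would note the containment $U_\sigma(J) \subseteq U_\sigma \cap G_\sigma(J)$. The inclusion into $U_\sigma$ is immediate from the definitions, while each generator $x_{[\alpha]}(t)$ with $t \in J_{[\alpha]}$ lies in $G_\sigma(J) = \ker \phi$ because the image of $t$ in $(R/J)_{[\alpha]}$ is zero in each of the four cases $[\alpha] \sim A_1, A_1^2, A_1^3, A_2$.

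Now, given $g \in U_\sigma(J)$, Proposition~\ref{lemma 62 of RS} yields a unique factorization
\[
g = x_{[\alpha_1]}(t_1) \cdots x_{[\alpha_n]}(t_n),\quad t_i \in R_{[\alpha_i]},
\]
in the fixed order; uniqueness of the asserted form is therefore automatic. For existence, I apply $\phi$. Since $g \in G_\sigma(J)$, we obtain $1 = \phi(g) = \prod_i x_{[\alpha_i]}(\overline{t_i})$ in $U_\sigma(R/J)$, where $\overline{t_i}$ denotes the image of $t_i$ in $(R/J)_{[\alpha_i]}$. Invoking the uniqueness clause of Proposition~\ref{lemma 62 of RS} applied to $R/J$, each $\overline{t_i}$ must be the identity element. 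Unpacking: for $[\alpha_i] \sim A_1$ this gives $t_i \in R_\theta \cap J = J_\theta$; for $[\alpha_i] \sim A_1^2$ or $A_1^3$ it gives $t_i \in J$; and for $[\alpha_i] \sim A_2$, writing $t_i = (a,b)$, it gives $a, b \in J$, so $t_i \in \mathcal{A}(J) = J_{[\alpha_i]}$ (the defining relation $a\bar{a} = b + \bar{b}$ is inherited from membership in $\mathcal{A}(R)$).

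I do not expect a serious obstacle. The only point deserving brief verification is that reduction modulo $J$ induces a well-defined group homomorphism $\mathcal{A}(R) \longrightarrow \mathcal{A}(R/J)$ with kernel exactly $\mathcal{A}(J)$; both facts follow directly from the $\theta$-invariance of $J$ and the explicit formula for the operation $\oplus$. No new commutator computation is required, because the work of bringing arbitrary words into the prescribed order has already been absorbed into Proposition~\ref{lemma 62 of RS}.
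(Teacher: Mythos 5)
Your proof is correct, but it takes a different route from the one the paper intends. The paper omits the proof and points to the Chevalley commutator formulas: the intended argument takes an arbitrary word in the generators $x_{[\alpha]}(t)$, $t \in J_{[\alpha]}$, and reorders it into the fixed order using the formulas of Subsection 3.3, observing that every new factor produced has parameter in $J_{i[\alpha]+j[\beta]}$ because $J$ is a $\theta$-invariant ideal; uniqueness then follows from Proposition~\ref{lemma 62 of RS} over $R$, since $J_{[\alpha]} \subseteq R_{[\alpha]}$. You instead take the ordered factorization over $R$ supplied by Proposition~\ref{lemma 62 of RS}, note that $U_\sigma(J) \subseteq U_\sigma(R) \cap G_\sigma(J)$, and force the parameters into $J_{[\alpha_i]}$ by applying $\phi$ and the uniqueness clause over $R/J$; this is legitimate because the $\theta$-invariance of $J$ makes the twisted setup over $R/J$ available, $\phi$ acts termwise as $x_{[\alpha]}(t) \mapsto x_{[\alpha]}(\bar t)$ in all four cases (including the homomorphism $\mathcal{A}(R) \to \mathcal{A}(R/J)$ with kernel $\mathcal{A}(J)$ that you flag), and Proposition~\ref{lemma 62 of RS} is stated for an arbitrary commutative ring. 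Your reduction trick is in fact the same one the paper itself uses in the remark immediately following the lemma (to prove $U_\sigma(J) = E'_\sigma(R,J) \cap U_\sigma(R)$) and again in the proof of Proposition~\ref{normal}. As for what each approach buys: the commutator route works entirely at the level of generators, needs nothing about the quotient ring, and yields the analogous statement for $\mathfrak{X}_S(J)$ with $S$ closed as a byproduct; your route avoids all commutator bookkeeping at the modest cost of invoking Proposition~\ref{lemma 62 of RS} over $R/J$ as well as over $R$.
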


\begin{proof}
    The proof is an easy consequence of the Chevalley commutator formula and is therefore omitted.
\end{proof}

\begin{rmk}
    \normalfont
    We can state and prove a similar result for $U^{-}_\sigma (J)$.
\end{rmk}

\begin{rmk}
    Note that $U_\sigma (J) = E'_\sigma (R,J) \cap U_\sigma (R)$ and $U^{-}_\sigma (J) = E'_\sigma (R,J) \cap U^{-}_\sigma (R)$. This can be seen as follows: Clearly, $U_\sigma (J) \subset E'_\sigma (R,J) \cap U_\sigma (R)$. For the reverse inclusion, let $x \in E'_\sigma (R,J) \cap U_\sigma (R)$. Since $E'_\sigma (R,J) \subset G_\sigma (J)$ we have $x \equiv 1$ (mod $J$). From the uniqueness in the above lemma, we conclude that $x \in U_\sigma (J)$. 
\end{rmk}

\begin{lemma}\label{inUHV}
    \normalfont
    Let $J$ be a $\theta$-invariant ideal of $R$ contained in $rad (R)$, the Jacobson radical of $R$. Then for any $[\alpha] \in \Phi_\rho, t \in J_{[\alpha]}, s \in R_{[\alpha]},$ we have 
    $$ x_{[\alpha]}(s) x_{-[\alpha]}(t) x_{[\alpha]}(s)^{-1} = x_{[\alpha]}(a) h x_{-[\alpha]}(b) $$ where $a,b \in J_{[\alpha]}$ and $h \in H'_\sigma (R,J)$.
\end{lemma}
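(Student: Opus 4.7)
The proof splits by the type of $[\alpha] \in \Phi_\rho$, each case being an explicit computation inside the rank-one subgroup $K_{[\alpha]} := \langle \mathfrak{X}_{[\alpha]}, \mathfrak{X}_{-[\alpha]} \rangle$. The common input is $J \subset \mathrm{rad}(R)$, which makes $1 + j$ invertible in $R$ for every $j \in J$; when $j \in R_\theta$, the inverse is automatically $\theta$-fixed (by uniqueness of inverses in $R$), so the same invertibility holds in $R_\theta$.

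When $[\alpha] \sim A_1$, working in the $SL_2$-type subgroup, the classical matrix identity
$$x_{[\alpha]}(s)\, x_{-[\alpha]}(t)\, x_{[\alpha]}(-s) \;=\; \begin{pmatrix} 1 + st & -s^2 t \\ t & 1 - st \end{pmatrix}$$
matched against the target form $x_{[\alpha]}(a)\, h_{[\alpha]}(c)\, x_{-[\alpha]}(b)$ gives $c = (1-st)^{-1} \in R_\theta^*$ and $a = -s^2 t c$, $b = tc$ in $J_\theta$. When $[\alpha] \sim A_1^2$ or $A_1^3$, the roots in $[\alpha]$ are pairwise orthogonal, so $K_{[\alpha]}$ decomposes as a product of two (resp.\ three) commuting $SL_2$-copies; applying the $A_1$ computation in each copy and recombining using the Chevalley-basis symmetry $c \mapsto \bar c$ yields $c = (1 - s\bar t)^{-1} \in R^*$ with $a, b \in J$.

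The principal case is $[\alpha] \sim A_2$. I would use the surjection $\psi : E_3(R) \twoheadrightarrow K_{[\alpha]}$ and the intertwiner $\sigma \circ \psi = \psi \circ \sigma'$ from the proof of Lemma~\ref{lemmaW4'} to lift the computation to $SL_3(R)$. Writing $s = (s_1, s_2)$ and $t = (t_1, t_2)$, one has
$$x'_{[\alpha]}(s) = \begin{pmatrix} 1 & \bar s_1 & s_2 \\ 0 & 1 & s_1 \\ 0 & 0 & 1 \end{pmatrix}, \qquad x'_{-[\alpha]}(t) = \begin{pmatrix} 1 & 0 & 0 \\ t_1 & 1 & 0 \\ t_2 & \bar t_1 & 1 \end{pmatrix},$$
and the conjugate $M := x'_{[\alpha]}(s)\, x'_{-[\alpha]}(t)\, x'_{[\alpha]}(s)^{-1}$ satisfies $M_{33} = 1 - s_1 \bar t_1 + \bar s_2 t_2 \in 1 + J \subset R^*$. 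The Gauss factorization $M = g \cdot d \cdot k$ as upper unitriangular $\cdot$ diagonal $\cdot$ lower unitriangular then exists and is unique, with $d_{33} = M_{33}$ and every off-diagonal entry of $g, k$ equal to a $J$-multiple of $c := M_{33}^{-1}$. Since $M$ is $\sigma'$-fixed and $\sigma'$ preserves each of $U^+, T, U^-$ in $SL_3(R)$, uniqueness forces $g, d, k$ individually $\sigma'$-fixed, hence $g = x'_{[\alpha]}(a)$, $d = h'_{[\alpha]}(c)$, $k = x'_{-[\alpha]}(b)$ for some $a, b$ with entries in $J$ satisfying the $\mathcal{A}$-relations, i.e.\ $a, b \in \mathcal{A}(J)$. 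Pushing forward along $\psi$ delivers the desired identity in $K_{[\alpha]}$.

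In every case, to conclude $h \in H'_\sigma(R, J)$ I would rearrange the already-established identity as
$$h = x_{[\alpha]}(-a) \cdot \bigl(x_{[\alpha]}(s)\, x_{-[\alpha]}(t)\, x_{[\alpha]}(s)^{-1}\bigr) \cdot x_{-[\alpha]}(-b).$$
The middle factor is a conjugate of $x_{-[\alpha]}(t) \in E'_\sigma(J) \subset E'_\sigma(R, J)$ by an element of $E'_\sigma(R)$, so by normality of $E'_\sigma(R, J)$ in $E'_\sigma(R)$ it lies in $E'_\sigma(R, J)$; the outer factors lie in $E'_\sigma(J) \subset E'_\sigma(R, J)$ since $a, b \in J_{[\alpha]}$. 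Hence $h \in E'_\sigma(R, J) \cap T_\sigma(R) = H'_\sigma(R, J)$. I expect the $A_2$ case — the $SL_3$ entry bookkeeping together with the (by the uniqueness argument automatic, but conceptually the most subtle step) verification that the output pairs truly lie in $\mathcal{A}(J)$ and not merely in $J \times J$ — to be the main technical obstacle.
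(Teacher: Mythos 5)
Your proposal follows essentially the same route as the paper: explicit rank-one computations (the $SL_2$ matrix identity for the $A_1$-type classes, the $E_3(R)\twoheadrightarrow K_{[\alpha]}$ lift for the $A_2$ class), followed by exactly the same rearrangement placing $h$ in $E'_\sigma(R,J)\cap T_\sigma(R)=H'_\sigma(R,J)$. The only differences are cosmetic: the paper exhibits the $A_2$ factors $a,b,u$ by explicit formulas instead of invoking uniqueness of the Gauss decomposition (which, as written, needs $M\equiv I \bmod J\subset \mathrm{rad}(R)$ so that all trailing principal minors are units, not merely $M_{33}$), and with the paper's ordering convention for $-[\alpha]$ the unit in the $A_1^2$/$A_1^3$ cases is $(1-st)^{-1}$ rather than $(1-s\bar t)^{-1}$.
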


\begin{proof}
    If $[\alpha] \sim A_1, A_1^2$ or $A_1^3$ then for given $t \in J_{[\alpha]}$ and $s \in R_{[\alpha]}$, we have $(1-st) \in R_{[\alpha]}^*$. In this case, we can take $a = -t s^2 (1-st)^{-1}, b= t (1-st)^{-1}$ and $h = h_{[\alpha]}((1-st)^{-1})$. Clearly, $a, b \in J_{[\alpha]}$. But then $h \in E'_\sigma (R,J)$ and hence $h \in H'_\sigma (R,J)$.

    If $[\alpha] \sim A_2$ then for given $t = (t_1, t_2) \in \mathcal{A}(J)$ and $s=(s_1, s_2) \in \mathcal{A}(R)$, we have $1-(s_1 \bar{t_1} - \bar{s_2}t_2) \in R^*.$ Therefore, the equation
    \begin{small}
        \[
            \begin{pmatrix}
                1 & \bar{s}_1 & s_2 \\
                0 & 1 & s_1 \\
                0 & 0 & 1
            \end{pmatrix}
            \begin{pmatrix}
                1 & 0 & 0 \\
                t_1 & 1 & 0 \\
                t_2 & \bar{t}_1 & 1
            \end{pmatrix}
            \begin{pmatrix}
                1 & -\bar{s_1} & \bar{s_2} \\
                0 & 1 & -s_1 \\
                0 & 0 & 1
            \end{pmatrix} = 
            \begin{pmatrix}
                1 & \bar{a_1} & a_2 \\
                0 & 1 & a_1 \\
                0 & 0 & 1
            \end{pmatrix}
            \begin{pmatrix}
                \bar{u} & 0 & 0 \\
                0 & u \bar{u}^{-1} & 0 \\
                0 & 0 & u^{-1}
            \end{pmatrix}
            \begin{pmatrix}
                1 & 0 & 0 \\
                b_1 & 1 & 0 \\
                b_2 & \bar{b_1} & 1
            \end{pmatrix} 
        \]
    \end{small}
    has a solution for $u, a_1, a_2, b_1, b_2$ and it is given by $u = (1-(\bar{t_1}s_1 - t_2\bar{s_2}))^{-1}, a_1= (t_1\bar{s_2} - \bar{t_1} {s_1}^2 + t_2 s_1 \bar{s_2}) (1-(\bar{t_1}s_1 - t_2\bar{s_2}))^{-1}, a_2 = (t_1 \bar{s_1} \bar{s_2} - \bar{t_1} s_1 s_2 + t_2 s_2 \bar{s_2}) (1-(\bar{t_1}s_1 - t_2\bar{s_2}))^{-1}, b_1= (t_1 - s_1\bar{t_2}) (1-(t_1 \bar{s_1} - \bar{t_2} {s_2}))^{-1}, b_2= t_2 (1-(\bar{t_1}s_1 - t_2\bar{s_2}))^{-1}$. By simple calculation we can see that $(a_1, a_2), (b_1, b_2) \in \mathcal{A}(J)$. But then $$h_{[\alpha]}(u) = x_{[\alpha]}(a)^{-1} x_{[\alpha]}(s) x_{-[\alpha]}(t) x_{[\alpha]}(s)^{-1} x_{-[\alpha]}(b)^{-1} \in E'_\sigma (R, J).$$ Since, $h = h_{[\alpha]}(u) \in T_\sigma (R)$, we have $h = h_{[\alpha]}(u) \in H'_\sigma (R,J)$.
\end{proof}

Let $[\alpha_i]$ be the simple roots of $\Phi_\rho$. We define the height $ht ([\alpha]):= \sum_{i=1}^l m_i$ of a root $[\alpha]= \sum_{i=1}^l m_i [\alpha_i]$ in $\Phi_\rho$. The order of the roots is \textit{regular} if the height $ht ([\alpha])$ is an increasing function of $[\alpha]$. From now on we fix a regular ordering of the roots in $\Phi_\rho$.

\begin{lemma}\label{inUV}
    \normalfont
    Let $J$ be any $\theta$-invariant ideal of $R$. Then for any $[\alpha], [\beta] (\neq [\alpha]) \in \Phi^{+}_\rho$ and $t \in J_{[\alpha]}, s \in R_{[\beta]},$ we have 
    $$ x_{-[\beta]}(s) x_{[\alpha]}(t) x_{-[\beta]}(s)^{-1} = xy$$
    where $x \in U_\sigma (J)$ and $y$ is a product of $x_{-[\gamma]}(u)$'s ($u \in J_{[\gamma]}$) in $U^{-}_\sigma (J)$ such that $-[\gamma] > - [\beta]$.
\end{lemma}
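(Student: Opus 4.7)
The plan is to rewrite
$$x_{-[\beta]}(s)\, x_{[\alpha]}(t)\, x_{-[\beta]}(s)^{-1} = [x_{-[\beta]}(s), x_{[\alpha]}(t)] \cdot x_{[\alpha]}(t)$$
and to analyze the commutator via the twisted Chevalley commutator formulas of Section~\ref{subsec:CheComm} applied to the pair $([\alpha], -[\beta])$. Setting $c := [x_{-[\beta]}(s), x_{[\alpha]}(t)]$, the problem reduces to understanding the structure of $c$ and then combining with the positive factor $x_{[\alpha]}(t)$.

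By Corollary~\ref{Che-Comm}, $c$ lies in $\mathfrak{X}_S$ for
$$S = \{\, i[\alpha] - j[\beta] \in \Phi_\rho : i, j \in \tfrac{1}{2}\Z_{>0}\,\},$$
and a direct inspection of each of the explicit formulas (a)--(g) shows that every monomial appearing in the argument of any factor $x_{[\delta]}(\cdot)$ contains at least one occurrence of $t$ or one of its $\theta$-conjugates $\bar t, \bar{\bar t}$. Since $J$ is $\theta$-invariant and $t \in J_{[\alpha]}$, each such argument lies in $J_{[\delta]}$; in particular $c \in E'_\sigma(R, J)$.

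Next I would partition $S = S^+ \sqcup S^-$ according to sign in $\Phi_\rho$. Because $[\alpha]$ and $[\beta]$ are linearly independent in the reduced system $\Phi_\rho$ (as $[\alpha] \neq \pm[\beta]$), no element of $S$ is the negative of another element of $S$; both $S^+$ and $S^-$ are therefore closed subsets of $\Phi_\rho$ free of $\pm$-pairs. Proposition~\ref{lemma 62 of RS} then furnishes unique ordered-product expressions in $\mathfrak{X}_{S^+}$ and $\mathfrak{X}_{S^-}$, and reordering the factors of $c$ using the commutator relations inside $\mathfrak{X}_S$ yields $c = u_+ u_-$ with $u_+ \in \mathfrak{X}_{S^+} \subseteq U_\sigma(J)$ and $u_- \in \mathfrak{X}_{S^-} \subseteq U^-_\sigma(J)$. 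To finish, I would commute $x_{[\alpha]}(t)$ past $u_-$ by one more application of the Chevalley commutator formulas; the resulting correction terms arise from commuting a positive-root element at level $J$ with negative-root elements in $u_-$, hence lie in $U_\sigma(J)$ and are absorbed into the positive part. This produces the desired decomposition $xy$ with $x \in U_\sigma(J)$ and $y$ a product of $x_{-[\gamma]}(u)$'s indexed by the negative roots appearing in $S^-$.

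The main obstacle is the verification that every negative root $-[\gamma] = i[\alpha] - j[\beta] \in S^-$ actually satisfies $-[\gamma] > -[\beta]$ in the fixed regular ordering, which is equivalent to $i[\alpha] - (j-1)[\beta] > 0$. When $j = 1$ this is immediate since $i[\alpha]$ is a positive multiple of the positive root $[\alpha]$. The only situation where $j \geq 2$ appears is in the commutator types (d), (e), and (f) of Section~\ref{subsec:CheComm}; for each of these I would perform a careful inspection of the rank-$2$ subsystem generated by $[\alpha]$ and $[\beta]$, using the crystallographic bound that every root string has length at most four, to confirm the inequality in all cases arising for $\Phi_\rho \sim {}^2A_n, {}^2D_n, {}^2E_6, {}^3D_4$. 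This last step is the most delicate part of the argument and is what distinguishes the twisted setting from a uniform one-line proof.
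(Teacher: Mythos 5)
Your overall route is the same as the paper's: its proof of Lemma~\ref{inUV} is a one-line appeal to the Chevalley commutator formula for $[x_{[\alpha]}(t)^{-1},x_{-[\beta]}(s)]$, and your steps (reduce to a commutator, note that every argument occurring in the formulas of \ref{subsec:CheComm} contains $t$ or a $\theta$-conjugate of $t$ and hence lies in $J$, then sort inside $\mathfrak{X}_S$ via Proposition~\ref{lemma 62 of RS}) are precisely the details that ``immediate'' suppresses. One small inaccuracy: because you place the commutator on the left, your expression is $u_+u_-\,x_{[\alpha]}(t)$ and you must pull $x_{[\alpha]}(t)$ across $u_-$; the corrections produced there are supported on roots $(k+li)[\alpha]-lj[\beta]$, which need not be positive, so the claim that they ``lie in $U_\sigma(J)$'' is unjustified as written. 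It is repairable (they stay in $\mathfrak{X}_S$ with arguments in $J$, so one more sorting finishes), but the cleaner arrangement is the paper's identity $x_{-[\beta]}(s)x_{[\alpha]}(t)x_{-[\beta]}(s)^{-1}=x_{[\alpha]}(t)\,[x_{[\alpha]}(t)^{-1},x_{-[\beta]}(s)]$, which puts the positive factor on the left from the outset and needs no further interchange.

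The genuine gap is the ordering clause, which you rightly single out as the delicate point but then defer. First, your reduction is not an equivalence: in a regular ordering, positivity of $i[\alpha]-(j-1)[\beta]$ forces $ht([\gamma])<ht([\beta])$ and hence $-[\gamma]>-[\beta]$, but the converse fails, so ``equivalent'' should be ``sufficient''. More seriously, the promised case inspection for $j\geq 2$ cannot be completed as described, because the inequality genuinely fails in type $(d)$ configurations. Take the $B_2$-subsystem of $\Phi_\rho$ (present for ${}^2A_n$, ${}^2D_n$, ${}^2E_6$) with simple roots $[\delta]$ short and $[\alpha]$ long, and set $[\beta]=[\alpha]+[\delta]$; both are positive and distinct, the pair $([\alpha],-[\beta])$ is of type $(d)$, and formula $(d{-}i)$ produces a factor at $[\alpha]-2[\beta]=-([\alpha]+2[\delta])$ with argument $N N' t s\bar{s}\in J$. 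Since $ht([\alpha]+2[\delta])=3>2=ht([\beta])$, we have $-([\alpha]+2[\delta])<-[\beta]$ in any regular ordering, and your test $i[\alpha]-(j-1)[\beta]>0$ reads $[\alpha]-[\beta]=-[\delta]>0$, which is false. So the ``most delicate part'' of your plan would end in a counterexample rather than a confirmation; what your argument actually delivers (and what the paper uses later, e.g.\ in Proposition~\ref{lavidecomposition}) is the weaker conclusion that $x\in U_\sigma(J)$, $y\in U^{-}_\sigma(J)$, and the negative roots occurring have the form $i[\alpha]-j[\beta]$ (in particular they are never $-[\beta]$ itself). As written, your proposal neither establishes the ordering clause nor flags that it is problematic, so the statement as literally given is not proved.
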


\begin{proof}
    Immediate from the Chevalley commutator formula for $[x_{[\alpha]}(t)^{-1}, x_{-[\beta]}(s)].$
\end{proof}

\begin{prop}\label{lavidecomposition}
    \normalfont
    Let $J$ be a $\theta$-invariant ideal of $R$ contained in $rad (R)$. Then $E'_\sigma (R,J) = U_\sigma(J) H'_\sigma(R,J) U^{-}_\sigma(J)$.
\end{prop}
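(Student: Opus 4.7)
Set $P := U_\sigma(J) H'_\sigma(R,J) U^-_\sigma(J)$. The inclusion $P \subset E'_\sigma(R,J)$ is immediate, since each of the three factors lies in $E'_\sigma(R,J)$ by definition. For the reverse inclusion, the strategy is to show that $P$ is a subgroup of $E'_\sigma(R)$ that is normalized by $E'_\sigma(R)$ and that contains every generator $x_{[\alpha]}(t)$ of $E'_\sigma(J)$; since $E'_\sigma(R,J)$ is by definition the normal closure of $E'_\sigma(J)$ in $E'_\sigma(R)$, this will yield $E'_\sigma(R,J) \subset P$.

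The first and most substantial step is the \emph{Gauss-type swap} $U^-_\sigma(J) \cdot U_\sigma(J) \subset P$. I would prove this by induction on the combined length, in the unique expressions furnished by Lemma~\ref{structure of U}, of $v \in U^-_\sigma(J)$ and $u \in U_\sigma(J)$. In the base case $x_{-[\alpha]}(t) \cdot x_{[\beta]}(s)$ with $t \in J_{[\alpha]}$ and $s \in J_{[\beta]}$: if $[\alpha] = [\beta]$, one rewrites $x_{-[\alpha]}(t) x_{[\alpha]}(s) = x_{[\alpha]}(s) \cdot \bigl(x_{[\alpha]}(-s) x_{-[\alpha]}(t) x_{[\alpha]}(s)\bigr)$ and applies Lemma~\ref{inUHV} to the parenthetical conjugate to obtain a $UHV$-expression; if $[\alpha] \neq [\beta]$, the Chevalley commutator formulas of Section~\ref{subsec:CheComm} yield $x_{-[\alpha]}(t) x_{[\beta]}(s) = C \cdot x_{[\beta]}(s) x_{-[\alpha]}(t)$, where $C$ is a product of root elements $x_{[\gamma]}(\cdot)$ with $[\gamma] = i[\beta]-j[\alpha]$, $i,j \geq 1$, and parameters in $J_{[\gamma]}$. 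Sorting $C$ into its positive- and negative-root parts and applying the induction hypothesis to the resulting sub-products (the roots in $C$ have strictly controlled positions in the height order) puts the whole product in $UHV$-form. The hypothesis $J \subset \mathrm{rad}(R)$ enters precisely through Lemma~\ref{inUHV}, where certain elements of the form $1-st$ must be units.

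Granting the Gauss swap, closure of $P$ under multiplication and inversion follows formally: for a product $(u_1 h_1 v_1)(u_2 h_2 v_2)$ apply the swap to the middle piece $v_1 u_2 = u' h' v'$ and use Proposition~\ref{prop h(chi)xh(chi)^-1} to move the torus factors past the unipotent factors; for the inverse, rewrite $(uhv)^{-1} = h^{-1}(h v^{-1} h^{-1}) u^{-1}$, invoke Proposition~\ref{prop h(chi)xh(chi)^-1}, and apply the swap to the remaining $V \cdot U$ piece. Normality of $P$ in $E'_\sigma(R)$ reduces, via the generating set $\{x_{[\alpha]}(s) : [\alpha] \in \Phi_\rho, s \in R_{[\alpha]}\}$, to showing $x_{[\alpha]}(s)\, P\, x_{[\alpha]}(s)^{-1} \subset P$; this is handled factor by factor using Lemmas~\ref{inUHV} and~\ref{inUV} and the Chevalley commutator formulas, together with the observation that every $h = h(\chi) \in H'_\sigma(R,J) \subset G_\sigma(J)$ satisfies $\chi \equiv 1 \pmod{J}$, so that the commutator $[x_{[\alpha]}(s), h]$ has parameter in $J$.

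\emph{Main obstacle.} The inductive step of the Gauss swap in the off-diagonal base case is the real work: one must sort the commutator factor $C$ into positive- and negative-root parts and iteratively re-apply the induction hypothesis until everything is in $UHV$-form. The bookkeeping is particularly delicate when $\Phi_\rho \sim {}^2 A_{2n}$, where $A_2$-type root classes carry the $\mathcal{A}(R)$-structure and the relevant commutator formulas (types $(c\text{-}ii)$ and $(d\text{-}ii)$) are quadratic in their parameters; one has to verify carefully that the resulting parameters land in the appropriate $J_{[\gamma]}$ or $\mathcal{A}(J)$, which is the computational heart of the argument.
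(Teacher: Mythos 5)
Your proposal is correct and follows essentially the same route as the paper: the paper likewise reduces everything to showing that $U_\sigma(J) H'_\sigma(R,J) U^{-}_\sigma(J)$ is a subgroup of $E'_\sigma(R)$ normalized by $E'_\sigma(R)$ (hence containing the normal closure of $E'_\sigma(J)$), with your ``Gauss swap'' appearing there as the key claim $x_{-[\beta]}(t)\,U_\sigma(J) \subset U_\sigma(J) H'_\sigma(R,J) U^{-}_\sigma(J)$, handled via Lemmas~\ref{inUHV} and~\ref{inUV} and Proposition~\ref{prop h(chi)xh(chi)^-1}, exactly as you outline. The only real difference is the induction measure: where you induct on combined length (and flag the bookkeeping problem that the commutator factor $C$ can create new terms), the paper uses a double induction on the pair consisting of $ht([\beta])$ and the number of positive factors, which is precisely what absorbs the extra negative-root factors of smaller height that your step produces.
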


\begin{proof}

The proof is similar to that of $2.8$ in \cite{EA2} and $2.1$ in \cite{EA&KS}. However, for the convenience of the reader, we will provide the full proof here.

We write $U = U_\sigma (J), H = H'_\sigma (R, J)$ and $V=U^{-}_\sigma (J)$. Clearly, $UHV \subset E'_\sigma (R, J)$. For converge, it is enough to show that $UHV$ is a normal subgroup of $E'_\sigma (R)$ because then $E'_\sigma (J) \subset UHV$ and hence $E'_\sigma (R, J) \subset UHV$. 

First, we will show that $UHV$ is a subgroup of $E'_\sigma (R)$. In other words, we will show that $g h^{-1} \subset UHV$ for every $g, h \in UHV$. For that, it is enough to show that $g (UHV) \subset UHV$ for any element $g$ of the form $x_{[\beta]}(t) \in U, h_{[\beta]}(t) \in H$ and $x_{-[\beta]}(t) \in U^{-}$. If $g = x_{[\beta]} (t)$, then by Lemma \ref{structure of U}, we have $x_{[\beta]}(t) U \subset U$ and hence $x_{[\beta]}(t) UHV \subset UHV$. Similarly, if $g = h_{[\beta]}(t)$, then from Proposition \ref{prop h(chi)xh(chi)^-1}, we have $h_{[\beta]}(t) U \subset U H$ and hence $h_{[\beta]}(t) UHV \subset UHV$. Finally, if $g= x_{-[\beta]}(t),$ we must show that 
\begin{align}
    x_{-[\beta]}(t) U \subset UHV. \label{eq1}
\end{align}
Because then $x_{-[\beta]}(t) UHV \subset (UHV)HV = UH(VH)V = UH(HV)V = UHV$, the second last equality is follows from Proposition \ref{prop h(chi)xh(chi)^-1}.

To prove (\ref{eq1}), we must show that $x_{-[\beta]}(t)x \in UHV$ for every $x \in U$. Write $$x = x_{[\alpha_1]}(t_1) \dots x_{[\alpha_n]}(t_n) \in U,$$ where each $[\alpha_i] \in \Phi_\rho^+ \ (i=1,\dots,n)$ with $[\alpha_1] > \dots > [\alpha_n]$. Let $m = ht([\beta])$. We will use double induction on the pair $(m, n)$ to prove our result. If $n = 1$ then for any pair $(m,1)$ the result is follows from Lemma \ref{inUHV}, if $[\alpha_1] = [\beta]$; and from Lemma \ref{inUV}, if $[\alpha_1] \neq [\beta]$. Assume that for all $1 \leq k \leq n-1$, the result is true for the pairs $(m,k)$ for all $m \geq 1$. We will show that it is also true for the pair $(m,n)$ for all $m \geq 1$, that is, we will show that $x_{-[\beta]}(t) x_{[\alpha_1]}(t_1) \dots x_{[\alpha_n]}(t_n) \in UHV$. 
If $[\beta] = [\alpha_1]$ then, by Lemma \ref{inUHV}, we have $x_{-[\beta]}(t)x_{[\alpha_1]}(t_1) = x_{[\alpha_1]}(t'_1) h x_{-[\beta]}(t')$, where $h \in H$. 
Thus, by induction hypothesis, 
\begin{align*}
    x_{-[\beta]}(t) x_{[\alpha_1]}(t_1) \dots x_{[\alpha_n]}(t_n) &= x_{[\alpha_1]}(t'_1) h x_{-[\beta]}(t') x_{[\alpha_2]}(t_2) \dots x_{[\alpha_n]}(t_n) \\
    &\in U H (UHV) = U(HU)HV = U(UH)HV = UHV.
\end{align*}
Similarly, if $[\beta] \neq [\alpha_1]$ then, by Lemma \ref{inUV}, we have $x_{-[\beta]}(t)x_{[\alpha_1]}(t_1) = x y x_{-[\beta]}(t)$, where $x \in U$ and $y \in V$ as in Lemma \ref{inUV}. Thus, by induction hypothesis, $$x_{-[\beta]}(t) x_{[\alpha_1]}(t_1) \dots x_{[\alpha_n]}(t_n) = x y x_{-[\beta]}(t) x_{[\alpha_2]}(t_2) \dots x_{[\alpha_n]}(t_n) \in U (UHV) = UHV.$$

Now we will show that $UHV$ is a normal subgroup of $E'_\sigma (R)$. It is suffices to show that $x_{\pm [\alpha]}(t) UHV x_{\pm [\alpha]}(t)^{-1} \subset UHV$ for any root $[\alpha] \in \Phi_\rho^+$ and $t \in R_{[\alpha]}$. Clearly, $x_{[\alpha]}(t) U x_{[\alpha]}(t)^{-1} \subset U$ and $x_{[\alpha]}(t) H x_{[\alpha]}(t)^{-1} \subset UH$ (see Proposition \ref{prop h(chi)xh(chi)^-1}). We will show that $x_{[\alpha]}(t) V x_{[\alpha]}(t)^{-1} \in UHV$. Let $y \in V$ be such that $y = x_{-[\alpha_1]}(t_1) \dots x_{-[\alpha_n]}(t_n)$, where $[\alpha_i] \in \Phi^+_\rho, t_i \in J_{[\alpha_i]} \ (i=1,\dots, n)$. Then 
\begin{align*}
    x_{[\alpha]}(t) y x_{[\alpha]}(t)^{-1} &= x_{[\alpha]}(t) x_{[\alpha_1]}(t_1) \dots x_{[\alpha_n]}(t_n) x_{[\alpha]}(t)^{-1} \\
    &= (x_{[\alpha]}(t) x_{[\alpha_1]}(t_1) x_{[\alpha]}(t)^{-1}) \dots (x_{[\alpha]}(t) x_{[\alpha_n]}(t_n)) x_{[\alpha]}(t)^{-1}) \in UHV.
\end{align*}
The containment is follows because if $[\alpha_i] = [\alpha]$ then, by Lemma \ref{inUHV}, $x_{[\alpha]}(t)x_{[\alpha_i]}(t_i)x_{[\alpha]}(t)^{-1} \in UHV$, and if $[\alpha_i] \neq [\alpha]$ then, by Lemma \ref{inUV}, $x_{[\alpha]}(t)x_{[\alpha_i]}(t_i)x_{[\alpha]}(t)^{-1} \in UV \subset UHV$, and also the fact that $UHV$ is a subgroup of $G$. Finally, we have
\begin{align*}
    x_{[\alpha]}(t) UHV x_{[\alpha]}(t)^{-1} &= (x_{[\alpha]}(t) U x_{[\alpha]}(t)^{-1}) (x_{[\alpha]}(t) H x_{[\alpha]}(t)^{-1}) (x_{[\alpha]}(t) V x_{[\alpha]}(t)^{-1}) \\
    &\subset (U) (UH) (UHV) = U(HU)HV = U(UH)HV = UHV.
\end{align*}
Similarly, one can show that $x_{-[\alpha]}(t) UHV x_{-[\alpha]}(t)^{-1} \subset UHV$. 
\end{proof}

\begin{prop}\label{P(J) and Q(J)}
    Let $J$ be a $\theta$-invariant ideal of $R$ contained in $rad(R)$.
    Set $P_\sigma(J) = U_\sigma(J) T_\sigma(R) U^{-}_\sigma(R)$ and $Q_\sigma(J) = U_\sigma(R) T_\sigma(R) U^{-}_\sigma(J)$. Then $P_\sigma(J)$ and $Q_\sigma(J)$ are subgroups of $G_\sigma(R)$.
\end{prop}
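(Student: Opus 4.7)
The plan is to mirror the approach of Proposition \ref{lavidecomposition}. It suffices to establish the single key inclusion
\[
    U^{-}_\sigma(R) \cdot U_\sigma(J) \subset P_\sigma(J),
\]
from which closure of $P_\sigma(J)$ under both multiplication and inversion is automatic. Indeed, using that $T_\sigma(R)$ normalises $U_\sigma(J)$ and $U^-_\sigma(R)$ (Proposition \ref{prop h(chi)xh(chi)^-1}), one immediately gets
\[
    P_\sigma(J) \cdot P_\sigma(J) = U_\sigma(J) T_\sigma(R) \bigl[U^-_\sigma(R) \cdot U_\sigma(J)\bigr] T_\sigma(R) U^-_\sigma(R) \subset P_\sigma(J),
\]
and for the inverse, writing $(uhv)^{-1} = v^{-1}(h u^{-1} h^{-1}) h^{-1}$ gives a middle pair $v^{-1}(h u^{-1} h^{-1}) \in U^-_\sigma(R) \cdot U_\sigma(J) \subset P_\sigma(J)$, with the trailing $h^{-1} \in T_\sigma(R)$ absorbed on the right. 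The statement for $Q_\sigma(J)$ is completely symmetric and will follow by interchanging positive and negative roots throughout.

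The base ingredient will be a variant of Lemma \ref{inUHV}: for any $[\alpha] \in \Phi_\rho$, $t \in R_{[\alpha]}$ and $s \in J_{[\alpha]}$ one has
\[
    x_{-[\alpha]}(t) \cdot x_{[\alpha]}(s) = x_{[\alpha]}(a) \cdot h \cdot x_{-[\alpha]}(b)
\]
with $a \in J_{[\alpha]}$, $h \in H_\sigma \subset T_\sigma(R)$ and $b \in R_{[\alpha]}$. The proof is by the same explicit matrix computation as Lemma \ref{inUHV} but with the roles of $s$ and $t$ interchanged; working inside $SL_2(R)$ for $[\alpha] \sim A_1, A_1^2, A_1^3$ and inside $SL_3(R)$ for $[\alpha] \sim A_2$, the decisive invertibility step (for the diagonal entry $1 + st$, or its analogue $1 + \bar t_1 s_1 + t_2 s_2$ in the $A_2$ case) holds because $s \in J \subset \mathrm{rad}(R)$. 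Crucially, only $a$ is forced into $J_{[\alpha]}$, while $b$ is allowed to range over $R_{[\alpha]}$, which is exactly the asymmetry needed to land in $P_\sigma(J)$.

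With this lemma in hand, the key inclusion is proved by showing $x_{-[\beta]}(t) \cdot U_\sigma(J) \subset P_\sigma(J)$ for every $[\beta] \in \Phi^+_\rho$ and $t \in R_{[\beta]}$, by induction on the length $n$ of an element $x_{[\alpha_1]}(t_1) \cdots x_{[\alpha_n]}(t_n) \in U_\sigma(J)$ written in the fixed regular order. The case $n = 0$ is trivial. For $n \geq 1$, examine the leading pair $x_{-[\beta]}(t) x_{[\alpha_1]}(t_1)$. If $[\alpha_1] = [\beta]$, the key lemma rewrites it as $x_{[\beta]}(a) h x_{-[\beta]}(b)$ with $a \in J_{[\beta]},\ h \in T_\sigma(R),\ b \in R_{[\beta]}$; then the induction hypothesis applied to $x_{-[\beta]}(b) \cdot x_{[\alpha_2]}(t_2) \cdots x_{[\alpha_n]}(t_n) \in P_\sigma(J)$ finishes, since $x_{[\beta]}(a) h$ is absorbed on the left. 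If $[\alpha_1] \neq [\beta]$, Lemma \ref{inUV} yields $x_{-[\beta]}(t) x_{[\alpha_1]}(t_1) = x\, y \cdot x_{-[\beta]}(t)$ with $x \in U_\sigma(J)$ and $y \in U^-_\sigma(J)$; applying the induction hypothesis to the tail $x_{-[\beta]}(t) x_{[\alpha_2]}(t_2) \cdots x_{[\alpha_n]}(t_n) \in P_\sigma(J)$ reduces the argument to the inclusion $U^-_\sigma(J) \cdot U_\sigma(J) \subset P_\sigma(J)$. This last is immediate from Proposition \ref{lavidecomposition}: both $U^-_\sigma(J)$ and $U_\sigma(J)$ lie inside the subgroup $E'_\sigma(R, J) = U_\sigma(J) H'_\sigma(R, J) U^-_\sigma(J) \subset U_\sigma(J) T_\sigma(R) U^-_\sigma(R) = P_\sigma(J)$.

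The main obstacle will be the verification of the key lemma in the $A_2$ case arising from type ${}^2 A_{2n}$. There one must not only solve a $3 \times 3$ system in $SL_3(R)$ but also check that the resulting pair $(a_1, a_2)$ satisfies the defining relation $a_1 \bar a_1 = a_2 + \bar a_2$ of $\mathcal{A}(J)$, and similarly that $(b_1, b_2) \in \mathcal{A}(R)$; this is a careful but direct bookkeeping exercise, strictly parallel to the $A_2$ verification in the proof of Lemma \ref{inUHV}.
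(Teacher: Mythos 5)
Your argument is essentially correct, but it takes a genuinely different and more laborious route than the paper. The paper's own proof is much shorter: it observes that $E'_\sigma(R,J) = U_\sigma(J) H'_\sigma(R,J) U^{-}_\sigma(J)$ (Proposition~\ref{lavidecomposition}) is normalized by $E'_\sigma(R)$ and by $T_\sigma(R)$, hence by $B^{-}_\sigma(R) = T_\sigma(R) U^{-}_\sigma(R)$, so the product $E'_\sigma(R,J)\,B^{-}_\sigma(R)$ is automatically a subgroup; and this product equals $P_\sigma(J)$ because $U_\sigma(J) H'_\sigma(R,J) U^{-}_\sigma(J)\, T_\sigma(R) U^{-}_\sigma(R) = U_\sigma(J) T_\sigma(R) U^{-}_\sigma(R)$ (and symmetrically for $Q_\sigma(J)$). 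That route buys you the subgroup property with no new computation, at the cost of leaning entirely on Proposition~\ref{lavidecomposition}. Your route instead re-runs the $UHV$-absorption argument for the asymmetric product, and for that you must prove a new ``swapped'' version of Lemma~\ref{inUHV} (with $s \in J_{[\alpha]}$, $t \in R_{[\alpha]}$, $a \in J_{[\alpha]}$ but $b$ only in $R_{[\alpha]}$), including the $A_2$ bookkeeping that the paper never needs for this proposition; your identification of where the invertibility comes from ($s \in \mathrm{rad}(R)$) and of the asymmetry in $a$ versus $b$ is the right key point, and the reduction of the $[\alpha_1]\neq[\beta]$ case to $U^{-}_\sigma(J)\,U_\sigma(J) \subset P_\sigma(J)$ via Proposition~\ref{lavidecomposition} is sound. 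Two small repairs are needed: the inversion identity should read $(uhv)^{-1} = v^{-1}\,(h^{-1}u^{-1}h)\,h^{-1}$, not $v^{-1}(hu^{-1}h^{-1})h^{-1}$ (the conjugation is by $h^{-1}$, and as written the torus factors do not cancel); and passing from the single-generator statement $x_{-[\beta]}(t)\,U_\sigma(J) \subset P_\sigma(J)$ to the full inclusion $U^{-}_\sigma(R)\,U_\sigma(J) \subset P_\sigma(J)$ needs the (routine, but unstated) secondary induction on the number of negative root factors, re-absorbing the intermediate $U_\sigma(J)T_\sigma(R)$ pieces using Proposition~\ref{prop h(chi)xh(chi)^-1}. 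With those points made explicit, your proof is complete, and like the paper's it uses the hypothesis $J \subset \mathrm{rad}(R)$ only through Proposition~\ref{lavidecomposition} and the invertibility of $1+ts$.
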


\begin{proof}
    Note that $E'_\sigma(R,J) = U_\sigma(J) H'_\sigma(R,J) U^{-}_\sigma(J)$ is normalized by $E'_\sigma(R)$ and $T_\sigma(R)$. Set $B_\sigma(R) = U_\sigma(R) T_\sigma(R) = T_\sigma(R) U_{\sigma}(R)$ and $B_\sigma^{-}(R) = U_\sigma^{-}(R) T_\sigma(R) = T_\sigma(R) U_\sigma^{-}(R)$. Clearly, both $B_\sigma (R)$ and $B^{-}_\sigma (R)$ are subgroups of $G_\sigma(R).$ By a similar argument as in Proposition~\ref{lavidecomposition}, we have 
    $$P_\sigma(J) = E'_\sigma(R,J)B^{-}_\sigma(R) = B^{-}_\sigma(R) E'_\sigma(R,J) \text{ and } Q_\sigma(J) = B_\sigma(R) E'_\sigma(R,J) = E'_\sigma(R,J)B_\sigma(R).$$ Therefore, $P_\sigma(J)$ and $Q_\sigma(J)$ are subgroups of $G_\sigma(R)$.
\end{proof}

For any $\theta$-invariant ideal $J$ of $R$, we have the canonical map $\phi: G_\sigma (R) \longrightarrow G_\sigma (R/J)$ as mentioned above. We now consider the canonical map $\phi': G (R) \longrightarrow G(R/J)$. Clearly, $\phi' |_{G_\sigma(R)} = \phi$. Let $G(J) = \ker(\phi')$ and $G(R,J) = \phi'^{-1} (Z(G(R/J)))$, where $Z(G(R/J))$ is the center of the group $G(R/J)$. Let \(U(J)\) (respectively, \(U^-(J)\)) be the subgroup of \(G(R)\) generated by \(x_{\alpha}(t)\) for \(t \in J\) and \(\alpha \in \Phi^+\) (respectively, \(\alpha \in \Phi^-\)). Define \(T(J) = G(J) \cap T(R)\) and \( T(R,J) = G(R,J) \cap T(R) \).

\begin{lemma}\label{lemma on T(J)}
    \normalfont
    Let $J$ be any $\theta$-invariant ideal of $R$. Then 
    \begin{enumerate}[(a)]
        \item the subgroup $T_\sigma (J)$ of $T_\sigma (R)$ is generated by all $h(\chi)$ such that $\chi = \bar{\chi}_\sigma$ and $\chi (\mu) \equiv 1 $ (mod $J$) for every $\mu \in \Omega_\pi$, where $\Omega_\pi$ is a set of all weights of the representation $\pi$.
        \item the subgroup $T_\sigma (R, J)$ of $T_\sigma (R)$ is generated by all $h(\chi)$ such that $\chi = \bar{\chi}_\sigma$ and $\chi (\alpha) \equiv 1 $ (mod $J$) for every $\alpha \in \Phi$.
    \end{enumerate}
\end{lemma}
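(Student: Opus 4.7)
The plan is to describe $\phi(h(\chi))$ explicitly in terms of $\chi$ and then reduce both claims to elementary conditions on this character. Since every element of $T_\sigma(R)$ already has the form $h(\chi)$ with $\chi=\bar{\chi}_\sigma$, the self-conjugacy hypothesis in each part is automatic. Because $J$ is $\theta$-invariant, the reduction $\bar{\chi}$ of $\chi$ modulo $J$ again lies in $\mathrm{Hom}_1(\Lambda_\pi,(R/J)^*)$, and a direct computation from the definition of the $T_\sigma(R)$-action on $V(R)$ gives $\phi(h(\chi))=h(\bar{\chi})$.

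For part (a), $T_\sigma(J)=\ker(\phi)\cap T_\sigma(R)$. On the weight decomposition $V(R)=\coprod_{\mu\in\Omega_\pi}V_\mu(R)$, the element $h(\chi)$ acts on $V_\mu(R)$ as scalar multiplication by $\chi(\mu)$, and after reduction modulo $J$ it acts on each weight summand of $V(R/J)$ by $\bar{\chi}(\mu)$. Hence $\phi(h(\chi))=1$ in $G_\sigma(R/J)$ iff $\bar{\chi}(\mu)=1$ for every $\mu\in\Omega_\pi$, which is exactly the condition $\chi(\mu)\equiv 1\pmod J$ for all weights $\mu$ of $\pi$.

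For part (b), by definition $h(\chi)\in T_\sigma(R,J)$ iff $h(\bar{\chi})\in Z(G_\sigma(R/J))$. Applying Theorem~\ref{thm:KS3} to the ring $R/J$, this centre equals $\mathrm{Hom}_1(\Lambda_\pi/\Lambda_r,(R/J)^*)$. Hence $h(\chi)\in T_\sigma(R,J)$ iff $\bar{\chi}$ vanishes on the root lattice, i.e., $\chi(\alpha)\equiv 1\pmod J$ for every root $\alpha\in\Phi$ (equivalently, for all simple roots, since they generate $\Lambda_r$).

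The main obstacle I expect is the appeal to Theorem~\ref{thm:KS3}: its proof requires the standing hypotheses on invertibility of $2$ (and of $3$ in the ${}^3D_4$ case), but these conditions pass to $R/J$ so the theorem may indeed be applied there. Once the centre description is granted, both parts follow by unravelling definitions through the weight-space action. Finally, since $T_\sigma(J)$ and $T_\sigma(R,J)$ are subgroups of the abelian group $T_\sigma(R)$, identifying them as the \emph{sets} of those $h(\chi)$ satisfying the stated conditions is equivalent to identifying them as the subgroups \emph{generated by} such elements, completing the proof.
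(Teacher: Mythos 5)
Your argument is correct. Part (a) is essentially the paper's argument: $h(\chi)$ acts on each weight summand $V_\mu(R/J)$ by the reduction of $\chi(\mu)$, so lying in $\ker\phi$ is exactly the congruence condition on all $\mu\in\Omega_\pi$ (the paper phrases this by first passing to the untwisted torus $T(J)$ and then intersecting with $G_\sigma(R)$, but the content is the same). For part (b) you take a genuinely shorter route: you apply Theorem~\ref{thm:KS3} directly to $G_\sigma(R/J)$, identify $Z(G_\sigma(R/J))$ with the self-conjugate characters trivial on $\Lambda_r$, and conclude via the injectivity of $\zeta\mapsto h(\zeta)$ on the torus over $R/J$ (a point worth stating explicitly, though it is standard and recorded in the paper's description $T_\pi(\Phi,R)\cong\mathrm{Hom}(\Lambda_\pi,R^*)$). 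The paper instead detours through the untwisted group: it uses Theorem~\ref{thm:KS3} only to get $Z(G_\sigma(R/J))=Z(G(R/J))\cap G_\sigma(R/J)$, hence $G_\sigma(R,J)=G(R,J)\cap G_\sigma(R)$, and then characterizes $T(R,J)$ inside $G(R)$ via the center description of Abe--Hurley together with the commutator identity $[h(\chi),x_\alpha(1)]=x_\alpha(\chi(\alpha)-1)$ and normality of $G(R,J)$. What the longer route buys is the auxiliary identities $T_\sigma(J)=T(J)\cap G_\sigma(R)$ and $T_\sigma(R,J)=T(R,J)\cap G_\sigma(R)$, which the paper reuses later (e.g.\ in the proof of Proposition~\ref{G=UTV}); your version is leaner and stays entirely within the twisted group. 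Your caveat about the invertibility hypotheses needed for Theorem~\ref{thm:KS3} passing to $R/J$ applies equally to the paper's proof, so it is not a defect of your approach.
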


\begin{proof}
    To prove our result it is enough to prove that 
    \begin{enumerate}[(a)]
        \item $T_\sigma (J) = T(J) \cap G_\sigma (R)$ and $T(J)$ is a subgroup of $T(R)$ generated by all $h(\chi)$ such that $\chi (\mu) \equiv 1$ (mod $J$) for every $\mu \in \Omega_\pi$.
        \item $T_\sigma (R, J) = T(R, J) \cap G_\sigma (R)$ and $T(R, J)$ is a subgroup of $T(R)$ generated by all $h(\chi)$ such that $\chi (\alpha) \equiv 1$ (mod $J$) for every $\alpha \in \Phi$.
    \end{enumerate}
    
    Since $G_\sigma (J) = G(J) \cap G_\sigma (R)$ and $T_\sigma (R) = T(R) \cap G_\sigma (R)$, the first assertion of part (a) is clear. The second assertion of part (a) directly follows from the definition of $T(J)$ and the action of $h(\chi)$ on the weight spaces corresponding to the representation $\pi$ (see \ref{Subsec:CG}). 

    Note that the center of $G(R/J)$ is $Z(G(R/J)) = \text{Hom }(\Lambda_\pi / \Lambda_r, (R/J)^*)$ (see \cite{EA&JH}) and the center of $G_\sigma (R/J)$ is $Z(G_\sigma (R/J)) = \text{Hom}_1(\Lambda_\pi / \Lambda_r, (R/J)^*)$ (see Theorem \ref{thm:KS3}). Therefore, $Z(G_\sigma (R/J)) = Z(G(R/J)) \cap G_\sigma (R/J)$. But then $G_\sigma (R, J) = G (R, J) \cap G_\sigma (R)$ and hence the first assertion of part (b) follows. For the second assertion of part (b), let $T' (R, J)$ be the subgroup of $T(R)$ generated by all $h(\chi)$ such that $\chi (\alpha) \equiv 1 $ (mod $J$), for every $\alpha \in \Phi$. We want to show that $T' (R, J) = T (R, J)$. 
    Let $h(\chi) \in T (R, J) = G (R, J) \cap T (R)$. Since $G (R, J)$ is normal subgroup of $G (R)$, we have $[h(\chi), x_{\alpha}(1)] \in G (R, J)$ for all $\alpha \in \Phi$, that is $x_{\alpha}(\chi(\alpha) - 1) \in G (R, J)$ for all $\alpha \in \Phi$. Hence, by the main theorem of \cite{EA&JH}, $\chi(\alpha) \equiv 1$ (mod $J$) for all $\alpha \in \Phi$. Thus, $T (R, J) \subset T'(R, J)$. For the reverse inclusion, let $h(\chi) \in T' (R, J)$. Then $\chi (\alpha) \equiv 1 $ (mod $J$), for every $\alpha \in \Phi$ and hence $\phi' (h(\chi)) \in Z(G (R/J))$ (again by the main theorem of \cite{EA&JH}). That is, $h (\chi) \in G(R,J)$. Thus, we have $T'(R, J) \subset T(R, J)$.
\end{proof}


\begin{prop}\label{G=UTV}
    \normalfont
    Let $J$ be a $\theta$-invariant ideal of $R$ contained in $rad (R)$. Then $$G_\sigma (J) = U_\sigma(J) T_\sigma(J) U^{-}_\sigma(J) \subset G'_\sigma (R).$$
\end{prop}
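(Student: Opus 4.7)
The inclusion $U_\sigma(J)\, T_\sigma(J)\, U_\sigma^-(J) \subseteq G_\sigma(J)$ is immediate, since each factor consists of elements congruent to $1$ modulo $J$ and is $\sigma$-invariant. For the reverse inclusion, the plan is to invoke the \emph{untwisted} analogue of the result and then extract the $\sigma$-fixed part by a uniqueness argument, after which the inclusion in $G'_\sigma(R)$ is routine.

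Under the hypothesis $J \subseteq \mathrm{rad}(R)$, the standard theory for untwisted Chevalley groups (Abe~\cite{EA2}, cf.\ also the proof of Proposition~\ref{lavidecomposition}) yields the Gauss-type decomposition
\[
G(J) \;=\; U(J)\, T(J)\, U^-(J),
\]
with \emph{uniqueness} of expression. The uniqueness is the crucial point: it comes from the fact that in the big Bruhat cell $U(R)\,T(R)\,U^-(R)$ the three factors are determined by the element, and any $g \in G(J)$ lies in this cell because $g \equiv 1 \pmod{\mathrm{rad}(R)}$. Mod-$J$ reduction then forces each factor of the decomposition of $g$ into the corresponding principal congruence subgroup.

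Next, fix a total order on $\Phi^+$ that is compatible with $\rho$, meaning roots lying in the same $\rho$-orbit are grouped consecutively, and within each orbit one places the representative chosen to define $x_{[\alpha]}$ first. Take $g \in G_\sigma(J) \subseteq G(J)$ and write $g = u h v$ uniquely with $u \in U(J)$, $h \in T(J)$, $v \in U^-(J)$. Since $\sigma$ preserves each of $U(J), T(J), U^-(J)$, applying $\sigma$ gives $g = \sigma(u)\sigma(h)\sigma(v)$, and uniqueness yields $\sigma(u)=u$, $\sigma(h)=h$, $\sigma(v)=v$. By Lemma~\ref{lemma on T(J)}(a), $h \in T_\sigma(J)$. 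To see that $u \in U_\sigma(J)$ (and similarly $v \in U_\sigma^-(J)$), expand $u = \prod_{\alpha \in \Phi^+} x_\alpha(t_\alpha)$ uniquely with $t_\alpha \in J$ in the chosen order. The condition $\sigma(u) = u$, combined with the Chevalley commutator formula (used to re-sort the factors of $\sigma(u)$ back into the fixed order) and the uniqueness of the product expression, translates into precisely the compatibility relations among $t_\alpha$'s in each $\rho$-orbit that match the defining relations for $x_{[\alpha]}(t)$ in $U_\sigma(J)$: the conditions $t_{\bar\alpha} = \bar t_\alpha$ for $[\alpha]\sim A_1^2, A_1^3$, the constraint $t\bar t = u + \bar u$ encoded in $\mathcal{A}(R)$ for $[\alpha]\sim A_2$ (which emerges from the commutator of $x_\alpha$ and $x_{\bar\alpha}$), and analogously for $[\alpha]\sim A_1^3$ in the order-$3$ case.

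For the final inclusion $U_\sigma(J)\, T_\sigma(J)\, U_\sigma^-(J) \subseteq G'_\sigma(R) = T_\sigma(R)\, E'_\sigma(R)$: by definition $U_\sigma(J), U_\sigma^-(J) \subseteq E'_\sigma(R)$, while $T_\sigma(J) \subseteq T_\sigma(R)$, and $T_\sigma(R)$ normalizes $E'_\sigma(R)$ by Proposition~\ref{prop h(chi)xh(chi)^-1}. Hence the product lies in $T_\sigma(R)\, E'_\sigma(R) = G'_\sigma(R)$. The main obstacle is the bookkeeping in the unipotent step, particularly in the ${}^2A_{2n}$ case where the parameter set $\mathcal{A}(R)$ is not merely additive, and in the ${}^3D_4$ case where the cyclic $\sigma$-action on triples of roots, together with the internal commutators $[x_\alpha, x_{\bar\alpha}]$ arising during re-ordering, must be tracked carefully to recover the correct relations defining $U_\sigma(J)$.
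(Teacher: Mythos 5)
Your argument is correct and follows essentially the same route as the paper: invoke the untwisted decomposition $G(J)=U(J)\,T(J)\,U^-(J)$ for $J\subseteq \mathrm{rad}(R)$ (the paper cites Abe--Suzuki), take $\sigma$-fixed points using uniqueness of the factors, identify $T(J)^\sigma=T_\sigma(J)$ via Lemma~\ref{lemma on T(J)}, and observe the inclusion in $G'_\sigma(R)$. The only difference is that you spell out the identification $U(J)^\sigma=U_\sigma(J)$ via the commutator re-sorting, which the paper simply asserts.
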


\begin{proof}
    By $2.3$ of \cite{EA&KS}, we have $G(J) = U(J) T(J) U^-(J)$. Note that $G_\sigma (J) = G(J) \cap G_\sigma (R)$, that is, $G_\sigma(J) = \{ U(J) T(J) U^-(J) \} \cap G_\sigma (R)$. Also, we have $U_\sigma (J) = \{ x \in U(J) \mid \sigma(x) = x \}, U^{-}_\sigma (J) = \{ x \in U^{-}(J) \mid \sigma(x) = x \}$ and $T_\sigma (J) = T_\sigma (R) \cap G_\sigma (J) = T (J) \cap G_\sigma (R)$ (by the proof of Lemma \ref{lemma on T(J)}). Since $U_\sigma (J) \cap U^{-}_\sigma (J) = U_\sigma (J) \cap T_\sigma (J) = U^{-}_\sigma (J)\cap T_\sigma (J) = \{ 1 \}$, we have 
    $$G_\sigma (J) = U_\sigma(J) T_\sigma(J) U^{-}_\sigma(J).$$
    It is also clear that $G_\sigma (J) \subset G'_\sigma(R)$. 
\end{proof}

\begin{prop}\label{G=G'}
    \normalfont
    Let $R$ be a semi-local ring. Then $G_\sigma (R) = G^{0}_\sigma (R) = G'_\sigma (R)$. 
\end{prop}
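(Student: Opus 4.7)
The equality $G_\sigma(R) = G_\sigma^0(R)$ is immediate from Abe's theorem $G_\pi(\Phi, R) = G_\pi^0(\Phi, R)$ for semi-local rings (cited in Subsection~\ref{Subsec:CG}): intersecting with $G_\sigma(R)$ gives $G_\sigma(R) = G_\pi^0(\Phi, R) \cap G_\sigma(R) = G_\sigma^0(R)$. The inclusion $G'_\sigma(R) \subseteq G_\sigma^0(R)$ is likewise immediate, since $T_\sigma(R) E'_\sigma(R) \subseteq T_\pi(\Phi, R) E_\pi(\Phi, R) \cap G_\sigma(R) = G_\sigma^0(R)$. The substantive content is therefore the reverse inclusion $G_\sigma^0(R) \subseteq G'_\sigma(R)$.

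My plan for this inclusion is to reduce modulo the Jacobson radical $J = rad(R)$, so that $\bar R := R/J$ is a finite product $\prod_i k_i$ of fields on which $\theta$ acts by permuting the factors (up to a field automorphism within each orbit). By Proposition~\ref{G=UTV} applied to the $\theta$-invariant ideal $J$, the kernel of the reduction map $\phi: G_\sigma(R) \to G_\sigma(\bar R)$ equals $U_\sigma(J) T_\sigma(J) U_\sigma^-(J)$, which visibly lies in $T_\sigma(R) E'_\sigma(R) = G'_\sigma(R)$. It therefore suffices to show $\phi(G_\sigma^0(R)) \subseteq \phi(G'_\sigma(R))$.

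Over $\bar R$, Proposition~\ref{prop:Chevalley as TChevalley} identifies each $\theta$-orbit of size $>1$ among the factors $k_i$ with an \emph{untwisted} Chevalley group over a single field, while $\theta$-fixed factors yield honest twisted Chevalley groups over a single field. In either case, Steinberg's classical Bruhat decomposition $G_\sigma(k) = U_\sigma(k) N_\sigma(k) U_\sigma(k)$ (see \cite{RS}), combined with $N_\sigma(k) = T_\sigma(k) \cdot \langle w_{[\alpha]}\rangle$ and the observation that every Weyl representative $w_{[\alpha]}$ lies in $N'_\sigma(k) \subseteq E'_\sigma(k)$, yields $G_\sigma(k) = T_\sigma(k) E'_\sigma(k) = G'_\sigma(k)$ (and the analogous statement in the untwisted case is Abe's theorem applied to $k$). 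Hence $G_\sigma(\bar R) = G'_\sigma(\bar R)$.

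To conclude, given $g \in G_\sigma^0(R)$, I would write $\phi(g) = \bar t\,\bar e$ in $G'_\sigma(\bar R)$, lift $\bar e$ generator-by-generator by choosing preimages in $R_{[\alpha]}$ of the parameters from $\bar R_{[\alpha]}$, and lift $\bar t$ as a self-conjugate character from $\operatorname{Hom}_1(\Lambda_\pi, \bar R^*)$ to $\operatorname{Hom}_1(\Lambda_\pi, R^*)$ by selecting a $\rho$-compatible basis of $\Lambda_\pi$ and handling its $\rho$-orbits one at a time. Once such a lift $te \in G'_\sigma(R)$ of $\bar t\,\bar e$ is in hand, $g(te)^{-1} \in \ker\phi \subseteq G'_\sigma(R)$, and thus $g \in G'_\sigma(R)$. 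The delicate step I expect to be the main obstacle is the lifting of $\theta$-fixed data, since the averaging argument of Lemma~\ref{A1&A2} is unavailable in characteristics dividing $o(\theta)$. One is forced to work orbit-by-orbit using the Chinese remainder decomposition $\bar R = \prod_i k_i$ together with the explicit action of $\theta$ on the maximal ideals of $R$: for $\rho$-orbits of size $>1$ any preimage in $R^*$ (which exists since $R \to \bar R$ surjects and units lift to units in a semi-local ring) can be extended $\theta$-equivariantly, while for $\rho$-fixed nodes one must verify that $R_\theta^* \to \bar R_\theta^*$ (respectively $\mathcal A(R) \to \mathcal A(\bar R)$ in the ${}^2 A_{2n}$ case) is surjective in the semi-local setting.
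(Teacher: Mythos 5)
Your proposal follows the same skeleton as the paper's proof: reduce modulo $J=\operatorname{rad}(R)$, establish the statement over the semisimple quotient $R/J$ (a finite product of fields permuted by $\theta$), and absorb the kernel via Proposition~\ref{G=UTV}, which gives $G_\sigma(J)=U_\sigma(J)T_\sigma(J)U^{-}_\sigma(J)\subset G'_\sigma(R)$. The difference is in how the residue step is handled: the paper simply invokes the proof of Proposition~2.2 of \cite{KS2} (with a remark for $o(\theta)=3$) for $G_\sigma(R/J_i)=G'_\sigma(R/J_i)$, whereas you rederive it from Proposition~\ref{prop:Chevalley as TChevalley} together with the Bruhat decomposition. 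That is legitimate and more self-contained, but be aware that Steinberg's Bruhat decomposition is proved for the group generated by the $\mathfrak{X}_{[\alpha]}$, i.e.\ for $E'_\sigma(k)$; to treat the full fixed-point group you should apply $\sigma$ to the Bruhat decomposition of $G(k)=T(k)E(k)$, use uniqueness of its components, and then the identity $N_\sigma=H_\sigma N'_\sigma$, which needs $\sigma$-fixed representatives of $W_\rho$ in $N'_\sigma$ (namely $w_{[\alpha]}(1)$, and $w_{[\alpha]}(1,1/2)$ when $[\alpha]\sim A_2$). Your line ``$N_\sigma(k)=T_\sigma(k)\langle w_{[\alpha]}\rangle$'' is exactly this point. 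Note also that on a $\theta$-fixed residue field $\theta$ may induce the identity; the uniqueness argument is insensitive to this, but Steinberg's twisted-group results do not apply verbatim there.

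Your explicit lifting step fills in what the paper compresses into ``Therefore $G_\sigma(R)\subset G'_\sigma(R)$''. The elementary half is unproblematic, and your worry about residue characteristic dividing $o(\theta)$ cannot arise, since $1/2$ (resp.\ $1/3$) is invertible in $R$ under the paper's hypotheses, so the averaging of Lemma~\ref{A1&A2} is available. The torus half as written has a soft spot: $\rho$ need not permute a $\mathbb{Z}$-basis of $\Lambda_\pi$ (for the intermediate lattice in type $D_n$ it sends a basis vector to its negative), so besides lifting $\theta$-fixed units one must lift units with $\theta(\bar u)=\bar u^{-1}$; this is done by Hilbert~90 over $R/J$ (write $\bar u=\theta(\bar x)\,\bar x^{-1}$ and lift $\bar x$ arbitrarily, units lifting since $J\subset\operatorname{rad}(R)$), not by averaging. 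Alternatively, you can bypass character lifting entirely: after lifting the elementary part $\bar e$ to $e\in E'_\sigma(R)$, the element $ge^{-1}$ reduces into $T_\sigma(R/J)$, hence lies in $U(J)T(R)U^{-}(J)$ because the untwisted torus lifts; applying $\sigma$ and the uniqueness of the big-cell factorization shows $ge^{-1}\in U_\sigma(J)T_\sigma(R)U^{-}_\sigma(J)\subset G'_\sigma(R)$. This is the cleaner route and is the same device the paper uses in Corollary~\ref{G(R,J)=UTV}.
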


\begin{proof}
    Let $J = rad(R)$. Since $R$ is semi-local, it has finitely many maximal ideals, say $\mathfrak{m}_1, \dots, \mathfrak{m}_k$. Therefore, by the Chinese remainder theorem, we have $R/J = \prod_{i=1}^k R/\mathfrak{m}_i$. 
    Write $\bar{\mathfrak{m}}_i = \theta (\mathfrak{m}_i)$. Set 
    \[
        J_i = \begin{cases}
            \mathfrak{m}_i & \text{ if } \mathfrak{m}_i = \bar{\mathfrak{m}}_i, \\
            \mathfrak{m}_i \cap \bar{\mathfrak{m}}_i & \text{ if } \mathfrak{m}_i \neq \bar{\mathfrak{m}}_i \text{ and } o(\theta) = 2, \\
            \mathfrak{m}_i \cap \bar{\mathfrak{m}}_i \cap \bar{\bar{\mathfrak{m}}}_i & \text{ if } \mathfrak{m}_i \neq \bar{\mathfrak{m}}_i \text{ and } o(\theta) = 3. 
        \end{cases}
    \]
    By the proof of Proposition $2.2$ of \cite{KS2}, we have $G_\sigma(R/J_i) = G'_\sigma (R/J_i)$ (that proof only addresses the case where $o(\theta) = 2$. However, the proof for the case where $o(\theta) = 3$ follows a similar structure).
    
    Since $R/J = \prod_{i=1}^k R/\mathfrak{m}_i = \prod_{i=1}^l R/J_i$, we have $G_\sigma(R/J) = \prod_{i=1}^l G_\sigma(R/J_i).$ But then $$G_\sigma(R/J) = \prod_{i=1}^l G_\sigma (R/J_i) = \prod_{i=1}^l G'_\sigma (R/J_i) = G'_\sigma (R/J).$$ On the other hand, from Proposition \ref{G=UTV}, $G_\sigma(J) \subset G'_\sigma (R)$. Therefore $G_\sigma(R) \subset G'_\sigma (R)$. Hence $G'_\sigma(R) = G^{0}_\sigma (R) = G_\sigma(R)$, as desired. 
\end{proof}

\begin{cor}\label{G(R,J)=UTV}
    \normalfont
    Let $R$ be a semi-local ring and let $J$ be a $\theta$-invariant ideal of $R$ contained in $rad(R)$. Then 
    $$G_\sigma (R,J) = U_\sigma (J) T_\sigma (R,J) U^{-}_\sigma (J).$$
\end{cor}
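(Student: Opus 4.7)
The plan is to show both inclusions. The inclusion $\supseteq$ is immediate, since $U_\sigma(J), U^-_\sigma(J) \subset G_\sigma(J) \subset G_\sigma(R,J)$ and $T_\sigma(R,J) \subset G_\sigma(R,J)$ by definition, so every product $u_1 t u_2$ with factors in these subgroups lies in the group $G_\sigma(R,J)$.

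For $\subseteq$, take $g \in G_\sigma(R,J)$. Then $\phi(g) \in Z(G_\sigma(R/J)) = \mathrm{Hom}_1(\Lambda_\pi/\Lambda_r, (R/J)^*)$ by Theorem~\ref{thm:KS3} (and its ${}^3D_4$ analogue), so in particular $\phi(g)$ lies in $T_\sigma(R/J)$. The crucial step is to lift $\phi(g)$ to some $t \in T_\sigma(R,J)$; by Lemma~\ref{lemma on T(J)}(b) this amounts to constructing a self-conjugate character $\chi \colon \Lambda_\pi \to R^*$ with $\chi(\alpha) \equiv 1 \pmod{J}$ for every $\alpha \in \Phi$, reducing modulo $J$ to $\phi(g)$. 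I would do this by choosing a $\rho$-stable $\mathbb{Z}$-basis of $\Lambda_\pi$ (available for the types in play): on each $\rho$-orbit of size $o(\rho)$ in the basis, freely lift the value of $\phi(g)$ at an orbit representative from $(R/J)^*$ to $R^*$ (possible because $R^* \to (R/J)^*$ is surjective for $R$ semi-local with $J \subset \mathrm{rad}(R)$) and extend by self-conjugacy; on a $\rho$-fixed basis vector, the value lies in $((R/J)_\theta)^*$ and must be lifted to $R^*_\theta$, which is achievable via the surjection $R_\theta \to (R/J)_\theta$ from Lemma~\ref{A1&A2} together with the fact that any lift in $R$ of a unit of $R/J$ is itself a unit when $J \subset \mathrm{rad}(R)$.

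Once such a $t \in T_\sigma(R,J)$ is in hand, $g t^{-1} \in \ker \phi = G_\sigma(J)$, and Proposition~\ref{G=UTV} gives $g t^{-1} = u_1 s u_2$ with $u_1 \in U_\sigma(J)$, $s \in T_\sigma(J) \subset T_\sigma(R,J)$, and $u_2 \in U^-_\sigma(J)$. Rewriting $g = u_1 (st)(t^{-1} u_2 t)$, Proposition~\ref{prop h(chi)xh(chi)^-1} shows that conjugation by $t$ sends each generator $x_{-[\alpha]}(r)$ to $x_{-[\alpha]}(\chi(-\alpha) \cdot r)$, preserving the congruence condition $r \in J_{[\alpha]}$; hence $t^{-1} u_2 t \in U^-_\sigma(J)$, while $st \in T_\sigma(R,J)$, yielding $g \in U_\sigma(J) T_\sigma(R,J) U^-_\sigma(J)$.

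The principal obstacle is the torus lift in step two, which is where the standing hypothesis $1/2 \in R$ (or $1/3 \in R$ for ${}^3D_4$) implicitly enters, through Lemma~\ref{A1&A2}. A cleaner alternative route would be to first establish the untwisted analogue $G(R,J) = U(J) T(R,J) U^-(J)$ by the same template as Proposition~\ref{G=UTV}, then intersect with $G_\sigma(R)$ and invoke uniqueness of the big-cell factorisation in $G(R)$ to force the three factors of any $\sigma$-fixed element to be individually $\sigma$-fixed; this shifts the character lift into the more elementary untwisted setting but requires proving the untwisted result first.
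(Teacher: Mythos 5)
Your outer inclusion, the use of Proposition~\ref{G=UTV}, and the final rewriting $g=u_1(st)(t^{-1}u_2t)$ with Proposition~\ref{prop h(chi)xh(chi)^-1} are fine, but the step on which everything hinges — lifting $\phi(g)$ to an element of $T_\sigma(R,J)$ — is not justified as written. Your construction requires a $\mathbb{Z}$-basis of $\Lambda_\pi$ that is permuted by $\rho$, and such a basis does not exist for all the lattices in play: $\Lambda_\pi$ is only assumed to be a $\rho$-invariant lattice between $\Lambda_r$ and $\Lambda_{sc}$, and for ${}^2D_n$ (or ${}^2A_3={}^2D_3$) with $\Lambda_\pi$ the intermediate ``vector'' lattice $\mathbb{Z}^n$ the graph automorphism acts by $e_n\mapsto -e_n$, so $H^1(\langle\rho\rangle,\Lambda_\pi)\cong\mathbb{Z}/2\neq 0$, whereas every permutation $\mathbb{Z}[\langle\rho\rangle]$-module has vanishing $H^1$; hence no $\rho$-stable basis exists for these perfectly admissible choices of $\pi$. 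Nor can you fall back on the basis-free statement ``every self-conjugate character $\Lambda_\pi\to (R/J)^*$ lifts to a self-conjugate character $\Lambda_\pi\to R^*$'': the obstruction lives in $H^1(\langle\sigma\rangle,\mathrm{Hom}(\Lambda_\pi,1+J))$, and it is not killed merely by $1/2\in R$ and $J\subseteq \mathrm{rad}(R)$. Concretely, at a $(-1)$-eigenvector of $\rho$ self-conjugacy forces a value $a\in R^*$ with $a\,\theta(a)=1$ lying over the given value mod $J$, i.e.\ an element of $1+J$ with prescribed norm in $1+J_\theta$, and the norm map $1+J\to (1+J)_\theta$ need not be surjective (for $R=\mathbb{C}[x]_{(x)}$ with $\theta$ complex conjugation, $1+x$ is not a norm). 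A repair would have to exploit that $\phi(g)$ is trivial on $\Lambda_r$, so only finitely many torsion values occur — but you do not argue this, and it is exactly where the difficulty sits.

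The paper avoids this problem entirely by a different decomposition: since $R$ is semilocal, Proposition~\ref{G=G'} gives $G_\sigma(R)=E'_\sigma(R)T_\sigma(R)$, so one writes $z=xy$ with $x\in E'_\sigma(R)$, $y\in T_\sigma(R)$; centrality of $\phi(z)$ then forces $\phi(x)\in T_\sigma(R/J)\cap E'_\sigma(R/J)=H'_\sigma(R/J)$, and one only has to lift $\phi(x)$ to some $h\in H'_\sigma(R)$ — a lift inside the elementary subgroup, available because $R^*\to (R/J)^*$, $R_\theta\to (R/J)_\theta$ and $\mathcal{A}(R)^*\to\mathcal{A}(R/J)^*$ are surjective when $J\subseteq\mathrm{rad}(R)$ and $1/2\in R$ (Lemma~\ref{A1&A2}). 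Then $xh^{-1}\in G_\sigma(J)$, $hy\in T_\sigma(R,J)$, and Proposition~\ref{G=UTV} finishes, exactly as in your last step. If you want to keep your write-up, replace your character-lifting paragraph by this argument; note also that your proposed ``cleaner alternative'' via the untwisted big cell needs the same semilocal input ($G(R)=E(R)T(R)$) plus an untwisted congruence-torus lift, so it does not actually circumvent the issue.
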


\begin{proof}
    Since $G_\sigma (J)$ is normalized by $T_\sigma (R,J)$, we conclude that $G_\sigma (J) T_\sigma (R,J)$ is a subgroup of $G_\sigma (R,J)$. On the other hand, by the above proposition, we have $G_\sigma(R) = G'_\sigma (R) = E'_\sigma (R) T_\sigma(R)$. Let $z \in G_\sigma (R,J) \subset G_\sigma(R)$. Then there exist $x \in E'_\sigma (R)$ and $y \in T_\sigma (R)$ such that $z=xy$. Now consider the canonical map $\phi: G_\sigma(R) \longrightarrow G_\sigma(R/J)$. Then $\phi (z) = \phi (x) \phi(y) \in Z(G_\sigma(R/J))$. Since $\phi(y) \in T_\sigma(R/J)$, we obtain $\phi(x) \in T_\sigma(R/J)$. To be precious, $\phi(x) \in H'_\sigma(R/J)$ as $x \in E'_\sigma (R)$. But then there exist $h \in H'_\sigma (R)$ such that $\phi (h) = \phi (x)$, that is, $xh^{-1} \in G_\sigma (J)$. Hence $hy \in T_\sigma(R,J)$. Write $x'= xh^{-1}$ and $y'= h y$. Then $z= xy = x' y' \in G_\sigma (J) T_\sigma(R,J)$. Therefore, $G_\sigma(R,J) = G_\sigma (J) T_\sigma(R,J) = U_\sigma (J) T_\sigma (R,J) U^{-}_\sigma (J)$, the last equality is due to Proposition \ref{G=UTV}.
\end{proof}


\section{The Subgroup \texorpdfstring{$E'_\sigma(R,J)$}{E(R,J)}}\label{sec:E(R,J)}

In this section, we will explore several important properties of the subgroup $E'_\sigma(R,J)$. Similar properties have been studied by L. N. Vaserstein in \cite{LV} for the case of Chevalley groups. Using his ideas, we will state and prove analogous properties for twisted Chevalley groups. For the remainder of this paper, we adopt the following conventions.
 
\begin{conv}
    Assume that $\Phi_\rho$ is irreducible and the rank of $\Phi_\rho > 1$. Any ideal $J$ of $R$ is $\theta$-invariant. If $o(\theta) = 2$ then assume that $1/2 \in R$ and if $o(\theta) = 3$ then assume that $1/2 \in R$ and $1/3 \in R$. 
\end{conv}


\begin{prop}\label{normal}
    For any ideal $J$ of $R$, the subgroup $E'_\sigma (R,J)$ of $G_\sigma (R)$ is normal. In other words,
    \[
        [G_\sigma (R), E'_\sigma (R,J)] \subset E'_\sigma (R,J).
    \]
\end{prop}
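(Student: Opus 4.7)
My plan is to combine a reduction to a single conjugation with the semilocal decomposition of Proposition~\ref{G=G'}. First, since $E'_\sigma(R, J)$ is by construction the normal closure in $E'_\sigma(R)$ of the generators $x_{[\alpha]}(t)$ with $t \in J_{[\alpha]}$, and since $E'_\sigma(R)$ is itself normal in $G_\sigma(R)$ by Corollary~\ref{cor:KS2}, it suffices to show that for every $h \in G_\sigma(R)$, every $[\alpha] \in \Phi_\rho$, and every $t \in J_{[\alpha]}$, the conjugate $h\, x_{[\alpha]}(t)\, h^{-1}$ lies in $E'_\sigma(R, J)$. Indeed, for a generic generator $e\, x_{[\alpha]}(t)\, e^{-1}$ with $e \in E'_\sigma(R)$, one has $g\, (e\, x_{[\alpha]}(t)\, e^{-1})\, g^{-1} = (ge)\, x_{[\alpha]}(t)\, (ge)^{-1}$ and $ge \in G_\sigma(R)$, so this reduction is clean.

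Next, I pass to the localization at each maximal ideal. For every maximal ideal $\m$ of $R$, consider the canonical map $G_\sigma(R) \to G_\sigma(R_\m)$. Since $R_\m$ is local and therefore semilocal, Proposition~\ref{G=G'} gives $G_\sigma(R_\m) = E'_\sigma(R_\m)\, T_\sigma(R_\m)$, so the image of $h$ factors as $e \cdot h(\chi)$ with $e \in E'_\sigma(R_\m)$ and $h(\chi) \in T_\sigma(R_\m)$. Applying Proposition~\ref{prop h(chi)xh(chi)^-1}, conjugation by $h(\chi)$ sends $x_{[\alpha]}(t)$ to $x_{[\alpha]}(\chi(\alpha) \cdot t)$, and the new parameter $\chi(\alpha) \cdot t$ still lies in $(JR_\m)_{[\alpha]}$ because $t \in J_{[\alpha]}$. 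Conjugating further by $e$ stays within $E'_\sigma(R_\m, JR_\m)$ by the very definition of the latter as the $E'_\sigma(R_\m)$-normal closure of $E'_\sigma(JR_\m)$. Hence the image of $h\, x_{[\alpha]}(t)\, h^{-1}$ in $G_\sigma(R_\m)$ belongs to $E'_\sigma(R_\m, JR_\m)$ for every maximal ideal $\m$.

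Finally, I invoke a local-global principle: any element of $E'_\sigma(R)$ whose localization at every maximal ideal $\m$ belongs to $E'_\sigma(R_\m, JR_\m)$ must already lie in $E'_\sigma(R, J)$. Combined with the fact that $h\, x_{[\alpha]}(t)\, h^{-1} \in E'_\sigma(R)$, which is a consequence of the normality of $E'_\sigma(R)$ in $G_\sigma(R)$, this concludes the proof.

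The principal obstacle I foresee is establishing this local-global principle in the twisted setting. The standard Quillen-patching/Suslin-specialization technique for relative elementary subgroups of Chevalley groups must be adapted to accommodate the several distinct parameter types $R_{[\alpha]}$ --- namely, the fixed subring $R_\theta$ when $[\alpha] \sim A_1$, the full ring $R$ when $[\alpha] \sim A_1^2$ or $A_1^3$, and the Heisenberg-type object $\mathcal{A}(R)$ when $[\alpha] \sim A_2$ --- each requiring its own compatibility check with the patching procedure. Once this principle is in hand, the remainder of the argument rests only on Propositions~\ref{G=G'} and~\ref{prop h(chi)xh(chi)^-1}, which are already at our disposal.
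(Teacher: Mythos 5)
Your reduction to conjugates of the generators $x_{[\alpha]}(t)$, $t \in J_{[\alpha]}$, is fine, and your local computation (over a suitable semilocal localization) is essentially correct; the argument collapses, however, at the final step. The ``local-global principle'' you invoke --- that an element of $E'_\sigma(R)$ whose image lies in $E'_\sigma(R_\m, JR_\m)$ for every maximal ideal $\m$ must lie in $E'_\sigma(R,J)$ --- is not a known result, and it is in fact false in general. Membership of a \emph{constant} element in a relative elementary subgroup is obstructed by relative $K_1$, which is a global invariant and does not localize: already for $\mathrm{SL}_n$ ($n \geq 3$) over the ring of integers $\mathcal{O}$ of a totally imaginary number field, Bass--Milnor--Serre produce ideals $\mathfrak{q}$ with $SK_1(\mathcal{O},\mathfrak{q}) \neq 1$, so there are elements of $E_n(\mathcal{O}) \cap \mathrm{SL}_n(\mathcal{O},\mathfrak{q})$ not in $E_n(\mathcal{O},\mathfrak{q})$, even though over every localization (a semilocal ring, stable range one) the congruence subgroup coincides with the relative elementary subgroup; via Proposition~\ref{prop:Chevalley as TChevalley} such examples live inside the twisted setting as well. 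The Quillen--Suslin/Taddei patching technique you allude to applies only to one-parameter elements $g(X) \in G_\sigma(R[X])$ with $g(0)=1$ --- this is exactly how the paper uses Lemma~\ref{lemma:GT} later, in Section~\ref{sec:Pf of main thm} --- and it does not yield a membership criterion for single elements of $G_\sigma(R)$. A smaller but real defect: for a maximal ideal with $\theta(\m) \neq \m$ the ring $R_\m$ carries no induced action of $\theta$, so $G_\sigma(R_\m)$ is not defined; one must localize at the $\theta$-stable set $S_\m = R \setminus (\m \cup \bar\m \cup \bar{\bar\m})$ as in Section~\ref{sec:Pf of prop 3} before quoting Proposition~\ref{G=G'}.

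The paper avoids any such principle by a relativization (``double ring'') trick: one passes to $R' = \{(r,s) \in R \times R \mid r-s \in J\}$ with the ideal $J' = J \times 0$, embeds the conjugation problem via $h \mapsto (h,h)$ and $g \mapsto (g,1)$, proves the key identity $E'_\sigma(R',J') = E'_\sigma(R') \cap G_\sigma(J')$ (which holds for this split pair, though not for an arbitrary pair $(R,J)$), and then deduces relative normality from the \emph{absolute} normality $E'_\sigma(R') \trianglelefteq G_\sigma(R')$ given by Corollary~\ref{cor:KS2}. If you want to salvage your outline, you would have to replace your last step by an argument of this kind (or by a genuinely relative polynomial patching argument), since the statement you are trying to prove is precisely what the missing principle would presuppose.
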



\begin{proof}
    First, consider the case where $J = R$. Then, by definition, $E'_{\sigma}(R, J) = E'_\sigma (R)$. The result in this case follows from the fact that $E'_\sigma (R)$ is a normal subgroup of $G_{\sigma}(R)$ (see Corollary~\ref{cor:KS2}). 

Now suppose $J \subsetneq R$. Let $h \in G_\sigma (R)$ and $g \in E'_\sigma (R, J)$. 
We want to prove that $hgh^{-1} \in E'_\sigma (R,J)$. 
We consider the ring $R':= \{(r,s) \in R \times R \mid r-s \in J\}$ and its ideal $J':= \{ (r,0) \in R \times R \mid r \in J \}$.
The automorphism $\theta$ of the ring $R$ can be naturally induced to an automorphism of $R'$, and we denote it by the same letter $\theta$.
Therefore, the group $G_\sigma(R')$ makes sense.
Consider an element $h'':= (h,h)$ of the group $G_\sigma(R') \subset G_\sigma (R) \times G_{\sigma} (R).$ 
Observe that $E'_\sigma (R, J)$ is embedded into the group $E'_\sigma (R', J')$ by $x \mapsto x':=(x,1).$ (This can be seen as follows: There is a natural embedding from $E'_\sigma (J)$ into $E'_\sigma (J')$ given by $x \mapsto (x,1)$. Now any $y \in E'_\sigma(R,J)$ is can be written as a product of the form $\prod_{i=1}^n g_i x_i g_i^{-1}$, where $x_i \in E'_{\sigma}(J)$ and $g_i \in E'_\sigma(R)$. But then $(y,1) = (\prod_{i=1}^n g_i x_i g_i^{-1},1) = \prod_{i=1}^n (g_i,g_i) (x_i,1) (g_i,g_i)^{-1} \in E'_\sigma(R',J')$, as desired.)

Now we claim that $E'_\sigma (R', J') = E'_\sigma (R') \cap G_\sigma (J')$. Clearly, by definition, $E'_\sigma (R', J') \subset E'_\sigma (R') \cap G_\sigma (J')$. For converse, let $$ x = \prod_{i=1}^{n} x_{[\alpha_i]} (t_i) \in E'_\sigma(R') \cap G_\sigma (J')$$ where $t_i \in R'_{[\alpha_i]}$. 
For each $t_i \in R'_{[\alpha_i]}$, choose elements $s_i \in R'_{[\alpha_i]}$ and $u_i \in J'_{[\alpha_i]}$ as follows:
\begin{enumerate}
    \item If $[\alpha_i] \sim A_1$, $A_1^2$, or $A_1^3$ and $t_i = (\alpha_i, \beta_i)$, then set $s_i = (\beta_i, \beta_i)$ and $u_i = (\alpha_i - \beta_i, 0)$. It is clear that $t_i = s_i + u_i$.

    \item If $[\alpha_i] \sim A_2,$ $t_i = (\alpha_i, \beta_i) \in \mathcal{A}(R')$, $\alpha_i = (a_1, a_2) \in R'$ and $\beta_i = (b_1, b_2) \in R'$, then set $s_i = (\gamma_i, \delta_i) \in \mathcal{A}(R')$ and $u_i = (\mu_i, \nu_i)  \in \mathcal{A}(R')$, where $\gamma_i = (a_2,a_2) \in R', \delta_i = (b_2,b_2) \in R', \mu_i = (a_1 - a_2, 0) \in R'$ and $\nu_i = (b_1 - b_2 - \overline{a_2} (a_1 - a_2), 0) \in R'$. Clearly, $t_i = s_i \oplus u_i$.
\end{enumerate}
Set $$y_k = \prod_{i=1}^{k} x_{[\alpha_i]} (s_i) \in E'_\sigma(R')$$ for $0 \leq k \leq n.$ Clearly, $y_0 = 1$ (by the definition). 
We claim that $y_n = 1$. Since $x \in G_\sigma(J')$, we have $x \equiv 1$ (mod $J'$). But then $y_n \equiv 1$ (mod $J'$), that is, $\prod_{i=1}^n x_{[\alpha_i]}(s_i + J') = 1$ in $E'_\sigma (R'/J')$ (the notion of $s_i + J'$ is clear even when $[\alpha] \sim A_2$). Note that there is a natural embedding from $R'/J'$ to $R/J \times R$ which induces an embedding from the group $E'_\sigma (R'/J')$ to the group $E'_{\sigma} (R/J \times R) \cong E'_{\sigma}(R/J) \times E'_{\sigma}(R)$. 
Under this embedding, $$\prod_{i=1}^n (x_{[\alpha_i]}(\beta_i+J),x_{[\alpha_i]}(\beta_i)) = (\prod_{i=1}^n x_{[\alpha_i]}(\beta_i+J), \prod_{i=1}^n x_{[\alpha_i]}(\beta_i)) = (1,1)$$ in $E'_{\sigma}(R/J) \times E'_{\sigma}(R)$. In particular, $\prod_{i=1}^n x_{[\alpha_i]}(\beta_i) = 1$ in $E'_{\sigma}(R)$. Thus
\begin{align*}
    y_n = \prod_{i=1}^n x_{[\alpha_i]}(s_i) = \prod_{i=1}^n x_{[\alpha_i]}(\beta_i, \beta_i) = (\prod_{i=1}^n x_{[\alpha_i]}(\beta_i), \prod_{i=1}^n x_{[\alpha_i]}(\beta_i)) = (1,1) = 1.
\end{align*}
This proves our claim. Finally,
$$x = \prod_{i=1}^{n} x_{[\alpha_i]} (s_i) x_{[\alpha_i]} (u_i) = \prod_{i=1}^{n} y_{i-1}^{-1}y_i x_{[\alpha_i]} (u_i) = y_0^{-1} \Bigg( \prod_{i=1}^{n} y_i x_{[\alpha_i]} (u_i) y_{i}^{-1} \Bigg) y_n \in E'_{\sigma} (R', J'),$$ as desired.

Again by Corollary \ref{cor:KS2}, $E'_\sigma (R')$ is a normal subgroup of $G_\sigma (R')$. So $h'' g' (h'')^{-1} \in E'_\sigma (R'),$ where $g' = (g,1) \in E'_\sigma (R')$. On the other hand, $h'' g' (h'')^{-1} = (hgh^{-1}, 1) \in G_\sigma (J')$. Therefore $h''g'(h'')^{-1} \in E'_\sigma (R') \cap G_\sigma (J') = E'_\sigma (R', J'),$ hence $hgh^{-1} \in E'_\sigma (R, J)$. Thus $E'_\sigma (R, J)$ is normal in $G_\sigma(R)$. 
\end{proof}

We derive the following corollary from the proof of the above Proposition.

\begin{cor}\label{mixcom}
    $[E'_\sigma (R), G_\sigma (J)] \subset E'_\sigma (R,J)$.
\end{cor}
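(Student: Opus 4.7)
My plan is to reuse the ring extension device from the proof of Proposition~\ref{normal}, with the two commutator arguments in swapped roles. Given $h \in E'_\sigma(R)$ and $g \in G_\sigma(J)$, I will lift the pair $(h,g)$ into the auxiliary ring $R' := \{(r,s) \in R \times R \mid r-s \in J\}$ equipped with the ideal $J' := \{(r,0) \mid r \in J\}$; both $R'$ and $J'$ are stable under the coordinatewise action of $\theta$, so $G_\sigma(R')$ and $E'_\sigma(R')$ make sense, and I will then deduce $[h,g] \in E'_\sigma(R,J)$ by intersecting two normality properties already available inside $G_\sigma(R')$.

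Under the diagonal embedding $r \mapsto (r,r)$, the element $h$ lifts to $h'' := (h,h) \in E'_\sigma(R')$. Since $g \equiv 1 \pmod{J}$, the pair $g' := (g,1)$ lies in $G(R')$, is $\sigma$-fixed, and projects to the identity under the canonical map $G_\sigma(R') \to G_\sigma(R'/J') \cong G_\sigma(R)$ induced by $(r,s) \mapsto s$; hence $g' \in G_\sigma(J')$. By Corollary~\ref{cor:KS2} applied to $R'$ (which inherits $1/2$, and $1/3$ when needed, from $R$), the subgroup $E'_\sigma(R')$ is normal in $G_\sigma(R')$, so $[h'', g'] \in E'_\sigma(R')$. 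Simultaneously, $G_\sigma(J')$ is normal in $G_\sigma(R')$ as the kernel of the quotient map, so $[h'', g'] \in G_\sigma(J')$. The intersection identity $E'_\sigma(R') \cap G_\sigma(J') = E'_\sigma(R', J')$ established inside the argument for Proposition~\ref{normal} then forces $[h'', g'] \in E'_\sigma(R', J')$.

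To conclude, I project onto the first coordinate: the ring map $R' \to R$, $(r,s) \mapsto r$, sends $J'$ into $J$, hence induces a group homomorphism carrying $E'_\sigma(R', J')$ into $E'_\sigma(R, J)$. Since $[h'', g'] = ([h,g], 1)$ maps to $[h,g]$ under this projection, we obtain $[h,g] \in E'_\sigma(R,J)$, as required. I do not expect any genuine obstacle here: the substantive ingredients, namely the intersection identity $E'_\sigma(R') \cap G_\sigma(J') = E'_\sigma(R', J')$ and the normality of $E'_\sigma$ inside $G_\sigma$, were already carried out in Proposition~\ref{normal} and Corollary~\ref{cor:KS2}, so the present corollary is essentially the same argument with the diagonal embedding and the $(\cdot,1)$ embedding reassigned to the opposite elements.
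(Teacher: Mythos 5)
Your proof is correct and follows essentially the same route as the paper: the auxiliary ring $R'=\{(r,s)\mid r-s\in J\}$ with ideal $J'$, the embeddings $h''=(h,h)$ and $g'=(g,1)$, normality of $E'_\sigma(R')$ and $G_\sigma(J')$ in $G_\sigma(R')$, and the identity $E'_\sigma(R')\cap G_\sigma(J')=E'_\sigma(R',J')$ from Proposition~\ref{normal}. Your explicit first-coordinate projection merely spells out the final step that the paper leaves implicit, so there is no substantive difference.
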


\begin{proof} 
    Take any $h \in E'_{\sigma} (R)$ and $g \in G_\sigma (J)$. Define, as in proof of Proposition~\ref{normal}, $h'' = (h,h) \in E'_{\sigma} (R')$ and $g' = (g,1) \in G_{\sigma} (J')$. Then $[h'',g'] \in E'_\sigma (R') \cap G_\sigma (J') = E'_\sigma (R', J')$ (as $E'_\sigma (R')$ and $G_\sigma (J')$ are normal subgroups of $G_\sigma (R')$). Since $[h'',g'] = ([h,g],1)$, we have $[h,g] \in E'_\sigma(R,J).$ Thus, $[E'_\sigma (R), G_\sigma (J)] \subset E'_\sigma (R,J)$.
\end{proof}


\begin{prop}\label{genofE(R,I)}
    For any ideal $J$ of $R$, the subgroup $E'_\sigma (R,J)$ is generated by elements of the form $x_{[\alpha]}(r)x_{-[\alpha]}(u)x_{[\alpha]}(r)^{-1}$ with $[\alpha] \in \Phi_\rho, r \in R_{[\alpha]}$ and $u \in J_{[\alpha]}$.
\end{prop}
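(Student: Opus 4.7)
Let $F$ denote the subgroup of $E'_\sigma(R)$ generated by the elements displayed in the statement; the plan is to show $F = E'_\sigma(R,J)$. The inclusion $F \subset E'_\sigma(R,J)$ will be immediate, since each generator $x_{[\alpha]}(r) x_{-[\alpha]}(u) x_{[\alpha]}(r)^{-1}$ is a conjugate of $x_{-[\alpha]}(u) \in E'_\sigma(J)$ by $x_{[\alpha]}(r) \in E'_\sigma(R)$. Setting $r$ equal to the identity element of $R_{[\alpha]}$ in each generator will yield $x_{-[\alpha]}(u) \in F$, and since $-[\alpha]$ ranges over $\Phi_\rho$ as $[\alpha]$ does, this gives $E'_\sigma(J) \subset F$. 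The remaining task is to establish $E'_\sigma(R,J) \subset F$, which reduces to showing that $F$ is normalized by each root subgroup $\mathfrak{X}_{[\beta]}$.

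The first main step will be the following Key Lemma: for all $[\alpha], [\beta] \in \Phi_\rho$, $s \in R_{[\beta]}$, and $u \in J_{[\alpha]}$, the conjugate $x_{[\beta]}(s) x_{[\alpha]}(u) x_{[\beta]}(s)^{-1}$ lies in $F$. We will verify this by case analysis on the relative position of $[\alpha]$ and $[\beta]$. When $[\beta] \ne \pm[\alpha]$, the identity $x_{[\beta]}(s) x_{[\alpha]}(u) x_{[\beta]}(s)^{-1} = [x_{[\beta]}(s), x_{[\alpha]}(u)] \cdot x_{[\alpha]}(u)$ combined with the Chevalley commutator formulas of Subsection~\ref{subsec:CheComm} will show that the commutator is a product of $x_{[\gamma]}(v)$'s in which every coefficient $v$ carries a factor of $u \in J_{[\alpha]}$; the commutator therefore lies in $E'_\sigma(J) \subset F$. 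When $[\beta] = [\alpha]$, a direct computation (trivial in the abelian cases, and explicit in $\mathcal{A}(R)$ in the $A_2$ case) places the conjugate in $\mathfrak{X}_{[\alpha]} \cap E'_\sigma(J) \subset F$. When $[\beta] = -[\alpha]$, the conjugate is itself a defining generator of $F$ (up to renaming $[\alpha] \leftrightarrow -[\alpha]$).

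For a general generator $\xi = x_{[\alpha]}(r) x_{-[\alpha]}(u) x_{[\alpha]}(r)^{-1}$ of $F$ and a conjugator $x_{[\beta]}(s)$, we will factor
\[
    x_{[\beta]}(s) \xi x_{[\beta]}(s)^{-1} = \big(x_{[\beta]}(s) x_{[\alpha]}(r) x_{[\beta]}(s)^{-1}\big) \big(x_{[\beta]}(s) x_{-[\alpha]}(u) x_{[\beta]}(s)^{-1}\big) \big(x_{[\beta]}(s) x_{[\alpha]}(r)^{-1} x_{[\beta]}(s)^{-1}\big),
\]
in which the middle factor lies in $F$ by the Key Lemma, while each outer factor expands as $[x_{[\beta]}(s), x_{[\alpha]}(r)] \cdot x_{[\alpha]}(r)^{\pm 1}$. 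The identity $x_{[\alpha]}(t) \cdot x_{[\alpha]}(r) x_{-[\alpha]}(u) x_{[\alpha]}(r)^{-1} \cdot x_{[\alpha]}(t)^{-1} = x_{[\alpha]}(t \oplus r) x_{-[\alpha]}(u) x_{[\alpha]}(t \oplus r)^{-1}$ shows that $\mathfrak{X}_{[\alpha]}$-conjugation carries generators of $F$ to generators. Combined with iterative use of the Key Lemma to absorb the ``in-between'' commutator factors $[x_{[\beta]}(s), x_{[\alpha]}(r)]$ — which, when $[\beta] \ne \pm[\alpha]$, consist of $x_{[\gamma]}(v)$-factors with $[\gamma]$ distinct from $\pm[\alpha]$ — this handles all cases $[\beta] \ne -[\alpha]$.

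The hard part will be the subcase $[\beta] = -[\alpha]$, where the full computation takes place in the rank-$1$ subgroup $\langle \mathfrak{X}_{[\alpha]}, \mathfrak{X}_{-[\alpha]}\rangle$ and the Chevalley commutator formulas are unavailable. The intended strategy is to exploit the hypothesis that $\Phi_\rho$ has rank at least $2$: choose $[\gamma] \in \Phi_\rho$ with both $[\gamma]$ and $-[\alpha]-[\gamma]$ in $\Phi_\rho \setminus \{\pm[\alpha]\}$, and use the Chevalley commutator formula for $[x_{[\gamma]}(v_1), x_{-[\alpha]-[\gamma]}(v_2)]$ to rewrite $x_{-[\alpha]}(u)$ modulo previously-established elements of $F$ as a product of root elements supported on roots distinct from $\pm[\alpha]$. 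This reduction brings the problematic subcase back into the already-handled regime $[\beta] \ne \pm[\alpha]$, completing the proof.
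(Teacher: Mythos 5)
Your overall framework (show the subgroup $F$ generated by these elements is normalized by every root subgroup, treat $[\beta]=[\alpha]$, $[\beta]\ne\pm[\alpha]$ and $[\beta]=-[\alpha]$ separately, and in the hard case use rank $\geq 2$ to rewrite $x_{-[\alpha]}(u)$ via a commutator $[x_{-[\alpha]-[\gamma]}(u_1),x_{[\gamma]}(u_2)]$) is the same as the paper's, but the crux of the hard case is missing. After the substitution, the conjugate $x_{-[\alpha]}(s)\,\xi\,x_{-[\alpha]}(s)^{-1}$ has the form $[D_1,D_2]\,D_3$, where $D_1,D_3$ are products of root elements with coefficients in $J$ supported away from $\pm[\alpha]$ (these do land in $E'_\sigma(J)\subset F$), but $D_2$ is the conjugate of $x_{[\gamma]}(u_2)$ with $u_2\in R$, \emph{not} in $J$: it is a product of $R$-coefficient root elements and is not in $F$, nor is it a single root element on a root $\ne\pm[\alpha]$ acting on an element of $F$. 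So the claim that ``this reduction brings the problematic subcase back into the already-handled regime $[\beta]\ne\pm[\alpha]$'' does not hold: what you actually need is that the commutator of a $J$-coefficient unipotent (supported on roots with negative $[\gamma]$-coefficient) with an $R$-coefficient unipotent (supported on roots with positive $[\gamma]$-coefficient) lies in $F$. This is exactly the paper's Lemma~\ref{lemma:H}, $[U''_-(J),U''_+(R)]\subset H$, whose proof is a separate induction on the number of factors and crucially invokes the \emph{defining generators} of $F$ at the step where a root meets its opposite. Your Key Lemma does not substitute for it: once an opposite-root conjugation produces a generator-type element $x_{[\delta]}(w)x_{-[\delta]}(v)x_{[\delta]}(w)^{-1}$, the next conjugation by an $R$-coefficient factor is precisely the normality statement you are trying to prove, so the iteration is circular. (Also, the existence of the decomposition of $x_{-[\alpha]}(u)$ with $u_1\in J_{[\alpha]+[\gamma]}$, $u_2\in R_{[\gamma]}$ has to be checked case by case over the twisted commutator formulas, using $1/2$, resp.\ $1/3$; this is the paper's Lemma~\ref{lemma:u_1,u_2 exists}.)

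A smaller but related problem occurs in your case $[\beta]\ne\pm[\alpha]$: the ``in-between'' factors $[x_{[\beta]}(s),x_{[\alpha]}(r)]$ have coefficients in $R$, so they cannot be ``absorbed by iterative use of the Key Lemma,'' which only governs conjugation of $J$-coefficient elements; as written, disposing of them again amounts to conjugating elements of $F$ by root elements, i.e.\ the statement being proved. This part is repairable — either by a finite recursion over the roots strictly between $[\alpha]$ and $[\beta]$ in the rank-$2$ subsystem, or, more simply, by the paper's direct argument: $\xi^{-1}\,x_{[\beta]}(s)\,\xi\,x_{[\beta]}(s)^{-1}$ lies in the unipotent subgroup supported on roots with positive $[\beta]$-coefficient (a closed set with no opposite pairs, so Proposition~\ref{lemma 62 of RS} gives a unique normal form) and is congruent to $1$ modulo $J$, hence lies in $E'_\sigma(J)$, so $x_{[\beta]}(s)\,\xi\,x_{[\beta]}(s)^{-1}\in \xi\, E'_\sigma(J)\subset F$. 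But the absence of anything playing the role of Lemma~\ref{lemma:H} in the $[\beta]=-[\alpha]$ case is a genuine gap.
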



\begin{proof}
    Let $H$ be the subgroup of $E'_\sigma (R,J)$ generated by all $x_{[\alpha]}(r) x_{-[\alpha]}(u) x_{[\alpha]}(r)^{-1}$, where $[\alpha] \in \Phi_\rho, r \in R_{[\alpha]},$ and $u \in J_{[\alpha]}$. We aim to prove that $H = E'_\sigma (R,J)$. Since $E'_\sigma (J) \subset H$, it suffices to show that $H$ is a normal subgroup of $E'_\sigma (R)$. To demonstrate this, we need to verify that 
    $$ g = x_{[\beta]}(s) x_{[\alpha]}(r) x_{-[\alpha]}(u) x_{[\alpha]}(r)^{-1} x_{[\beta]}(s)^{-1} \in H $$ for all $[\alpha], [\beta] \in \Phi_\rho, r \in R_{[\alpha]}, s \in R_{[\beta]}$, and $u \in J_{[\alpha]}$.

\vspace{2mm}

\noindent \textbf{Case A. $[\alpha] \neq \pm [\beta]$.} For $[\gamma], [\delta] (\neq -[\gamma]) \in \Phi_\rho$, we have $$[x_{[\gamma]}(a), x_{[\delta]}(b)] = \prod x_{i [\gamma] + j [\delta]} (f_{i,j}(a,b)),$$ where $f_{i,j}$ is function of $a$ and $b$ with the property that $f_{i,j}(a,b) \in J_{i[\gamma]+j[\delta]}$ if $a \in J_{[\gamma]}$ or $b \in J_{[\delta]}$. Since no convex combination of the roots $-[\alpha], [\beta]$ and $i [\alpha] + j [\beta] \ (i, j \neq 0)$ is $0$, we have 
\begin{align*}
    g &= x_{[\beta]}(s) x_{[\alpha]}(r) x_{-[\alpha]}(u) x_{[\alpha]}(r)^{-1} x_{[\beta]}(s)^{-1} \\
    &= x_{[\alpha]}(r) x_{-[\alpha]}(u) x_{[\alpha]}(r)^{-1} \Big( \prod x_{i[\alpha]+ j[\beta]}(h_{i,j}(s,t,u)) \Big) \in H,
\end{align*}
where $h_{i,j}$ is function of $s, t$ and $u$ such that $h_{i,j}(s,t,u) \in J_{i[\alpha]+j[\beta]}$.

\vspace{2mm}

\noindent \textbf{Case B. $[\alpha] = \pm [\beta]$.} Note that, if $[\alpha] = [\beta]$ then there is nothing to prove. Now assume that $[\alpha] = -[\beta]$. We then have 
$$ g = x_{-[\alpha]}(s) x_{[\alpha]}(r) x_{-[\alpha]}(u) x_{[\alpha]}(r)^{-1} x_{-[\alpha]}(s)^{-1}.$$ 

Since the rank of $\Phi_\rho > 1$, there exists $[\gamma] \in \Phi_\rho$ such that the subroot system $\Phi'$ generated by $[\alpha]$ and $[\gamma]$ is connected of rank $2$. WLOG, we can assume that $[\alpha], [\gamma]$ is base of $\Phi'$. Set $\Phi'_+$ be the set of positive roots of $\Phi'$ with respect to this base, $\Phi'_- = - \Phi'_+, \Phi''_+ = \{ i [\alpha] + j [\gamma] \in \Phi'_+ \mid j>0 \},$ and $\Phi''_- = - \Phi''_+$. Write $U''_+ (J)$ (resp., $U''_- (J)$) for the subgroup of $E'_\sigma (R)$ generated by $x_{[\delta]}(t)$ with $[\delta] \in \Phi''_+$ (resp., $[\delta] \in \Phi''_-$) and $t \in J_{[\delta]}$. Then $U''_+(J)$ and $U''_-(J)$ are subgroups of $H$. 

Now, by Lemma~\ref{lemma:u_1,u_2 exists} (below), for given $u \in J_{[\alpha]}$ we can find $u_1 \in J_{[\alpha] + [\gamma]}$ and $u_2 \in R_{[\gamma]}$ such that $$x_{-[\alpha]}(u) = [x_{-([\alpha] + [\gamma])}(u_1), x_{[\gamma]}(u_2)] h'$$ with $h' \in U''_{-}(J)$. Set \begin{align*}
    g_1 &:= x_{-[\alpha]}(s) x_{[\alpha]}(r) x_{-([\alpha]+[\gamma])}(u_1) x_{[\alpha]}(r)^{-1} x_{-[\alpha]}(s)^{-1} \in U''_-(J), \\
    g_2 &:= x_{-[\alpha]}(s) x_{[\alpha]}(r) x_{[\gamma]}(u_2) x_{[\alpha]}(r)^{-1} x_{-[\alpha]}(s)^{-1} \in U''_+(R), \\
    g_3 &:= x_{-[\alpha]}(s) x_{[\alpha]}(r) h' x_{[\alpha]}(r)^{-1} x_{-[\alpha]}(s)^{-1} \in U''_-(J).
\end{align*}
Then $g = [g_1,g_2] g_3$, which contained in $H$ by Lemma \ref{lemma:H} (below). 
\end{proof}

\begin{lemma}\label{lemma:u_1,u_2 exists}
    For given $u \in J_{[\alpha]}$ we can find $u_1 \in J_{[\alpha] + [\gamma]}$ and $u_2 \in R_{[\gamma]}$ such that $$x_{-[\alpha]}(u) = [x_{-([\alpha] + [\gamma])}(u_1), x_{[\gamma]}(u_2)] h'$$ with $h' \in U''_-(J)$.
\end{lemma}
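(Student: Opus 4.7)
The plan is to apply the Chevalley commutator formulas of Subsection~\ref{subsec:CheComm} to $[x_{-([\alpha]+[\gamma])}(u_1), x_{[\gamma]}(u_2)]$, isolate the unique $x_{-[\alpha]}$-factor in the expansion, and absorb all remaining factors into $U''_-(J)$. By Corollary~\ref{Che-Comm} this commutator is a product of root-subgroup elements indexed by the roots $i(-([\alpha]+[\gamma]))+j[\gamma] = -i[\alpha]+(j-i)[\gamma] \in \Phi_\rho$ with $i,j\geq 1$. Since $[\alpha],[\gamma]$ form a base of the connected rank-two subsystem $\Phi'$ and $\Phi_\rho$ is reduced, any such root must have non-positive coefficients with respect to $[\alpha],[\gamma]$ (forcing $j\leq i$), and equality $j=i$ combined with reducedness forces $i=1$. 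Consequently the only contribution with vanishing $[\gamma]$-coefficient is the single factor $x_{-[\alpha]}(\cdot)$, while every other surviving contribution has strictly negative $[\gamma]$-coefficient and hence lies in $U''_-$. A further application of Corollary~\ref{Che-Comm} shows that $x_{-[\alpha]}(\cdot)$ normalizes $U''_-$, so after rearranging we can write
\[
    [x_{-([\alpha]+[\gamma])}(u_1),\, x_{[\gamma]}(u_2)] \;=\; x_{-[\alpha]}\bigl(f(u_1,u_2)\bigr)\cdot h',
\]
where $f$ is a ``leading coefficient'' and $h'$ is a product of terms $x_{-i[\alpha]-(i-j)[\gamma]}(g_{ij}(u_1,u_2))$ with $i>j\geq 1$, sitting in $U''_-$.

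Next I would make a convenient choice of $u_2 \in R_{[\gamma]}$ and then solve $f(u_1,u_2)=u$ for $u_1 \in J_{[\alpha]+[\gamma]}$. The shape of $f$ depends on the types of $[\alpha], [\gamma], [\alpha]+[\gamma]$, and is read off from Subsection~\ref{subsec:CheComm}. Representative computations: in case (b$-$i), $f(u_1,1) = \pm u_1$, so $u_1=\pm u$; in case (c$-$i), $f(u_1,1) = \pm(u_1+\bar u_1)$, and since $u=\bar u\in J_\theta$ one takes $u_1=\pm u/2$ (using $1/2\in R$); in case (c$-$ii), taking $u_2 = (1,1/2)\in\mathcal{A}(R)$ (the $\mathcal{A}$-relation uses $1/2\in R$) produces $f(u_1,u_2) = \pm u_1^{(1)}$, and an appropriate pair $u_1 = (\pm u^{(1)}, \pm u^{(2)}) \in \mathcal{A}(J)$ works thanks to the defining relation of $\mathcal{A}$; the remaining cases (b$-$ii), (d$-$i), (d$-$ii) and, for $\Phi_\rho \sim {}^3 D_4$, the cases (e), (f), (g), are handled analogously, invoking $1/3\in R$ whenever a triple symmetrization forces an internal factor of $3$. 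In every case the resulting $u_1$ belongs to $J_{[\alpha]+[\gamma]}$ because $u$ belongs to $J_{[\alpha]}$.

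Finally, once $u_1 \in J$ has been fixed, each $g_{ij}(u_1,u_2)$ appearing in $h'$ carries $u_1$ as a factor (the root $-i[\alpha]-(i-j)[\gamma]$ arises from $i\geq 1$ copies of the first argument in the commutator expansion), so $g_{ij}(u_1,u_2) \in J_{-i[\alpha]-(i-j)[\gamma]}$; therefore $h'\in U''_-(J)$, as required. The main obstacle is the case analysis in the types $\Phi_\rho \sim {}^2 A_{2n}$ and ${}^3 D_4$: there the commutator expansion has several surviving terms, the $\mathcal{A}(R)$-structure of $A_2$-type roots must be tracked simultaneously with its defining relation $t\bar t = u + \bar u$, and one must verify solvability of the leading equation for every configuration of root types that can actually occur in a rank-two subsystem $\Phi'$.
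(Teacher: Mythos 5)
Your proposal is correct and follows essentially the same route as the paper: expand $[x_{-([\alpha]+[\gamma])}(u_1), x_{[\gamma]}(u_2)]$ via the commutator formulas of Subsection~\ref{subsec:CheComm}, choose $u_1, u_2$ case by case according to the type of the pair $(-[\alpha]-[\gamma], [\gamma])$ so that the unique $x_{-[\alpha]}$-factor equals $x_{-[\alpha]}(u)$, and note that all remaining factors involve $u_1 \in J$ and hence lie in $U''_-(J)$. Your extra structural justification (only the $(1,1)$-term has vanishing $[\gamma]$-coefficient, and $x_{-[\alpha]}$ normalizes $U''_-(J)$) and your slightly different placement of the factors $1/2$, $1/3$ are harmless variants of the paper's explicit table of choices; note only that case $(f)$ in fact cannot occur for this configuration.
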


\begin{proof}
    The Chevalley commutator formula for $[x_{-([\alpha]+[\gamma])}(u_1), x_{[\gamma]}(u_2)]$ suggests that depending on the types of the pair of roots $(-[\alpha] - [\gamma], [\gamma])$ we can choose $u_1$ and $u_2$ as follows:
    \begin{center}
        \begin{tabular}{ccc}
            Type of pair $(-[\alpha] - [\gamma], [\gamma])$ & $u_1$ & $u_2$  \\
            \hline
            $(b-i)$ & $u$ & $\pm1$ \\
            $(b-ii)$ & $u$ or $\bar{u}$ & $\pm 1$ \\
            $(c-i)$ & $u$ & $\pm 1/2$ \\
            $(c-ii)$ & $(u, u \bar{u}/2)$ or $(\bar{u}, u \bar{u}/2)$ & $(\pm 1, 1/2)$ \\
            $(d-i)$ & $u$ & $\pm 1$ \\
            $(d-ii)$ & $(a, b):=u$ or $(\bar{a},b)$ or $(a, \bar{b})$ & $\pm 1$ \\
            $(e)$ & $u$ & $\pm 1$ \\
            $(g)$ & $u$ & $\pm 1/3$
        \end{tabular}
    \end{center}
    Note that each $u_1 \in J_{[\alpha]+[\gamma]}$ and $u_2 \in R_{[\gamma]}$. An immediate observation from Chevalley commutator formula for $[x_{-([\alpha]+[\gamma])}(u_1), x_{[\gamma]}(u_2)]$ is that $h' \in U''_-(J)$.
\end{proof}

\begin{lemma}\label{lemma:H}
    $[U''_-(J),U''_+(R)] \subset H$.
\end{lemma}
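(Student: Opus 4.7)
The argument rests on two observations. First, taking $r = 0$ in the defining generator $x_{[\alpha]}(r) x_{-[\alpha]}(u) x_{[\alpha]}(r)^{-1}$ of $H$ shows that $x_{-[\alpha]}(u) \in H$ for every $[\alpha] \in \Phi_\rho$ and $u \in J_{[\alpha]}$; since $-[\alpha]$ ranges over $\Phi_\rho$ with $[\alpha]$, this gives
\[
    x_{[\beta]}(s) \in H \quad \text{for all } [\beta] \in \Phi_\rho \text{ and } s \in J_{[\beta]}. \qquad (\star)
\]
In particular $U''_+(J), U''_-(J) \subset H$, and also $\mathfrak{X}_{k[\alpha]}(J_{k[\alpha]}) \subset H$ whenever $k[\alpha] \in \Phi_\rho$. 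Second, for \emph{single} generators $a = x_{[\delta']}(u)$ (with $[\delta'] \in \Phi''_-,\ u \in J_{[\delta']}$) and $b = x_{[\epsilon]}(r)$ (with $[\epsilon] \in \Phi''_+,\ r \in R_{[\epsilon]}$), the Chevalley commutator formula of Subsection~\ref{subsec:CheComm} expresses $[a, b]$ as an ordered product of $x_{m[\delta'] + n[\epsilon]}(f_{m,n}(u,r))$ with $m, n \geq 1$. In each such factor $u$ (and its Galois conjugates) appears to a positive power, so $f_{m,n}(u,r) \in J_{m[\delta']+n[\epsilon]}$. Since $m[\delta'] + n[\epsilon]$ lies in the rank-$2$ subsystem $\Phi'$ and is therefore in $\Phi_\rho$, each factor is in $H$ by $(\star)$, and so $[a,b] \in H$.

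\textbf{Extension to arbitrary $a, b$.} I then proceed by double induction on the word lengths of $a$ in the generators of $U''_-(J)$ and of $b$ in the generators of $U''_+(R)$, via the standard identities
\[
    [a_1 a_2, b] = a_1 [a_2, b] a_1^{-1} \cdot [a_1, b], \qquad [a, b_1 b_2] = [a, b_1] \cdot b_1 [a, b_2] b_1^{-1}.
\]
To close the induction I need to know that conjugation by a generator $x_{[\eta]}(t) \in U''_+(R) \cup U''_-(J)$ carries $H$ into itself. The key reduction is the auxiliary claim that for any $[\zeta] \in \Phi_\rho$, $s \in J_{[\zeta]}$, and any such $x_{[\eta]}(t)$, one has $x_{[\eta]}(t)\, x_{[\zeta]}(s)\, x_{[\eta]}(t)^{-1} \in H$. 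This I verify by case analysis: (a) if $[\zeta] \neq \pm[\eta]$, the Chevalley commutator $[x_{[\eta]}(t), x_{[\zeta]}(s)]$ is a product of $x_{\mathrm{root}}$-terms whose coefficients involve $s$ to a positive power, so they lie in $J_{\mathrm{root}}$ and are in $H$ by $(\star)$; (b) if $[\zeta] = [\eta]$, the conjugation happens inside $\mathfrak{X}_{[\eta]}$, and when $[\eta] \sim A_2$ an explicit computation using the $\oplus$-law on $\mathcal{A}(R)$ shows the result is $x_{[\eta]}(s_1, s_2 + \bar{t}_1 s_1 - \bar{s}_1 t_1)$, whose second entry lies in $J$; (c) if $[\zeta] = -[\eta]$, the conjugate $x_{[\eta]}(t)\, x_{-[\eta]}(s)\, x_{[\eta]}(t)^{-1}$ is \emph{by definition} a generator of $H$. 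Since every element of $H$ that appears at each stage of the induction is a product of the two forms from $(\star)$ and of generators of the type in (c), iterating the case analysis on the factors of $x_{[\eta]}(t)$ gives the required conjugation invariance.

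\textbf{Main obstacle.} The principal technical difficulty is the bookkeeping in the inductive step: one must keep track of the form of the intermediate commutator $[a, b_2]$ (or $[a_2, b]$) at each stage, to ensure that conjugation by a further generator can always be analyzed by cases (a)--(c) above. This is most delicate in the non-simply-laced settings, notably when $\Phi_\rho \sim {}^2A_{2n}$ (where some root subgroups $\mathfrak{X}_{[\eta]}$ are non-abelian Heisenberg-type groups, so case (b) demands a genuine $\mathcal{A}(R)$-computation) and when $\Phi_\rho \sim {}^3D_4$ (where the relevant Chevalley commutator formulas (e)--(g) of Subsection~\ref{subsec:CheComm} are the most intricate). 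One must also verify, in case (b), that the hypothesis $s \in J_{[\zeta]}$ is preserved by the $\oplus$-operation; this uses only the defining identity of $\mathcal{A}(R)$ and holds uniformly.
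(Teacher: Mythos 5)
There is a genuine gap at the step where you extend from single generators to arbitrary products. Your double induction needs the statement that conjugation by a generator $x_{[\eta]}(t)$ of $U''_+(R)$, with $t \in R$ arbitrary, carries $H$ into itself, but your auxiliary claim (cases (a)--(c)) only shows that such a conjugation sends a \emph{single root element} $x_{[\zeta]}(s)$ with $s \in J_{[\zeta]}$ into $H$. It says nothing about conjugates of the defining generators $x_{[\epsilon]}(r)\,x_{-[\epsilon]}(u)\,x_{[\epsilon]}(r)^{-1}$ with $r \in R$ arbitrary, nor about the intermediate products $[a,b_2]$, $[a_2,b]$ produced along the induction, which contain such generators. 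Concretely, for $[\eta] \neq \pm[\epsilon]$ the element $x_{[\eta]}(t)\,x_{[\epsilon]}(r)\,x_{-[\epsilon]}(u)\,x_{[\epsilon]}(r)^{-1}\,x_{[\eta]}(t)^{-1}$ is exactly the element whose membership in $H$ is the content of Cases A and B in the proof of Proposition~\ref{genofE(R,I)}, i.e.\ the normality of $H$ in $E'_\sigma(R)$ --- the very statement that Lemma~\ref{lemma:H} is an ingredient for. So the closing sentence ``iterating the case analysis on the factors of $x_{[\eta]}(t)$'' is circular: none of your cases (a)--(c) applies to that conjugation, and the required invariance of $H$ under $R$-parameter root elements is not available at this stage. (A smaller slip: in your Plan the single-generator case invokes the Chevalley commutator formula even when $[\epsilon] = -[\delta']$, where the formula does not apply; that case must instead be written as $[a,b] = a\,(b\,a^{-1}b^{-1})$, the second factor being a defining generator of $H$ --- your case (c) has the right idea, but the Plan as stated overlooks it.)

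Note how the paper's own proof is organized precisely to avoid this: it inducts only on the length $n$ of $h \in U''_-(J)$, rewriting
\begin{equation*}
[h,g] = x_{-[\alpha_1]}(u_1)\,[x_{-[\alpha_2]}(u_2)\cdots x_{-[\alpha_n]}(u_n),\,g]\,[g,\,x_{-[\alpha_1]}(u_1)^{-1}]\,x_{-[\alpha_1]}(u_1)^{-1},
\end{equation*}
so the only conjugating elements appearing in the inductive step are $x_{-[\alpha_1]}(u_1)^{\pm 1}$, which lie in $E'_\sigma(J) \subset H$; no normality-type hypothesis is needed there. The whole element $g \in U''_+(R)$ is then handled at once in the base case $n=1$, combining the Chevalley commutator formulas of Subsection~\ref{subsec:CheComm} (all parameters involve $u_1 \in J$, so the factors lie in $E'_\sigma(J) \subset H$) with the defining generators of $H$ when the opposite root $[\alpha_1]$ occurs among the factors of $g$. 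To repair your argument you would have to reorganize the induction so that every conjugation performed is by an element already known to lie in $H$ (for instance, peel factors only off the $U''_-(J)$ side, as the paper does), rather than by generators of $U''_+(R)$.
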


\begin{proof}
    Let $h \in U''_- (J)$ and $g \in U''_+ (R)$. Write $$ h = x_{-[\alpha_1]}(u_1) \dots x_{-[\alpha_n]}(u_n),$$ where $[\alpha_i] \in \Phi''_+$ and $0 \neq u_i \in J_{[\alpha_i]}$. We want to show that $[h,g] \in H.$ For that we use induction of $n$. If $n=1$ then $[h,g] = x_{-[\alpha_1]}(u_1) g x_{-[\alpha_1]}(u_1)^{-1} g^{-1}.$ Write $g = x_{[\beta_1]}(v_1) \dots x_{[\beta_m]}(v_m)$ where $[\beta_i] \in \Phi''_+$ and $0 \neq v_i \in R_{[\beta_i]}$. If $[\beta_i] \neq -[\alpha_1]$ for every $i = 1, \dots, m$, then we are done by Chevalley commutator relations. If there is some $i \in \{1, \dots, m\}$ such that $[\beta_i] = [\alpha_1]$ then also we are done by the definition of $H$ and Chevalley commutator relations. 

    Now for general $n$,
    \begin{align*}
        [h,g] &= h g h^{-1} g^{-1} \\
        &= x_{-[\alpha_1]}(u_1) [x_{-[\alpha_2]}(u_2) \dots x_{-[\alpha_n]}(u_n), g] [g, x_{-[\alpha_1]}(u_1)^{-1}] x_{-[\alpha_1]}(u_1)^{-1} \\
        &\in H.
    \end{align*}
    Which proves the lemma.
\end{proof}





\medskip

\noindent \emph{Proof of Theorem~\ref{mainthm1}:}
Note that $[E'_\sigma(R), E'_\sigma (J)] \subset [E'_\sigma(R), G_\sigma (R, J)]$ and $[E'_\sigma(R), E'_\sigma (J)] \subset [G_\sigma(R), E'_\sigma (R,J)]$. 
Also, by Proposition~\ref{normal}, we have $[G_\sigma (R), E'_\sigma (R,J)] \subset E'_\sigma (R,J)$. 
Therefore, to prove our proposition, it is enough to prove the following:
\begin{enumerate}[(i)]
    \item $E'_\sigma(R, J) \subset [E'_\sigma(R), E'_\sigma (J)].$
    \item $[E'_\sigma(R), G_\sigma (R, J)] \subset E'_\sigma(R, J).$
\end{enumerate}

Since $H:= [E'_\sigma(R), E'_\sigma (J)]$ is normal in $E'_\sigma(R)$, to prove (i) it is enough to prove that $x_{[\alpha]}(u) \in H$ for every $[\alpha] \in \Phi_\rho$ and $u \in J_{[\alpha]}$. As in the proof of Proposition \ref{genofE(R,I)}, since the rank of $\Phi_\rho > 1$, there exists $[\beta] \in \Phi_\rho$ such that the subsystem $\Phi'$ generated by $[\alpha]$ and $[\beta]$ is connected of rank $2$. WLOG, we can assume that $[\alpha], [\beta]$ is base of $\Phi'$.

\vspace{2mm}

\noindent \textbf{Case A. $\Phi' \sim A_2$.} In this case, the pair of roots $[\alpha] + [\beta]$ and $-[\beta]$ are of type $(b)$. 

\begin{center}
    \begin{tikzpicture}
        \draw[<->, line width=1pt] (2,0)--(-2,0);
        \draw[<->, line width=1pt] (1,1.73)--(-1,-1.73);
        \draw[<->, line width=1pt] (-1,1.73)--(1,-1.73);
        \node at (2.5,0) {$[\alpha]$};
        \node at (-1.4,2.0) {$[\beta]$};
        \node at (1.4,2.0) {$[\alpha] + [\beta]$};
        \node at (-2.5,0) {$-[\alpha]$};
        \node at (1.4, -2.0) {$-[\beta]$};
        \node at (-1.4, -2.0) {$-[\alpha] - [\beta]$};
    \end{tikzpicture}
\end{center}

\begin{enumerate}
    \item[$(b-i)$] If $[\alpha] + [\beta] \sim A_1$ and $-[\beta] \sim A_1$, then so is $[\alpha] = ([\alpha] + [\beta]) + (-[\beta])$. In this case, for given $u \in J_{[\alpha]} = J_{\theta}$ we have $$ x_{[\alpha]}(u) = [x_{[\alpha]+[\beta]}(\pm 1), x_{-[\beta]}(u)] \in H.$$
    \item[$(b-ii)$] If $[\alpha] + [\beta] \sim A_1^2$ and $-[\beta] \sim A_1^2$, then so is $[\alpha] = ([\alpha] + [\beta]) + (-[\beta])$. In this case, for given $u \in J_{[\alpha]} = J$ we write $u'= u$ or $\bar{u} \in J$. Then we have $$ x_{[\alpha]}(u) = [x_{[\alpha]+[\beta]}(\pm 1), x_{-[\beta]}(u')] \in H.$$
\end{enumerate}

\vspace{2mm}

\noindent \textbf{Case B. $\Phi' \sim B_2$ and $[\alpha]$ is a long root.} In this case, the pair of roots $[\alpha] + [\beta]$ and $-[\beta]$ are of type $(c)$. 

\begin{center}
    \begin{tikzpicture}
        \draw[<->, line width=1pt] (1,1)--(-1,-1);
        \draw[<->, line width=1pt] (-1,1)--(1,-1);
        \draw[<->, line width=1pt] (2,0)--(-2,0);
        \draw[<->, line width=1pt] (0,2)--(0,-2);
        \node at (2.5,0) {$[\alpha]$};
        \node at (-1.4,1.4) {$[\beta]$};
        \node at (1.6,1.4) {$[\alpha] + [\beta]$};
        \node at (0,2.4) {$[\alpha] + 2 [\beta]$};
        \node at (-2.5,0) {$-[\alpha]$};
        \node at (1.4,-1.4) {$-[\beta]$};
        \node at (-1.6,-1.4) {$-[\alpha] - [\beta]$};
        \node at (0,-2.4) {-$[\alpha] - 2 [\beta]$};
    \end{tikzpicture}
\end{center}

\begin{enumerate}
    \item[$(c-i)$] If $[\alpha] + [\beta] \sim A_1^2$ and $-[\beta] \sim A_1^2$, then $[\alpha] = ([\alpha] + [\beta]) + (-[\beta]) \sim A_1$. In this case, for given $u \in J_{[\alpha]} = J_{\theta}$ we have $$ x_{[\alpha]}(u) = [x_{[\alpha]+[\beta]}(\pm 1/2), x_{-[\beta]}(u)] \in H.$$
    \item[$(c-ii)$] If $[\alpha] + [\beta] \sim A_2$ and $-[\beta] \sim A_2$, then $[\alpha] = ([\alpha] + [\beta]) + (-[\beta]) \sim A_1^2$. In this case, for given $u \in J_{[\alpha]} = J$ we write $u'= u$ or $\bar{u} \in J$. Then we have $$ x_{[\alpha]}(u) = [x_{[\alpha]+[\beta]}(\pm 1, 1/2), x_{-[\beta]}(u', u \bar{u} / 2)] \in H.$$
\end{enumerate}

\vspace{2mm}

\noindent \textbf{Case C. $\Phi' \sim B_2$ and $[\alpha]$ is a short root.} In this case, the pair of roots $[\alpha] + [\beta]$ and $-[\beta]$ are of type $(d)$ with $[\alpha] + [\beta]$ being the short root.

\begin{center}
    \begin{tikzpicture}
        \draw[<->, line width=1pt] (1.5,1.5)--(-1.5,-1.5);
        \draw[<->, line width=1pt] (-1.5,1.5)--(1.5,-1.5);
        \draw[<->, line width=1pt] (1.5,0)--(-1.5,0);
        \draw[<->, line width=1pt] (0,1.5)--(0,-1.5);
        \node at (2,0) {$[\alpha]$};
        \node at (-1.9,1.9) {$[\beta]$};
        \node at (1.9,1.9) {$2[\alpha] + [\beta]$};
        \node at (0,2) {$[\alpha] + [\beta]$};
        \node at (-2,0) {$-[\alpha]$};
        \node at (1.9,-1.9) {$-[\beta]$};
        \node at (-2.1,-1.9) {$-2[\alpha] - [\beta]$};
        \node at (0,-2) {$-[\alpha] - [\beta]$};
    \end{tikzpicture}
\end{center}

\begin{enumerate}
    \item[$(d-i)$] If $[\alpha] + [\beta] \sim A_1^2$ and $-[\beta] \sim A_1$, then $[\alpha] = ([\alpha] + [\beta]) + (-[\beta]) \sim A_1^2$ and $2[\alpha] + [\beta] = 2([\alpha] + [\beta]) + (-[\beta]) \sim A_1$. In this case, for given $u \in J_{[\alpha]} = J$ we have 
    \begin{align*}
        x_{[\alpha]}(u) x_{2[\alpha] + [\beta]}(u) &= [x_{-[\beta]}(\pm u), x_{[\alpha]+[\beta]}(\pm 1)] \\
        &= [x_{[\alpha]+[\beta]}(\pm 1), x_{-[\beta]}(\pm u)]^{-1} \in H.
    \end{align*}
    Now observe that $[\alpha] \sim A_1^2, [\alpha] + [\beta] \sim A_1^2$ and $2[\alpha] + [\beta] \sim A_1$. Then by similar argument as in $(c-i)$ above, we can conclude that $x_{2[\alpha]+[\beta]}(u) \in H$. Hence $$ x_{[\alpha]}(u) = (x_{[\alpha]}(u) x_{2[\alpha] + [\beta]}(u)) (x_{2[\alpha] + [\beta]}(u))^{-1} \in H.$$
    \item[$(d-ii)$] If $[\alpha] + [\beta] \sim A_2$ and $-[\beta] \sim A_1^2$, then $[\alpha] = ([\alpha] + [\beta]) + (-[\beta]) \sim A_2$ and $2[\alpha] + [\beta] = 2([\alpha] + [\beta]) + (-[\beta]) \sim A_1^2$. In this case, for given $u = (u_1, u_2) \in J_{[\alpha]} = \mathcal{A}(J) = \mathcal{J}$ we have 
    \begin{align*}
        x_{[\alpha]}(u_1,u_2) x_{2[\alpha] + [\beta]}(\pm u_2) &= [x_{-[\beta]}(\pm 1), x_{[\alpha]+[\beta]}(u_1, u_2)] \\
        &= [x_{[\alpha]+[\beta]}(u_1, u_2), x_{-[\beta]}(\pm 1)]^{-1} \in H.
    \end{align*}
    Now observe that $[\alpha] \sim A_2, [\alpha] + [\beta] \sim A_2$ and $2[\alpha] + [\beta] \sim A_1^2$. Then by similar argument as in $(c-ii)$ above, we can conclude that $x_{2[\alpha]+[\beta]}(\pm u_2) \in H$. Hence $$ x_{[\alpha]}(u_1,u_2) = (x_{[\alpha]}(u_1, u_2) x_{2[\alpha] + [\beta]}(\pm u_2)) (x_{2[\alpha] + [\beta]}(\pm u_2))^{-1} \in H.$$
\end{enumerate}

\vspace{2mm}

\noindent \textbf{Case D. $\Phi' \sim G_2$ and $[\alpha]$ is a long root.} In this case, we consider a subroot system $\Phi''$ of $\Phi'$ generated by roots $[\alpha]$ and $[\alpha] + 3 [\beta]$. Note that $\Phi'' \sim A_2$ and hence, by case 1 (replace $[\beta]$ by $[\alpha] + 3 [\beta]$), we can conclude that $x_{[\alpha]}(u) \in H$.

\begin{center}
    \begin{tikzpicture}
        \draw[<->, line width=1pt] (2.5,0)--(-2.5,0);
        \draw[<->, line width=1pt] (1.25,0.72)--(-1.25,-0.72);
        \draw[<->, line width=1pt] (1.25,2.16)--(-1.25,-2.16);
        \draw[<->, line width=1pt] (0,1.44)--(0,-1.44);
        \draw[<->, line width=1pt] (-1.25,2.16)--(1.25,-2.16);
        \draw[<->, line width=1pt] (-1.25,0.72)--(1.25,-0.72);
        \node at (3,0) {$[\alpha]$};
        \node at (-1.65,0.72) {$[\beta]$};
        \node at (2.05,0.72) {$[\alpha] + [\beta]$};
        \node at (2.05,2.56) {$2[\alpha] + 3 [\beta]$};
        \node at (0,2) {$[\alpha] + 2 [\beta]$};
        \node at (-2.05,2.56) {$[\alpha] + 3 [\beta]$};
        \node at (-3,0) {$-[\alpha]$};
        \node at (1.65,-0.8) {$-[\beta]$};
        \node at (-2.25,-0.8) {$-[\alpha] - [\beta]$};
        \node at (-2.05,-2.56) {$-2[\alpha] - 3 [\beta]$};
        \node at (0,-2) {$-[\alpha] - 2 [\beta]$};
        \node at (2.05,-2.56) {$-[\alpha] - 3 [\beta]$};
    \end{tikzpicture}
\end{center}

\vspace{2mm}

\noindent \textbf{Case E. $\Phi' \sim G_2$ and $[\alpha]$ is a short root.} In this case, the pair of roots $2[\alpha] + [\beta]$ and $-[\alpha] - [\beta]$ are of type $(f)$.

\begin{center}
    \begin{tikzpicture}
        \draw[<->, line width=1pt] (1.5,0)--(-1.5,0);
        \draw[<->, line width=1pt] (2.6,1.5)--(-2.6,-1.5);
        \draw[<->, line width=1pt] (0.75,1.3)--(-0.75,-1.3);
        \draw[<->, line width=1pt] (0,2.6)--(0,-2.6);
        \draw[<->, line width=1pt] (-0.75,1.3)--(0.75,-1.3);
        \draw[<->, line width=1pt] (-2.6,1.5)--(2.6,-1.5);
        \node at (2,0) {$[\alpha]$};
        \node at (-3,1.5) {$[\beta]$};
        \node at (-0.95,1.65) {$[\alpha] + [\beta]$};
        \node at (0.95,1.65) {$2[\alpha] + [\beta]$};
        \node at (3.4,1.7) {$3[\alpha] + [\beta]$};
        \node at (0,3) {$3[\alpha] + 2 [\beta]$};
        \node at (-2,0) {$-[\alpha]$};
        \node at (3,-1.8) {$-[\beta]$};
        \node at (1,-1.65) {$-[\alpha] - [\beta]$};
        \node at (-1.2,-1.65) {$-2[\alpha] - [\beta]$};
        \node at (-3.4,-1.9) {$-3[\alpha] - [\beta]$};
        \node at (0,-3) {$-3[\alpha] - 2 [\beta]$};
    \end{tikzpicture}
\end{center}

Observe that $[\alpha], 2[\alpha] + [\beta], - [\alpha] - [\beta] \sim A_1^3$ and $-[\beta], 3[\alpha] + [\beta] \sim A_1$. For given $u \in J_{[\alpha]} = J$ we write $(u',u'') = (\bar{u}, \bar{\bar{u}})$ or $(\bar{\bar{u}}, \bar{u})$. Then we have 
\begin{align*}
    & \hspace{-5mm} x_{[\alpha]}(u) x_{3[\alpha] + [\beta]}(\pm (u^2 + (u')^2 + (u'')^2 - 2 u u' - 2 u' u'' - 2 uu'')/4) x_{-[\beta]}(\pm (u + u' + u'')/2) \\
    &= [x_{2[\alpha]+[\beta]}((u + u' - u'')/2), x_{-[\alpha]-[\beta]}(\pm 1)] \in H.
\end{align*}

Now observe that $- [\beta] \sim A_1$ and $3[\alpha] + [\beta] \sim A_1$. Then by similar argument as in Case A above, we can conclude that $x_{3[\alpha]+[\beta]}(\pm (u^2 + (u')^2 + (u'')^2 - 2 u u' - 2 u' u'' - 2 uu'')/4) \in H$ and $x_{-[\beta]}(\pm (u + u' + u'')^2/4) \in H$. Hence 
\begin{align*}
    x_{[\alpha]}(u) &= (x_{[\alpha]}(u) x_{3[\alpha] + [\beta]}(\pm (u^2 + (u')^2 + (u'')^2 - 2 u u' - 2 u' u'' - 2 uu'')/4) \\ 
    & \hspace{7mm} x_{-[\beta]}(\pm (u + u' + u'')/2)) (x_{3[\alpha] + [\beta]}(\pm (u^2 + (u')^2 + (u'')^2 - 2 u u' - 2 u' u'' - 2 uu'')/4) \\
    & \hspace{14mm} x_{-[\beta]}(\pm (u + u' + u'')/2))^{-1} \in H.
\end{align*}

This proves part (i). Now for part (ii), we consider the groups $M:= E'_\sigma (R)$ and $N:= (E'_\sigma (R) \cap G_\sigma(J))/E'_{\sigma}(R, J)$. Observe that the group $M$ is perfect (put $J=R$ in part (i)) and the group $N$ is commutative (by Corollary~\ref{mixcom}). For a fixed $g \in G_\sigma(R,J)$, define a map $\psi_g: M \longrightarrow N$ given by $h \longmapsto [h,g]E'_\sigma(R,J)$. Then $\psi_g$ is a well-defined homomorphism from the perfect group $M$ to a commutative group $N$. Hence $\psi_g$ must be trivial, i.e., $[h,g] \in E'_\sigma (R,J)$ for all $h \in E'_\sigma(R).$ Thus, $[E'_\sigma(R), G_\sigma (R, J)] \subset E'_\sigma(R, J),$ as desired. \qed


\begin{cor}\label{cor:normalized}
    The group $E'_\sigma (R)$ is perfect, that is, $[E'_\sigma (R),E'_\sigma (R)]=E'_\sigma (R)$.
\end{cor}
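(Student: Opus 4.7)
The plan is to deduce this immediately from Theorem \ref{mainthm1} by specializing to the case $J = R$. First I would check the definitional chain: when $J = R$, the set $J_{[\alpha]}$ coincides with $R_{[\alpha]}$ for every type of class $[\alpha] \in \Phi_\rho$ (indeed $J_\theta = J \cap R_\theta = R_\theta$ and $\mathcal{A}(J) = \mathcal{A}(R)$), so the generating set of $E'_\sigma(J)$ is exactly the generating set of $E'_\sigma(R)$. Hence $E'_\sigma(R) = E'_\sigma(R)$ as subgroups, and consequently the normal closure $E'_\sigma(R, R)$ of $E'_\sigma(R)$ inside $E'_\sigma(R)$ is just $E'_\sigma(R)$ itself.

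Next I would apply the first equality of Theorem \ref{mainthm1} with this choice of ideal, giving
\[
E'_\sigma(R) \;=\; E'_\sigma(R, R) \;=\; [E'_\sigma(R),\, E'_\sigma(R)],
\]
which is exactly the perfectness statement. The reverse inclusion $[E'_\sigma(R), E'_\sigma(R)] \subset E'_\sigma(R)$ is automatic, so no further work is required.

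There is no serious obstacle here; the corollary is a formal consequence of the main theorem once one unwinds the notation. The one detail worth flagging is that the proof of Theorem \ref{mainthm1}(ii) itself invokes perfectness of $E'_\sigma(R)$ (through the appeal to $\psi_g$ being a map from a perfect group to an abelian group), but this is not circular because that invocation relies only on part (i) of the theorem at $J = R$, which is the very same argument I am using here. Thus the logical dependence is $\text{Thm } \ref{mainthm1}\text{(i)} \Rightarrow \text{Cor } \ref{cor:normalized} \Rightarrow \text{Thm } \ref{mainthm1}\text{(ii)}$, and stating the corollary explicitly after the theorem is simply a convenience for later reference.
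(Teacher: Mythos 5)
Your proposal is correct and is exactly the paper's argument: the paper proves the corollary by putting $J=R$ in Theorem~\ref{mainthm1}, noting as you do that $E'_\sigma(R,R)=E'_\sigma(R)$ and that the first equality then reads $E'_\sigma(R)=[E'_\sigma(R),E'_\sigma(R)]$. Your observation about non-circularity also matches the paper, whose proof of part (ii) of Theorem~\ref{mainthm1} invokes perfectness precisely via part (i) at $J=R$.
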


\begin{proof}
    Immediate by putting $J=R$ in the above proposition.
\end{proof}


\begin{cor}\label{cor:converge}
    Every subgroup of $G_\sigma (R, J)$ containing $E'_\sigma (R, J)$ is normalized by $E'_\sigma (R)$.
\end{cor}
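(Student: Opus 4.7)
The plan is to deduce this directly from Theorem~\ref{mainthm1}, which has just been proved. The key ingredient is the mixed commutator formula
\[
[E'_\sigma(R),\, G_\sigma(R,J)] \subset E'_\sigma(R,J).
\]
Once we have this inclusion, the normalization claim is essentially automatic: a commutator trick turns the containment of commutators into the statement that conjugation by elements of $E'_\sigma(R)$ stays inside any subgroup $H$ sitting between $E'_\sigma(R,J)$ and $G_\sigma(R,J)$.

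More precisely, let $H$ be a subgroup of $G_\sigma(R,J)$ with $E'_\sigma(R,J) \subset H$. Take any $e \in E'_\sigma(R)$ and any $h \in H$. Since $h \in G_\sigma(R,J)$, the element $[e,h] = e h e^{-1} h^{-1}$ lies in $[E'_\sigma(R), G_\sigma(R,J)]$, which by Theorem~\ref{mainthm1} is contained in $E'_\sigma(R,J) \subset H$. Therefore $e h e^{-1} = [e,h]\,h \in H$, showing that $H$ is normalized by $E'_\sigma(R)$.

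There is no real obstacle here: the entire content of the corollary is packaged inside Theorem~\ref{mainthm1}, and the step from that theorem to this normalization statement is the one-line commutator-times-element rearrangement above. The only thing to be careful about is that we use the second of the commutator inclusions in Theorem~\ref{mainthm1}, namely the one involving $G_\sigma(R,J)$ rather than $E'_\sigma(J)$ or $G_\sigma(R)$, since the ambient group providing $h$ is $G_\sigma(R,J)$.
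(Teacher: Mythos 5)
Your proof is correct and follows essentially the same route as the paper: both rest on the inclusion $[E'_\sigma(R), G_\sigma(R,J)] \subset E'_\sigma(R,J)$ from Theorem~\ref{mainthm1} together with the rearrangement $e h e^{-1} = [e,h]\,h$. The paper additionally sandwiches $[E'_\sigma(R),H]$ from below by $[E'_\sigma(R),E'_\sigma(J)]$ to record the equality $[E'_\sigma(R),H]=E'_\sigma(R,J)$, but that extra step is not needed for the normalization claim itself.
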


\begin{proof}
    Let $H$ be a subgroup of $G_\sigma (R, J)$ containing $E'_\sigma (R, J)$. Then $$[E'_\sigma(R), E'_\sigma(J)] \subset [E'_\sigma(R), H] \subset [E'_\sigma(R), G_\sigma (R,J)].$$ By Theorem~\ref{mainthm1}, we have $[E'_\sigma (R), H] = E'_\sigma (R,J) \subset H$. Therefore $H$ is normalized by $E'_\sigma (R)$.
\end{proof}


\begin{cor}\label{C=G}
    Let $C_\sigma (R,J) = \{ x \in G_\sigma (R) \mid [x, E'_\sigma (R)] \subset E'_\sigma (R,J) \}$. Then $G_\sigma (R,J) = C_\sigma (R,J)$.
\end{cor}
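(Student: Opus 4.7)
The inclusion $G_\sigma(R,J) \subset C_\sigma(R,J)$ is immediate from Theorem~\ref{mainthm1}, which gives $[E'_\sigma(R), G_\sigma(R,J)] \subset E'_\sigma(R,J)$. So the real task is the reverse inclusion: take $x \in C_\sigma(R,J)$ and show $x \in G_\sigma(R,J)$, i.e., the image of $x$ under the projection $\phi: G_\sigma(R) \to G_\sigma(R/J)$ lies in $Z(G_\sigma(R/J))$.

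The plan is to show that $\phi(x)$ centralizes $E'_\sigma(R/J)$ and then invoke Theorem~\ref{thm:KS3} (together with its ${}^3D_4$ analogue noted in the remark following it), which identifies $Z(G_\sigma(R/J))$ with $C_{G_\sigma(R/J)}(E'_\sigma(R/J))$. Since $E'_\sigma(R,J) \subset G_\sigma(J) = \ker(\phi)$, the hypothesis $[x, E'_\sigma(R)] \subset E'_\sigma(R,J)$ forces $\phi(x)$ to commute with every element of $\phi(E'_\sigma(R))$, so it suffices to verify that $\phi(E'_\sigma(R)) = E'_\sigma(R/J)$.

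For the latter, I would check that each generator $x_{[\alpha]}(\bar{t})$ of $E'_\sigma(R/J)$ (with $\bar{t} \in (R/J)_{[\alpha]}$) admits a lift in $R_{[\alpha]}$. This reduces to surjectivity of the natural maps $R \to R/J$ (trivial for $[\alpha] \sim A_1^2, A_1^3$), $R_\theta \to (R/J)_\theta$ (for $[\alpha] \sim A_1$), and $\mathcal{A}(R) \to \mathcal{A}(R/J)$ (for $[\alpha] \sim A_2$). The hypotheses $1/2 \in R$, and $1/3 \in R$ in the ${}^3D_4$ case, make the averaging argument of Lemma~\ref{A1&A2} go through verbatim when applied to the $\theta$-invariant ideal $J$ (in place of $\m \cap \overline{\m}$), yielding surjectivity in all cases.

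Putting these pieces together, $\phi(x) \in C_{G_\sigma(R/J)}(E'_\sigma(R/J)) = Z(G_\sigma(R/J))$, hence $x \in \phi^{-1}(Z(G_\sigma(R/J))) = G_\sigma(R,J)$, which completes the proof. The only potential subtlety is the surjectivity step for $\mathcal{A}$ in the ${}^2A_{2n}$ case, but since we already carried out the analogous computation in Lemma~\ref{A1&A2} for the ideal $\m \cap \overline{\m}$, nothing new is needed beyond noting that the same formulas $y_1 = x_1$, $y_2 = x_2 + x_1\bar{x}_1/2 - (x_2+\bar{x}_2)/2$ produce a preimage in $\mathcal{A}(R)$ for any given class in $\mathcal{A}(R/J)$.
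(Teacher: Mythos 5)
Your proof is correct and follows essentially the same route as the paper: one inclusion comes from Theorem~\ref{mainthm1}, and the other by reducing to Theorem~\ref{thm:KS3} (which gives $Z(G_\sigma(R/J)) = C_G(E'_\sigma(R/J))$) using $E'_\sigma(R,J) \subset G_\sigma(J) = \ker\phi$. The only difference is that you make explicit the surjectivity of $\phi$ on the elementary generators (via the averaging argument of Lemma~\ref{A1&A2} applied to $J$), a step the paper's proof leaves implicit.
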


\begin{proof}
    Clearly, by Theorem~\ref{mainthm1}, $G_\sigma (R,J) \subset C_\sigma (R,J)$. By definition, we have $G_\sigma (R,J) = \{ x \in G_\sigma (R) \mid [x, G_\sigma (R)] \subset G_\sigma (J) \}$. Let $Z(G)$ be the centre of $G = G_\sigma (R/J)$ and $C_G(E)$ the centralizer of $E = E'_\sigma (R/J)$ in $G$. Then, by Theorem \ref{thm:KS3}, we have $Z(G) = C_G(E)$. But then $$G_\sigma (R,J) = \{x \in G_\sigma (R) \mid [x, E'_\sigma (R)] \subset G_\sigma (J) \}.$$ 
    Since $E'_\sigma (R,J) \subset G_\sigma (J)$, we have $C_\sigma (R,J) \subset G_\sigma (R,J)$.
\end{proof}


\section{Proof of Theorem~\ref{mainthm}}\label{sec:Pf of main thm}

Let $H$ be a subgroup of $G_\sigma (R)$ normalized by $E'_\sigma (R)$. For $[\alpha] \in \Phi_\rho$, we write 
\[
    J_{[\alpha]} (H) = \begin{cases}
        \{ t \in R \mid x_{[\alpha]} (t) \in H \} & \text{if } [\alpha] \sim A_1, A_1^2, A_1^3; \\
        \{ t \in R \mid \exists \ u \in R \text{ with } x_{[\alpha]} (t, u) \in H \text{ or } x_{[\alpha]} (u,t) \in H \} & \text{if } [\alpha] \sim A_2. 
    \end{cases}
\]
Define $J = \displaystyle\bigcup_{[\alpha] \in \Phi_\rho} J_{[\alpha]}(H)$. To demonstrate the main theorem, we will first consider the following two propositions.

\begin{prop}\label{prop:E(R,J) subset of H}
    \normalfont
    Let $J$ be as above. Then
    \begin{enumerate}[(a)]
        \item $J$ is a $\theta$-invariant ideal of $R$.
        \item $E'_\sigma (R, J) \subset H$. 
    \end{enumerate}
\end{prop}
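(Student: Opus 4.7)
The plan is to establish part (a) through a four-stage analysis of how $H$ interacts with root subgroups, and then deduce part (b) by exploiting the normality of $H$ in $E'_\sigma(R)$.

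For part (a), I would first show that each $J_{[\alpha]}(H)$ is a subgroup of $(R,+)$. For $[\alpha] \sim A_1, A_1^2, A_1^3$ this is immediate from $x_{[\alpha]}(t)x_{[\alpha]}(u) = x_{[\alpha]}(t+u)$; for $[\alpha] \sim A_2$ one projects the group law on $\mathcal{A}(R)$ onto each coordinate, using $(t,u) \oplus (t',u') = (t+t', u+u'+\bar t t')$, and separately handles the case where $t$ or $u$ is zero. Next, to show $J_{[\alpha]}(H) = J_{[\beta]}(H)$ whenever $[\alpha], [\beta]$ lie in the same Weyl orbit of $\Phi_\rho$, I conjugate by $w_{[\gamma]}(1) \in N'_\sigma \subset E'_\sigma(R)$ and apply Proposition~3.16, which sends $x_{[\alpha]}(t)$ to $x_{s_{[\gamma]}[\alpha]}(\pm u')$ with $u' \in \{t, \bar t, \bar{\bar t}\}$. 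To pass between long and short roots, I use commutator formulas of types (c) and (d): starting from $t \in J_{\mathrm{long}}(H)$ and a suitable $s = \pm 1$ (or $\pm 1/2, \pm 1/3$, whence the hypothesis on $1/2, 1/3$), the commutator $[x_{[\beta]}(s), x_{[\alpha]}(t)]$ exhibits an element of the short root group with parameter involving $t$, and vice versa.

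Still within part (a), to see that $J$ absorbs multiplication by $R$, I use commutators of type (b) with an element $x_{[\beta]}(r)$, $r \in R$: these produce $x_{[\alpha]+[\beta]}(\pm rt)$ (or $\pm r\bar t$), placing $rt \in J$. Combined with the additive closure and the transport steps above, $J$ becomes an ideal of $R$. For $\theta$-invariance: the case $[\alpha] \sim A_1$ is trivial since $J_{[\alpha]}(H) \subset R_\theta$. For $[\alpha] \sim A_1^2$ or $A_1^3$, the same Weyl-conjugation argument of Proposition~3.16 produces $x_{[\gamma]}(\bar t)$ (resp.\ $\bar{\bar t}$) at a reflected root, showing $\bar t \in J$. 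For $[\alpha] \sim A_2$ the analogous computation using the $(W4)$, $(W4')$ formulas and the second slot of the pair $(t,u)$ does the job.

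For part (b), since $H$ is normalized by $E'_\sigma(R)$, the normal closure of any $\{x_{[\alpha]}(t)\} \subset H$ in $E'_\sigma(R)$ lies in $H$; hence it suffices to show $x_{[\alpha]}(t) \in H$ for every generator of $E'_\sigma(J)$, i.e.\ for every $[\alpha] \in \Phi_\rho$ and every $t \in J_{[\alpha]}$. For $[\alpha] \sim A_1, A_1^2, A_1^3$, any $t \in J$ lies in $J_{[\beta]}(H)$ for some $[\beta]$, and the transport lemmas of part (a) carry $t$ back to $J_{[\alpha]}(H)$. For $[\alpha] \sim A_2$, given $(t,u) \in \mathcal{A}(J)$ we build $x_{[\alpha]}(t,u)$ inside $H$ by combining $x_{[\alpha]}(t_0, u_0)$-pieces produced by type (c-ii)/(d-ii) commutators from known short- and long-root elements with parameters in $J$.

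The main obstacle will be the $A_2$ case arising in ${}^2 A_{2n}$: the parameter space $\mathcal{A}(R)$ is non-abelian and its coordinates $t$ and $u$ are linked by the relation $t\bar t = u+\bar u$, so showing that an ideal-level pair $(t,u) \in \mathcal{A}(J)$ is effectively reachable from the coordinatewise data encoded in $J_{[\alpha]}(H)$ requires careful bookkeeping of which slot each commutator fills. The twisted Chevalley formulas (c-ii) and (d-ii), together with the invertibility of $2$ (and of $3$ for ${}^3D_4$), are precisely what make this reconstruction go through.
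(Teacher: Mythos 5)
Your underlying machinery is the same as the paper's, just unpackaged: the paper funnels all of the Weyl-conjugation and rank-two commutator transport you describe into Proposition~\ref{z to Rz} (the normal closure in $E'_\sigma(R)$ of a single $x_{[\alpha]}(z)$ equals $E'_\sigma(R,J_z)$ for the ideal $J_z$ built from $z$, $\bar z$, $\bar{\bar z}$, resp.\ $z_1,\bar z_1, z_2-\bar z_2$), proved by the case analysis of Lemma~\ref{z to Rz in phi'}, and then both (a) and (b) drop out in a few lines. The genuine gap in your plan is the very first stage in the $A_2$ case: the claim that $J_{[\alpha]}(H)$ is a subgroup of $(R,+)$ is not established by ``projecting the group law onto each coordinate.'' The set $J_{[\alpha]}(H)$ collects parameters occurring in \emph{either} slot, and the law $(t,u)\oplus(t',u')=(t+t',\,u+u'+\bar t t')$ does not respect the second slot: if $x_{[\alpha]}(u,t)$ and $x_{[\alpha]}(u',t')$ lie in $H$, their product is $x_{[\alpha]}(u+u',\,t+t'+\bar u u')$, so the second coordinate acquires the cross term $\bar u u'$; and in the mixed case (one parameter witnessed in the first slot, the other in the second) neither coordinate of any product of the two witnesses is $t+t'$. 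Note also that from a second-slot witness $x_{[\alpha]}(u,t)$ one only recovers $t$ itself by combining $t-\bar t$ with $t+\bar t=u\bar u$ and using $1/2\in R$ --- a step absent from your sketch. Since $J$ is defined as the union of the $J_{[\alpha]}(H)$, additivity of $J$, and hence the assertion that $J$ is an ideal, is exactly what is left unproved.

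The paper sidesteps this entirely: it never shows that any individual $J_{[\alpha]}(H)$ is an additive subgroup. Instead, given $t,u\in J$ it applies Proposition~\ref{z to Rz} to the witnessing elements of $H$ and thereby places $x_{[\gamma]}(rt)$, $x_{[\gamma]}(\bar t)$, $x_{[\gamma]}(u)$ in $H$ for one fixed root $[\gamma]$ of type $A_1^2$ or $A_1^3$, where the parameter set is all of $R$ and closure under addition is immediate; part (b) is then the observation that $E'_\sigma(J)\subset H$ plus normality of $H$ under $E'_\sigma(R)$, which you do have. Your plan is repairable along the same lines (route all parameters through $A_1^2$/$A_1^3$ root subgroups rather than proving coordinatewise closure at $A_2$ roots), but two further points you gloss over would then have to be done honestly: long-to-short transport via type $(c)$/$(d)$ commutators produces a \emph{product} of root elements across several heights, so extracting the single factor you want needs the $\pm r$ substitution trick (and, in the $G_2$ and $(d\!-\!ii)$ cases, prior stripping of the other factors) as carried out in Lemma~\ref{z to Rz in phi'}; and for part (b) in the ${}^2A_{2n}$ case the reconstruction of a general $x_{[\alpha]}(t,u)$, $(t,u)\in\mathcal{A}(J)$, uses the decompositions of Lemmas~\ref{A(R)} and~\ref{A(I+J) = A(I) A(J)}, not just the $(c\!-\!ii)$/$(d\!-\!ii)$ formulas.
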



\begin{prop}\label{prop:U(R) cap H subset U(J)}
    \normalfont
    Let $J$ be as above. Then
    \begin{enumerate}[(a)]
        \item $U_\sigma (R) \cap H \subset U_\sigma (J)$. 
        \item $U_\sigma (rad (R)) T_\sigma (R) U^{-}_\sigma (R) \cap H \subset U_\sigma (J) T_\sigma (R, J) U^{-}_\sigma (J)$, where $rad(R)$ is the Jacobson radical of $R$.
    \end{enumerate}
\end{prop}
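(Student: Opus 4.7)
The plan is to prove both parts by an ``extraction'' strategy: commute a given element of $H$ against elementary generators in $E'_\sigma(R)$ (the resulting commutator stays in $H$ by hypothesis), and then use the Chevalley commutator formulas of Section~\ref{subsec:CheComm} together with the uniqueness of the regular-ordered expression in Lemma~\ref{structure of U} to read off, coefficient by coefficient, that the unipotent and torus components of the original element land in the correct $J$-congruence subgroups.

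For part (a), take $u \in U_\sigma(R) \cap H$ and write it uniquely as $u = \prod_{i=1}^n x_{[\alpha_i]}(t_i)$ with $[\alpha_i] \in \Phi^+_\rho$ in the fixed regular order. I would induct on heights, showing at each stage that the minimal-height nonzero factor $x_{[\alpha]}(t)$ of $u$ already lies in $H$, so that $u \cdot x_{[\alpha]}(t)^{-1} \in H$ has strictly fewer factors. To isolate $x_{[\alpha]}(t)$, I would commute $u$ against $x_{-[\beta]}(s) \in E'_\sigma(R)$ for a simple root $[\beta]$ chosen so that $x_{[\alpha]}(t)$ commutes with $x_{-[\beta]}(s)$, while every other $x_{[\alpha_i]}(t_i)$ in the expansion produces, upon commutation with $x_{-[\beta]}(s)$, only contributions at heights strictly greater than $ht([\alpha])$. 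By the formulas of Section~\ref{subsec:CheComm}, the resulting commutator $[u, x_{-[\beta]}(s)] \in H$ has normal-form support only above $ht([\alpha])$, and those higher factors already lie in $H$ by a previous step of the induction; uniqueness of the normal form pins down $x_{[\alpha]}(t) \in H$, giving $t \in J_{[\alpha]}(H) \subset J$.

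For part (b), write $g = u_0 t_0 v_0 \in U_\sigma(rad(R)) T_\sigma(R) U^-_\sigma(R) \cap H$. By Proposition~\ref{P(J) and Q(J)} this is a subgroup, and Lemma~\ref{inUHV} allows us to rearrange ``low'' commutators modulo $H'_\sigma(R,J)$. I would proceed in three stages. First, commuting $g$ against various $x_{[\gamma]}(s)$ with $[\gamma] > 0$ and projecting onto the $U^-_\sigma$-component absorbs the $u_0 t_0$ contribution into $U_\sigma(rad(R)) T_\sigma(R)$, and the $U^-_\sigma$-symmetric analogue of part (a) yields $v_0 \in U^-_\sigma(J)$. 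Second, $g v_0^{-1} = u_0 t_0 \in H$, and commuting against $x_{-[\gamma]}(s)$ reduces matters to part (a) directly, giving $u_0 \in U_\sigma(J)$. Finally, testing $t_0 = h(\chi) \in T_\sigma(R)$ against $x_{[\alpha]}(e)$ for suitable $e$ and each $[\alpha] \in \Phi_\rho$, Proposition~\ref{prop h(chi)xh(chi)^-1} gives a commutator in $U_\sigma(R) \cap H$ which, by part (a), forces $\chi(\alpha) - 1 \in J$ for every root $\alpha \in \Phi$, whence $t_0 \in T_\sigma(R,J)$ by Lemma~\ref{lemma on T(J)}.

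The main obstacle will be the extraction step in part (a) when $\Phi_\rho \sim {}^2 A_{2n}$ and $[\alpha] \sim A_2$: there the coefficient is a pair $(t_1, t_2) \in \mathcal{A}(R)$, and the relevant commutator formulas (types (c-ii) and (d-ii)) couple the two coordinates nontrivially, so the extraction must be performed in two separate substeps that decouple them. The same issue resurfaces in part (b), since the torus action $(t_1, t_2) \mapsto (\chi(\alpha) t_1, \chi(\alpha) \overline{\chi(\alpha)} t_2)$ has distinct exponents in the two coordinates, forcing a careful choice of test characters. This aligns with the authors' remark that the ${}^2 A_{2n}$ case is ``significantly more intricate'' than the other root-system types.
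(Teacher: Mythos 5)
Your extraction step in part (a) does not work as described. Commuting $u$ with a negative simple root element $x_{-[\beta]}(s)$ moves the support of the remaining factors \emph{down} in height, not up: a factor $x_{[\alpha_i]}(t_i)$ contributes terms at roots $k[\alpha_i]-l[\beta]$ of height $k\,ht([\alpha_i])-l$, so another factor of the same minimal height already produces a term at height $ht([\alpha])-1$; and if $[\beta]$ itself occurs in the support of $u$, then $[x_{[\beta]}(t_i),x_{-[\beta]}(s)]$ is not unipotent at all, so $[u,x_{-[\beta]}(s)]$ need not lie in $U_\sigma(R)$ and ``normal-form support'' is meaningless. More fundamentally, even if you could arrange that $x_{[\alpha]}(t)$ commutes with $x_{-[\beta]}(s)$ while everything else moves to other roots, the commutator then contains no trace of $t$, so the inference ``the commutator lies in $H$, hence $x_{[\alpha]}(t)\in H$'' is a non sequitur. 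The working argument (the paper's Lemma~\ref{lemma:U cap H}) goes in the opposite direction: one commutes with \emph{positive} simple root elements $x_{[\alpha_i]}(1)$, chosen so that $[\delta]+[\alpha_i]\in\Phi_\rho$ and $[\delta]-[\alpha_i]\notin\Phi_\rho$, which transports the coefficient $t_{[\delta]}$ to the higher root $[\delta]+[\alpha_i]$; a descending induction anchored at the highest long and highest short roots shows $x_{[\delta]+[\alpha_i]}(\pm t_{[\delta]})\in H$, and Proposition~\ref{z to Rz} then transfers membership back to $x_{[\delta]}(t_{[\delta]})\in H$. Your sketch never invokes Proposition~\ref{z to Rz}, and it also omits the rank-two base cases ${}^2A_3$, ${}^2A_4$, ${}^3D_4$, which require separate computations (for instance splitting $x_{[\alpha]+[\beta]}(t)\,x_{[\alpha]+2[\beta]}(u)\in H$ into its two factors) and are exactly where the $A_2$-class coupling you mention has to be resolved.

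In part (b) the difficulty is precisely what your first two stages assume away. There is no homomorphic ``projection onto the $U^-_\sigma$-component'' of the product $U_\sigma T_\sigma U^-_\sigma$, and a single commutator of the mixed element $g=u_0t_0v_0$ with $x_{\pm[\gamma]}(s)$ is again a mixed element, not an element of $U^{\pm}_\sigma(R)$ to which part (a) or its $U^-$-analogue applies. This is what the paper's Lemma~\ref{lemma:UTV cap H} is for: an extended chain of commutators and Weyl-element conjugations extracts the coefficient at the distinguished simple root $[\gamma]$ adjacent to the highest root from $z=x_{[\gamma]}(t)\,x\,h\,y\in H$, and the remaining root lengths are then reached by conjugating into position, commuting with $x_{[\beta]}(1)$ for the highest root $[\beta]$ so as to land in $U_\sigma(R)\cap H$, and applying Lemma~\ref{lemma:U cap H} together with Proposition~\ref{z to Rz}. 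Moreover, the hypothesis $u_0\in U_\sigma(\mathrm{rad}(R))$ is what keeps the Weyl-conjugated elements inside the big cell (via Lemma~\ref{inUHV}); your proposal cites that lemma in passing but does not explain where the radical hypothesis is used. Your treatment of the torus component (testing $h(\chi)$ against $x_{[\alpha]}(1)$ and quoting Lemma~\ref{lemma on T(J)}) agrees with the paper, but it only becomes available after the unipotent factors have been shown to lie in $H$, and that is the step your proposal leaves without a valid argument.
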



The proofs of Propositions \ref{prop:E(R,J) subset of H} and \ref{prop:U(R) cap H subset U(J)} can be found in Sections \ref{sec:Pf of prop 1} and \ref{sec:Pf of prop 2}, respectively.
Moving forward, let $\mathfrak{m}$ be a maximal ideal of $R$ and define $\bar{\mathfrak{m}} = \theta(\mathfrak{m})$. Set
\[
    S_{\mathfrak{m}} = \begin{cases} 
        R \setminus \mathfrak{m} & \text{if } \mathfrak{m} = \bar{\mathfrak{m}}, \\
        R \setminus (\mathfrak{m} \cup \bar{\mathfrak{m}}) & \text{if } \mathfrak{m} \neq \bar{\mathfrak{m}} \text{ and } o(\theta) = 2, \\
        R \setminus (\mathfrak{m} \cup \bar{\mathfrak{m}} \cup \bar{\bar{\mathfrak{m}}}) & \text{if } \mathfrak{m} \neq \bar{\mathfrak{m}} \text{ and } o(\theta) = 3.
    \end{cases}
\]
Then $S_\mathfrak{m}$ is a multiplicatively closed subset of $R$ such that $\theta (S_\mathfrak{m}) \subset S_\mathfrak{m}$. Therefore there is a natural automorphism of the ring $S_\mathfrak{m}^{-1}R$ induced by an automorphism $\theta$ of $R$. We denote this automorphism of $S_\mathfrak{m}^{-1}R$ also by $\theta$. Let $\psi_\mathfrak{m}: G_\sigma (R) \longrightarrow G_\sigma (S_\mathfrak{m}^{-1} R)$ be the homomorphism of groups induced by the canonical homomorphism of rings from $R$ to $S_\mathfrak{m}^{-1}R$. We write $R_S$ and $J_S$ for $S_\mathfrak{m}^{-1} R$ and $S_\mathfrak{m}^{-1} J$, respectively. By $S_\theta$, we mean $S_\mathfrak{m} \cap R_\theta$.


\begin{prop}\label{local:H subset G(R,J)}
    $\psi_\mathfrak{m} (H) \subset G_\sigma (R_S, J_S)$. 
\end{prop}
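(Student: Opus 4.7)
The plan is to apply Corollary~\ref{C=G} at the semi-local ring $R_S$---which inherits from $R$ the invertibility of $2$ and, when $\Phi_\rho \sim {}^3 D_4$, of $3$---so that $G_\sigma(R_S, J_S) = C_\sigma(R_S, J_S)$. It therefore suffices to prove that $[\psi_\mathfrak{m}(h), x_{[\alpha]}(t)] \in E'_\sigma(R_S, J_S)$ for every $h \in H$, every $[\alpha] \in \Phi_\rho$, and every $t \in (R_S)_{[\alpha]}$.

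The first step is denominator clearing. Using the $\theta$-averaging argument that underlies Lemma~\ref{A1&A2}, one shows that any $t \in (R_S)_{[\alpha]}$ can be written in terms of some $s \in S_\mathfrak{m}$---chosen in $S_\theta$ when $[\alpha] \sim A_1$ or $A_2$---and some $v \in R_{[\alpha]}$, so that by Proposition~\ref{prop h(chi)xh(chi)^-1} we have $x_{[\alpha]}(t) = h_{[\alpha]}(s^{-1})\,x_{[\alpha]}(v)\,h_{[\alpha]}(s^{-1})^{-1}$, with $h_{[\alpha]}(s^{-1}) \in T_\sigma(R_S)$ and $x_{[\alpha]}(v) \in \psi_\mathfrak{m}(E'_\sigma(R))$. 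Applying the identity $[A, BCB^{-1}] = B[B^{-1}AB, C]B^{-1}$ and invoking the normality of $E'_\sigma(R_S, J_S)$ in $G_\sigma(R_S)$ (Proposition~\ref{normal}), the problem is reduced to showing $[h', x_{[\alpha]}(v)] \in E'_\sigma(R_S, J_S)$, where $h' = h_{[\alpha]}(s)\,\psi_\mathfrak{m}(h)\,h_{[\alpha]}(s^{-1})$.

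Since $R_S$ is semi-local, Proposition~\ref{G=G'} gives $\psi_\mathfrak{m}(h) = t_0 e_0$ with $t_0 \in T_\sigma(R_S)$ and $e_0 \in E'_\sigma(R_S)$. Because $T_\sigma(R_S)$ is abelian and normalizes $E'_\sigma(R_S)$, one obtains $h' = t_0\,e_0'$ with $e_0' = h_{[\alpha]}(s)\,e_0\,h_{[\alpha]}(s^{-1}) \in E'_\sigma(R_S)$. The commutator accordingly splits as
\[
[h', x_{[\alpha]}(v)] = t_0\,[e_0', x_{[\alpha]}(v)]\,t_0^{-1} \cdot [t_0, x_{[\alpha]}(v)],
\]
and the two factors can be separately analysed using the Chevalley commutator formulas of Section~\ref{subsec:CheComm} and the torus action formula of Proposition~\ref{prop h(chi)xh(chi)^-1}.

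The hard part, which I expect to be the main obstacle, is to show that each of these two factors lies in $E'_\sigma(R_S, J_S)$. For the torus commutator $[t_0, x_{[\alpha]}(v)] = x_{[\alpha]}((\chi_{t_0}(\alpha) - 1) \cdot v)$, one must establish that $\chi_{t_0}(\alpha) - 1 \in J_S$; this should follow by reducing modulo $J_S$ and using Proposition~\ref{prop:U(R) cap H subset U(J)} applied to the $E'_\sigma(R_S)$-normal closure of $\psi_\mathfrak{m}(H)$ in $G_\sigma(R_S)$, which controls the diagonal content of $\psi_\mathfrak{m}(h)$ via the associated $\theta$-invariant ideal. For the $E'_\sigma$-commutator $[e_0', x_{[\alpha]}(v)]$, one decomposes $e_0'$ via Proposition~\ref{lavidecomposition} and matches its $U_\sigma, H'_\sigma, U^-_\sigma$ components against the bounds supplied by Proposition~\ref{prop:U(R) cap H subset U(J)}(a). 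Putting these together should yield the required membership.
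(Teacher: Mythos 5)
Your overall framing is legitimate (Corollary~\ref{C=G} over the semi-local ring $R_S$ does reduce the claim to showing $[\psi_\mathfrak{m}(h),x_{[\alpha]}(t)]\in E'_\sigma(R_S,J_S)$, and the denominator-clearing and the splitting $[t_0e_0',x_{[\alpha]}(v)]=t_0[e_0',x_{[\alpha]}(v)]t_0^{-1}\,[t_0,x_{[\alpha]}(v)]$ are fine as algebra), but the final step, which you yourself flag as the hard part, is where the argument is genuinely circular. The decomposition $\psi_\mathfrak{m}(h)=t_0e_0$ furnished by Proposition~\ref{G=G'} is completely arbitrary: there is no reason at this stage that $\chi_{t_0}(\alpha)\equiv 1 \pmod{J_S}$, nor that $[e_0',x_{[\alpha]}(v)]\in E'_\sigma(R_S,J_S)$ for a general elementary element $e_0'$ and a general $v$; asserting either is essentially equivalent to the statement $\psi_\mathfrak{m}(h)\in G_\sigma(R_S,J_S)$ that you are trying to prove (note also that only the product of your two factors needs to lie in $E'_\sigma(R_S,J_S)$, and forcing each factor separately requires a carefully chosen decomposition, i.e.\ the conclusion). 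The tool you invoke to supply this control does not apply: Proposition~\ref{prop:U(R) cap H subset U(J)} requires a subgroup normalized by the elementary group, and $\psi_\mathfrak{m}(H)$ is in general \emph{not} normalized by $E'_\sigma(R_S)$; if you instead apply it to the $E'_\sigma(R_S)$-normal closure $\widetilde H$ of $\psi_\mathfrak{m}(H)$, the conclusion is phrased in terms of the ideal attached to $\widetilde H$ over $R_S$, and you have given no argument that this ideal is contained in $J_S$ (the localization of the ideal attached to $H$ over $R$). That containment is precisely the delicate point.

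This is exactly the difficulty the paper isolates in Proposition~\ref{prop:U(RS) cap psi(H) subset U(JS)} and the lemma preceding it: to relate unipotent data of $\psi_\mathfrak{m}(H)$ to $J_S$ one must pull elements back to $H$ itself, which is only possible after scaling the root parameters by suitable $s\in S_\theta$, and this pull-back requires Taddei's specialization lemma (Lemma~\ref{lemma:GT}) applied over $R[X]$. The paper's proof then proceeds quite differently from yours: it reduces modulo $I_SR_S=\operatorname{rad}(R_S)$ to $G_\sigma(k_S)$, uses the central/non-central dichotomy and the simplicity of $E'_\sigma(k_S)$ over its center (via Proposition~\ref{prop:Chevalley as TChevalley} when $\mathfrak m\neq\bar{\mathfrak m}$) to produce, if the conclusion failed, an element of $U_\sigma(I_SR_S)T_\sigma(R_S)U^-_\sigma(R_S)\cap\psi_\mathfrak{m}(H)$ violating Proposition~\ref{prop:U(RS) cap psi(H) subset U(JS)}(b), together with the decompositions of Propositions~\ref{G=UTV} and \ref{G(R,J)=UTV}. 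Without an analogue of the Taddei-based pull-back (or some other mechanism tying the $U_\sigma$-, $T_\sigma$-, and $U^-_\sigma$-components of elements of $\psi_\mathfrak{m}(H)$ to the specific ideal $J_S$), your proposal does not close.
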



The proof of Proposition~\ref{local:H subset G(R,J)} has been provided in Section \ref{sec:Pf of prop 3}.


\begin{prop}\label{local:[x,g] in E(R,J)}
    For any element $g \in G_\sigma (R_S, J_S),$ there exists an elements $s \in S_\theta$ such that $$[\psi_\mathfrak{m}(x_{[\alpha]}(s \cdot t)), g] \in \psi_\mathfrak{m} (E'_\sigma(R, J))$$ for all $[\alpha] \in \Phi_\rho$ and $t \in  R_{[\alpha]}.$
\end{prop}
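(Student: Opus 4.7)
The plan is to reduce the commutator to an explicit product of elementary generators with arguments in $J$, by first splitting $g$ via a triangular decomposition and then applying the Chevalley commutator formulas while absorbing denominators into a power of a suitable $s \in S_\theta$.

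Since $R_S$ is semi-local, I distinguish two cases. If $J \subseteq \mathfrak{m}$ then, by $\theta$-invariance of $J$, $J$ is contained in every conjugate $\theta^k(\mathfrak{m})$; hence $J_S \subseteq \operatorname{rad}(R_S)$, and Corollary~\ref{G(R,J)=UTV} gives a decomposition $g = u h v$ with $u \in U_\sigma(J_S)$, $h \in T_\sigma(R_S, J_S)$ and $v \in U^-_\sigma(J_S)$. If instead $J \not\subseteq \mathfrak{m}$, then by prime avoidance applied to $\mathfrak{m} \cup \bar{\mathfrak{m}}$ (and $\bar{\bar{\mathfrak{m}}}$ when $o(\theta)=3$), together with $\theta$-invariance of $J$, the intersection $J \cap S_\mathfrak{m}$ is nonempty, so $J_S = R_S$ and Proposition~\ref{G=G'} writes $g = e h'$ with $e \in E'_\sigma(R_S)$ and $h' \in T_\sigma(R_S) = T_\sigma(R_S, J_S)$. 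In both cases, $g$ is a finite product of factors of the form $x_{[\beta]}(\xi_i/s_i)$ (with $\xi_i$ in $J_{[\beta]}$ in the first case and in $R_{[\beta]}$ in the second) and $h(\chi_k)$ with $1 - \chi_k([\beta]) \in J_S$ for every $[\beta]$, by Lemma~\ref{lemma on T(J)} in the first case and trivially in the second since $J_S = R_S$.

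Next, I would pick a common multiple $s_0 \in S_\mathfrak{m}$ of all the denominators $s_i$ and set $s = \bigl(s_0 \theta(s_0) \cdots \theta^{o(\theta)-1}(s_0)\bigr)^N$, which lies in $S_\theta$ for any $N \geq 1$. For such $s$, I would iteratively apply the identity $[x, ab] = [x,a] \cdot a[x,b]a^{-1}$ to expand $[\psi_\mathfrak{m}(x_{[\alpha]}(s \cdot t)), g]$ into a product of conjugates of commutators of $\psi_\mathfrak{m}(x_{[\alpha]}(s \cdot t))$ with the individual factors of $g$. A commutator with a single $x_{[\beta]}(\xi_i/s_i)$ is given by one of the formulas in Section~\ref{subsec:CheComm}, producing a product of $x_{[\gamma]}(\eta)$ whose arguments $\eta$ are polynomial in $s\cdot t$ and $\xi_i/s_i$ of bounded total degree. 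A commutator with $h(\chi_k)$ equals $x_{[\alpha]}\bigl((1 - \chi_k([\alpha])) \cdot (s \cdot t)\bigr)$ by Proposition~\ref{prop h(chi)xh(chi)^-1}. Taking $N$ large enough forces every $\eta$ to clear its denominator in $s_0$ and to lie in $J_{[\gamma]}$; the same $N$ also handles the subsequent conjugations introduced by the $a$-factors, since those again produce bounded-degree polynomial expressions via the Chevalley formulas.

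The main obstacle is the careful bookkeeping when $[\alpha] \sim A_2$: here $t = (t_1, t_2)$ is a pair in $\mathcal{A}(R)$ subject to $t_1 \bar{t_1} = t_2 + \bar{t_2}$, and the scaling $s \cdot (t_1, t_2) = (st_1, s^2 t_2)$ enters asymmetrically in the commutator formulas of types $(c\text{-}ii)$ and $(d\text{-}ii)$, and in the cubic formulas $(e), (f), (g)$ relevant for $\Phi_\rho \sim {}^3D_4$. One must verify not only that each scalar argument lies in $J$, but also that the resulting pair lies in $\mathcal{A}(J)$, i.e.\ satisfies the Hermitian constraint. A uniform $N$ nevertheless suffices because each commutator formula produces only finitely many terms of degrees bounded by numbers depending only on $\Phi_\rho$ and the depth of the decomposition of $g$.
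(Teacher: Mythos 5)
There is a genuine gap, and it sits exactly where the paper has to work hardest. Your expansion of $[\psi_\mathfrak{m}(x_{[\alpha]}(s\cdot t)),g]$ assumes that every commutator of $x_{[\alpha]}(s\cdot t)$ with a single factor $x_{[\beta]}(\xi_i/s_i)$ "is given by one of the formulas in Section~\ref{subsec:CheComm}". Those formulas only cover $[\beta]\neq\pm[\alpha]$, and in your decompositions $g=uhv$ (resp.\ $g=eh'$) the factor $x_{-[\alpha]}(\cdot)$ unavoidably occurs, so the case $[\beta]=-[\alpha]$ must be handled, and no Chevalley commutator formula exists there: the opposite root subgroups generate a rank-one group and the commutator involves torus and lower/upper unipotent pieces whose membership in $\psi_\mathfrak{m}(E'_\sigma(R,J))$ (with controlled denominators) is precisely what needs proving. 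This is why the paper does not decompose $g$ into individual root elements but into the generators $z_{[\beta]}(u,v)=x_{[\beta]}(u)x_{-[\beta]}(v)x_{[\beta]}(u)^{-1}$ with $v\in(J_S)_{[\beta]}$ (Proposition~\ref{genofE(R,I)}), and treats $[\alpha]=\pm[\beta]$ separately by choosing an auxiliary root $[\gamma]$ of a rank-two subsystem and rewriting $x_{-[\beta]}(v)=[x_{-[\beta]-[\gamma]}(u_1),x_{[\gamma]}(u_2)]h'$ via Lemma~\ref{lemma:u_1,u_2 exists}, so that afterwards only non-opposite pairs appear. Your proposal skips this entirely (you identify the $A_2$ bookkeeping as the main obstacle, but the opposite-root case is the real one).

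There is a second problem in your case $J\not\subseteq\mathfrak{m}$. There $J_S=R_S$, and you take the factors of $g$ with numerators $\xi_i$ merely in $R_{[\beta]}$ and the torus condition vacuous; then "clearing denominators with a large power of $s$" produces arguments in $R$, not in $J$, and nothing places the resulting product in $\psi_\mathfrak{m}(E'_\sigma(R,J))$ (which is the image of the level-$J$ group over $R$, not $E'_\sigma(R_S,J_S)=E'_\sigma(R_S)$). The point you are missing is that $J_S=R_S$ forces $J\cap S_\mathfrak{m}\neq\emptyset$, so every element of $R_S$, and in particular the arguments $v$ and $1-\chi(\alpha)$, can be rewritten as $c/d$ with $c\in J$ and $d\in S_\mathfrak{m}$; the paper's uniform use of generators $z_{[\beta]}(u,v)$ with $v\in(J_S)_{[\beta]}$ and of $h(\chi)\in T_\sigma(R_S,J_S)$ with $1-\chi(\alpha)=u_\alpha/s_\alpha$, $u_\alpha\in J$ (Lemma~\ref{lemma on T(J)}), builds this in, so that every Chevalley term carries a numerator in $J$ before the power of $s$ clears the denominators. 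Without such a rewriting (or, alternatively, without forcing $s$ itself to lie in $J\cap S_\theta$ so that $s\cdot t\in J_{[\alpha]}$), your argument does not yield membership in $\psi_\mathfrak{m}(E'_\sigma(R,J))$ in this case.
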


\begin{proof}
    Note that $R_S$ is a semi-local ring, and $J_S$ is either contained in $rad(R_S)$ or equal to $R_S$. 
    If $J_S \subset \operatorname{rad}(R_S)$, then by Proposition~\ref{lavidecomposition} and Corollary~\ref{G(R,J)=UTV}, we have $G_\sigma(R_S, J_S) = E'_\sigma(R_S, J_S)\, T_\sigma(R_S, J_S)$.
    If $J_S = R_S$, then by Proposition~\ref{G=G'}, it follows that $G_\sigma(R_S, J_S) = E'_\sigma(R_S, J_S)\, T_\sigma(R_S, J_S)$.
    In both cases, by Proposition \ref{genofE(R,I)}, we conclude that $G_\sigma(R_S, J_S)$ is generated by all $h(\chi) \in T_\sigma (R_S,J_S)$ and all the elements of the form 
    \[
    z_{[\beta]}(u,v) := x_{[\beta]}(u)\, x_{-[\beta]}(v)\, x_{[\beta]}(u)^{-1},
    \]
    where $[\beta] \in \Phi_\rho$, $u \in (R_S)_{[\beta]}$, and $v \in (J_S)_{[\beta]}$. We first prove the lemma for each of these generators.
    
    Let $g = h(\chi) \in T_\sigma (R_S, J_S)$ where $\chi \in \text{Hom}_1(\Lambda_\pi, R_S^{*})$. By Lemma \ref{lemma on T(J)}, we have $\chi(\alpha) \equiv 1$ (mod $J_S$) for every root $\alpha \in \Phi$, that is, $1 - \chi(\alpha) \in J_S$ for every root $\alpha \in \Phi$. Hence, there exists $u_\alpha \in J$ and $s_{\alpha} \in S$ such that $1 - \chi(\alpha) = \displaystyle\frac{u_{\alpha}}{s_{\alpha}}$. Take $s = \displaystyle\prod_{\alpha \in \Phi} s_{\alpha} \bar{s}_\alpha$ or $\displaystyle\prod_{\alpha \in \Phi} s_{\alpha} \bar{s}_\alpha \bar{\bar{s}}_\alpha$ depending on whether $o(\theta) = 2$ or $3$. Clearly, $s \in S_\theta$. 
    If $[\alpha] \sim A_2$ and $t = (t_1, t_2) \in R_{[\alpha]}$, then $t/1$ denotes $(t_1/1, t_2/1)$. Now 
    $$ [\psi_\mathfrak{m}(x_{[\alpha]}(s \cdot t)), g] = [x_{[\alpha]}(s/1 \cdot t/1), h(\chi)] = x_{[\alpha]} (s/1 \cdot t/1) x_{[\alpha]}(\chi(\alpha) s/1 \cdot t/1)^{-1}. $$
    If $[\alpha] \sim A_1, A_1^2$ or $A_1^3$ then 
    $$[\psi_\mathfrak{m}(x_{[\alpha]}(s \cdot t)), g] = x_{[\alpha]}\Big(\frac{st}{1}(1 - \chi(\alpha))\Big) = x_{[\alpha]}\Big(\frac{s t u_\alpha}{s_\alpha}\Big) = \psi_\mathfrak{m}(x_{[\alpha]}(a_{\alpha, t})),$$
    where $a_{\alpha, t} = \Bigg( \displaystyle\prod_{\substack{\beta \in \Phi \\ \beta \neq \alpha}} s_\beta \bar{s}_\beta \Bigg) \bar{s}_\alpha t u_\alpha $ or $\Bigg( \displaystyle\prod_{\substack{\beta \in \Phi \\ \beta \neq \alpha}} s_\beta \bar{s}_\beta \bar{\bar{s}}_\beta \Bigg) \bar{s}_\alpha \bar{\bar{s}}_\alpha t u_\alpha$ depending on whether $o(\theta) = 2$ or $3$. 
    Since $a_{\alpha, t} \in J_{[\alpha]}$, we obtain the required result in this case. Now if $[\alpha] \sim A_2$ then
    \begin{align*}
        [\psi_\mathfrak{m}(x_{[\alpha]}(s \cdot t)), g] &= x_{[\alpha]}\Bigg(\frac{st_1}{1}, \frac{s^2 t_2}{1}\Bigg) x_{[\alpha]} \Bigg(\chi(\alpha) \frac{st_1}{1}, \chi(\alpha) \overline{\chi(\alpha)} \frac{s^2 t_2}{1}\Bigg)^{-1} \\
        &= x_{[\alpha]}\Bigg((1 - \chi(\alpha))\frac{st_1}{1}, (1 - \chi(\alpha)) \frac{s^2t_2}{1} - \chi(\alpha) (1-\overline{\chi(\alpha)}) \frac{s^2 \bar{t}_2}{1} \Bigg) \\
        &= x_{[\alpha]}\Bigg(\frac{s t_1 u_\alpha}{s_\alpha}, \frac{s^2 t_2 u_\alpha}{s_\alpha} - \frac{s^2 \bar{t}_2 (s_\alpha - u_\alpha) \bar{u}_\alpha}{s_\alpha \bar{s}_\alpha} \Bigg) \\
        &= \psi_\mathfrak{m}(x_{[\alpha]}(a_{\alpha, t}, b_{\alpha, t})),
    \end{align*}
    where $a_{\alpha, t} = \Bigg( \displaystyle\prod_{\substack{\beta \in \Phi \\ \beta \neq \alpha}} s_\beta \bar{s}_\beta \Bigg) \bar{s}_\alpha t_1 u_\alpha$ and $b_{\alpha, t} = \Bigg( \displaystyle\prod_{\substack{\beta \in \Phi \\ \beta \neq \alpha}} s_\beta \bar{s}_\beta \Bigg) \bar{s}_\alpha s t_2 u_\alpha - \Bigg( \displaystyle\prod_{\substack{\beta \in \Phi \\ \beta \neq \alpha}} s_\beta \bar{s}_\beta \Bigg) s \bar{t}_2 (s_\alpha - u_\alpha) \bar{u}_\alpha$. Since $(a_{\alpha, t}, b_{\alpha, t}) \in \mathcal{A}(J)$, we obtain the required result in this case.

    \vspace{2mm}
    
    Now let $g = z_{[\beta]}(u, v)$, where $[\beta] \in \Phi_\rho, u \in (R_S)_{[\beta]}$ and $v \in (J_S)_{[\beta]}$. We want to find $s \in S_\theta$ such that 
    $$ [x_{[\alpha]}(s/1 \cdot t/1), z_{[\beta]}(u, v)] \in \psi_\mathfrak{m}(E'_\sigma(R, J)),$$ for all $[\alpha] \in \Phi_\rho$ and $t \in R_{[\alpha]}$.
    If $[\beta] \sim A_1, A_1^2$ or $A_1^3$, then write $u = a/b$ and $v = c/d$, where $a \in R, c \in J$ and $b, d \in S_\mathfrak{m}$. If $[\beta] \sim A_2$, then write $u = (u_1, u_2) = (a_1/b_1, a_2/b_2)$ and $v = (v_1, v_2) = (c_1/d_1, c_2/d_2)$, where $a_1, a_2 \in R,$ $c_1, c_2 \in J$ and $b_1, b_2, d_1, d_2 \in S_\mathfrak{m}$.
    Depending on the type of root $[\beta]$, we can choose the value of $s$ as below:
    \[
        s = \begin{cases}
            (b d)^m & \text{if } [\beta] \sim A_1, \\
            (b \bar{b} d \bar{d})^m & \text{if } [\beta] \sim A_1^2, \\
            (b \bar{b} \bar{\bar{b}} d \bar{d} \bar{\bar{d}})^m & \text{if } [\beta] \sim A_1^3, \\
            (b_1 \bar{b}_1 b_2 \bar{b}_2 d_1 \bar{d}_1 d_2 \bar{d}_2)^m & \text{if } [\beta] \sim A_2, 
        \end{cases}
    \]
    for sufficiently large positive integer $m$. We proceed using a similar method as in Proposition~\ref{genofE(R,I)}. 

    \vspace{2mm}

    \noindent \textbf{Case A. $[\alpha] \neq \pm [\beta]$.} Since no convex combination of the roots $[\alpha], -[\beta]$ and $i [\alpha] + j [\beta] \ (i, j \neq 0)$ is $0$, by using Chevalley commutator formula, we obtain 
    \begin{align*}
        & \hspace{-5mm} [x_{[\alpha]}(s/1 \cdot t/1), z_{[\beta]}(u,v)] \\
        &= x_{[\alpha]}(s/1 \cdot t/1) x_{[\beta]}(u) x_{-[\beta]}(v) x_{[\beta]}(u)^{-1} x_{[\alpha]}(s/1 \cdot t/1)^{-1} x_{[\beta]}(u) x_{-[\beta]}(v) x_{[\beta]}(u)^{-1} \\
        &= \Big( \prod x_{i[\alpha]+ j[\beta]}(c_{i,j}(s, t, u, v)) \Big),
    \end{align*}
    where $c_{i,j} = c_{i,j} (s, t, u, v)$ are functions of $s, t, u$ and $v$ such that $s$ and $v$ (or $\bar{v}, v_1, \bar{v}_1, v_2, \bar{v}_2$; the last four candidates appear only when $v=(v_1, v_2) \in \mathcal{A}(R)$) appear in each term of $c_{i,j}$. But then, for sufficiently large $m$ (see the definition of $s$), the values of $c_{i,j}(s,t,u,v)$ are in $J_{i[\alpha]+j[\beta]}$, as desired.

    \vspace{2mm}

    \noindent \textbf{Case B. $[\alpha] = \pm [\beta]$.} Since the rank of $\Phi_\rho > 1$, there exists $[\gamma] \in \Phi_\rho$ such that $[\beta]$ and $[\gamma]$ is base of connected subsystem $\Phi'$ of $\Phi_\rho$ is of rank $2$. 
    By Lemma \ref{lemma:u_1,u_2 exists}, there exists $a \in (J_S)_{[\beta] + [\gamma]}$ and $b \in (R_S)_{[\gamma]}$ such that
    $$ x_{-[\beta]}(v) = [x_{-[\beta]-[\gamma]}(a), x_{[\gamma]}(b)] h'$$ with $h' = \prod_{i \leq 0, j < 0} x_{i[\beta] + j[\gamma]} (a_{ij}) $. 
    But then 
    \begin{align*}
        z_{[\beta]}(u,v) &= x_{[\beta]}(u) x_{-[\beta]}(v) x_{[\beta]}(u)^{-1} \\ 
        &= x_{[\beta]}(u) \{ [x_{-[\beta]-[\gamma]}(a), x_{[\gamma]}(b)] h' \} x_{[\beta]}(u)^{-1} \\
        &= [x_{[\beta]}(u), x_{-[\beta]-[\gamma]}(a)] x_{-[\beta]-[\gamma]}(a) [x_{[\beta]}(u), x_{[\gamma]}(b)] x_{[\gamma]}(b) [x_{[\beta]}(u), x_{-[\beta]-[\gamma]}(a)^{-1}] \\
        & \hspace{10mm}  x_{-[\beta]-[\gamma]}(a)^{-1} [x_{[\beta]}(u), x_{[\gamma]}(b)^{-1}] x_{[\gamma]}(b)^{-1} [x_{[\beta]}(u), h'] h'.
    \end{align*}
    Now we consider
    \begin{align*}
        [x_{[\alpha]}(s/1 \cdot t/1), z_{[\beta]}(u,v)] &= x_{\pm [\beta]}(s/1 \cdot t/1) z_{[\beta]}(u,v) x_{\pm [\beta]}(s/1 \cdot t/1)^{-1} z_{[\beta]}(u,v)^{-1}.
    \end{align*}
    Using the above expression of $z_{[\beta]}(u,v)$ and Chevalley commutator formulas, we can conclude that $[x_{[\alpha]}(s/1 \cdot t/1), z_{[\beta]}(u,v)]$ can be expressed as a product of elements of the form $x_{[\delta]}(c_{[\delta]})$, where $[\delta] \in \Phi'$ and $c_{[\delta]} \in (R_S)_{[\delta]}$ such that $s$ and $v$ (or $\bar{v}, v_1, \bar{v}_1, v_2, \bar{v}_2$) appear in each term of $c_{[\delta]}$. Consequently, for sufficiently large $m$, the values of $c_{[\delta]}$ are in $J_{[\delta]}$, as desired.

    \vspace{2mm}

    Finally, let $g$ be an arbitrary element of $G_\sigma(R_S, J_S)$. We aim to show that for any given $[\alpha] \in \Phi_\rho$ and $t \in R_{[\alpha]}$, there exists $s \in S_\theta$ such that
    \[
        [\psi_{\mathfrak{m}}(x_{[\alpha]}(s \cdot t)), g] \in \psi_{\mathfrak{m}}(E'_\sigma(R, J)).
    \]
    As noted earlier, $g$ can be written as a product
    \[
        g = x_1 \cdots x_n,
    \]
    where each factor $x_i$ is either of the form $h(\chi)$ for some $\chi \in \mathrm{Hom}_1(\Lambda_\pi, (R_S)^*)$, or $z_{[\beta]}(u, v)$ with $[\beta] \in \Phi_\rho$, $u \in (R_S)_{[\beta]}$ and $v \in (J_S)_{[\beta]}$. 
    We have already established that for each such $x_i$, there exists $s_i \in S_\theta$ such that
    \[
        [\psi_{\mathfrak{m}}(x_{[\alpha]}(s_i \cdot t)), x_i] \in \psi_{\mathfrak{m}}(E'_\sigma(R, J)).
    \]
    Let $s = s_1 \cdots s_n$ and define $w = \psi_{\mathfrak{m}}(x_{[\alpha]}(s \cdot t))$. A straightforward computation yields
    \[
        [w, g] = [w, x_1 \cdots x_n] = \{ [w, x_1] \} \{ {}^{x_1}[w, x_2] \} \{ {}^{x_1 x_2}[w, x_3] \} \cdots \{ {}^{x_1 \cdots x_{n-1}}[w, x_n] \},
    \]
    where ${}^a b$ denotes the conjugation $a b a^{-1}$.
    Using a similar (though slightly adapted) argument previously applied to each generator $x_i$, we conclude that
    \[
        \{ {}^{x_1 \cdots x_{i-1}}[w, x_i] \} \in \psi_{\mathfrak{m}}(E'_\sigma(R, J)) \quad \text{for each } i.
    \]
    Therefore, the entire commutator $[w, g]$ lies in $\psi_{\mathfrak{m}}(E'_\sigma(R, J))$, as required.
\end{proof}


Before proceeding further, we state a lemma from G. Taddei \cite{GT}. Let $G_\pi(\Phi, R)$ be a Chevalley group over a commutative ring $R$. Consider $R[X]$, the polynomial ring in one variable $X$ with coefficients in $R$. For a maximal ideal $\mathfrak{m}$, let $\psi'_\mathfrak{m}: G_\pi (\Phi, R) \longrightarrow G_\pi (\Phi, R_S)$ denote the natural map induced by the canonical ring homomorphism $R \longrightarrow R_S$.

\begin{lemma}[{\cite[Lemma 3.14]{GT}}]\label{lemma:GT}
    Let $\epsilon (X)$ be an element of $G_\pi(\Phi, R[X])$. Suppose $\psi'_\mathfrak{m} (\epsilon (X)) = 1$ and $\epsilon  (0) = 1$. Then there exists an element $s$ of $S$ such that $\epsilon (sX) = 1$.
\end{lemma}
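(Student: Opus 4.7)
The plan is to treat this as a purely commutative-algebra/linear-algebra statement about matrices with polynomial entries, exploiting the fact that $G_\pi(\Phi, R)$ is realized as the zero set of polynomial identities inside $\mathrm{GL}_n(R)$.

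First I would unwind the setup. View $\epsilon(X) \in G_\pi(\Phi, R[X]) \subset \mathrm{GL}_n(R[X])$ and write $\epsilon(X) - I = (f_{ij}(X))_{i,j}$ with $f_{ij}(X) \in R[X]$. Since $\epsilon(0) = 1$, every $f_{ij}$ has zero constant term, so we may write
\[
f_{ij}(X) = \sum_{k \geq 1} a_{ij,k}\, X^k, \qquad a_{ij,k} \in R.
\]
The ring homomorphism $R \to R_S$ extends coefficient-wise to $R[X] \to R_S[X]$, and this is what induces $\psi'_{\mathfrak m}$ on matrix groups; hence the assumption $\psi'_{\mathfrak m}(\epsilon(X)) = 1$ amounts to saying that the image of each $f_{ij}(X)$ in $R_S[X]$ vanishes.

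Next I would extract a common denominator. Vanishing of $f_{ij}(X)$ in $R_S[X]$ is equivalent to vanishing of each coefficient $a_{ij,k}$ in $R_S$, which means each $a_{ij,k}$ is annihilated by some $s_{ij,k} \in S$. Because $\epsilon(X)$ has only finitely many entries and each $f_{ij}$ has finite degree, only finitely many $a_{ij,k}$ occur, so the product $s := \prod_{i,j,k} s_{ij,k} \in S$ satisfies $s\, a_{ij,k} = 0$ for all $i,j$ and all $k \geq 1$ simultaneously.

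Finally I would verify that this $s$ is the element sought. The coefficient of $X^k$ in $f_{ij}(sX)$ is $s^k a_{ij,k} = s^{k-1}(s\, a_{ij,k}) = 0$ for every $k \geq 1$, while the constant term is already zero. Therefore $f_{ij}(sX) = 0$ in $R[X]$ for all $i,j$, i.e.\ $\epsilon(sX) = I$ as a matrix in $\mathrm{GL}_n(R[X])$, which gives $\epsilon(sX) = 1$ in $G_\pi(\Phi, R[X])$ (substitution $X \mapsto sX$ is a ring endomorphism of $R[X]$ and therefore carries the ambient defining polynomial equations to themselves). There is no real obstacle here; the only point that requires a moment of care is the step that converts the group-theoretic hypothesis $\psi'_{\mathfrak m}(\epsilon(X)) = 1$ into the coefficient-level statement $s\, a_{ij,k} = 0$, which is immediate once one recalls that $G_\pi(\Phi, -)$ sits inside $\mathrm{GL}_n(-)$ as a polynomially defined subfunctor.
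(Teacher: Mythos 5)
Your proof is correct. The paper does not prove this lemma itself (it is quoted from Taddei), and your argument --- reduce to the matrix entries since $G_\pi(\Phi,-)\subset \mathrm{GL}_n(-)$ is defined entrywise, note that a polynomial dies in $S^{-1}R[X]$ iff each of its finitely many coefficients is killed by some element of $S$, take one common annihilator $s$, and use $\epsilon(0)=1$ to handle the constant terms (which the substitution $X\mapsto sX$ cannot affect) --- is exactly the standard proof of the cited result, so there is nothing to fix.
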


\begin{rmk}
    \normalfont
    Consider a twisted Chevalley group $G_\sigma (R)$. Note that an automorphism $\theta: R \longrightarrow R$ of order $n$ (where $n=2$ or $3$) can be naturally extended to an automorphism of $R[X]$ of the same order, denoted also by $\theta$. Therefore, we can make sense of the group $G_\sigma (R[X])$.
    Since $\psi'_\mathfrak{m} \mid_{G_\sigma (R)} = \psi_\mathfrak{m}$, we can apply the conclusion of Lemma~\ref{lemma:GT} to the case of twisted Chevalley groups as well. Moreover, in this context, we can also choose $s \in S_\theta$ (see \cite[Lemma $4.7$]{KS2}).
\end{rmk}


\begin{prop}\label{general:[x,g] in E(R,J)}
    For any maximal ideal $\mathfrak{m}$ of $R$ and $g \in H$, there exists $s \in S_\theta$ such that $$[x_{[\alpha]}(s \cdot t), g] \in E'_\sigma (R, J),$$ for all $[\alpha] \in \Phi_\rho$ and $t \in R_{[\alpha]}$.
\end{prop}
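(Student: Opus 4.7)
The plan is to combine the three preceding propositions of this section with Taddei's localization lemma (Lemma~\ref{lemma:GT}) to bridge the gap between the local statement and the desired global one.

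First, I would apply Proposition~\ref{local:H subset G(R,J)} to conclude that $\psi_\mathfrak{m}(g) \in G_\sigma(R_S, J_S)$. Then Proposition~\ref{local:[x,g] in E(R,J)}, applied to the element $\psi_\mathfrak{m}(g)$, produces an element $s_0 \in S_\theta$ (depending only on $g$ and $\mathfrak{m}$) such that
$$\psi_\mathfrak{m}\bigl([x_{[\alpha]}(s_0 \cdot t), g]\bigr) \in \psi_\mathfrak{m}(E'_\sigma(R, J)) \quad \text{for every } [\alpha] \in \Phi_\rho \text{ and } t \in R_{[\alpha]}.$$

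Second, to promote this from an equality modulo $\ker \psi_\mathfrak{m}$ to an actual membership, I would pass to the polynomial ring $R[X]$. Fix $[\alpha]$ and $t$, and let $h = h_{[\alpha], t} \in E'_\sigma(R, J)$ be a witness realizing $\psi_\mathfrak{m}\bigl([x_{[\alpha]}(s_0 \cdot t), g] h^{-1}\bigr) = 1$. Inspecting the explicit commutator computations in the proof of Proposition~\ref{local:[x,g] in E(R,J)}, the witness $h$ is a finite product of root subgroup elements whose entries depend polynomially on $s_0$; replacing $s_0$ by $s_0 X$ throughout yields a family $h(X) \in E'_\sigma(R[X], J[X])$ with $h(0) = 1$. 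Setting
$$\epsilon(X) := [x_{[\alpha]}((s_0 X) \cdot t), g] \cdot h(X)^{-1} \in G_\sigma(R[X]),$$
one checks $\epsilon(0) = 1$ and $\psi_\mathfrak{m}(\epsilon(X)) = 1$ in $G_\sigma(R_S[X])$. By Lemma~\ref{lemma:GT} (in the form valid for twisted Chevalley groups, as noted in the remark following it), there exists $s_1 \in S_\theta$ with $\epsilon(s_1 X) = 1$. Specializing at $X = 1$ gives
$$[x_{[\alpha]}((s_0 s_1) \cdot t), g] = h(s_1) \in E'_\sigma(R, J).$$

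The main obstacle is that $s_1$ a priori depends on $[\alpha]$ and $t$, while the proposition requires a single $s \in S_\theta$ uniform in both. This is the core technical difficulty. The intended remedy is to treat $t$ itself as a formal parameter: run the argument above over a polynomial extension of $R$ in which $t$ is a generic element of $R_{[\alpha]}$ (with the relation $T_1 \bar T_1 = T_2 + \bar T_2$ imposed when $[\alpha] \sim A_2$), so that a single $s_{[\alpha]} \in S_\theta$ emerges valid for every $t \in R_{[\alpha]}$. Since $\Phi_\rho$ is finite and $S_\theta$ is multiplicatively closed, setting $s := \prod_{[\alpha] \in \Phi_\rho} s_{[\alpha]}$ yields the required uniform element, completing the proof.
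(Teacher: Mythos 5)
Your proposal follows essentially the same route as the paper's proof: localize and use Proposition~\ref{local:H subset G(R,J)} to place $\psi_\mathfrak{m}(g)$ in $G_\sigma(R_S,J_S)$, invoke Proposition~\ref{local:[x,g] in E(R,J)} to produce an explicit elementary witness, insert the variable $X$, apply Lemma~\ref{lemma:GT} in its twisted form, specialize at $X=1$, and combine over the finitely many roots. The only cosmetic difference is that the paper applies Proposition~\ref{local:[x,g] in E(R,J)} directly over $R[X]$ to the element $x_{[\alpha]}((s'X)\cdot t)$, rather than substituting $s_0\mapsto s_0X$ into a witness first obtained over $R$; the two amount to the same computation, since the witness's entries are the same explicit expressions with $s_0X$ in place of $s_0$.

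The one substantive point where you go beyond the paper is the uniformity in $t$. The paper's write-up forms $\epsilon_{[\alpha]}(X)$ for a fixed $t$, so its element $s'_{[\alpha]}$ from Lemma~\ref{lemma:GT} a priori depends on $t$ as well, and the final product is taken only over $[\alpha]$; your identification of this as the core difficulty is accurate. Your proposed remedy --- rerunning the argument over a polynomial extension in which $t$ is generic (a $\theta$-fixed variable for $[\alpha]\sim A_1$, a $\theta$-orbit of variables for $A_1^2$ and $A_1^3$, and for $A_2$ variables $T_1, T_1', T_2$ with the involution sending $T_2$ to $T_1T_1'-T_2$, which avoids passing to a quotient ring) --- does work: the element $g$ still lies in $G_\sigma(R_S,J_S)$, so the semilocal decomposition underlying Proposition~\ref{local:[x,g] in E(R,J)} is unaffected by the extra variables, and the proof of Lemma~\ref{lemma:GT} needs only a multiplicative set in the base ring, not the complement of a maximal ideal of the enlarged ring. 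So your argument is correct and, on this uniformity point, more careful than the proof given in the paper; to make it complete you should carry out the generic-$t$ step explicitly rather than leaving it as a sketch.
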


\begin{proof}
    The canonical map $R \longrightarrow R_S$ naturally induces the following maps: 
    $$\psi_\mathfrak{m}: G_\sigma (R) \longrightarrow G_\sigma (R_S) \text{ and } \psi''_\mathfrak{m}: G_\sigma (R[X]) \longrightarrow G_\sigma (R_S[X]).$$
    It is clear that $\psi''_\mathfrak{m} \mid_{G_\sigma(R)} = \psi_\mathfrak{m}$.
    Let $g \in H$. Then, by Proposition~\ref{local:H subset G(R,J)}, $\psi_\mathfrak{m} (g) \in G_\sigma (R_S,J_S) \subset G_\sigma (R_S[X], J_S[X])$. By Proposition \ref{local:[x,g] in E(R,J)}, for the case of $\psi''_\mathfrak{m}$, there exists $s' \in S_\theta$ such that
    $$\psi''_\mathfrak{m} ([x_{[\alpha]}(s' X \cdot t), g]) \in \psi''_\mathfrak{m} (E'_\sigma (R[X], J[X])),$$
    for all $[\alpha] \in \Phi_\rho$ and $t \in R_{[\alpha]}$. Moreover, from the proof of Proposition \ref{local:[x,g] in E(R,J)}, we can preciously write 
    $$\psi''_\mathfrak{m} ([x_{[\alpha]}(s' X \cdot t), g]) = \psi''_\mathfrak{m} \Big( \prod_{i=1}^{m} x_{[\alpha_i]} (c_i (X)) \Big),$$
    where $c_i (X) \in (J[X])_{[\alpha_i]}$ such that $c_i (0) = 0$.
    Put $$ \epsilon_{[\alpha]} (X) = [x_{[\alpha]}(s' X \cdot t), g] \Big( \prod_{i=1}^{m} x_{[\alpha_i]} (c_i(X)) \Big)^{-1}.$$
    Then we see that $\epsilon_{[\alpha]} (X)$ satisfies the hypotheses of Lemma~\ref{lemma:GT}, and hence there exists $s'_{[\alpha]} \in S_\theta$ such that $\epsilon_{[\alpha]}(s'_{[\alpha]}X) = 1$. 
    Thus we obtain $$ [x_{[\alpha]}(s' (s'_{[\alpha]}X) \cdot t), g] = \prod_{i=1}^{m} x_{[\alpha_i]} (c_i(s'_{[\alpha]}X)).$$
    Now by taking $X = 1$ and $s_{[\alpha]} = s' s'_{[\alpha]}$, we derive
    $$ [x_{[\alpha]}(s_{[\alpha]} \cdot t), g] \in E'_\sigma (R, J).$$
    Finally, if we set $s = \prod_{[\alpha] \in \Phi_\rho} s_{[\alpha]} \in S_\theta$ then $s$ is the required element.
\end{proof}


Now we are in a position to prove the main theorem. 


\vspace{2mm}

\noindent \textit{Proof of Theorem \ref{mainthm}.} Let $H$ be a subgroup of $G_\sigma (R)$ normalized by $E'_\sigma (R)$ and let $J$ be as earlier. Since $E'_\sigma (R, J) \subset H$ (see Proposition \ref{prop:E(R,J) subset of H}), it only remains to prove that $H \subset G_\sigma (R,J)$. To demonstrate this, it suffices to show, by Corollary \ref{C=G}, that if $g \in H$ then $[x_{[\alpha]}(t), g] \in E'_\sigma (R,J)$ for every $[\alpha] \in \Phi_\rho$ and $t \in R_{[\alpha]}$.

\vspace{2mm}

\noindent \textbf{Case A. $[\alpha] \sim A_1$:} Define $I_{g, [\alpha]} = \{ s \in R_\theta \mid [x_{[\alpha]} (st), g] \in E'_\sigma (R,J) \text{ for all } t \in R_{\theta} \}$. Then $I_{g,[\alpha]}$ is an ideal of $R_\theta$. To see this, let $s_1, s_2 \in I_{g,[\alpha]}$. Then for all $t \in R_\theta$,
$$ [x_{[\alpha]}((s_1 + s_2)t), g] = (x_{[\alpha]}(s_1 t) [x_{[\alpha]}(s_2 t), g] x_{[\alpha]}(s_1 t)^{-1}) [x_{[\alpha]}(s_1 t), g] \in E'_\sigma(R,J).$$
Therefore, $s_1 + s_2 \in I_{g, [\alpha]}$. By the definition of $I_{g, [\alpha]}$, it is clear that for all $r \in R_\theta$, we have $r s_1 \in I_{g, [\alpha]}$. 
For any maximal ideal $\mathfrak{m}_\theta$ of $R_\theta$, by Proposition \ref{general:[x,g] in E(R,J)} and by Lemma \ref{A1&A2}, there exists $s \in I_{g, [\alpha]}$ such that $s \not \in \mathfrak{m}_\theta$. Thus, we can conclude that $I_{g, [\alpha]} = R_\theta$. But then $1 \in I_{g, [\alpha]}$ and hence $[x_{[\alpha]}(t), g] \in E'_\sigma (R,J)$ for all $t \in R_{\theta}$, as desired.

\vspace{2mm}

\noindent \textbf{Case B. $[\alpha] \sim A_1^2$ or $A_1^3$:} Define $I_{g, [\alpha]} = \{ s \in R \mid [x_{[\alpha]} (st), g] \in E'_\sigma (R,J) \text{ for all } t \in R \}$. Then, by a similar argument as in Case A, we can see that $I_{g,[\alpha]}$ is an ideal of $R$. For any maximal ideal $\mathfrak{m}$ of $R$, by Proposition~\ref{general:[x,g] in E(R,J)}, there exists $s \in I_{g, [\alpha]}$ such that $s \not \in \mathfrak{m}$. Thus, we can conclude that $I_{g, [\alpha]} = R$. But then $[x_{[\alpha]}(t), g] \in E'_\sigma (R,J)$ for all $t \in R$, as desired.

\vspace{2mm}

\noindent \textbf{Case C. $[\alpha] \sim A_2$:} Define $A_{g, [\alpha]} = \{ s \in R \mid [x_{[\alpha]} (s t, s \bar{s} t \bar{t}/2), g] \in E'_\sigma (R,J) \text{ for all } t \in R \}$. Let $I_{g, [\alpha]}^{(1)}$ be the ideal of $R$ generated by $A_{g, [\alpha]}$ and $I_{g, [\alpha]}^{(2)} = \{ s \in R_\theta \mid [x_{[\alpha]}(0, st), g] \in E'_\sigma (R, J) \text{ for all } t \in R \text{ such that } t = - \bar{t} \}$. 
We claim that $I_{g, [\alpha]}^{(2)}$ is an ideal of $R_\theta$. To see this, let $s_1, s_2 \in I^{(2)}_{g,[\alpha]}$. Then for all $t \in R$ such that $t = - \bar{t}$, we have 
\begin{align*}
    [x_{[\alpha]}(0, (s_1 + s_2)t), g] &= [x_{[\alpha]}(0, s_1 t) x_{[\alpha]}(0, s_2 t), g] \\
    &= (x_{[\alpha]}(0, s_1 t) [x_{[\alpha]}(0, s_2 t), g] x_{[\alpha]}(0, s_1 t)^{-1}) ([x_{[\alpha]}(0, s_1 t) ,g]) \\
    & \in E'_\sigma (R,J).
\end{align*}
Therefore, $s_1 + s_2 \in I^{(2)}_{g, [\alpha]}$. By the definition of $I^{(2)}_{g, [\alpha]}$, it is clear that for all $r \in R_\theta$, we have $r s \in I^{(2)}_{g, [\alpha]}$. 

For any maximal ideal $\mathfrak{m}$ of $R$, Proposition~\ref{general:[x,g] in E(R,J)} ensures the existence of an element $s \in A_{g, [\alpha]}$ such that $s \notin \mathfrak{m}$. Therefore, we conclude that $I_{g, [\alpha]}^{(1)} = R$.
Next, observe the following:
\begin{quote}
    \textit{For any maximal ideal $\mathfrak{m}$ of $R$ and any $g \in H$, there exists $s \in S_\theta$ such that
    \[
        [x_{[\alpha]}(0, st), g] \in E'_\sigma(R, J)
    \]
    for all $[\alpha] \in \Phi_\rho$ and all $t \in R$ satisfying $t = -\bar{t}$.}
\end{quote}
The proof of the above statement follows the same argument as that of Proposition~\ref{general:[x,g] in E(R,J)} and is therefore omitted. 
Hence, we conclude that $I^{(2)}_{g, [\alpha]} = R_\theta$. 

Let $(t, u) \in \mathcal{A}(R)$. Since $I^{(1)}_{g, [\alpha]} = R$, there exists $s_1, \dots, s_k \in A_{g, [\alpha]}$ and $r_1, \dots, r_k \in R$ such that $r_1 s_1 + \cdots + r_k s_k = 1$. Therefore, $t = r_1 s_1 t + \cdots + r_k s_k t$. Observe that
\begin{align*}
    (t, u) &= (r_1 s_1 t + \cdots + r_k s_k t, u) \\
    &= \left( r_1 s_1 t, \frac{r_1 \bar{r}_1 s_1 \bar{s}_1 t \bar{t}}{2} \right) \oplus \cdots \oplus \left( r_k s_k t, \frac{r_k \bar{r}_k s_k \bar{s}_k t \bar{t}}{2} \right) \oplus (0, c),
\end{align*}
for some $c \in R$ with $c = -\bar{c}$. Set 
\[
    x_i = x_{[\alpha]} \left( r_i s_i t,\; \frac{r_i \bar{r}_i s_i \bar{s}_i t \bar{t}}{2} \right) \quad \text{for each } i = 1, \dots, k, \quad \text{and} \quad x_{k+1} = x_{[\alpha]}(0, c).
\]
Then $x_{[\alpha]}(t, u) = x_1 \dots x_{k+1}$.
Since $s_i \in A_{g, [\alpha]}$, it follows that $x_i \in E'_\sigma (R, J)$ for all $i = 1, \dots, k$. Moreover, as $1 \in I^{(2)}_{g, [\alpha]}$, we have $x_{k+1} \in E'_\sigma (R, J)$.
Therefore,
\begin{align*}
    [x_{[\alpha]}(t,u), g] &= [x_1 \dots x_{k+1}, g] \\
    &= \{ {}^{(x_1 \cdots x_k)}[x_{k+1}, g] \} \{ {}^{(x_1 \cdots x_{k-1})} [x_k, g] \} \cdots \{ {}^{x_1} [x_2, g] \} \{ [x_1, g] \} \in E'_\sigma (R,J),
\end{align*}
as desired. \qed


\begin{cor}
    Let $R$ and $\Phi$ be as described in Theorem \ref{mainthm}. Then a subgroup $H$ of $E'_{\sigma}(\Phi, R)$ is normal if and only if there exists a unique $\theta$-invariant ideal $J$ of $R$ such that 
    $$E'_{\sigma} (R, J) \subset H \subset G_{\sigma} (R, J) \cap E'_{\sigma}(\Phi, R).$$
\end{cor}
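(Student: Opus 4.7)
The plan is to derive this as a direct consequence of Theorem~\ref{mainthm} together with Corollary~\ref{cor:converge}, since both directions reduce to statements already handled in the body of the paper. No new calculation is needed; the only care required is in passing between ``normal in $E'_\sigma(R)$'' and ``normalized by $E'_\sigma(R)$ as a subgroup of $G_\sigma(R)$.''

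For the forward direction, suppose $H$ is a normal subgroup of $E'_\sigma(R)$. Since $E'_\sigma(R) \subset G_\sigma(R)$, we may view $H$ as a subgroup of $G_\sigma(R)$, and the normality of $H$ in $E'_\sigma(R)$ says precisely that $H$ is normalized by $E'_\sigma(R)$. Hence Theorem~\ref{mainthm} applies and yields a unique $\theta$-invariant ideal $J$ of $R$ with
\[
    E'_\sigma(R,J) \subset H \subset G_\sigma(R,J).
\]
Intersecting the right-hand containment with the ambient group $E'_\sigma(R)$ (which contains $H$) gives $H \subset G_\sigma(R,J) \cap E'_\sigma(R)$, as required.

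For the reverse direction, assume there is a $\theta$-invariant ideal $J$ such that $E'_\sigma(R,J) \subset H \subset G_\sigma(R,J) \cap E'_\sigma(R)$. Then $H$ is a subgroup of $G_\sigma(R,J)$ containing $E'_\sigma(R,J)$, so by Corollary~\ref{cor:converge} it is normalized by $E'_\sigma(R)$. Because $H$ is itself contained in $E'_\sigma(R)$, this normalization statement is the same as $H$ being a normal subgroup of $E'_\sigma(R)$.

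Finally, uniqueness of $J$ is inherited from Theorem~\ref{mainthm}: if two $\theta$-invariant ideals $J_1, J_2$ both satisfy the sandwich $E'_\sigma(R,J_i) \subset H \subset G_\sigma(R,J_i) \cap E'_\sigma(R) \subset G_\sigma(R,J_i)$, then Theorem~\ref{mainthm} applied to $H$ (as a subgroup of $G_\sigma(R)$ normalized by $E'_\sigma(R)$) forces $J_1 = J_2$. Since each step is either an immediate quotation of a prior result or a trivial intersection manipulation, there is no genuine obstacle in the proof; the only thing worth making explicit is that the hypotheses of Theorem~\ref{mainthm} and Corollary~\ref{cor:converge} are available verbatim under the standing assumptions on $\Phi_\rho$ and $R$ inherited from the statement.
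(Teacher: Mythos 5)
Your proof is correct and follows exactly the route the paper intends: the paper's own proof is a one-line citation of Theorem~\ref{mainthm} (for the forward direction and uniqueness) and Corollary~\ref{cor:converge} (for the reverse direction), which is precisely what you have spelled out. The only substance beyond those citations—the observation that normality in $E'_\sigma(R)$ is the same as being normalized by $E'_\sigma(R)$ for a subgroup contained in $E'_\sigma(R)$, plus the trivial intersection with $E'_\sigma(R)$—is handled correctly in your write-up.
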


\begin{proof}
    The result follows directly from Theorem~\ref{mainthm} and Corollary~\ref{cor:converge}.
\end{proof}


\section{Proof of Proposition \ref{prop:E(R,J) subset of H}}\label{sec:Pf of prop 1}


\begin{lemma}\label{A(R)}
    \normalfont
    Let $R$ be a ring with unity, and let $J$ be a $\theta$-invariant ideal of $R$. Assume that $1/2 \in R$. Consider the group $G = (\mathcal{A}(J), \oplus)$. Define the subgroups $H = \{ (0,u) \mid u \in J, \bar{u} = - u \}$ and $K = \langle (r,r \bar{r}/2) \mid r \in J \rangle$ of $G$. Then $H$ is a normal subgroup of $G$; in fact, it is contained in the centre of $G$, and $G = HK$. Moreover, if $J = R$ then $H \subset K$, that is, $G = K$.
\end{lemma}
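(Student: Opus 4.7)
The plan is to verify the three claims in order, doing the routine group-theoretic checks first and leaving the $J=R$ part for last, as it is the one requiring a small trick.

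First I would verify that $H$ is central (hence normal) in $G$. For $(0,u)\in H$ and $(t',u')\in G$, a direct computation using the formula $(t_1,u_1)\oplus(t_2,u_2)=(t_1+t_2,u_1+u_2+\bar{t}_1 t_2)$ gives
\[
(0,u)\oplus(t',u')=(t',u+u')=(t',u'+u)=(t',u')\oplus(0,u),
\]
where the extra term vanishes because the first coordinate of $(0,u)$ is zero. Closure of $H$ under $\oplus$ and inverses is immediate from the same formula. Since $H$ is central, it is normal in $G$.

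Next, to show $G=HK$, I would give an explicit decomposition. Given $(t,u)\in\mathcal{A}(J)$, set $v=u-t\bar{t}/2$. I then claim $(t,u)=(0,v)\oplus(t,t\bar{t}/2)$ with $(0,v)\in H$ and $(t,t\bar{t}/2)\in K$. The second factor lies in $K$ by definition (note that $t\bar{t}/2\in J$ once $1/2\in R$ since $t\in J$, and $(t,t\bar{t}/2)\in\mathcal{A}(J)$ since $t\bar{t}/2+\overline{t\bar{t}/2}=t\bar{t}$ using $\overline{t\bar{t}}=t\bar{t}$). For the first factor, the key check is $\bar{v}=-v$: using the defining relation $u+\bar{u}=t\bar{t}$ of $\mathcal{A}(J)$,
\[
\bar{v}+v=(\bar{u}-t\bar{t}/2)+(u-t\bar{t}/2)=(u+\bar{u})-t\bar{t}=0.
\]
Applying $\oplus$ to the two factors recovers $(t,u)$, so $G=HK$.

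The hardest part, and the main obstacle, is the final assertion that $H\subset K$ when $J=R$, because for general $J$ the element $1$ need not lie in $J$, so the argument must genuinely use $J=R$. I would compute the sum of two generators of $K$:
\[
(r_1,r_1\bar{r}_1/2)\oplus(r_2,r_2\bar{r}_2/2)=\bigl(r_1+r_2,\ r_1\bar{r}_1/2+r_2\bar{r}_2/2+\bar{r}_1 r_2\bigr),
\]
and compare this with the generator $(r_1+r_2,(r_1+r_2)\overline{(r_1+r_2)}/2)$. Subtracting the second coordinates gives $w:=(\bar{r}_1 r_2-r_1\bar{r}_2)/2$, which satisfies $\bar{w}=-w$, so that
\[
(r_1,r_1\bar{r}_1/2)\oplus(r_2,r_2\bar{r}_2/2)=(0,w)\oplus\bigl(r_1+r_2,(r_1+r_2)\overline{(r_1+r_2)}/2\bigr).
\]
Rearranging shows $(0,w)\in K$ for every $r_1,r_2\in R$. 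Now specializing $r_1=1,\ r_2=v$ (legal only because $J=R$), one gets $w=(v-\bar{v})/2=v$ whenever $\bar{v}=-v$. Hence every $(0,v)\in H$ lies in $K$, proving $H\subset K$ and so $G=HK=K$. No further computation is needed.
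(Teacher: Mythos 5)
Your proof is correct and essentially follows the paper's own argument: the centrality check is the same, your decomposition $(t,u)=(0,u-t\bar t/2)\oplus(t,t\bar t/2)$ coincides with the paper's $(g_1,g_2)=(g_1,g_1\bar g_1/2)(0,(g_2-\bar g_2)/2)$ since $u-t\bar t/2=(u-\bar u)/2$, and your $J=R$ step, though phrased as a "defect of additivity" of the generators, amounts to the paper's explicit identity expressing $(0,v)$ as a product of the three generators with parameters $1$, $v$, $1+v$.
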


\begin{proof}
    The first assertion is clear as $(g_1,g_2) (0,u) (g_1,g_2)^{-1} = (0,u)$ for appropriate $g_1, g_2, u \in J$. For the second assertion, note that for given $(g_1, g_2) \in \mathcal{A}(J)$, we have
    $$(g_1, g_2) = (g_1, g_1 \bar{g_1}/2) (0, (g_2 - \bar{g_2})/2).$$ Hence $G = HK$. Now if $J=R$, then we want to show that $H \subset K$.
    Our work is done if we show that $(0,(g_2 - \bar{g_2})/2)$ is generated by elements of the form $(r, r \bar{r}/2), \ r \in R$, which follows from below: $$ (0,(g_2 - \bar{g_2})/2) = (1+g_2, (1+g_2) (1 + \bar{g_2})/2)^{-1} (1, 1/2) (g_2, g_2 \bar{g_2}/2).$$
\end{proof}


\begin{lemma}\label{theta Rz}
    \normalfont
    Assume that $1/2 \in R$ if $o(\theta) = 2$ and $1/3 \in R$ if $o(\theta) = 3$. Let $z \in R$. 
    \begin{enumerate}[(a)]
        \item If $z = \bar{z}$, then $(Rz)_\theta = R_\theta z$.
        \item If $o(\theta) = 2$ and $rz + r' \bar{z} \in (Rz + R\bar{z})_\theta$, then there exists $t \in R$ such that $rz + r' \bar{z} = tz + \bar{t} \bar{z}.$
        \item If $o(\theta) = 3$ and $rz + r' \bar{z} + r'' \bar{\bar{z}} \in (Rz + R{\bar{z}} + R{\bar{\bar{z}}})_\theta$, then there exists $t \in R$ such that $rz + r' \bar{z} + r'' \bar{\bar{z}} = tz + \overline{tz} + \overline{\overline{tz}}.$ 
    \end{enumerate}
\end{lemma}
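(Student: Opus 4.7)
The plan is to prove all three parts by the same averaging trick, namely symmetrizing a candidate coefficient over the cyclic group $\langle \theta \rangle$. The invertibility of $2$ (and $3$ when $o(\theta)=3$) is exactly what makes this averaging available.

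For part (a), the inclusion $R_\theta z \subset (Rz)_\theta$ is immediate from $z = \bar{z}$. Conversely, given $rz \in (Rz)_\theta$, I would first observe that $\theta(rz) = rz$ forces $\bar{r} z = \bar{r}\bar{z} = rz$, and therefore also $\bar{\bar{r}} z = rz$ in the $o(\theta)=3$ case. Setting
\[
s = \tfrac{1}{2}(r + \bar r) \quad \text{or} \quad s = \tfrac{1}{3}(r + \bar r + \bar{\bar r})
\]
according as $o(\theta) = 2$ or $3$, one has $s \in R_\theta$, and $sz = rz$ by the identities just established. This gives $(Rz)_\theta \subset R_\theta z$.

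For part (b), writing $A = rz + r' \bar z$ and using $A = \bar A$, I would form the symmetrization
\[
A = \tfrac{1}{2}(A + \bar A) = \tfrac{1}{2}\bigl( rz + r'\bar z + \bar r \bar z + \bar{r}' z\bigr) = \tfrac{1}{2}(r + \bar{r}')\, z + \tfrac{1}{2}(\bar r + r')\, \bar z.
\]
The natural candidate is then $t = \tfrac{1}{2}(r + \bar{r}')$, for which $\bar t = \tfrac{1}{2}(\bar r + r')$, so the displayed equality reads exactly $A = tz + \bar t \bar z$.

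For part (c), the same idea applies to $A = rz + r'\bar z + r''\bar{\bar z}$ via $A = \tfrac{1}{3}(A + \bar A + \bar{\bar A})$. The only nontrivial point is tracking which coefficient of $A$ contributes to which of $z, \bar z, \bar{\bar z}$ after the twist: since $\theta$ cyclically permutes $z \mapsto \bar z \mapsto \bar{\bar z} \mapsto z$, the $z$-coefficient of $\bar A$ is $\bar{r}''$ (not $\bar r$), and the $z$-coefficient of $\bar{\bar A}$ is $\bar{\bar{r}}'$. Hence the $z$-coefficient of the average is $\tfrac{1}{3}(r + \bar{r}'' + \bar{\bar{r}}')$, and taking
\[
t = \tfrac{1}{3}\bigl(r + \bar{r}'' + \bar{\bar{r}}'\bigr)
\]
makes $\bar t$ and $\bar{\bar t}$ equal to the corresponding averaged coefficients of $\bar z$ and $\bar{\bar z}$, so $A = tz + \bar t \bar z + \bar{\bar t}\,\bar{\bar z} = tz + \overline{tz} + \overline{\overline{tz}}$. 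The main obstacle is purely bookkeeping: keeping the cyclic index shifts straight in (c); once that is done, the verification is a one-line computation.
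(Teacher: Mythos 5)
Your proof is correct and follows essentially the same approach as the paper: the element you construct, $t = \tfrac{1}{2}(r + \bar{r}')$ in (b) and $t = \tfrac{1}{3}(r + \bar{r}'' + \bar{\bar{r}}')$ in (c), is exactly the one used there, the only cosmetic difference being that the paper verifies the identity by direct substitution via auxiliary elements $a,b,c$ (and deduces (a) from (b)/(c)) while you package the verification as the averaging identity $A = \tfrac{1}{o(\theta)}\sum_i \theta^i(A)$.
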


\begin{proof}
    Note that $(a)$ follows from $(b)$ (or $(c)$) and our assumption on $R$. To prove $(b)$, observe that $rz + r' \bar{z} \in (Rz + R\bar{z})_\theta \implies \overline{(rz + r' \bar{z})} = rz + r' \bar{z} \implies (r - \bar{r'}) z = (\bar{r} - r') \bar{z}.$ Set $a:= r-\bar{r'},$ then $\bar{a} = \bar{r} - r'$ and $az = \overline{az}$. Now let $t = r - a/2 = r/2 + \bar{r'}/2$, then $\bar{t} = \bar{r} - \bar{a}/2 = r' + \bar{a}/2$. Then $tz + \overline{tz} = (r - a/2)z + (r' + \bar{a}/2) \bar{z} = rz + r' \bar{z} + (\overline{az} - az)/2 = rz + r' \bar{z},$ as desired. Similarly, to prove $(c)$, observe that $rz + r' \bar{z} + r'' \bar{\bar{z}} \in (Rz + R\bar{z} + R\bar{\bar{z}})_\theta \implies \overline{rz + r' \bar{z} + r'' \bar{\bar{z}}} = rz + r' \bar{z} + r'' \bar{\bar{z}} \implies \bar{r} \bar{z} + \bar{r'} \bar{\bar{z}} + \bar{r''} z = rz + r' \bar{z} + r'' \bar{\bar{z}} \implies (r - \bar{r''}) z + (r' - \bar{r}) \bar{z} + (r'' - \bar{r'}) \bar{\bar{z}} = 0.$ Set $a:= r - \bar{r''}, b:= r' - \bar{r}$ and $c:= r'' - \bar{r'}$, then $\bar{\bar{a}} + \bar{b} + c = 0$ and $az + b \bar{z} + c \bar{\bar{z}} = 0$. Now let $t = r - a/3 + \bar{\bar{b}}/3 = (r + \bar{r''} + \bar{\bar{r'}})/3$, then $\bar{t} = r' - \bar{a}/3 - 2b/3$ and $\bar{\bar{t}} = r'' + 2 \bar{\bar{a}}/3 + \bar{b}/3$. Then 
    \begin{align*}
        tz + \overline{tz} + \overline{\overline{tz}} &= (r - a/3 + \bar{\bar{b}}/3)z + (r' - \bar{a}/3 - 2b/3) \bar{z} + (r'' + 2 \bar{\bar{a}}/3 + \bar{b}/3) \bar{\bar{z}} \\
        &= (rz + \overline{rz} + \overline{\overline{rz}}) - 2(az + b \bar{z} + (-\bar{\bar{a}} - \bar{b})\bar{\bar{z}})/3 - (\bar{a}\bar{z} + \bar{b} \bar{\bar{z}} + (-a - \bar{\bar{b}})z)/3 \\
        &= (rz + \overline{rz} + \overline{\overline{rz}}) - 2(az + b \bar{z} + c \bar{\bar{z}})/3 - (\bar{a}\bar{z} + \bar{b} \bar{\bar{z}} + \bar{c} z)/3 \\
        &= (rz + \overline{rz} + \overline{\overline{rz}}),
    \end{align*}
    as desired.
\end{proof}


\begin{lemma}\label{A(I+J) = A(I) A(J)}
    \normalfont
    Let $I$ and $J$ be $\theta$-invariant ideals of $R$. Then $\mathcal{A}(I + J) = \mathcal{A}(I) \mathcal{A} (J) = \mathcal{A}(J) \mathcal{A} (I)$.
\end{lemma}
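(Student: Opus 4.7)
The plan is to prove both inclusions separately for the equality $\mathcal{A}(I+J) = \mathcal{A}(I) \mathcal{A}(J)$; the identity $\mathcal{A}(I+J) = \mathcal{A}(J)\mathcal{A}(I)$ is then handled by an entirely analogous construction. The easy containment $\mathcal{A}(I)\mathcal{A}(J) \subseteq \mathcal{A}(I+J)$ follows directly from the formula for $\oplus$: if $(a,b) \in \mathcal{A}(I)$ and $(c,d) \in \mathcal{A}(J)$, then $(a,b) \oplus (c,d) = (a+c,\; b+d+\bar{a}c)$, and both coordinates lie in $I + J$ (using $\bar{a}c \in I \cap J$). So the real work is the reverse inclusion.

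Given $(t, u) \in \mathcal{A}(I+J)$, I would fix decompositions $t = a + c$ with $a \in I$, $c \in J$, and $u = u_0 + u_1$ with $u_0 \in I$, $u_1 \in J$, and seek $(a, b) \in \mathcal{A}(I)$ and $(c, d) \in \mathcal{A}(J)$ with $(a, b) \oplus (c, d) = (t, u)$. The first coordinate is automatic; the second requires $b + d + \bar{a}c = u$, which determines $d = u - b - \bar{a}c$ once $b$ is chosen. Hence it suffices to produce $b \in I$ with $b + \bar{b} = a\bar{a}$ and $b \equiv u \pmod{J}$ (so that $d$ falls in $J$); the relation $d + \bar{d} = c\bar{c}$ will then follow automatically from $u + \bar{u} = t\bar{t}$.

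The key observation, obtained by expanding $u + \bar{u} = t\bar{t}$ and isolating the $I$-part, is the identity
\[
u_0 + \bar{u_0} - a\bar{a} \;=\; c\bar{c} + (a\bar{c} + c\bar{a}) - (u_1 + \bar{u_1}),
\]
whose left-hand side lies in $I$ and whose right-hand side lies in $J$, so both lie in $I \cap J$. Since the standing convention of Section~\ref{sec:E(R,J)} gives $1/2 \in R$, I can set
\[
b := u_0 + \tfrac{1}{2}\bigl(a\bar{a} - u_0 - \bar{u_0}\bigr), \qquad d := u_1 - \bar{a}c - \tfrac{1}{2}\bigl(a\bar{a} - u_0 - \bar{u_0}\bigr),
\]
and then verify directly that $b \in I$, $d \in J$, $b + \bar{b} = a\bar{a}$, $d + \bar{d} = c\bar{c}$, and $(a,b) \oplus (c,d) = (t,u)$. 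The equality $\mathcal{A}(I+J) = \mathcal{A}(J)\mathcal{A}(I)$ is obtained by the same argument with the roles of $I$ and $J$ interchanged, using the companion inclusion $u_1 + \bar{u_1} - c\bar{c} \in I \cap J$ coming from the same identity. The only non-routine step is recognizing the displayed identity and its consequence $u_0 + \bar{u_0} - a\bar{a} \in I \cap J$; once this correction term is in hand, everything else is mechanical verification.
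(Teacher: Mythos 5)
Your proof is correct and follows essentially the same route as the paper: both decompose $t = a + c$ and $u = u_0 + u_1$ into $I$- and $J$-parts and use $1/2 \in R$ to build the factors of $\mathcal{A}(I)$ and $\mathcal{A}(J)$ explicitly, then get the reversed order by symmetry. Indeed your $b$ equals the paper's first-factor entry $\tfrac{1}{2}\bigl(a\bar a + u_0 - \bar u_0\bigr)$, and your $d$ is the paper's second factor with its central correction term $\bigl(0, \tfrac{1}{2}(a\bar c - \bar a c)\bigr)$ absorbed into it, so the two decompositions coincide.
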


\begin{proof}
    Clearly, $\mathcal{A}(I) \mathcal{A} (J) \subset \mathcal{A}(I + J)$. For converge, let $(r_1, r_2) \in \mathcal{A}(I+J)$. Then $r_1 \bar{r}_1 = r_2 + \bar{r}_2$ and there exists $a_1, a_2 \in I$ and $b_1, b_2 \in J$ such that $r_1 = a_1 + b_1$ and $r_2 = a_2 + b_2$. Therefore,
    \begin{align*}
        & r_1 \bar{r}_1 = r_2 + \bar{r}_2 \\
        \implies & (a_1 + b_1) (\bar{a}_1 + \bar{b}_1) = (a_2 + b_2) + (\bar{a}_2 + \bar{b}_2) \\
        \implies & a_1 \bar{a}_1 + a_1 \bar{b}_1 + b_1 \bar{a}_1 + b_1 \bar{b}_1 = (a_2 + b_2) + (\bar{a}_2 + \bar{b}_2). 
    \end{align*}
    By using this, we can write 
    \begin{align*}
        (r_1, r_2) &= (a_1 + b_1, a_2 + b_2) \\
        &= \Big(a_1, \frac{a_1 \bar{a}_1 + (a_2 - \bar{a}_2)}{2}\Big) \oplus \Big(b_1, \frac{b_1 \bar{b}_1 + (b_2 - \bar{b}_2)}{2}\Big) \oplus \Big(0, \frac{a_1 \bar{b}_1 - \bar{a}_1 b_1}{2}\Big).
    \end{align*}
    Hence $(r_1, r_2) \in \mathcal{A}(I) \mathcal{A}(J)$. Therefore $\mathcal{A}(I+J) = \mathcal{A}(I) \mathcal{A}(J) = \mathcal{A}(J) \mathcal{A}(I)$, later equality is possible because $\mathcal{A}(I) \mathcal{A}(J)$ is a group.
\end{proof}


\begin{prop}\label{z to Rz}
    \normalfont
    Fix a root $[\alpha] \in \Phi_\rho$ and an element $z \in R_{[\alpha]}$. Let $H$ be the normal subgroup of $E'_\sigma (R)$ generated by $x_{[\alpha]} (z)$. Then $H = E'_{\sigma}(R, J)$ where 
    \[
        J = \begin{cases}
            Rz & \text{if } [\alpha] \sim A_1, \\
            Rz + R{\bar{z}} & \text{if } [\alpha] \sim A_1^2, \\
            Rz + R{\bar{z}} + R{\bar{\bar{z}}} & \text{if } [\alpha] \sim A_1^3, \\
            Rz_1 + R{\bar{z_1}} + R (z_2 - \bar{z_2}) & \text{if } [\alpha] \sim A_2.
        \end{cases}
    \]
\end{prop}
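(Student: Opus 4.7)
The plan is to prove both inclusions $H \subseteq E'_\sigma(R, J)$ and $E'_\sigma(R, J) \subseteq H$ separately. The first is a direct verification; the second requires generating all of $E'_\sigma(J)$ inside $H$ from the single generator $x_{[\alpha]}(z)$, after which the normality of $H$ in $E'_\sigma(R)$ will finish the job.

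For the forward inclusion, I would first check that $J$ is a $\theta$-invariant ideal of $R$ in all four cases; the only nontrivial verification is in the case $[\alpha] \sim A_2$, where $\theta$-invariance follows from $\overline{z_2 - \bar z_2} = -(z_2 - \bar z_2)$ together with the obvious $\theta$-invariance of $Rz_1 + R\bar z_1$. Next, I would verify $z \in J_{[\alpha]}$. The only nontrivial case is again $[\alpha] \sim A_2$, where one needs $(z_1, z_2) \in \mathcal{A}(J)$, and in particular $z_2 \in J$; this follows from the defining relation $z_1 \bar z_1 = z_2 + \bar z_2$ together with $1/2 \in R$ via $z_2 = \tfrac{1}{2}(z_1 \bar z_1) + \tfrac{1}{2}(z_2 - \bar z_2)$. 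Hence $x_{[\alpha]}(z) \in E'_\sigma(J) \subseteq E'_\sigma(R, J)$, and since $E'_\sigma(R, J)$ is normal in $E'_\sigma(R)$ by Proposition~\ref{normal}, it contains the normal closure $H$.

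For the reverse inclusion, since $H$ is normal in $E'_\sigma(R)$ and $E'_\sigma(R, J)$ is by definition the normal closure of $E'_\sigma(J)$ in $E'_\sigma(R)$, it suffices to prove $E'_\sigma(J) \subseteq H$; equivalently, $x_{[\beta]}(t) \in H$ for every $[\beta] \in \Phi_\rho$ and every $t \in J_{[\beta]}$. I would split this into two steps. \textbf{(Step 1)} Establish $x_{[\alpha]}(J_{[\alpha]}) \subseteq H$ for the designated root $[\alpha]$. \textbf{(Step 2)} Bootstrap from Step~1 to every other root $[\beta]$. Step 2 proceeds by conjugating by the Weyl group elements $w_{[\gamma]}(1) \in E'_\sigma(R)$ (which, via Proposition~\ref{prop wxw^{-1}}, transfers $x_{[\alpha]}(J_{[\alpha]}) \subseteq H$ to $x_{[\beta]}(J_{[\beta]}) \subseteq H$ for every $[\beta]$ in the Weyl orbit of $[\alpha]$), and then by using commutators of types $(b)$--$(d)$ in rank-$2$ subsystems to cross between the two root lengths; this is essentially the rank-$2$ mechanism already exhibited in the proof of Theorem~\ref{mainthm1}, but now applied with the already-established inclusion $x_{[\alpha]}(J_{[\alpha]}) \subseteq H$ in place of the hypothesis $E'_\sigma(J) \subseteq H$.

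The main obstacle is Step~1, whose treatment splits by the type of $[\alpha]$. Choosing an auxiliary root $[\gamma] \in \Phi_\rho$ so that $[\alpha]$ and $[\gamma]$ span a rank-$2$ irreducible subsystem, the standard tools are the Chevalley commutator identities of Section~\ref{subsec:CheComm}, Weyl conjugation $w_{[\gamma]}(1) x_{[\alpha]}(z) w_{[\gamma]}(1)^{-1} = x_{s_{[\gamma]}([\alpha])}(\pm z)$ (Proposition~\ref{prop wxw^{-1}}), and torus conjugation $h(\chi) x_{[\alpha]}(t) h(\chi)^{-1} = x_{[\alpha]}(\chi(\alpha) \cdot t)$ (Proposition~\ref{prop h(chi)xh(chi)^-1}). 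In the cases $[\alpha] \sim A_1, A_1^2, A_1^3$, the set $L_{[\alpha]} := \{t \in R_{[\alpha]} : x_{[\alpha]}(t) \in H\}$ is an additive subgroup of $R_{[\alpha]}$; iterated commutators of types $(b)$, $(c)$, $(d)$ or $(f)$ between Weyl conjugates of $x_{[\alpha]}(z)$ and arbitrary $x_{\pm[\gamma]}(r)$ produce the additive generators $rz$, $r\bar z$ (and $r\bar{\bar z}$ in the $A_1^3$ case), completing these cases in view of Lemma~\ref{theta Rz}. The most delicate case is $[\alpha] \sim A_2$, where $L_{[\alpha]}$ is a subgroup of $(\mathcal{A}(R), \oplus)$ that must be shown to contain all of $\mathcal{A}(J)$. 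Lemma~\ref{A(R)} reduces this to generating two families: the diagonal family $\{(r, r\bar r /2) : r \in J\}$, which I would produce via type-$(c\text{-}ii)$ or $(d\text{-}ii)$ commutators in a $C_2$-subsystem containing $[\alpha]$ as its $A_2$-root (in analogy with Cases C--D of the proof of Theorem~\ref{mainthm1}); and the central family $\{(0, u) : u \in J,\ \bar u = -u\}$, which I would produce via type-$(a_2\text{-}ii)$ commutators, whose second coordinate $N_{\alpha,\beta}N_{\bar\gamma,\gamma}(t\bar u - \bar t u)$ automatically lies in $\{v : \bar v = -v\}$. Lemma~\ref{A(I+J) = A(I) A(J)} is then used to combine these generators into arbitrary elements of $\mathcal{A}(J)$.
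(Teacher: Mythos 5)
Your proposal is correct and follows essentially the same route as the paper: one inclusion by normality of $E'_\sigma(R,J)$ (Proposition~\ref{normal}), the other by generating $E'_\sigma(J)$ inside $H$ through rank-$2$ Chevalley commutator computations, Weyl conjugation via Proposition~\ref{prop wxw^{-1}}, and, in the $[\alpha]\sim A_2$ case, the decomposition of $\mathcal{A}(J)$ into the diagonal and central families via Lemmas~\ref{A(R)} and~\ref{A(I+J) = A(I) A(J)} with $(a_2$-$ii)$/$(d$-$ii)$ commutators. The paper merely packages your ``saturate at $[\alpha]$, then propagate'' scheme as a single transfer lemma (Lemma~\ref{z to Rz in phi'}) applied along a chain of rank-$2$ subsystems joining $[\alpha]$ to an arbitrary root, with the $G_2$-type commutators $(e)$--$(g)$ supplying the length-crossing in the ${}^3D_4$ case.
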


\begin{proof}
    Since $x_{[\alpha]}(z) \in E'_\sigma(R, J)$, by Proposition \ref{normal}, we have $H \subset E'_\sigma(R, J)$. For the reverse inclusion, we need to prove that $x_{[\beta]}(t) \in H$ for every $t \in J_{[\beta]}$ and $[\beta] \in \Phi_\rho$. Observe that there exist a sequence of roots $[\alpha_i] \ (i = 1, \dots, n)$ such that $[\alpha_1] = [\alpha], [\alpha_n] = [\beta]$, and for every $i \in \{1, \dots, n-1 \}$, the pair of roots $[\alpha_i], [\alpha_{i+1}]$ contained in some connected subsystem of $\Phi_\rho$ of rank $2$. Now, by applying Lemma \ref{z to Rz in phi'} (below) recursively to the pairs $([\alpha_i], [\alpha_{i+1}])$, we obtain the desired result.
\end{proof}


\begin{lemma}\label{z to Rz in phi'}
    \normalfont
    Let the notation be as established in Proposition \ref{z to Rz}. Suppose $\Phi'$ is a connected subsystem of $\Phi_\rho$ with rank 2. If $[\gamma] \in \Phi'$ and $s \in R$ such that $x_{[\gamma]}(s) \in H$, then $x_{[\beta]}(t) \in H$ for every $[\beta] \in \Phi'$ and $t \in I_{[\beta]}$, where $I$ is an $\theta$-invariant ideal defined similarly to the ideal $J$ in Proposition \ref{z to Rz} by replacing $z$ with $s$.
\end{lemma}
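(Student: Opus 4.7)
The approach is an exhaustive case analysis on the type of $\Phi'$, which is necessarily $A_2$, $B_2/C_2$, or $G_2$ (since $\Phi'$ is connected of rank two). The engines are the Chevalley commutator formulas of \S3.3, the Weyl-group conjugation identity of Proposition~\ref{prop wxw^{-1}}, and the normality of $H$ under $E'_\sigma(R)$.

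First I would conjugate $x_{[\gamma]}(s)$ by $w_{[\delta]}(r) \in E'_\sigma(R)$ for a suitable $[\delta] \in \Phi'$ and $r \in R_{[\delta]}^{\star}$. By Proposition~\ref{prop wxw^{-1}} the result is $x_{s_{[\delta]}([\gamma])}(s') \in H$, where $s'$ is obtained from $s$ by sign changes, applications of $\theta$, and multiplication by a unit. Because $I$ is $\theta$-invariant and manifestly symmetric in $s,\bar s,\bar{\bar s}$, the ideal built from $s'$ coincides with $I$. This reduces the task to producing $x_{[\beta]}(t) \in H$ for each positive root $[\beta] \in \Phi'$ and each $t \in I_{[\beta]}$. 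For $[\beta]\neq [\gamma]$, I would select $[\delta] \in \Phi'$ so that $[\beta]$ appears as a nonnegative integer combination of $[\gamma]$ and $[\delta]$ in the commutator $[x_{[\gamma]}(s),x_{[\delta]}(u)]$, let $u$ range over $R_{[\delta]}$, and peel off higher-height factors by a descending induction (using the normality of $H$ again). This yields $x_{[\beta]}(t)$ for every $t$ in the ideal generated by the relevant linear or bilinear traces of $s,\bar s,\bar{\bar s}$. To recover $x_{[\gamma]}(t)$ for $t \in I_{[\gamma]}$, I would push back from a neighbouring root by a second commutator such as $[x_{[\gamma]+[\delta]}(t'),x_{-[\delta]}(r)]$ once $x_{[\gamma]+[\delta]}(t')$ has been obtained. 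The structure constants $N_{*,*}$ appearing throughout are units under our standing hypotheses $1/2 \in R$ (and $1/3 \in R$ when $\Phi_\rho \sim {}^3D_4$).

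The most delicate subcase occurs when $[\gamma] \sim A_2$ (so $\Phi_\rho \sim {}^2A_{2n}$), where $R_{[\gamma]} = \mathcal{A}(R)$ carries the twisted group law $\oplus$ and $I_{[\gamma]} = \mathcal{A}(Rs_1+R\bar s_1+R(s_2-\bar s_2))$. Here I would invoke Lemma~\ref{A(I+J) = A(I) A(J)} to factor an arbitrary $(t_1,t_2) \in \mathcal{A}(I)$ as a $\oplus$-product of pieces lying in $\mathcal{A}(Rs_1+R\bar s_1)$ and pieces of the shape $(0,u)$ with $\bar u=-u$; Lemma~\ref{A(R)} then decomposes each factor into a form reachable by the commutator formulas \textbf{(c--ii)}, \textbf{(d--ii)}, and \textbf{(a$_2$--ii)}. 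Matching the antisymmetric second-coordinate contributions with $R(s_2-\bar s_2)$ relies on Lemma~\ref{theta Rz}(b). The main obstacle will be precisely this $A_2$ bookkeeping: one must verify that every antisymmetric contribution in the second coordinate is actually attainable, and that the twisted associativity of $\oplus$ introduces no further obstruction beyond what is absorbed by $1/2 \in R$. The $G_2$ piece arising in ${}^3D_4$ is handled in parallel using formulas \textbf{(e)}, \textbf{(f)}, \textbf{(g)} together with Lemma~\ref{theta Rz}(c), where the invertibility of $3$ guarantees that the relevant structure constants are units.
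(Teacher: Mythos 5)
Your overall strategy (rank-two case analysis driven by the commutator formulas of \S3.3, Weyl conjugation via Proposition~\ref{prop wxw^{-1}}, and the $\mathcal{A}$-decomposition lemmas for the ${}^2A_{2n}$ case) is the same as the paper's, but there is a genuine gap in the reduction step, caused by your decision to take all auxiliary roots $[\delta]$ \emph{inside} $\Phi'$. Consider the case where $\Phi'$ is of type $A_2$ with all classes of type $A_1^2$ (short roots of ${}^2A_{2n-1}$, or long roots of ${}^2A_{2n}$). The subgroup of $E'_\sigma(R)$ generated by $\mathfrak{X}_{\pm[\alpha]}$ for $[\alpha]\in\Phi'$ is then a twisted form of $E_3(R)\times E_3(R)$ sitting diagonally as $\{(g,\rho\theta(g))\}$, and $x_{[\gamma]}(s)$ has first coordinate an elementary matrix with entry $s$. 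Every commutator $[x_{[\gamma]}(s),x_{[\delta]}(u)]$ with $[\delta]\in\Phi'$ and every conjugate $w_{[\delta]}(r)x_{[\gamma]}(s)w_{[\delta]}(r)^{-1}$ keeps the first-coordinate parameter inside the ideal $Rs$: the normal closure of $e_{12}(s)$ in $E_3(R)$ only reaches level $Rs$, never $R\bar s$. Since $I_{[\beta]}=Rs+R\bar s$ here, no argument confined to $\Phi'$ can produce $x_{[\beta]}(r\bar s)$; your appeal to the $\theta$-symmetry of $I$ does not help, because the issue is which elements you can \emph{reach}, not which ideal they generate. This is exactly why the paper's Case B must leave $\Phi'$ and embed $[\beta]$ into an ambient $B_2$ subsystem of $\Phi_\rho$, forward-referencing its $B_2$ cases to obtain the $\bar s$-multiples.

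A second, related gap is your assertion that letting $u$ range over $R_{[\delta]}$ and peeling off higher factors "yields $x_{[\beta]}(t)$ for every $t$ in the ideal generated by the relevant linear or bilinear traces." The outputs one actually gets in the $B_2$ chains are of the shapes $x_{[\beta]}(r_1(r_2\bar s+\bar r_2 s))$, $x_{[\beta]}(rs\bar s)$, and $x_{[\beta]}(r_3 s)$ with $r_3\in R_\theta$ only, and it is not automatic that these exhaust $I_{[\beta]}=Rs+R\bar s$. The paper closes this gap with a polarization trick that you do not supply: from $x_{[\beta]}(r(rs+\bar r\bar s))$ and $x_{[\beta]}(r\bar r\bar s)$ one extracts $x_{[\beta]}(r^2s)$, and then $rs$ for arbitrary $r\in R$ via $r=\bigl(\tfrac{1+r}{2}\bigr)^2-\bigl(\tfrac{1-r}{2}\bigr)^2$, which is where $1/2\in R$ is really used. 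Likewise, in the ${}^3D_4$ case the role of $1/3$ is not that "the structure constants are units": the paper's hardest subcase (two classes of type $A_1^3$ inside $G_2$) needs the surjectivity of $r\mapsto 2r+\bar r$ (or $2r+\bar{\bar r}$) and division of elements such as $x_{[\gamma_1]}(3\bar s)$ and $x_{[\gamma_3]}(3rs)$ by $3$ after combining three commutator outputs. Your plan as stated would stall at both of these points; the $\mathcal{A}(R)$-bookkeeping you flag as the main obstacle is indeed delicate, but it is handled by Lemmas~\ref{A(R)}, \ref{theta Rz} and \ref{A(I+J) = A(I) A(J)} much as you describe, so the real missing ideas are the excursion outside $\Phi'$ and the polarization-type identities above.
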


\begin{proof}
    Let $\mu$ denote the angle between $[\beta]$ and $[\gamma]$. For $r \in R$, let $r'$ denote either $r, \bar{r}$ or $\bar{\bar{r}}$. By $r_1$ and $r_2$, we mean the first and second coordinate of $r = (r_1, r_2) \in \mathcal{A}(R)$.
    Consider the following table outlining the possible types of choices for $\Phi'$:
    \begin{center}
        \begin{tabular}{|c|c|c|c|c|}
            \hline
            \multirow{2}{*}{\textbf{Type}} & \multirow{2}{*}{\textbf{$\Phi_\rho$}} & \multicolumn{2}{c|}{\textbf{Type of Roots}} & \multirow{2}{*}{\textbf{Possible Choices of $\Phi'$}} \\
            \cline{3-4}
             & & \textbf{Long} & \textbf{Short} &  \\
            \hline 
            ${}^2 A_{3}$ & $C_2 (= B_2$) & $A_1$ & $A_1^2$ & $B_2$ (containing long and short roots) \\
            \hline 
            ${}^2 A_{2n-1} \ (n \geq 3)$ & $C_n$ & $A_1$ & $A_1^2$ & $A_2$ (only short), $B_2$ (long and short)\\
            \hline 
            ${}^2 A_{4}$ & $B_2$ & $A_1^2$ & $A_2$ & $B_2$ (containing long and short roots) \\
            \hline 
            ${}^2 A_{2n} \ (n \geq 3)$ & $B_n$ & $A_1^2$ & $A_2$ & $A_2$ (only long), $B_2$ (long and short)\\
            \hline 
            ${}^2 D_{n} \ (n \geq 4)$ & $B_{n-1}$ & $A_1$ & $A_1^2$ & $A_2$ (only long), $B_2$ (long and short) \\
            \hline
            ${}^3 D_{4}$ & $G_2$ & $A_1$ & $A_1^3$ & $A_2$ (only long), $G_2$ (long and short) \\
            \hline
            ${}^2 E_{6}$ & $F_4$ & $A_1$ & $A_1^2$ & $A_2$ (with long), $A_2$ (with short), $B_2$ \\
            \hline
        \end{tabular}
    \end{center}

    \vspace{2mm}

    \noindent \textbf{Case A. $\Phi' \sim A_2$ which contains roots of the type $A_1$.} This case arises only when $\Phi_\rho \sim {}^2 D_{n} \ (n \geq 4), {}^3 D_4$ or ${}^2 E_6$. Let us consider the following subcases:
    \begin{enumerate}
        \item[\textbf{(A1)}] \textbf{$[\gamma] \sim A_1$ and $\mu = \pi/3$.} In this case, the pair $[\gamma]$ and $[\beta] - [\gamma]$ is of type $(b-i)$. Here $[\gamma], [\beta]$ and $[\beta] - [\gamma]$ are of type $A_1$. For $r \in R_\theta$, we have
        $$ [x_{[\gamma]}(s), x_{[\beta] - [\gamma]}(\pm r)] = x_{[\beta]} (rs) \in H.$$
        
        \item[\textbf{(A2)}] \textbf{$[\gamma] \sim A_1$ and $\mu$ is arbitrary.} Observe that, we can find a sequence $[\gamma_1], \dots, [\gamma_m]$ of roots in $\Phi'$ such that $2 \leq m \leq 6, [\gamma_1] = [\gamma], [\gamma_m] = [\beta]$ and the angle between $[\gamma_i]$ and $[\gamma_{i+1}]$ is $\pi / 3$ for every $i = 1, \dots, m-1$. Then, by applying Case (A1) to the pair of roots $([\gamma_{i-1}], [\gamma_{i}])$, we have $x_{[\gamma_i]}(rs) \in H$ for every $r \in R_\theta$ and $i = 2, \dots, m$. In particular, by Lemma \ref{theta Rz}, $x_{[\beta]}(t) \in H$ for every $t \in I_{[\beta]}$, as desired.
    \end{enumerate}
    
    \vspace{2mm}

    \noindent \textbf{Case B. $\Phi' \sim A_2$ which contains roots of the type $A_1^2$.} This case arises only when $\Phi_\rho \sim {}^2 A_{2n-1} \ (n \geq 3) {}^2 A_{2n} \ (n \geq 3)$ or ${}^2 E_6$. Let us consider the following subcases:
    \begin{enumerate}
        \item[\textbf{(B1)}] \textbf{$[\gamma] \sim A_1^2$ and $\mu = \pi/3$.} In this case, the pair $[\gamma]$ and $[\beta] - [\gamma]$ is of the type $(b-ii)$. Here $[\gamma], [\beta]$ and $[\beta] - [\gamma]$ are of type $A_1^2$. For $r \in R$, we have
        $$ [x_{[\gamma]}(s), x_{[\beta] - [\gamma]}(\pm r')] = x_{[\beta]} (rs') \in H.$$

        \item[\textbf{(B2)}] \textbf{$[\gamma] \sim A_1^2$ and $\mu$ is arbitrary.} Observe that, we can find a sequence $[\gamma_1], \dots, [\gamma_m]$ of roots in $\Phi'$ such that $2 \leq m \leq 6, [\gamma_1] = [\gamma], [\gamma_m] = [\beta]$ and the angle between $[\gamma_i]$ and $[\gamma_{i+1}]$ is $\pi / 3$ for every $i = 1, \dots, m-1$. Then, by applying Case (B1) to the pair of roots $([\gamma_{i-1}],[\gamma_i])$, we have $x_{[\gamma_i]}(rs') \in H$ for every $r \in R$ and $i = 2, \dots, m$. In particular, $x_{[\beta]}(rs') \in H$ for every $r \in R$.
        
        Note that, we can find a subsystem $\Phi''$ of $\Phi_\rho$ of type $B_2$ such that $[\beta] \in \Phi''$. Since $x_{[\beta]}(rs') \in H$, by Case C (if $\Phi_\rho \sim {}^2 A_{2n-1}$ or ${}^2 E_6$) and by Case D (if $\Phi_\rho \sim {}^2 A_{2n}$) (see below), we have $x_{[\beta]}(\bar{r}\bar{s'}) \in H$ for every $r \in R$. In particular, $x_{[\beta]}(rs)$ and $x_{[\beta]}(r \bar{s}) \in H$ for every $r \in R$. Therefore, $x_{[\beta]}(t) \in H$ for every $t \in I_{[\beta]}$, as desired.
    \end{enumerate}

    \vspace{2mm}
    
    \noindent \textbf{Case C. $\Phi' \sim B_2$ which contains roots of the type $A_1$ and $A_1^2$.} This case arises only when $\Phi_\rho \sim {}^2 A_{2n-1} \ (n \geq 2), {}^2 D_{n} \ (n \geq 4)$ or ${}^2 E_6$. Let us consider the following subcases: 
    \begin{enumerate}
        \item[\textbf{(C1)}] \textbf{$[\gamma] \sim A_1$ and $\mu = \pi/4$.} In this case, the pair $[\gamma]$ and $[\beta] - [\gamma]$ is of the type $(d-i)$. Here $[\gamma], 2[\beta] - [\gamma] \sim A_1$ and $[\beta], [\beta] - [\gamma] \sim A_1^2$. For $r \in R$, we have
        $$ [x_{[\gamma]}(s), x_{[\beta] - [\gamma]}(\pm r/2)] = x_{[\beta]} (rs/2) x_{2[\beta] - [\gamma]} (\pm r \bar{r} s/4) \in H.$$
        Now put $-r$ instead of $r$, we get $x_{[\beta]} (-rs/2) x_{2[\beta] - [\gamma]} (\pm r \bar{r} s/4) \in H$. But then
        $$ x_{[\beta]}(rs) = \{x_{[\beta]} (rs/2) x_{2[\beta] - [\gamma]} (\pm r \bar{r} s/4) \} \{ x_{[\beta]} (-rs/2) x_{2[\beta] - [\gamma]} (\pm r \bar{r} s/4) \}^{-1} \in H.$$
        
        \item[\textbf{(C2)}] \textbf{$[\gamma] \sim A_1^2$ and $\mu = \pi/4$.} In this case, the pair $[\gamma]$ and $[\beta] - [\gamma]$ is of the type $(c-i)$. Here $[\gamma], [\beta] - [\gamma] \sim A_1^2$ and $[\beta] \sim A_1$. For $r \in R$, we have
        $$ [x_{[\gamma]}(s), x_{[\beta] - [\gamma]}(\pm r)] = x_{[\beta]} (r \bar{s} + \bar{r} s) \in H.$$

        \item[\textbf{(C3)}] \textbf{$[\gamma] \sim A_1^2$ and $\mu = \pi/2$.} In this case, the pair $[\gamma]$ and $[\beta] - [\gamma]$ is of the type $(d-i)$. Here $[\gamma], [\beta] \sim A_1^2$ and $[\beta] + [\gamma], [\beta] - [\gamma] \sim A_1$. For $r \in R_\theta$, we have
        $$[x_{[\beta] - [\gamma]} (\pm r), x_{[\gamma]}(s)] = x_{[\beta]} (rs) x_{[\gamma] + [\beta]} (\pm rs\bar{s}) \in H.$$
        Now, by Case (C2), we know that $x_{[\gamma] + [\beta]} (\pm rs\bar{s}) \in H$. Hence 
        $$ x_{[\beta]}(rs) = \{x_{[\beta]} (rs) x_{[\gamma] + [\beta]} (\pm rs\bar{s})\} \{x_{[\gamma] + [\beta]} (\pm rs\bar{s})\}^{-1} \in H.$$
        In particular, $x_{[\beta]}(s) \in H$. By Case (C1) and Case (C2), we have $x_{[\beta]} (s + \bar{s}) \in H$. But then $x_{[\beta]}(\bar{s}) \in H$. Now, by switching roles of ${[\beta]}$ and $[\gamma]$, we get $x_{[\gamma]}(\bar{s}) \in H$. If we replace $s$ by $\bar{s}$ in above argument, then we get $x_{[\beta]}(r\bar{s}) \in H$, for every $r \in R_\theta$. 
        
        \item[\textbf{(C4)}] \textbf{$[\gamma]$ and $\mu$ are arbitrary.} Observe that, we can find a sequence $[\gamma_1], \dots, [\gamma_m]$ of roots in $\Phi'$ such that $2 \leq m \leq 8, [\gamma_1] = [\gamma], [\gamma_m] = [\beta]$ and the angle between $[\gamma_i]$ and $[\gamma_{i+1}]$ is $\pi / 4$ for every $i = 1, \dots, m-1$. 
        \begin{enumerate}[1.]
            \item If $[\gamma] \sim A_1$ and $[\beta] \sim A_1$ then $m$ must be odd. By applying Case (C1) to pairs $([\gamma_{i-1}], [\gamma_{i}])$, Case (C2) to pairs $([\gamma_i], [\gamma_{i+1}])$ for $i = 2, 4, \dots, m-1$, inductively, we get $x_{[\beta]}((r + \bar{r}) s) \in H$ for every $r \in R$. Now by Lemma \ref{theta Rz}, we have $x_{[\beta]}(t) \in H$ for every $t \in I_{[\beta]}$.
            \item If $[\gamma] \sim A_1$ and $[\beta] \sim A_1^2$ then $m$ must be even. Note that $[\gamma_{m-1}]$ is of type $A_1$. Hence, by the above case, we have $x_{[\gamma_{m-1}]}(s) \in H$. Now by applying Case (C1) to the pair $([\gamma_{m-1}], [\gamma_m])$, we conclude that $x_{[\beta]}(t) \in H$ for every $t \in I_{[\beta]}$.
            \item If $[\gamma] \sim A_1^2$ and $[\beta] \sim A_1$ then $m$ must be even. By applying Case (C3) to pairs $([\gamma_i], [\gamma_{i+2}])$ for $i = 1, 3, \dots, m-3$, inductively, we get $x_{[\gamma_{m-1}]}(s) \in H$. Now by applying Case (C2) to the pair $([\gamma_{m-1}],[\gamma_m])$, we get $x_{[\beta]}(r \bar{s} + \bar{r} s) \in H$ for every $r \in R$. Finally by Lemma \ref{theta Rz}, we have $x_{[\beta]}(t) \in H$ for all $t \in I_{[\beta]}$.
            \item If $[\gamma] \sim A_1^2$ and $[\beta] \sim A_1^2$ then $m$ must be odd. By applying Case (C2) to pairs $([\gamma_{i-1}], [\gamma_{i}])$, Case (C1) to pairs $([\gamma_i], [\gamma_{i+1}])$ for $i = 2, 4, \dots, m-1$, inductively, we get $x_{[\beta]}(r_1(r_2 \bar{s} + \bar{r_2} s)) \in H$ for every $r_1, r_2 \in R$. Now by applying Case (C3) to pairs $([\gamma_{i}, \gamma_{i+2}])$ for $i=1,3,\dots, m-2,$ inductively, we get $x_{[\beta]}(r_3 s)$ and $x_{[\beta]}(r_4 \bar{s}) \in H$ for every $r_3, r_4 \in R_\theta$. 
            But then for every $r_5 \in R$, we have $$x_{[\beta]}(r_5^2 s) = x_{[\beta]}(r_5(r_5s + \bar{r_5}\bar{s})) \{x_{[\beta]}(r_5 \bar{r_5} \bar{s})\}^{-1} \in H.$$ Similarly, we have $x_{[\beta]}(r_6^2 \bar{s}) \in H$ for every $r_6 \in R$. 
            Finally, for given $r \in R$ we have 
            $$ x_{[\beta]}(rs) = x_{[\beta]}\Big(\Big(\frac{1+r}{2}\Big)^2s\Big) \Big\{x_{[\beta]}\Big(\Big(\frac{1-r}{2}\Big)^2s\Big) \Big\}^{-1} \in H.$$
            Similarly, we have $x_{[\beta]}(r \bar{s}) \in H$ for every $r \in R$. By Lemma \ref{theta Rz}, we conclude that $x_{[\beta]}(t) \in H$ for all $t \in I_{[\beta]}.$
        \end{enumerate}
    \end{enumerate}

    \vspace{2mm}

    \noindent \textbf{Case D. $\Phi' \sim B_2$ which contains roots of the type $A_1^2$ and $A_2$.} This case arises only when $\Phi_\rho \sim {}^2 A_{2n} \ (n \geq 2)$. Let us consider the following subcases: 
    \begin{enumerate}
        \item[\textbf{(D1)}] \textbf{$[\gamma] \sim A_1^2$ and $\mu = \pi /4$.} First observe that the pair $[\gamma]$ and $[\beta] - [\gamma]$ is of the type $(d-ii)$. Here $[\gamma], 2[\beta] - [\gamma] \sim A_1^2$ and $[\beta], [\beta] - [\gamma] \sim A_2$. In this case, for $r = (r_1, r_2) \in \mathcal{A}(R)$ we have 
        \begin{equation*}\label{eqn_a}
            [x_{[\gamma]}(s), x_{[\beta] - [\gamma]}(\pm r_1'/2, r_2'/4)] = x_{[\beta]} (r_1 s'/2 , r_2 s \bar{s}/4) x_{2[\beta] - [\gamma]} (\pm r_2' s/4) \in H.
        \end{equation*}
        By putting $-r_1$ instead of $r_1$, we get $x_{[\beta]} (-r_1 s'/2, r_2 s \bar{s}/4) x_{2[\beta] - [\gamma]} (\pm r_2' s/4) \in H$. But then
        \begin{equation*}\label{eqn_b}
            \begin{split}
                x_{[\beta]}(r_1 s', \frac{r_1 \bar{r_1}}{2}s \bar{s}) &= x_{[\beta]}(r_1 s', \frac{r_2 + \bar{r_2}}{2}s \bar{s}) \\
                &= \{x_{[\beta]} (r_1 s'/2 , r_2 s \bar{s}/4) x_{2[\beta] - [\gamma]} (\pm r_2 s/4)\} \\ 
                & \hspace{10mm} \{x_{[\beta]} (-r_1 s'/2, r_2 s \bar{s}/4) x_{2[\beta] - [\gamma]} (\pm r_2 s/4)\}^{-1} \in H.
            \end{split}
        \end{equation*}
        Now observe that the pair $[\gamma]$ and $2[\beta] - [\gamma]$ is of type $(a_{2}-ii)$. Here $[\gamma], [\delta]:=2[\beta] - [\gamma] \sim A_1^2$ and $[\beta] = 1/2([\gamma] + [\delta]) \sim A_2$. In this case, for $r \in R$ we have 
        \begin{equation*}\label{eqn_c}
            [x_{[\gamma]}(s), x_{[\delta]}(\pm r)] = x_{[\beta]}(0, \bar{r}s - r \bar{s}) \in H.
        \end{equation*}
        We now claim that for every $(r_3 s', r_4 s') \in \mathcal{A}(Rs')$, the element $x_{[\beta]}(r_3 s', r_4 s') \in H$. As in proof of Lemma \ref{A(R)}, we consider the following decomposition of $(r_3 s', r_4 s'):$ 
        $$(r_3 s', r_4 s') = (r_3 s', r_3 \bar{r_3} s \bar{s}/2) \oplus (0, (r_4 s' - \overline{r_4 s'})/2).$$
        Therefore, we have 
        \begin{equation*}\label{eqn_d}
            x_{[\beta]}(r_3 s', r_4 s') = x_{[\beta]}(r_3 s', r_3 \bar{r_3} s \bar{s}/2) x_{[\beta]}(0, (r_4 s' - \overline{r_4 s'})/2) \in H.
        \end{equation*}
        Finally, by Proposition \ref{prop wxw^{-1}} (or by Proposition $4.1$ of \cite{EA1}), we have 
        \begin{equation}\label{eq_wx=xw}
            w_{2[\beta] - [\gamma]}(1) x_{[\gamma]} (s) {w_{2[\beta] - [\gamma]}(1)}^{-1} = x_{[\gamma]}(-\bar{s}).
        \end{equation}
        Therefore, $x_{[\gamma]}(-\bar{s}) \in H$ and hence $x_{[\gamma]}(\bar{s}) \in H.$ Replacing $s$ by $\bar{s}$ in above, we get $x_{[\beta]}(r_3 \bar{s'}, r_4 \bar{s'}) \in H$ for every $(r_3 \bar{s'}, r_4 \bar{s'}) \in \mathcal{A}(R\bar{s'})$. In particular, we have $x_{[\beta]}(r_3 s, r_4 s) \in H$ (resp., $x_{[\beta]}(r_3 \bar{s}, r_4 \bar{s}) \in H$) for every $(r_3 s, r_4 s) \in \mathcal{A}(Rs)$ (resp., $(r_3 \bar{s}, r_4 \bar{s}) \in \mathcal{A}(R\bar{s})$).
        
        \item[\textbf{(D2)}] \textbf{$[\gamma] \sim A_2$ and $\mu = \pi /4$.} Note that, the pair $[\gamma]$ and $[\beta] - [\gamma]$ is of the type $(c-ii)$. Here $[\gamma], [\beta]-[\gamma] \sim A_2$ and $[\beta] \sim A_1^2$. For $r \in R$, we have
        \begin{equation}\label{eq_d2_1}
            [x_{[\gamma]}(s_1, s_2), x_{[\beta] - [\gamma]}(\pm r', r \bar{r}/2)] = x_{[\beta]} (r s_1') \in H.
        \end{equation}
        By (\ref{eq_wx=xw}), we also have $x_{[\beta]} (\bar{r} \bar{s_1'}) \in H$. In particular, for every $r \in R$, we have $x_{[\beta]}(rs_1) \in H$ and $x_{[\beta]}(r \bar{s_1}) \in H$.

        Observe that, the pair $[\gamma]$ and $[\beta] - 2[\gamma]$ is of the type $(d-ii)$. Here $[\beta] - [\gamma], [\gamma] \sim A_2$ and $[\beta], [\beta] - 2 [\gamma] \sim A_1^2$. For $r \in R$ we have 
        \begin{equation}\label{eq_d2_2}
            [x_{[\beta] - 2[\gamma]}(\pm r'), x_{[\gamma]}(s_1, s_2)] = x_{[\beta] - [\gamma]} (rs_1', r \bar{r} s_2') x_{[\beta]} (\pm r's_2'') \in H. 
        \end{equation}
        Note that, $(x_{[\gamma]}(s_1, s_2))^{-1} = x_{[\gamma]}(-s_1, \bar{s_2}) \in H$. By above, we have $x_{[\beta]}(s_1) \in H$. By applying case (D1) to the pair $([\beta], [\gamma])$, we have $x_{[\gamma]}(2s_1, 2s_1\bar{s_1}) \in H$. But then 
        $$x_{[\gamma]}(s_1, \bar{s_2}) = x_{[\gamma]}(-s_1, \bar{s_2}) x_{[\gamma]}(2s_1, 2s_1\bar{s_1}) \in H.$$

        Put $(s_1, \bar{s_2})$ instead of $(s_1, s_2)$ and $-r$ instead of $r$ in (\ref{eq_d2_2}), we get $$x_{[\beta] - [\gamma]} (- r s_1', r \bar{r} \bar{s_2'}) x_{[\beta]} (\pm (-r') \bar{s_2''}) \in H.$$ Since the elements $x_{[\beta] - [\gamma]} (\cdot)$ and $x_{[\beta]} (\cdot)$ are commutes with each other, we have 
        \begin{align*}
            x_{[\beta]}(\pm r'(s_2'' - \bar{s_2''})) &=  \{x_{[\beta] - [\gamma]} (r s_1', r \bar{r} s_2') x_{[\beta]} (\pm r' s_2'')\} \\
            & \hspace{15mm} \{ x_{[\beta] - [\gamma]} (- r s_1', r \bar{r} \bar{s_2'}) x_{[\beta]} (\pm (-r') \bar{s_2''}) \} \in H.
        \end{align*}
        In particular, we have $x_{[\beta]} (\frac{r}{2} (s_2 - \bar{s_2})) \in H$ for every $r \in R$. Again by (\ref{eq_d2_1}), we have $x_{[\beta]}(\frac{r}{2} (s_1 \bar{s_1})) = x_{[\beta]} (\frac{r}{2} (s_2 + \bar{s_2})) \in H$ for every $r \in R$. But then $$ x_{[\beta]}(r s_2) = x_{[\beta]} (\frac{r}{2} (s_2 + \bar{s_2})) x_{[\beta]}(\frac{r}{2} (s_2 - \bar{s_2})) \in H,$$
        for every $r \in R$. Similarly, we have $x_{[\beta]}(r \bar{s_2}) \in H$ for every $r \in R$.
        
        \item[\textbf{(D3)}] \textbf{$[\gamma]$ and $\mu$ are arbitrary.} Observe that, we can find a sequence $[\gamma_1], \dots, [\gamma_m]$ of roots in $\Phi'$ such that $2 \leq m \leq 8, [\gamma_1] = [\gamma], [\gamma_m] = [\beta]$ and the angle between $[\gamma_i]$ and $[\gamma_{i+1}]$ is $\pi / 4$ for every $i = 1, \dots, m-1$.
        \begin{enumerate}[1.]
            \item If $[\gamma] \sim A_1^2$ and $[\beta] \sim A_1^2$ then $m$ must be odd. By applying Case (D1) to pairs $([\alpha_i], [\alpha_{i+1}])$ and Case (D2) to pairs $([\alpha_{i+1}], [\alpha_{i+2}])$ for every $i = 1, 3, \dots, m-2$, recursively, we get $x_{[\beta]}(rs) \in H$ and $x_{[\beta]}(r\bar{s}) \in H$ for every $r \in R$. Therefore, we have $x_{[\beta]}(t) \in H$ for every $t \in I_{[\beta]}$.

            \item If $[\gamma] \sim A_1^2$ and $[\beta] \sim A_2$ then $m$ must be even. Note that $[\gamma_{m-1}] \sim A_1^2$, by the above case, $x_{[\gamma_{m-1}]}(s) \in H$ and $x_{[\gamma_{m-1}]}(\bar{s}) \in H$. By applying case (D1) to the pair $([\alpha_{m-1}], [\alpha_m])$, we get $x_{[\beta]}(r_1s, r_2s) \in H$ and $x_{[\beta]}(r_1 \bar{s}, r_2 \bar{s}) \in H$ for every $(r_1s,r_2s) \in \mathcal{A}(Rs)$. Finally, by the proof of Lemma \ref{A(I+J) = A(I) A(J)}, we have $x_{[\beta]}(t) \in H$ for every $t \in I_{[\beta]}$.

            \item If $[\gamma] \sim A_2$ and $[\beta] \sim A_1^2$ then $m$ must be even. Note that $[\gamma_{2}] \sim A_1^2$, by applying case (D2) to the pair $([\gamma_1], [\gamma_2])$, we get $x_{[\gamma_2]}(r s_1), x_{[\gamma_2]}(r \bar{s_1}), x_{[\gamma_2]}(r s_2), x_{[\gamma_2]}(r \bar{s_2}) \in H$ for every $r \in R$. Now by Case 1 above, we have $x_{[\beta]}(r s_1),$ $x_{[\beta]}(r s_2),$ $x_{[\beta]}(r \bar{s_1}),$ $x_{[\beta]}(r \bar{s_2}) \in H$ for every $r \in R$.
            Therefore, we have $x_{[\beta]}(t) \in H$ for every $t \in I_{[\beta]}$.

            \item If $[\gamma] \sim A_2$ and $[\beta] \sim A_2$ then $m$ must be odd. Note that $[\gamma_{m-1}] \sim A_1^2$, by the case 3 above, we have $x_{[\gamma_{m-1}]}(s_1), x_{[\gamma_{m-1}]}(s_2), x_{[\gamma_{m-1}]}(\bar{s_1}), x_{[\gamma_{m-1}]}(\bar{s_2}) \in H$. By applying case (D1) to the pair $([\gamma_{m-1}], [\gamma_{m}])$, we have $x_{[\beta]}(r_1 s_1, r_2 s_1), x_{[\beta]}(r_1 \bar{s_1}, r_2 \bar{s_1}) \in H$ for every $(r_1 s_1, r_2 s_1) \in \mathcal{A}(Rs_1)$ and $x_{[\beta]}(r_3 s_2, r_4 s_2), x_{[\beta]}(r_3 \bar{s_2}, r_4 \bar{s_2}) \in H$ for every $(r_3 s_2, r_4 s_2) \in \mathcal{A}(Rs_2)$. Therefore, by the proof of Lemma \ref{A(I+J) = A(I) A(J)}, we have $x_{[\beta]}(t) \in H$ for every $t \in I_{[\beta]}$.
        \end{enumerate}
    \end{enumerate}

    \vspace{2mm}

    \noindent \textbf{Case E. $\Phi' \sim G_2$ which contains roots of the type $A_1$ and $A_1^3$.} This case arises only when $\Phi_\rho \sim {}^3 D_{4}$. Let us consider the following subcases:
    \begin{enumerate}
        \item[\textbf{(E1)}] \textbf{$[\gamma] \sim A_1$ and $\mu = \pi/6$.} In this case, the pair $[\gamma]$ and $[\beta] - [\gamma]$ is of the type $(e)$. Here $[\gamma], 3[\beta] - [\gamma], 3[\beta] - 2[\gamma] \sim A_1$ and $[\beta], [\beta] - [\gamma], 2[\beta] - [\gamma] \sim A_1^3$. For $r \in R$, we have
        \begin{gather*}
            [x_{[\gamma]}(s), x_{[\beta] - [\gamma]}(\pm r/2)] = x_{[\beta]} (rs/2) x_{2[\beta] - [\gamma]} (\pm r r' s /4) \\
            x_{3[\beta] - 2[\gamma]} (\pm r \bar{r} \bar{\bar{r}} s/8) x_{3[\beta] - [\gamma]} (\pm r \bar{r} \bar{\bar{r}} s^2/8) \in H.
        \end{gather*}
        Note that, long roots in $G_2$ form a subsystem of type $A_2$. Hence, by Case A, we have $x_{3[\beta] - 2[\gamma]} (\pm r \bar{r} \bar{\bar{r}} s/8) \in H$ and $x_{3[\beta] - [\gamma]} (\pm r \bar{r} \bar{\bar{r}} s^2/8) \in H$. But then 
        \begin{gather*}
             \{ x_{[\beta]} (rs/2) x_{2[\beta] - [\gamma]} (\pm r r' s/4) x_{3[\beta] - 2[\gamma]} (\pm r \bar{r} \bar{\bar{r}} s/8) x_{3[\beta] - [\gamma]} (\pm r \bar{r} \bar{\bar{r}} s^2/8) \} \\
             \{x_{3[\beta] - 2[\gamma]} (\pm r \bar{r} \bar{\bar{r}} s/8) x_{3[\beta] - [\gamma]} (\pm r \bar{r} \bar{\bar{r}} s^2/8)\}^{-1} \\
             = x_{[\beta]} (rs/2) x_{2[\beta] - [\gamma]} (\pm r r' s/4) \in H.
        \end{gather*}
        Now put $-r$ instead of $r$, we get $x_{[\beta]} (-rs/2) x_{2[\beta] - [\gamma]} (\pm r r' s/4) \in H$. Finally, 
        $$ x_{[\beta]} (rs) = \{ x_{[\beta]} (rs/2) x_{2[\beta] - [\gamma]} (\pm r r' s/4) \} \{ x_{[\beta]} (-rs/2) x_{2[\beta] - [\gamma]} (\pm r r' s/4) \}^{-1} \in H.$$

        \item[\textbf{(E2)}] \textbf{$[\gamma] \sim A_1^3$ and $\mu = \pi/6$.} In this case, the pair $[\gamma]$ and $[\beta] - [\gamma]$ is of type $(g)$. Here $[\gamma] \sim A_1^3, [\beta] - [\gamma] \sim A_1^3$ and $[\beta] \sim A_1$. For $r \in R$, we have
        $$ [x_{[\gamma]}(s), x_{[\beta] - [\gamma]}(\pm r')] = x_{[\beta]} (rs + \bar{r} \bar{s} + \bar{\bar{r}} \bar{\bar{s}}) \in H.$$

        \item[\textbf{(E3)}] \textbf{$[\gamma] \sim A_1^3$ and $\mu = \pi/3$.} In this case, the pair $[\gamma]$ and $[\beta] - [\gamma]$ is of type $(f)$. Here $[\gamma] \sim A_1^3, [\beta] - [\gamma] \sim A_1^3, [\beta] \sim A_1^3, 2[\beta] - [\gamma] \sim A_1$ and $[\gamma] + [\beta] \sim A_1$. For $r \in R$, we have
        \begin{gather*}
            [x_{[\gamma]}(s), x_{[\beta] - [\gamma]}(\pm r/2)] = x_{[\beta]} ((r' s \pm r s')/2) x_{[\gamma] + [\beta]} (\pm(r \bar{s} \bar{\bar{s}} + \bar{r} s \bar{\bar{s}} + \bar{\bar{r}} s \bar{s})/2) \\
            x_{2[\beta] - [\gamma]} (\pm(r \bar{r} \bar{\bar{s}} + \bar{r} \bar{\bar{r}} s + r \bar{\bar{r}} \bar{s})/2) \in H.
        \end{gather*}
        Here $(s',r') = (\bar{s}, \bar{r})$ or $(\bar{\bar{s}}, \bar{\bar{r}})$.
        Note that $x_{[\gamma]}(-s) = \{x_{[\gamma]}(s)\}^{-1} \in H$. Hence if we replace $r$ (resp., $s$) by $-r$ (resp., $-s$), then 
        \begin{gather*}
            x_{[\beta]} ((r' s \pm r s')/2) x_{[\gamma] + [\beta]} (\pm(-r \bar{s} \bar{\bar{s}} - \bar{r} s \bar{\bar{s}} - \bar{\bar{r}} s \bar{s})/2) \\
            x_{2[\beta] - [\gamma]} (\pm(-r \bar{r} \bar{\bar{s}} - \bar{r} \bar{\bar{r}} s - r \bar{\bar{r}} \bar{s})/2) \in H.
        \end{gather*}
        Since $x_{[\beta]}(\cdot), x_{[\gamma] + [\beta]}(\cdot)$ and $x_{2[\beta] - [\gamma]}(\cdot)$ commutes with each other, we have
        \begin{gather*}
            x_{[\beta]} (r' s \pm r s') = \{ x_{[\beta]} ((r' s \pm r s')/2) x_{[\gamma] + [\beta]} (\pm(r \bar{s} \bar{\bar{s}} + \bar{r} s \bar{\bar{s}} + \bar{\bar{r}} s \bar{s})/2) \\ 
            x_{2[\beta] - [\gamma]} (\pm(r \bar{r} \bar{\bar{s}} + \bar{r} \bar{\bar{r}} s + r \bar{\bar{r}} \bar{s})/2) \} \{ x_{2[\beta] - [\gamma]} (\pm(-r \bar{r} \bar{\bar{s}} - \bar{r} \bar{\bar{r}} s - r \bar{\bar{r}} \bar{s})/2) \\
            x_{[\gamma] + [\beta]} (\pm(-r \bar{s} \bar{\bar{s}} - \bar{r} s \bar{\bar{s}} - \bar{\bar{r}} s \bar{s})/2) x_{[\beta]} ((r' s \pm r s')/2)\} \in H.
        \end{gather*}

        \item[\textbf{(E4)}] \textbf{$[\gamma]$ and $\mu$ are arbitrary.} Observe that, we can find a sequence $[\gamma_1], \dots, [\gamma_m]$ of roots in $\Phi'$ such that $2 \leq m \leq 12, [\gamma_1] = [\gamma], [\gamma_m] = [\beta]$ and the angle between $[\gamma_i]$ and $[\gamma_{i+1}]$ is $\pi / 6$ for every $i = 1, \dots, m-1$. 
        \begin{enumerate}[1.]
            \item If $[\gamma] \sim A_1$ and $[\beta] \sim A_1$ then $m$ must be odd. By applying Case (E1) to pairs $([\gamma_i], [\gamma_{i+1}])$ and Case (E2) to pairs $([\gamma_{i+1}], [\gamma_{i+2}])$ for $i=1, 3, \dots, m-2$, recursively, we get, $x_{[\beta]} ((r +\bar{r} + \bar{\bar{r}})s) \in H$ for every $r \in R$. Now by Lemma \ref{theta Rz}, we have $x_{[\beta]}(t) \in H,$ for every $t \in I_{[\beta]}$.

            \item If $[\gamma] \sim A_1$ and $[\beta] \sim A_1^3$ then $m$ must be even. Since $[\gamma_{m-1}] \sim A_1$, by the above Case, we have  $x_{[\gamma_{m-1}]}(s) \in H$. Now by applying Case (E1) to the pair $([\gamma_{m-1}], [\gamma_{m}])$, we get $x_{[\beta]} (rs) \in H$ for every $r \in R$. Hence we have $x_{[\beta]}(t) \in H,$ for every $t \in I_{[\beta]} = I$.
            
            \item If $[\gamma] \sim A_1^3$ and $[\beta] \sim A_1$ then $m$ must be even. By apply Case (E2) to pairs $([\gamma_i], [\gamma_{i+1}])$ and Case (E1) to pairs $([\gamma_{i+1}], [\gamma_{i+2}])$ for $i=1, 3, \dots, m-3$, recursively, and finally Case (E2) to the pair $([\gamma_{m-1}],[\gamma_m])$, we get $x_{[\beta]} (r s + \bar{r} \bar{s} + \bar{\bar{r}} \bar{\bar{s}}) \in H$ for every $r \in R$. Now by Lemma \ref{theta Rz}, we have $x_{[\beta]}(t) \in H,$ for every $t \in I_{[\beta]}$.
            
            \item If $[\gamma] \sim A_1^3$ and $[\beta] \sim A_1^3$ then $m$ must be odd and $m \geq 3$. We first show that $x_{[\gamma_3]}(t) \in H$ for every $t \in I$. Note that $[\gamma_3] \sim A_1^3$. By applying Case (E2) to the pair $([\gamma_{1}], [\gamma_{2}])$, we get $x_{[\gamma_2]} (r s + \bar{r} \bar{s} + \bar{\bar{r}} \bar{\bar{s}}) \in H$ for every $r \in R$. Next, by applying Case (E1) on the pair $([\gamma_2], [\gamma_3])$, we conclude that $x_{[\gamma_3]} (r s + \bar{r} \bar{s} + \bar{\bar{r}} \bar{\bar{s}}) \in H$ for every $r \in R$. Finally, by applying Case (E3) on the pair $([\gamma_1], [\gamma_3])$, we obtain $x_{[\gamma_3]}(r s \pm r' s') \in H$ for every $r \in R$ (where $a'$ denotes $\bar{a}$ or $\bar{\bar{a}}$). 
            
            Suppose $x_{[\gamma_3]}(r s + r' s') \in H$. Then $x_{[\gamma_3]}(r'' s'') \in H$ for every $r \in R$ (where $a'' = \bar{a}$ if $a' = \bar{\bar{a}}$ and vice-versa). B reversing the roles of $[\gamma_1]$ and $[\gamma_3]$ together with roles of $s$ and $s''$, we get $x_{[\gamma_1]}(r s + \bar{r} \bar{s} + \bar{\bar{r}} \bar{\bar{s}}) \in H$ and $x_{[\gamma_1]}(r's') \in H$. Since $x_{[\gamma_1]}(s) \in H$, it follows that $x_{[\gamma_1]}(\bar{s})$ and $x_{[\gamma_1]}(\bar{\bar{s}})$ are also in $H$. Applying the same process again with $\bar{s}$ (resp., $\bar{\bar{s}}$), we obtain $x_{[\gamma_3]}(r \bar{s}'') \in H$ (resp., $x_{[\gamma_3]}(r \bar{\bar{s}}'') \in H$) for every $r \in R$. In particular, we have $x_{[\gamma_3]}(t) \in H$ for every $t \in I_{[\gamma_3]}$. 
            
            Now, suppose $x_{[\gamma_3]}(rs - r's') \in H$. Then $x_{[\gamma_3]}(2rs + r'' s'') \in H$ for every $r \in R$. By reversing the roles of $[\gamma_1]$ and $[\gamma_3]$ together with roles of $s$ and $2s + s''$, we obtain $x_{[\gamma_1]}((2r+r')s + (2\bar{r}+ \bar{r'}) \bar{s} + (2 \bar{\bar{r}} + \bar{\bar{r'}}) \bar{\bar{s}}) \in H$ and $x_{[\gamma_1]}(4s + 4s'' + s') \in H$. Since the map $r \mapsto 2r + r'$ from $R$ to itself is surjective, we have $x_{[\gamma_1]}(rs + \bar{r}\bar{s} + \bar{\bar{r}} \bar{\bar{s}}) \in H$ for every $r \in R$. Consequently, $x_{[\gamma_1]} (3s') \in H$, and hence $x_{[\gamma_1]}(\bar{s})$ and $x_{[\gamma_1]}(\bar{\bar{s}}) \in H$ (as $1/3 \in R$). Applying the same process again with $\bar{s}$ and $\bar{\bar{s}}$, we get $x_{[\gamma_3]}(3rs) = x_{[\gamma_3]}(rs + \bar{r} \bar{s} + \bar{\bar{r}} \bar{\bar{s}}) x_{[\gamma_3]} (rs - \bar{r} \bar{s}) x_{[\gamma_3]} (rs - \bar{\bar{r}} \bar{\bar{s}}) \in H$ for every $r \in R$. Thus, we get $x_{[\gamma_3]}(rs) \in H$ for every $r \in R$. Similarly, we can show that $x_{[\gamma_3]}(r \bar{s}) \in H$ and $x_{[\gamma_3]}(r \bar{\bar{s}}) \in H$ for every $r \in R$. In particular, we have $x_{[\gamma_3]}(t) \in H$ for every $t \in I_{[\gamma_3]}$, as required. 
            
            Now if $m=3$, then we are done. If not, we repeat this process for the pair $([\gamma_{i}], [\gamma_{i+2}])$ for every $i = 3, \dots, m-2$ to obtain desired result.
        \end{enumerate}
    \end{enumerate}
    This completes the proof of our lemma.
\end{proof}


\vspace{2mm}

\noindent \textit{Proof of Proposition \ref{prop:E(R,J) subset of H}.} Let $J$ be as in the hypothesis of Proposition \ref{prop:E(R,J) subset of H}. Let $t, u \in J$. Then there exists $[\alpha], [\beta] \in \Phi_\rho$ such that $t \in J_{[\alpha]}(H)$ and $u \in J_{[\beta]}(H)$. Let $[\gamma] \in \Phi_\rho$ be such that it is either of type $A_1^2$ or $A_1^3$. By Proposition \ref{z to Rz}, for every $r \in R$ we have $x_{[\gamma]}(rt), x_{[\gamma]}(\bar{t}), x_{[\gamma]}(u) \in H$ and hence $x_{[\gamma]} (t + u) \in H$. Therefore, if $t,u \in J$ and $r \in R$ then we have $t+u, rt, \bar{t} \in J$. Thus $J$ is a $\theta$-invariant ideal of $R$. Now for the second assertion, it follows from Proposition \ref{z to Rz} that $E'_\sigma (J) \subset H$. Since $H$ is normalized by $E'_\sigma (R)$, we conclude that $E'_\sigma (R,J) \subset H$, as desired. \qed


\section{Proof of Proposition \ref{prop:U(R) cap H subset U(J)}} \label{sec:Pf of prop 2}


Let $\Phi_\rho$ be an irreducible root system. We fix a simple system $\Delta_\rho = \{ [\alpha_1], \dots, [\alpha_l] \}$ of $\Phi_\rho$. Recall that, for a root $[\alpha] = \sum_{i=1}^l m_{i} [\alpha_i] \in \Phi_\rho$, we defined $ht([\alpha])= \sum_{i=1}^l m_i$. We say a root $[\beta]$ is \textit{highest} if the height of $[\beta]$ is maximal, i.e., $ht([\beta]) = \max \{ ht([\alpha]) \mid [\alpha] \in \Phi_{\rho} \}$. Note that there is a unique highest root in an irreducible root system and it is a long positive root. 
Therefore we sometimes call it \textit{highest long root}. Similarly, we say $[\gamma]$ is a \textit{highest short root} if $ht([\gamma]) = \max \{ ht([\alpha]) \mid [\alpha] \in \Phi_{\rho} \text{ and } [\alpha] \text{ is short root} \}$. There is a unique highest short root in an irreducible root system.

\begin{lemma}\label{lemma:U cap H}
    \normalfont
    Let $x: = \prod_{[\alpha] \in \Phi_\rho^{+}} x_{[\alpha]}(t_{[\alpha]}) \in U(R) \cap H$ with $t_{[\alpha]} \in R_{[\alpha]}$ (the product is taken over disjoint roots in any fixed order). Then $x_{[\alpha]}(t_{[\alpha]}) \in H$ for all $[\alpha] \in \Phi_\rho^{+}$.
\end{lemma}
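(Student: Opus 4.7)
My plan is to proceed by induction on $N(x) := |\{[\alpha] \in \Phi_\rho^+ \mid t_{[\alpha]} \ne 0\}|$. The base case $N(x) \le 1$ is immediate, since then either $x = 1$ or $x$ is itself its unique non-trivial factor. For the inductive step I fix the regular ordering on $\Phi_\rho^+$ (of the kind introduced in Section~\ref{sec:E=UHV}), in which roots of greater height appear later, and let $[\beta]$ denote the highest root appearing in $x$ with $t_{[\beta]} \ne 0$ (breaking ties, if any, by the fixed ordering). With this choice we may factor $x = y \cdot x_{[\beta]}(t_{[\beta]})$, where $y = \prod_{[\alpha] < [\beta]} x_{[\alpha]}(t_{[\alpha]}) \in U_\sigma(R)$. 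It suffices to prove that $x_{[\beta]}(t_{[\beta]}) \in H$: then $y = x \cdot x_{[\beta]}(t_{[\beta]})^{-1} \in H$ has $N(y) = N(x) - 1$, and the inductive hypothesis applied to $y$ delivers the remaining factors.

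To isolate $x_{[\beta]}(t_{[\beta]})$, I exploit the hypothesis that $H$ is normalized by $E'_\sigma(R)$: for every admissible $s \in R_{-[\beta]}$, the commutator $[x_{-[\beta]}(s), x]$ lies in $H$. Writing
\[
[x_{-[\beta]}(s),\, x] \;=\; [x_{-[\beta]}(s),\, y]\;\cdot\; \bigl(y\,[x_{-[\beta]}(s),\, x_{[\beta]}(t_{[\beta]})]\,y^{-1}\bigr),
\]
the first factor is, by the Chevalley commutator formulas of Section~\ref{subsec:CheComm} and their iteration, supported on root classes of the form $\sum n_i [\alpha_i] - k[\beta]$ with $[\alpha_i] < [\beta]$ and $k \ge 1$; because $\Phi_\rho$ is reduced and none of the $[\alpha_i]$ equals $[\beta]$, no such combination equals $[\beta]$ itself, so this factor contributes nothing to the $\mathfrak{X}_{[\beta]}$-component. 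The second factor, by (the proof of) Lemma~\ref{inUHV}, has the explicit form $x_{[\beta]}(a) \cdot h \cdot x_{-[\beta]}(b)$ with $a = a(s, t_{[\beta]})$, $b = b(s, t_{[\beta]})$ and $h \in H'_\sigma$. Projecting onto the $\mathfrak{X}_{[\beta]}$-component via the uniqueness of the $U_\sigma H'_\sigma U^-_\sigma$-decomposition (Proposition~\ref{lavidecomposition}), we obtain $x_{[\beta]}(a(s, t_{[\beta]})) \in H$ for every admissible $s$. The ideal-generation technique developed in Proposition~\ref{z to Rz} and Lemma~\ref{theta Rz} then upgrades this family of "scaled" versions to $x_{[\beta]}(t_{[\beta]}) \in H$ itself.

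The main obstacle I anticipate is the case in which the class $[\beta]$ has type $A_2$: there the group $\mathfrak{X}_{[\beta]} \cong (\mathcal{A}(R), \oplus)$ is non-additive, and the expression for $a(s, t_{[\beta]})$ provided by Lemma~\ref{inUHV} couples the two coordinates of $t_{[\beta]} = (t_1, t_2) \in \mathcal{A}(R)$ in a nonlinear way. To disentangle them I intend to mimic the two-stage strategy from Case~C of the proof of Theorem~\ref{mainthm}: first extract pairs of the form $(r t_1, \tfrac{1}{2} r \bar r t_1 \bar t_1)$ for a generating family of $r \in R$ (an analogue of the ideal $I^{(1)}_{g,[\alpha]}$), then separately recover the purely "imaginary" part $(0, u)$ with $u = -\bar u$ (an analogue of $I^{(2)}_{g,[\alpha]}$) by exploiting commutators of type $(a_2\text{-ii})$ between appropriate $\mathfrak{X}_{[\gamma]}$ subgroups, and finally reassemble $t_{[\beta]}$ via the decomposition of $\mathcal{A}(R)$ from Lemma~\ref{A(R)}.
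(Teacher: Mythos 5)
There is a genuine gap, and it sits at the heart of your inductive step. First, you invoke (the proof of) Lemma~\ref{inUHV} to write $[x_{-[\beta]}(s), x_{[\beta]}(t_{[\beta]})]$ in the form $x_{[\beta]}(a)\,h\,x_{-[\beta]}(b)$, but that lemma requires the entry of the opposite root element to lie in a $\theta$-invariant ideal contained in $\mathrm{rad}(R)$, precisely so that $1-st$ (and its analogue in the $A_2$ case) is a unit. For an arbitrary $t_{[\beta]} \in R_{[\beta]}$ and arbitrary $s$ no such rank-one decomposition exists, so the ``explicit form'' you rely on is simply not available. Second, and more fatally, the projection step is invalid: from $w \in H$ and a factorization $w = u\,h\,v$ (even a unique one) you cannot conclude that $u$, or any single root factor of $u$, lies in $H$ --- membership in a subgroup does not pass to components of a decomposition. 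Proposition~\ref{lavidecomposition} concerns only $E'_\sigma(R,J)$ with $J \subseteq \mathrm{rad}(R)$ and provides no such projection principle; indeed, extracting factors from elements of $H$ that lie in $U_\sigma(R)\,T_\sigma(R)\,U^{-}_\sigma(R)$ is exactly the hard content of Lemma~\ref{lemma:UTV cap H} and of Proposition~\ref{prop:U(R) cap H subset U(J)}(b), which require long explicit arguments rather than a formal projection. Your choice of commutating with the negative root element $x_{-[\beta]}(s)$ is what forces you out of $U_\sigma(R)$ and into this trap.

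The paper's proof avoids both problems by never leaving $U_\sigma(R)$ in the induction: one commutes with \emph{positive} simple-root elements $x_{[\alpha_i]}(1)$ (or $x_{[\alpha_i]}(1,1/2)$ in the ${}^2A_{2n}$ case), chosen so that $[\delta]+[\alpha_i] \in \Phi_\rho$ and $[\delta]-[\alpha_i] \notin \Phi_\rho$, which pushes the factor to be extracted up in height while all other new factors also have strictly larger height. The induction is on height (statement $(P_n)$), grounded in explicit rank-two computations for ${}^2A_3$, ${}^2A_4$ and ${}^3D_4$, and Proposition~\ref{z to Rz} is then used to descend from $x_{[\delta]+[\alpha_i]}(\pm t_{[\delta]}) \in H$ back to $x_{[\delta]}(t_{[\delta]}) \in H$. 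If you want to salvage your plan, you would have to replace the negative-root commutator and the projection step by such an upward-moving argument (or reprove something like Lemma~\ref{lemma:UTV cap H} first); your final paragraph about the $A_2$-type classes addresses a real but secondary difficulty that only becomes relevant once the main extraction mechanism is sound.
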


\begin{proof}
    The proof closely resembles that of Lemma 3.1 and Proposition 1 in Section 3 of \cite{EA3}. However, the calculations presented here are distinct from those in \cite{EA3}. For the convenience of the reader, we provide the full proof below.

    \vspace{2mm}
    
    \noindent \textbf{Case A. $\Phi_\rho \sim {}^2 A_3$:} In this case, after the twist, $\Phi_\rho$ becomes a root system of type $B_2$. Let $[\alpha]$ and $[\beta]$ be the simple roots, with $[\alpha]$ being the long root. We first claim that 
    \begin{equation}\label{eq_a}
        \textit{if } x = x_{[\alpha] + [\beta]}(t) x_{[\alpha]+ 2 [\beta]}(u) \in H, \textit{ then } x_{[\alpha] + [\beta]}(t), x_{[\alpha]+ 2 [\beta]}(u) \in H. 
    \end{equation}
    For any $r \in R_{[\alpha]} = R_\theta$, we have
    $$ H \ni [x_{-[\alpha]}(r), x] = x_{[\beta]} (\pm rt) x_{[\alpha] + 2 [\beta]} (\pm r t \bar{t}).$$
    Since $x \in H$ then so is $x^{-1} = x_{[\alpha] + [\beta]}(-t) x_{[\alpha]+ 2 [\beta]}(-u)$. Therefore we can replacing $t$ and $u$ by $-t$ and $-u$, respectively, and we get $x_{[\beta]} (\pm r(-t)) x_{[\alpha] + 2 [\beta]} (\pm r t \bar{t}) \in H$. But then 
    $$x_{[\beta]}(\pm 2rt) = \{ x_{[\beta]} (\pm rt) x_{[\alpha] + 2 [\beta]} (\pm r t \bar{t}) \} \{ x_{[\beta]} (\pm r(-t)) x_{[\alpha] + 2 [\beta]} (\pm r t \bar{t}) \}^{-1} \in H.$$
    Put $r = \pm 1/2$, we get $x_{[\beta]}(t) \in H$. By Proposition \ref{z to Rz}, we get $x_{[\alpha] + [\beta]}(t) \in H$ and hence $x_{[\alpha]+ 2 [\beta]}(u) \in H$. This proves (\ref{eq_a}).
    Now let $x = x_{[\beta]}(t) x_{[\alpha]}(u) x_{[\alpha] + [\beta]} (v) x_{[\alpha] + 2[\beta]}(w) \in H$. Then
    $$ [x_{[\alpha]}(1), x] = x_{[\alpha] + [\beta]} (\pm t) x_{[\alpha]+2[\beta]} (\pm t \bar{t}) \in H.$$ 
    By (\ref{eq_a}), we have $x_{[\alpha] + [\beta]} (\pm t) \in H$. Again by Proposition \ref{z to Rz}, we get $x_{[\beta]} (t) \in H$. Consequently, $x_1:= x_{[\alpha]}(u) x_{[\alpha] + [\beta]} (v) x_{[\alpha] + 2[\beta]}(w) \in H$. Now,
    $$ [x_{[\beta]}(1), x_1] = x_{[\alpha] + [\beta]} (\pm u) x_{[\alpha]+2[\beta]} (\pm u \pm (v + \bar{v})) \in H.$$
    Again, by (\ref{eq_a}), $x_{[\alpha] + [\beta]} (\pm u) \in H$, and hence $x_{[\alpha]} (u) \in H$ (by Proposition \ref{z to Rz}). But then $$x_{[\alpha] + [\beta]} (v) x_{[\alpha] + 2[\beta]}(w) \in H.$$ Finally, by (\ref{eq_a}), $x_{[\alpha] + [\beta]} (v) \in H$ and $ x_{[\alpha] + 2[\beta]}(w) \in H$, as desired.
    
    \vspace{2mm}
    
    \noindent \textbf{Case B. $\Phi_\rho \sim {}^2 A_4$:} In this case, after the twist, $\Phi_\rho$ becomes a root system of type $B_2$. Let $[\alpha]$ and $[\beta]$ be the simple roots, with $[\alpha]$ being the long root. We first claim that 
    \begin{equation}\label{eq_b}
        \textit{if } x = x_{[\alpha] + [\beta]}(t) x_{[\alpha]+ 2 [\beta]}(u) \in H, \textit{ then } x_{[\alpha] + [\beta]}(t), x_{[\alpha]+ 2 [\beta]}(u) \in H. 
    \end{equation}
    For any $r = (r_1, r_2) \in R_{[\alpha]} = \mathcal{A}(R)$, we have
    \begin{equation}\label{eq_c}
        H \ni [x_{[\beta]}(r), x] = x_{[\alpha] + 2 [\beta]} (\pm r'_1 t'_1),
    \end{equation}
    where $r'_1$ denotes either $r_1$ or $\bar{r_1}$, similar for $t'_1$.
    Now $$ H \ni [x_{-[\beta]}(r), x] = x_{[\alpha]}(\pm r'_1 t'_1) x_{[\alpha]}(\pm r'_2 u) x_{[\alpha] + [\beta]}(\pm r'_1 u', r'_2 u \bar{u}).$$
    By (\ref{eq_c}) and Proposition \ref{z to Rz}, $x_{[\alpha]}(\pm r'_1 t'_1) \in H$. But then $x_{[\alpha]}(\pm r'_2 u) x_{[\alpha] + [\beta]}(\pm r'_1 u', r'_2 u \bar{u}) \in H$. Now if we put $(r_1, r_2) = (1,1/2)$, then we get $x_{[\alpha]}(\pm u/2) x_{[\alpha] + [\beta]} (\pm u', u \bar{u}/2) \in H$ and if we put $(r_1, r_2) = (-1, 1/2)$, then we get $x_{[\alpha]}(\pm u/2) x_{[\alpha] + [\beta]} (\pm (-u'), u \bar{u}/2) \in H$. But then
    $$x_{[\alpha]}(\pm u) = \{ x_{[\alpha]}(\pm u/2) x_{[\alpha] + [\beta]} (\pm u', u \bar{u}/2) \} \{ x_{[\alpha]}(\pm u/2) x_{[\alpha] + [\beta]} (\pm (-u'), u \bar{u}/2) \} \in H.$$
    Again by Proposition \ref{z to Rz}, we have $x_{[\alpha] + 2[\beta]} (u) \in H$ and hence $x_{[\alpha]+ [\beta]}(t) \in H$. This proves (\ref{eq_b}).
    
    Now let $x = x_{[\beta]}(s) x_{[\alpha]}(t) x_{[\alpha] + [\beta]} (u) x_{[\alpha] + 2[\beta]}(v) \in H$. Then
    $$ [x_{[\alpha]}(1), x] = x_{[\alpha] + [\beta]} (\pm s'_1, s'_2) x_{[\alpha] + [\beta]} (0, \pm (v - \bar{v})) x_{[\alpha]+2[\beta]} (\pm s_2) \in H,$$ where $s = (s_1, s_2) \in \mathcal{A}(R)$ and $s_1' = s_1$ or $\bar{s_1}$, similar for $s'_2$. 
    By (\ref{eq_b}), we have 
    $$ x_{[\alpha] + [\beta]} (\pm s'_1, s'_2 \pm (v - \bar{v})) \in H \text{ and } x_{[\alpha]+2[\beta]} (\pm s_2) \in H.$$ 
    Again by Proposition \ref{z to Rz}, we get $x_{[\beta]} (s)$. But then $x_1:= x_{[\alpha]}(t) x_{[\alpha] + [\beta]} (u) x_{[\alpha] + 2[\beta]}(v) \in H$. 
    Now, $$ [x_{[\beta]}(1, 1/2), x_1] = x_{[\alpha] + [\beta]} (\pm t', t \bar{t}/2 \pm (t \bar{u_1} - \bar{t}u_1)) x_{[\alpha]+2[\beta]} (\pm t \pm u'_1 ) \in H.$$
    Again by (\ref{eq_b}), $x_{[\alpha] + [\beta]} (\pm t', t \bar{t}/2 \pm (t \bar{u_1} - \bar{t}u_1)) \in H$ and hence $x_{[\alpha]} (t) \in H$ (by Proposition \ref{z to Rz}). But then $x_{[\alpha] + [\beta]} (u) x_{[\alpha] + 2[\beta]}(v) \in H$ and again by (\ref{eq_b}), $x_{[\alpha] + [\beta]} (u) \in H$ and $ x_{[\alpha] + 2[\beta]}(v) \in H$, as desired.
    
    \vspace{2mm}
    
    \noindent \textbf{Case C. $\Phi_\rho \sim {}^3 D_4$:} In this case, after the twist, $\Phi_\rho$ becomes a root system of type $G_2$. Let $[\alpha]$ and $[\beta]$ be the simple roots, with $[\alpha]$ being the long root. 
    We first claim that if $$x = x_{[\alpha] + 2[\beta]} (t) x_{[\alpha] + 3[\beta]} (u) x_{2[\alpha] + 3 [\beta]}(v) \in H,$$ then $x_{[\alpha] + 2[\beta]} (t), x_{[\alpha] + 3[\beta]} (u), x_{2[\alpha] + 3 [\beta]}(v) \in H.$
    Note that $[x_{[\alpha]}(1), x] = x_{2[\alpha] + 3[\beta]} (\pm u) \in H$ and $[x_{-[\alpha]}(1), x] = x_{[\alpha] + 3[\beta]} (\pm v) \in H$. But then, by Proposition \ref{z to Rz}, we have $x_{2[\alpha] + 3[\beta]} (v) \in H$, $x_{[\alpha] + 3[\beta]} (u) \in H$ and hence $x_{[\alpha] + 2[\beta]}(t) \in H$, which proves the claim. 
    We next claim that if $$x = x_{[\alpha] + [\beta]}(t) x_{[\alpha] + 2[\beta]}(u) x_{[\alpha] + 3[\beta]}(v) x_{2[\alpha] + 3[\beta]}(w) \in H,$$ then $x_{[\alpha] + [\beta]}(t), x_{[\alpha] + 2[\beta]}(u), x_{[\alpha] + 3[\beta]}(v), x_{2[\alpha] + 3[\beta]}(w) \in H$. For any $r \in R_\theta$, we have 
    $$y(r) = [x_{-[\alpha]}(r), x] = x_{[\beta]} (\pm rt) x_{[\alpha] + 2[\beta]}(\pm r t t') x_{[\alpha] + 3 [\beta]}(\pm r^2 t \bar{t} \bar{\bar{t}} \pm r w) x_{2[\alpha] + 3 [\beta]} (\pm r t \bar{t} \bar{\bar{t}}) \in H,$$ where $t' = \bar{t}$ or $\bar{\bar{t}}$. For any $s \in R_\theta$, we have
    \begin{align*}
        y(r,s) &= [x_{[\alpha]}(s), y(r)] = x_{[\alpha] + [\beta]} (\pm srt) x_{[\alpha] + 2[\beta]}(\pm s r^2 t t') x_{[\alpha]+3[\beta]}(\pm s r^3 t \bar{t} \bar{\bar{t}}) \\
        & \hspace{35mm} x_{2[\alpha] + 3[\beta]}(\pm s^2 r^3 t \bar{t} \bar{\bar{t}} \pm s r^2 t \bar{t} \bar{\bar{t}} \pm srw) \in H.
    \end{align*}
    Let $x_1:= y(r,s)^{-1} y (1,rs) = x_{[\alpha]+2[\beta]}(\pm s (r^2 - r)tt') x_{[\alpha]+3[\beta]}(v') x_{2[\alpha]+3[\beta]}(u') \in H$. By above we have $x_{[\alpha] + 2[\beta]}(\pm s (r^2 - r)tt') \in H$. Put $r = -1$ and $s=1/2$, we have $x_{[\alpha]+2[\beta]}(tt') \in H$. 
    But then, by Proposition \ref{z to Rz}, we have $x_{[\alpha]+2[\beta]}(\pm t t'), x_{[\alpha]+3[\beta]}(\pm t\bar{t} \bar{\bar{t}}), x_{2[\alpha] + 3[\beta]} (\pm t \bar{t} \bar{\bar{t}}) \in H$. Hence,
    $$y(1) x_{2[\alpha] + 3[\beta]} (\pm t \bar{t} \bar{\bar{t}})^{-1} x_{[\alpha]+3[\beta]}(\pm t\bar{t} \bar{\bar{t}})^{-1} x_{[\alpha]+2[\beta]}(\pm t t')^{-1} = x_{[\beta]}(\pm t) x_{[\alpha] + 3 [\beta]}(\pm w) \in H.$$
    Further, $[x_{-2[\alpha]-3[\beta]}(1), x_{[\beta]}(\pm t) x_{[\alpha] + 3[\beta]}(\pm w)] = x_{-[\alpha]} (\pm w) \in H$. By Proposition \ref{z to Rz}, we have $x_{[\alpha] + 3[\beta]}(\pm w) \in H$ and hence $x_{[\beta]}(\pm t) \in H$. Again by Proposition \ref{z to Rz}, $x_{[\alpha] + [\beta]}(t) \in H$ and hence $x_{[\alpha] + 2[\beta]}(u) x_{[\alpha] + 3[\beta]}(v) x_{2[\alpha] + 3[\beta]}(w) \in H$. By above claim, we have $x_{[\alpha] + 2[\beta]}(u)$, $x_{[\alpha] + 3[\beta]}(v)$, $x_{2[\alpha] + 3[\beta]}(w) \in H$, as desired. 
    
    Finally, let $x = x_{[\beta]}(t_1) x_{[\alpha]}(t_2) x_{[\alpha]+[\beta]}(t_3) x_{[\alpha]+2[\beta]}(t_4) x_{[\alpha] + 3[\beta]}(t_5) x_{2[\alpha]+3[\beta]} (t_6) \in H$. Note that 
    $$[x_{[\alpha]}(1), x] = x_{[\alpha] + [\beta]} (\pm t_1) x_{[\alpha] + 2[\beta]}(\pm t_1 t'_1) x_{[\alpha]+3[\beta]} (\pm t_1 \bar{t}_1 \bar{\bar{t}}_1) x_{2[\alpha]+3[\beta]}(\pm t_1 \bar{t}_1 \bar{\bar{t}}_1) \in H,$$
    where $t'_1 = \bar{t}_1$ or $\bar{\bar{t}}_1$. By above claim we have $x_{[\alpha]+[\beta]}(\pm t_1) \in H$, hence $x_{[\beta]}(t_1) \in H$ (by Proposition \ref{z to Rz}). Therefore, we have $$x_1:= x_{[\alpha]}(t_2) x_{[\alpha]+[\beta]}(t_3) x_{[\alpha]+2[\beta]}(t_4) x_{[\alpha] + 3[\beta]}(t_5) x_{2[\alpha]+3[\beta]} (t_6) \in H.$$
    Note that $$ [x_{[\beta]}(1), x_1] = x_{[\alpha] + [\beta]}(\pm t_2) x_{[\alpha]+2[\beta]}(s_4) x_{[\alpha] + 3[\beta]}(s_5) x_{2[\alpha]+3[\beta]} (s_6) \in H,$$ for some $s_4, s_5, s_6 \in R$. Again by above claim we have $x_{[\alpha] + [\beta]} (\pm t_2) \in H$ and hence $x_{[\alpha]}(t_2) \in H$. Thus $x_{[\alpha]+[\beta]}(t_3) x_{[\alpha]+2[\beta]}(t_4) x_{[\alpha] + 3[\beta]}(t_5) x_{2[\alpha]+3[\beta]} (t_6) \in H$. But again by above claim we have $x_{[\alpha]+[\beta]}(t_3), x_{[\alpha]+2[\beta]}(t_4), x_{[\alpha] + 3[\beta]}(t_5), x_{2[\alpha]+3[\beta]} (t_6) \in H,$ as desired.
    
    \vspace{2mm}
    
    \noindent \textbf{Case D. The rank of $\Phi_\rho > 2$:} Let $[\beta]$ be the highest long root in $\Phi_\rho$ and $[\beta']$ be the highest short root in $\Phi_\rho$. For $x = \prod_{[\alpha] \in \Phi^{+}_\rho} x_{[\alpha]} (t_{[\alpha]})$ (product is taken over some fixed order on the roots), we set $\Phi(x) = \{ [\alpha] \in \Phi^{+}_\rho \mid t_{[\alpha]} \neq 0 \}$. We use induction on $n$ to prove the following statement.

    \vspace{2mm}

    \noindent \textbf{($P_n$):} If $\Phi (x)$ only contains the roots $[\beta], [\beta']$ or $[\alpha]$ with $ht ([\alpha]) \geq ht([\beta]) - n + 1$. Then the conclusion of the lemma holds.

    \vspace{2mm}

    \noindent \textit{Proof of $(P_1)$}: We will show that if $x= x_{[\beta]}(t) x_{[\beta']}(t') \in H,$ then all factors of $x$ is contained in $H$. The subsystem generated by $[\beta]$ and $[\beta']$ is of the type ${}^2 A_{3}$ if $\Phi_\rho \sim {}^2 A_{2n-1} \ (n \geq 3), {}^2 D_{n} \ (n \geq 4)$ or ${}^2 E_6$ and is of type ${}^2 A_4$ if $\Phi_\rho \sim {}^2 A_{2n} \ (n \geq 3)$. Thus we are done by Case $A$ and Case $B$, above.

    \vspace{2mm}

    \noindent \textit{Proof of $(P_{n}) \implies (P_{n+1})$}: Assume that $(P_{n})$ holds, that is, assume that if $\Phi(x)$ only contains the roots $[\alpha]$ with $ht([\alpha]) \geq ht ([\beta]) - n + 1$, $[\beta]$ or $[\beta']$, then all factors of $x$ are contained in $H$. To prove $(P_{n+1})$, let $x \in H$ be such that $\Phi(x)$ only contains the roots  $[\alpha]$ with $ht([\alpha]) \geq ht ([\beta]) - n$, $[\beta]$ or $[\beta']$. It is enough to show that if $[\delta] \in \Phi (x)$ be such that $ht([\delta]) = ht ([\beta]) - n$ and $[\delta] \neq [\beta], [\beta']$ then $x_{[\delta]}(t_{[\delta]}) \in H$. Note that there exists a simple root $[\alpha_i] \in \Delta_\rho$ such that $[\delta] + [\alpha_i] \in \Phi_\rho$ and $[\delta] - [\alpha_i] \not \in \Phi_\rho$ (see 3.6 of \cite{EA2}). Let $\Phi'$ be the subsystem generated by $[\alpha_i]$ and $[\delta]$.
    \begin{enumerate}
        \item Suppose $\Phi'$ is of type $A_2$. In this case, the pair $[\alpha_i]$ and $[\delta]$ is either of the type $(b-i)$ or of the type $(b-ii)$. Take
        \[
            H \ni [x_{[\alpha_i]}(1), x] = \begin{cases}
                x_{[\delta] + [\alpha_i]} (\pm t_{[\delta]}) x' & \text{if } [\alpha_i], [\delta] \text{ is of type } (b-i), \\
                x_{[\delta] + [\alpha_i]} (\pm t_{[\delta]}) x' \text{ or } x_{[\delta] + [\alpha_i]} (\pm \bar{t}_{[\delta]}) x' & \text{if } [\alpha_i], [\delta] \text{ is of type } (b-ii);
            \end{cases}
        \]
        where $x'$ is a product of elements $x_{[\alpha]}(t_{[\alpha]})$ with $[\alpha] \neq [\delta] + [\alpha_i]$ and $ht ([\alpha]) > ht ([\delta])$. Hence, by $(P_n)$, we have $x_{[\delta]+[\alpha_i]} (\pm t_{[\delta]})$ or $x_{[\delta]+[\alpha_i]} (\pm \bar{t}_{[\delta]}) \in H$. But then, by Proposition \ref{z to Rz}, $x_{[\delta]} (t_{[\delta]}) \in H$.

        \item Suppose $\Phi'$ is of type $B_2$ and $[\delta]$ is a short root. In this case, the pair $[\alpha_i]$ and $[\delta]$ is of the type $(d-i)$ if $\Phi_\rho \sim {}^2 A_{2n-1}, {}^2 D_{n+1}$ or ${}^2 E_6$ and of the type $(d-ii)$ if $\Phi_\rho \sim {}^2A_{2n}$ (with $[\alpha_i]$ being the long root). Take 
        \[
            H \ni [x_{[\alpha_i]}(1), x] = \begin{cases}
                x_{[\delta] + [\alpha_i]} (t_{[\delta]}) x' & \text{if } {}^2 A_{2n-1}, {}^2 D_{n+1} \text{ or } {}^2 E_6, \\
                x_{[\delta] + [\alpha_i]} (s_{[\delta]}) x' & \text{if } {}^2 A_{2n};
            \end{cases}
        \]
        where $x'$ is a product of elements $x_{[\alpha]}(t_{[\alpha]})$ with $[\alpha] \neq [\delta] + [\alpha_i]$, $ht ([\alpha]) > ht ([\delta])$ and $s_{[\delta]} = (\pm t_1, t_2)$ or $(\pm \bar{t_1}, \bar{t_2})$ if $t_{[\delta]}= (t_1, t_2).$ Hence, by $(P_n)$, we have $x_{[\delta]+[\alpha_i]} (t_{[\delta]})$ or $x_{[\delta]+[\alpha_i]} (s_{[\delta]}) \in H$. But then, by Proposition \ref{z to Rz}, $x_{[\delta]} (t_{[\delta]}) \in H$. 

        \item Suppose $\Phi'$ is of type $B_2$ and $[\delta_i]$ is a long root.  In this case, the pair $[\delta]$ and $[\alpha_i]$ is of the type $(d-i)$ if $\Phi_\rho \sim {}^2 A_{2n-1}, {}^2 D_{n+1}$ or ${}^2 E_6$ and of the type $(d-ii)$ if $\Phi_\rho \sim {}^2A_{2n}$. Take
        \[
            H \ni \begin{cases}
                [x_{[\alpha_i]}(1), x] = x_{[\delta] + [\alpha_i]} (\pm t_{[\delta]}) x' & \text{if } {}^2 A_{2n-1}, {}^2 D_{n+1} \text{ or } {}^2 E_6, \\
                [x_{[\alpha_i]}(1, 1/2), x] = x_{[\delta] + [\alpha_i]} (\pm t'_{[\delta]}, t_{[\delta]} \bar{t}_{[\delta]}/2) x' & \text{if } {}^2 A_{2n};
            \end{cases}
        \]
        where $x'$ is a product of elements $x_{[\alpha]}(t_{[\alpha]})$ with $[\alpha] \neq [\delta] + [\alpha_i]$, $ht ([\alpha]) > ht ([\delta])$ and $t'_{[\delta]} = t_{[\delta]}$ or $\bar{t}_{[\delta]}$. Hence, by $(P_n)$, we have $x_{[\delta]+[\alpha_i]} (\pm t_{[\delta]})$ or $x_{[\delta] + [\alpha_i]} (\pm t'_{[\delta]}, t_{[\delta]} \bar{t}_{[\delta]}/2) \in H$. But then, by Proposition \ref{z to Rz}, $x_{[\delta]} (t_{[\delta]}) \in H$.
    \end{enumerate}
    This proves the lemma.
\end{proof}


We labelled the simple roots $[\alpha_1], [\alpha_2], \dots, [\alpha_l]$ from one end of the Dynkin diagram to the other end such that 
\[
    [\alpha_1] \sim \begin{cases}
        A_1^2 & \text{if } \Phi_\rho \sim {}^2A_{n} \ (n \geq 3); \\
        A_1 & \text{if } \Phi_\rho \sim {}^2D_{n} \ (n \geq 4), {}^3D_{4} \text{ or } {}^2E_{6}. \\
        
    \end{cases} 
\]
Let $[\beta]$ be the highest root in $\Phi_\rho$. Note that there is a unique simple root $[\gamma] \in \Delta_\rho$ such that $\langle [\beta], [\gamma] \rangle \neq 0.$ 
The following table give us the precious values of $[\beta]$ and $[\gamma]$:
\begin{center}
    \begin{tabular}{c|c|c}
        \textbf{Type of $\Phi_\rho$} & \textbf{$[\beta]$} & \textbf{$[\gamma]$} \\
        \hline
        ${}^2 A_{2n-1} \ (n \geq 2)$ & $2 [\alpha_1] + 2 [\alpha_2] + \dots + 2 [\alpha_{n-1}] + [\alpha_n]$ & $[\alpha_1]$ \\
        ${}^2 A_{2n} \ (n \geq 2)$ & $[\alpha_1] + 2[\alpha_2] + 2 [\alpha_3] + \dots + 2[\alpha_{n}]$ & $[\alpha_2]$ \\
        ${}^2 D_{n} \ (n \geq 4)$ & $[\alpha_1] + 2[\alpha_2] + 2 [\alpha_3] + \dots + 2[\alpha_{n-1}]$ & $[\alpha_2]$ \\
        ${}^3 D_{4}$ & $2[\alpha_1] + 3[\alpha_2]$ & $[\alpha_1]$ \\
        ${}^2 E_{6}$ & $2 [\alpha_1] + 3 [\alpha_2] + 4 [\alpha_3] + 2 [\alpha_4]$ & $[\alpha_1]$ \\
    \end{tabular}
\end{center}


\begin{lemma}\label{lemma:UTV cap H}
    \normalfont
    Let $[\beta]$ and $[\gamma]$ be as above. If $z = x_{[\gamma]}(t) xhy \in U_\sigma (R) T_\sigma (R) U^{-}_\sigma (R) \cap H,$ where $x_{[\gamma]}(t) x \in U_\sigma (R),$ $x$ is a product of elements $x_{[\alpha]}(t_{[\alpha]})$ with $[\alpha] \neq [\gamma], [\alpha] \in \Phi^{+}_\rho$ and $t_{[\alpha]} \in R_{[\alpha]}, h \in T_\sigma (R)$ and $y \in U^{-}_\sigma (R)$. Then $x_{[\gamma]}(t) \in H$.
\end{lemma}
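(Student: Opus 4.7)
The plan is to form the commutator $c(s) := [x_{-[\beta]}(s), z]$ for a suitably chosen $s \in R_{[\beta]}$, which automatically lies in $H$ because $H$ is normalized by $E'_\sigma(R)$. The goal is to show that after conjugation by a Weyl element, $c(s)$ becomes an element of $U_\sigma(R)\cap H$ whose leading factor (in the ordered product guaranteed by Proposition~\ref{lemma 62 of RS}) is of the form $x_{[\gamma]}(\text{unit}\cdot t)$; Lemma~\ref{lemma:U cap H} then extracts that factor, and Proposition~\ref{z to Rz} (applied once to rescale) yields $x_{[\gamma]}(t)\in H$.

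The first step is to exploit that $[\beta]$ is the unique highest root: for any $[\alpha]\in\Phi^+_\rho$, the sum $-[\alpha]-[\beta]$ is not a root, so every generator of $U^-_\sigma(R)$ commutes with $x_{-[\beta]}(s)$. In particular $y$ commutes with $x_{-[\beta]}(s)$, while the torus element $h=h(\chi)$ merely rescales it by $\chi([\beta])^{-1}$ (Proposition~\ref{prop h(chi)xh(chi)^-1}). Consequently
\[
z\,x_{-[\beta]}(s)\,z^{-1} \;=\; (x_{[\gamma]}(t)\,x)\,x_{-[\beta]}(s')\,(x_{[\gamma]}(t)\,x)^{-1},
\]
with $s'$ obtained from $s$ by the torus action, reducing everything to a computation inside the unipotent radicals.

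Next, I would expand $(x_{[\gamma]}(t)\,x)\,x_{-[\beta]}(s')\,(x_{[\gamma]}(t)\,x)^{-1}$ using the Chevalley commutator formulas of Section~\ref{subsec:CheComm}. Since $[\gamma]$ is the unique simple root with $\langle[\beta],[\gamma]^\vee\rangle\neq 0$, the commutator of $x_{[\gamma]}(t)$ with $x_{-[\beta]}(s')$ produces a principal factor $x_{[\gamma]-[\beta]}(\text{const}\cdot t\cdot s')$ together with lower-order corrections from higher root pairs (handled by the invertibility of $2$, and of $3$ in the ${}^3D_4$ case). The remaining factors $x_{[\alpha]}(t_{[\alpha]})$ with $[\alpha]\neq[\gamma]$ contribute terms supported only on roots $i[\alpha]+j(-[\beta])$ that lie strictly below $[\gamma]-[\beta]$ in the ordering (a case check using $[\beta]$ highest). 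After rearranging, $c(s)$ takes the shape $x_{[\gamma]-[\beta]}(\text{const}\cdot t\cdot s')\cdot r(s)$, where $r(s)$ is supported on roots $[\delta]<[\gamma]-[\beta]$.

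Finally, conjugate $c(s)\in H$ by $w_{[\beta]}(1)\in E'_\sigma(R)$. The reflection $s_{[\beta]}$ sends $[\gamma]-[\beta]$ to $[\gamma]$ (since $\langle[\gamma],[\beta]^\vee\rangle=1$, which is readily verified against the table listing $[\beta]$ and $[\gamma]$), and by a direct check within each of the root systems ${}^2A_n,{}^2D_n,{}^3D_4,{}^2E_6$ it sends every $[\delta]<[\gamma]-[\beta]$ appearing in $r(s)$ to a positive root. Using Proposition~\ref{prop wxw^{-1}}, the conjugate $w_{[\beta]}(1)\,c(s)\,w_{[\beta]}(1)^{-1}$ therefore lies in $U_\sigma(R)\cap H$, with its $[\gamma]$-coordinate equal to a nonzero scalar multiple of $t\cdot s'$. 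Lemma~\ref{lemma:U cap H} then puts $x_{[\gamma]}(\text{const}\cdot t\cdot s')$ into $H$, and by choosing $s$ so that $s'$ is a unit (for instance $s=1$, using $1/2\in R$ to absorb the torus rescaling) and invoking Proposition~\ref{z to Rz}, we obtain $x_{[\gamma]}(t)\in H$. The main obstacle is the case-by-case verification in Step~3 that every residual root $[\delta]$ appearing in $r(s)$ satisfies $s_{[\beta]}([\delta])>0$; this is most delicate in ${}^2A_{2n}$ where $[\gamma]\sim A_2$ and commutator type $(d\text{-}ii)$ produces the additional $[\alpha]+2[\beta]$ contributions, which must be checked separately before invoking Lemma~\ref{lemma:U cap H}.
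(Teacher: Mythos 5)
Your first reduction is fine: since $[\beta]$ is the highest root, $-[\alpha]-[\beta]$ is never a root, so $y$ indeed commutes with $x_{-[\beta]}(s)$, and $h$ only rescales $s$ by a unit; the endgame (Lemma~\ref{lemma:U cap H} followed by Proposition~\ref{z to Rz}) is also the right toolkit. The gap is in the middle step, where you claim that the residual factors of $c(s)$ are supported on roots ``strictly below'' $[\gamma]-[\beta]$ and that $w_{[\beta]}(1)\,c(s)\,w_{[\beta]}(1)^{-1}$ lies in $U_\sigma(R)\cap H$. Neither assertion holds. First, for any positive $[\alpha]\neq[\gamma]$ with $\langle[\alpha],[\beta]^\vee\rangle=1$ the root $[\alpha]-[\beta]$ occurs in the expansion and is \emph{not} below $[\gamma]-[\beta]$ (no positive root is strictly below the simple root $[\gamma]$). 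More seriously, the expansion contains factors on roots that are \emph{orthogonal} to $[\beta]$, hence fixed by $s_{[\beta]}$, and negative; these survive the conjugation and prevent the element from lying in $U_\sigma(R)$. Already in the smallest case $\Phi_\rho\sim{}^2A_3$ (so $\Phi_\rho=C_2$, $[\beta]=2e_1$, $[\gamma]=e_1-e_2$), the pair $(-[\beta],[\gamma])$ is of type $(d$-$i)$, and the commutator $[x_{-[\beta]}(s'),x_{[\gamma]}(t)]$ produces, besides $x_{[\gamma]-[\beta]}(\pm s't)$, the factor $x_{2[\gamma]-[\beta]}(\pm s't\bar t)$ with $2[\gamma]-[\beta]=-2e_2$; since $-2e_2\perp[\beta]$, it is fixed by $s_{[\beta]}$ and remains a negative root after conjugation, so Lemma~\ref{lemma:U cap H} cannot be invoked. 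The same phenomenon occurs in ${}^2A_{2n}$ through the type $(d$-$ii)$ terms $x_{2[\alpha]-[\beta]}(\cdot)$ coming from short factors $x_{[\alpha]}(t_{[\alpha]})$ of $x$ (e.g.\ $[\alpha]=e_2$ gives $2[\alpha]-[\beta]=e_2-e_1<0$, fixed by $s_{[\beta]}$), and there are further nested-commutator contributions from the other factors of $x$ whose images under $s_{[\beta]}$ you have not controlled.

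This is precisely the difficulty the paper's proof is organized around: rather than one commutator with $x_{-[\beta]}$ and one Weyl conjugation, it builds a chain of iterated commutators (with $x_{-[\gamma]}(1)$, then with $x_{[\delta]}(1)$ for a simple root $[\delta]$ adjacent to $[\gamma]$, then with $x_{[\beta]}(1)$ or $x_{-[\gamma]-[\delta]}(1)$), at each stage stripping off or separately absorbing exactly these leftover factors (using Lemma~\ref{lemma:U cap H} and Proposition~\ref{z to Rz} along the way), and it treats the rank-two cases ${}^2A_3$, ${}^2A_4$, ${}^3D_4$ by separate explicit computations because there the offending terms cannot be pushed onto higher roots. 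To rescue your plan you would have to show how the factors on $s_{[\beta]}$-fixed negative roots (and on roots sent to negatives) are eliminated before conjugating, which is the real content of the lemma; as written, the proposal does not prove it.
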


\begin{proof}
    We write ${}^a b$ for the conjugate $a b a^{-1}$. 
    
    \vspace{2mm}
    
    \noindent \textbf{Case A. The rank of $\Phi_\rho > 2$:} 
    Let 
    \begin{align*}
        H \ni z_1 &= [x_{-[\gamma]}(1), z] \\
        &= [x_{-[\gamma]} (1), x_{[\gamma]}(t)] \{ {}^{x_{[\gamma]}(t)} [x_{-[\gamma]}(1), x] \} \{ {}^{x_{[\gamma]}(t) x} [x_{-[\gamma]}(1), h] \} \{ {}^{x_{[\gamma]}(t) x h} [x_{-[\gamma]}(1), y] \} \\
        &= {}^{x_{[\gamma]}(t) x} \{ \{ {}^{x^{-1} x_{[\gamma]}(t)^{-1}} [x_{-[\gamma]} (1), x_{[\gamma]}(t)] \} \{ {}^{x^{-1}} [x_{-[\gamma]}(1), x] \} \{ [x_{-[\gamma]}(1), h] \} \{ {}^{h} [x_{-[\gamma]}(1), y] \} \}.
    \end{align*}
    Note that, ${}^{x^{-1} x_{[\gamma]}(t)^{-1}} [x_{-[\gamma]} (1), x_{[\gamma]}(t)] = {}^{x^{-1}} [x_{[\gamma]} (t)^{-1}, x_{-[\gamma]}(1)] = [x_{[\gamma]} (t)^{-1}, x_{-[\gamma]}(1)] x'$ with $x' \in U_{\sigma}(R),$ $[x_{-[\gamma]}(1),h] = x_{-[\gamma]}(a)$ for some $a \in R_{[\gamma]}$ and $x^{-1} [x_{-[\gamma]} (1), x] = [x^{-1}, x_{-[\gamma]} (1)] \in U_\sigma (R)$. Set $u_1 = [x_{[\gamma]}(t)^{-1}, x_{-[\gamma]} (1)], x_1= x' [x^{-1}, x_{-[\gamma]} (1)]$ and $y_1 = x_{-[\gamma]}(a) \{ {}^{h} [x_{-[\gamma]}(1), y] \}$. 
    Thus we have $z_1 = {}^{x_{[\gamma]}(t) x}(u_1 x_1 y_1)$ and since $z_1 \in H$ then so is $u_1 x_1 y_1$. 
    Observe that $x_1 \in U_\sigma (R)$ is a product of elements $x_{[\alpha]}(t_{[\alpha]})$ with $[\alpha] \neq [\gamma]$, and $y_1 \in U^{-}_\sigma (R)$ is a product of elements $x_{-[\alpha]}(s_{[\alpha]})$ with $m_{[\gamma]}([\alpha]) \geq 1$. 
    In this case, there exists a root $[\delta] \in \Delta_\rho$ such that $\{[\gamma], [\delta]\}$ is a base of a subsystem of $\Phi_\rho$ of type $A_2$ (note that $[\gamma] \sim A_1$ or $A_1^2$ then so is $[\delta]$). Now let
    \begin{align*}
        H \ni z_2 &= [x_{[\delta]}(1), u_1 x_1 y_1] \\
        &= \{ [x_{[\delta]}(1), u_1] \} \{ {}^{u_1} [x_{[\delta]}(1), x_1] \} \{ {}^{u_1 x_1} [x_{[\delta]}(1), y_1] \} \\
        &= {}^{u_1 x_1} \{ \{ {}^{x_1^{-1} u_1^{-1}} [x_{[\delta]}(1), u_1] \} \{ {}^{x_1^{-1}} [x_{[\delta]}(1), x_1] \} \{ [x_{[\delta]}(1), y_1] \} \}.
    \end{align*}
    Note that, 
    \begin{align*}
        {}^{x_1^{-1} u_1^{-1}} [x_{[\delta]}(1), u_1] &= {}^{x_1^{-1}} [u_1^{-1}, x_{[\delta]}(1)] \\
        &= x_1^{-1} (u_1^{-1} x_{[\delta]}(1) u_1 x_{[\delta]}(1)^{-1}) x_1 \\
        &= {}^{x_1^{-1} x_{-[\gamma]}(1)} \{ x_{[\delta]} (\pm t') x_{[\gamma] + [\delta]} (\pm t'^{2}) \},
    \end{align*}
    where $t' = t$ or $\bar{t}$. Observe that, there exists $x'_1 \in U_\sigma (R)$ such that $x_1^{-1} x_{-[\gamma]}(1) = x_{-[\gamma]}(1) x'_1$. Then,
    \begin{align*}
        z_2 &= {}^{u_1 x_1} \{ \{ {}^{x_1^{-1} u_1^{-1}} [x_{[\delta]}(1), u_1] \} \{ {}^{x_1^{-1}} [x_{[\delta]}(1), x_1] \} \{ [x_{[\delta]}(1), y_1] \} \} \\
        &= {}^{u_1 x_1 x_{-[\gamma]}(1)} \{ \{ {}^{x'_1} (x_{[\delta]}(\pm t') x_{[\gamma]+[\delta]}(\pm t'^2)) \} \{ {}^{x_{-[\gamma]}(1)^{-1} x_1^{-1}} [x_{[\delta]}(1), x_1] \} \{ {}^{x_{-[\gamma]}(1)^{-1}} [x_{[\delta]}(1), y_1] \} \} \\
        &= {}^{u_1 x_1 x_{-[\gamma]}(1)} \{ \{ x_{[\delta]}(\pm t') x_{[\gamma]+[\delta]}(\pm t'^2) x''_1 \} \{ {}^{x_{-[\gamma]}(1)^{-1} x_1^{-1}} [x_{[\delta]}(1), x_1] \} \{ {}^{x_{-[\gamma]}(1)^{-1}} [x_{[\delta]}(1), y_1] \} \},
    \end{align*}
    where $x''_1 \in U_\sigma (R)$. Set $x_2 = x''_1 \{ {}^{x_{-[\gamma]}(1)^{-1} x_1^{-1}} [x_{[\delta]}(1), x_1] \}$ and $y_2 = {}^{x_{-[\gamma]}(1)^{-1}} [x_{[\delta]}(1), y_1]$. Note that $x_2 \in U_\sigma (R)$ is a product of elements $x_{[\alpha]}(t_{[\alpha]})$ with $[\alpha] \in \Phi^{+}_\rho, [\alpha] \neq [\gamma], [\delta], [\gamma]+[\delta]$ and $y_2  \in U^{-}_\sigma (R)$ is a product of elements $x_{-[\alpha]}(s_{[\alpha]})$ with $[\alpha] \in \Phi^{+}_\rho$ and $m_{[\gamma]} ([\alpha]) \geq 1$. But then
    $$z_2 = [x_{[\delta]}(1), u_1 x_1 y_1] = {}^{u_1 x_1 x_{-[\gamma]}(1)} \{ x_{[\delta]}(\pm t') x_{[\gamma]+[\delta]}(\pm t'^2) x_2 y_2 \}.$$
    Since $z_2 \in H$ then so is $x_{[\delta]}(\pm t') x_{[\gamma]+[\delta]}(\pm t'^2) x_2 y_2$. 
    Let
    \begin{align*}
        H \ni z_3 &= [x_{-[\gamma]}(1), x_{[\delta]}(\pm t') x_{[\gamma]+[\delta]}(\pm t'^2) x_2 y_2] \\
        &= [x_{-[\gamma]}(1), x_{[\delta]}(\pm t')] \{ {}^{x_{[\delta]}(\pm t')} [x_{-[\gamma]}(1), x_{[\gamma]+[\delta]}(\pm t'^2)] \} \{ {}^{x_{[\delta]}(\pm t') x_{[\gamma]+[\delta]}(\pm t'^2)} [x_{-[\gamma]}(1), x_2] \} \\
        & \hspace{20mm} \{ {}^{x_{[\delta]}(\pm t') x_{[\gamma]+[\delta]}(\pm t'^2) x_2} [x_{-[\gamma]}(1), y_2] \} 
    \end{align*}
    Note that $[x_{-[\gamma]}(1), x_{[\delta]}(\pm t')]=1$ and $y_3 := [x_{-[\gamma]}(1), y_2] = 1$. The formal assertion is clear. To see the latter assertion, observe that the highest root $[\beta]$ is the only root with $m_{[\gamma]}([\beta]) = 2,$ therefore $x_{-[\beta]}(u)$ is the only possible factor of $y_3$. 
    Assume $y_3 = x_{-[\beta]}(u) \neq 1,$ then $-[\beta] = -[\alpha] - [\gamma]$ for some $[\alpha]$ such that $x_{-[\alpha]}(v)$ is factor of $y_2$. Hence, $-[\alpha] = -[\alpha_1] + [\delta]$ or $(-[\alpha_1]+[\delta]) -[\alpha_2]$ for some $[\alpha_1], [\alpha_2]$ with $m_{[\gamma]}([\alpha_i]) \geq 1 \ (i=1,2).$ But it is impossible for the first case, as $[\alpha_1] = [\beta] - [\gamma] + [\delta]$ is never a root, and for the second case, $2 \geq m_{[\gamma]}([\alpha_1] - [\delta] + [\alpha_2]) = m_{[\gamma]} ([\beta] - [\gamma]) = 1$. Which proves the claim. Finally, we have
    \begin{align*}
        z_3 &= \{ {}^{x_{[\delta]}(\pm t')} [x_{-[\gamma]}(1), x_{[\gamma]+[\delta]}(\pm t'^2)] \} \{ {}^{x_{[\delta]}(\pm t') x_{[\gamma]+[\delta]}(\pm t'^2)} [x_{-[\gamma]}(1), x_2] \} \\
        &= {}^{x_{[\delta]}(\pm t') x_{[\gamma]+[\delta]}(\pm t'^2)} \{ {}^{ x_{[\gamma]+[\delta]}(\pm t'^2)^{-1}} [x_{-[\gamma]}(1), x_{[\gamma]+[\delta]}(\pm t'^2)] \} \{ [x_{-[\gamma]}(1), x_2] \} \\
        &= {}^{x_{[\delta]}(\pm t') x_{[\gamma]+[\delta]}(\pm t'^2)} \{ [x_{[\gamma]+[\delta]}(\pm t'^2)^{-1}, x_{-[\gamma]}(1)]  [x_{-[\gamma]}(1), x_2] \} \\
        &= {}^{x_{[\delta]}(\pm t') x_{[\gamma]+[\delta]}(\pm t'^2)} \{ x_{[\delta]} (\pm t'^2) x_3 \},
    \end{align*}
    where $x_3 = [x_{-[\gamma]}(1), x_2] \in U_\sigma (R)$. Since $z_3 \in H$, then so is $x_{[\delta]}(\pm t'^2) x_3$. Therefore, by Lemma \ref{lemma:U cap H}, we have $x_{[\delta]} (\pm t'^2) \in H$. But then, by Proposition \ref{z to Rz}, $x_{[\gamma] + [\delta]} (\pm t'^2) \in H$. Hence $x_{[\gamma] + [\delta]} (\pm t'^2)^{-1} \{ x_{[\delta]}(\pm t') x_{[\gamma]+[\delta]}(\pm t'^2) x_2 y_2 \} = x_{[\delta]}(\pm t') x_2 y_2 \in H$. Now let 
    \begin{align*}
        H \ni z_4 &= [x_{-[\gamma] - [\delta]} (1), x_{[\delta]}(\pm t') x_2 y_2] \\
        &= [x_{-[\gamma] - [\delta]} (1), x_{[\delta]}(\pm t')] \{ {}^{x_{[\delta]}(\pm t')}[x_{-[\gamma] - [\delta]} (1), x_2] \} \{ {}^{x_{[\delta]}(\pm t') x_2} [x_{-[\gamma] - [\delta]} (1), y_2] \} \\
        &= x_{-[\gamma]}(\pm t') x_4 y_4,
    \end{align*}
    where $x_4 = {}^{x_{[\delta]}(\pm t')}[x_{-[\gamma] - [\delta]} (1), x_2]$ and $y_4 = {}^{x_{[\delta]}(\pm t') x_2} [x_{-[\gamma] - [\delta]} (1), y_2]$. Since $x_2$ does not have factors of the form $x_{[\alpha]}(t_{[\alpha]})$ with $[\alpha] = [\gamma], [\delta], [\gamma]+[\delta]$, we conclude that $x_4 \in U_\sigma(R)$ and it does not have a factor of the form $x_{[\delta]} (s_{[\delta]})$. Now we claim that $[x_{-[\gamma] - [\delta]} (1), y_2] = 1$ or $x_{-[\beta]}(s)$ for some $s \in R_{[\beta]}$. The latter case is possible only when $\Phi_\rho \sim {}^2 A_{2n+1}$. To see this, observe that the highest root $[\beta]$ is the only root with $m_{[\gamma]}([\beta]) = 2$, therefore $x_{-[\beta]}(u)$ is the only possible factor of $[x_{-[\gamma] - [\delta]} (1), y_2]$. Assume $\Phi_\rho \not\sim {}^{2} A_{2n+1}$ and $[x_{-[\gamma] - [\delta]} (1), y_2] = x_{-[\beta]}(s) \neq 1$, then $-[\beta] = -[\alpha] - [\gamma] - [\delta]$ for some root $[\alpha]$ such that $x_{-[\alpha]}(v)$ is a factor of $y_2$. Hence, $-[\alpha] = - [\alpha_1] + [\delta]$ or $-[\alpha] = (-[\alpha_1] + [\delta]) - [\alpha_2]$ for some $[\alpha_1], [\alpha_2]$ with $m_{[\gamma]}([\alpha_i]) \geq 1 (i=1,2).$ But it is impossible for the first case, as in this case $[\alpha_1] = [\beta] - 2[\gamma]$ is never a root, and for the second case, $2 \geq m_{[\gamma]}([\alpha_1] - [\delta] + [\alpha_2]) = m_{[\gamma]}([\beta] - [\gamma] - [\delta]) = 1$. Which proves the claim. 
    
    Suppose $\Phi_\rho \not\sim {}^2 A_{2n+1}$ then $z_4 = x_{-[\gamma]}(\pm t') x_4$. In this case, let
    \begin{align*}
        H \ni z_5 &= [x_{[\beta]}(1), x_{-[\gamma]} (\pm t') x_4] \\
        &= [x_{[\beta]}(1), x_{-[\gamma]}(\pm t')] \{ {}^{x_{-[\gamma]}(\pm t')} [x_{[\beta]}(1), x_4] \} \\
        &= x_{[\beta] - [\gamma]}(\pm t').
    \end{align*}
    Thus, by Proposition \ref{z to Rz}, we have $x_{[\gamma]}(t) \in H$. 
    
    Now suppose $\Phi_\rho \sim {}^2 A_{2n+1}$, then $ [x_{-[\gamma] - [\delta]}(1), y_2] = x_{-[\beta]}(s)$ for some $s \in R_{[\beta]}$. We can rewrite the expression of $z_4$ as follows:
    \begin{align*}
        z_4 &= [x_{-[\gamma] - [\delta]} (1), x_{[\delta]}(\pm t')] \{ {}^{x_{[\delta]}(\pm t')}[x_{-[\gamma] - [\delta]} (1), x_2] \} \{ {}^{x_{[\delta]}(\pm t') x_2} [x_{-[\gamma] - [\delta]} (1), y_2] \} \\
        &= {}^{x_{[\delta]}(\pm t') x_2} \{ \{ {}^{x_2^{-1} x_{[\delta]}(\pm t')^{-1}} [x_{-[\gamma] - [\delta]} (1), x_{[\delta]}(\pm t')] \} \{ {}^{x_2^{-1}} [x_{-[\gamma] - [\delta]} (1), x_2] \} \{ x_{-[\beta]}(s) \} \} \\
        &= {}^{x_{[\delta]}(\pm t') x_2} \{ \{ {}^{x_2^{-1}} [x_{[\delta]}(\pm t')^{-1}, x_{-[\gamma] - [\delta]} (1)] \} \{ [x_2^{-1}, x_{-[\gamma] - [\delta]} (1)] \} \{ x_{-[\beta]}(s) \} \} \\
        &= {}^{x_{[\delta]}(\pm t') x_2} \{ \{ {}^{x_2^{-1}} x_{-[\gamma]}(\pm t') \} \{ [x_2^{-1}, x_{-[\gamma] - [\delta]} (1)] \} \{ x_{-[\beta]}(s) \} \}
    \end{align*}
    Let $x'_2$ be such that ${}^{x_2^{-1}} x_{-[\gamma]}(\pm t') = x_{-[\gamma]}(\pm t') x'_2$. Clearly, $x'_2 \in U_\sigma (R)$. We set $x'_4 = x'_2 [x_2^{-1}, x_{-[\gamma] - [\delta]}(1)]$. Note that $x'_4 \in U_\sigma (R)$ such that it can not contain a factor of type $x_{[\delta]}(s_{[\delta]})$. But then 
    $$ z_4 = {}^{x_{[\delta]}(\pm t') x_2} \{ x_{-[\gamma]} (\pm t') x'_4 x_{-[\beta]}(s) \}.$$
    Since $z_4 \in H$, so is $x_{-[\gamma]} (\pm t')^{-1} x'_4 x_{-[\beta]}(s)$. Now, let
    \begin{align*}
        H \ni z'_5 &= [x_{-[\delta]}(1), x_{-[\gamma]} (\pm t') x'_4 x_{-[\beta]}(s)] \\
        &= [x_{-[\delta]}(1), x_{-[\gamma]} (\pm t')] \{ {}^{x_{-[\gamma]} (\pm t')} [x_{-[\delta]}(1), x'_4] \} \{ {}^{x_{-[\gamma]} (\pm t') x'_4} [x_{-[\delta]}(1), x_{-[\beta]}(s)] \} \\
        &= {}^{x_{-[\gamma]}(\pm t')} \{ \{{}^{x_{-[\gamma]}(\pm t')^{-1}} [x_{-[\delta]}(1), x_{-[\gamma]} (\pm t')] \} \{ [x_{-[\delta]}(1), x'_4] \} \} \\
        &= {}^{x_{-[\gamma]}(\pm t')} \{ \{[x_{-[\gamma]}(\pm t')^{-1}, x_{-[\delta]}(1)] \} \{ [x_{-[\delta]}(1), x'_4] \} \} \\
        &= {}^{x_{-[\gamma]}(\pm t')} \{ \{ x_{-[\gamma] - [\delta]} (\pm t') \} \{ [x_{-[\delta]}(1), x'_4] \} \} \\
        & = {}^{x_{-[\gamma]}(\pm t')} \{ x_{-[\gamma] - [\delta]} (\pm t') x_5 \},
    \end{align*}
    where $x_5 = [x_{-[\delta]}(1), x'_4] \in U_\sigma (R)$. Note that $x_{-[\gamma] - [\delta]} (\pm t') x_5 \in H$. Finally, let
    \begin{align*}
        H \ni z_6 &= [x_{[\beta]}(1), x_{-[\gamma] - [\delta]} (\pm t') x_5] \\
        &= [x_{[\beta]}(1), x_{-[\gamma] - [\delta]} (\pm t')] \{ {}^{x_{-[\gamma] - [\delta]} (\pm t')} [x_{[\beta]}(1), x_5] \} \\
        &= [x_{[\beta]}(1), x_{-[\gamma] - [\delta]} (\pm t')] \\
        &= x_{[\beta] - [\gamma] - [\delta]} (\pm t') x_{[\beta] - 2 [\gamma] - 2 [\delta]} (\pm t \bar{t})
    \end{align*}
    Now, by Lemma \ref{lemma:U cap H}, we have $x_{[\beta] - [\gamma] - [\delta]} (\pm t') \in H$ and hence, by Proposition \ref{z to Rz}, $x_{[\gamma]} (t) \in H$.

    \vspace{2mm}
    
    \noindent \textbf{Case B. $\Phi_\rho \sim {}^2 A_3$:} 
    Let $[\delta] \in \Delta_\rho$ be such that $\{[\gamma], [\delta]\}$ forms a base of $\Phi_\rho$. Note that $[\gamma]$ is a short root. The idea of the proof is the same as in Case A. We leave the details to the reader.
    Let $z = x_{[\gamma]}(t) x h y = x_{[\gamma]}(t) x h x_{-[\gamma]-[\delta]}(s_1) x_{-[\gamma]}(s_2) x_{-2[\gamma]-[\delta]}(s_3) x_{-[\delta]}(s_4) \in H$.
    We first show that $x_{-[\delta]}(s_4) \in H$. Write $x' = x^{-1} x_{[\gamma]}(t)^{-1}$ and $y' = \{ x_{-[\gamma]-[\delta]}(s_1) x_{-[\gamma]}(s_2) x_{-2[\gamma]-[\delta]}(s_3) \}^{-1}$ and consider 
    $$ H \ni z_1 = [x_{[\delta]}(1), z^{-1}] = {}^{x_{-[\delta]}(-s_4)y'} \{ u_1 y_1 x_1 \}, $$
    where $u_1 = [x_{-[\delta]}(s_4), x_{[\delta]}(1)], y_1 = x_{-[\gamma]}(s'_1) x_{-[\gamma]-[\delta]}(s'_2) x_{-2[\gamma]-[\delta]}(s'_3) \in U^{-}_\sigma (R)$ and $x_1 = x_{[\delta]}(t'_1) x_{[\gamma]+[\delta]} (t'_2) x_{2[\gamma] + [\delta]} (t'_3) \in U_\sigma (R)$. Now consider
    $$H \ni z_2 = [x_{-[\gamma]}(1), u_1 y_1 x_1] = {}^{u_1 y_1 x_{[\delta]}(1)} \{ x_{-[\gamma]}(\pm s_4) y_2 x_2 \},$$
    where $y_2 = x_{-[\gamma]-[\delta]}(s''_1) x_{-2[\gamma]-[\delta]}(s''_2) \in U^{-}_\sigma (R)$ and $x_2 = x_{[\delta]}(t''_1) x_{[\gamma]+[\delta]} (t''_2) \in U_\sigma (R)$. Now let $z'_2 = x_{-[\gamma]}(\pm s_4) y_2 x_2$ and consider
    $$ H \ni z_3 = [x_{[\delta]}(1), z'_2] = x_{-[\gamma]}(\pm s''_1) x_{-2[\gamma] - [\delta]}(s'''_2).$$
    But then $$w_{2[\gamma]+[\delta]}(1) \{ x_{-[\gamma]}(\pm s''_1) x_{-2[\gamma] - [\delta]}(s'''_2) \}w_{2[\gamma]+[\delta]}(1)^{-1} = x_{[\gamma] + [\delta]}(\pm s''_1) x_{2[\gamma] + [\delta]}(\pm s'''_2) \in H.$$ 
    By Lemma \ref{lemma:U cap H}, we have $x_{[\gamma] + [\delta]}(\pm s''_1) \in H$ and hence, by Proposition \ref{z to Rz}, we $x_{-[\gamma]-[\delta]}(s''_1) \in H$. Now $x_{-[\gamma]-[\delta]}(s''_1)^{-1} (z'_2) = x_{-[\gamma]}(\pm s_4) x_{-2[\gamma]-[\delta]}(s''_2) x_{[\delta]}(t''_1) x_{[\gamma]+[\delta]} (t''_2) \in H$. But then 
    \begin{gather*}
        w_{2[\gamma]+[\delta]}(1) w_{[\gamma] + [\delta]}(1) \{ x_{-[\gamma]}(\pm s_4) x_{-2[\gamma]-[\delta]}(s''_2) x_{[\delta]}(t''_1) x_{[\gamma]+[\delta]} (t''_2) \} w_{[\gamma] + [\delta]}(1)^{-1} w_{2[\gamma]+[\delta]}(1)^{-1} \\
        = x_{[\gamma] + [\delta]}(\pm s_4) x_{[\delta]}(s''_2) x_{2[\gamma] + [\delta]}(t''_1) x_{[\gamma]} (t''_2) \in H.
    \end{gather*}
    By Lemma \ref{lemma:U cap H}, we have $x_{[\gamma] + [\delta]}(\pm s_4) \in H$ and hence, by Proposition \ref{z to Rz}, we $x_{-[\delta]}(s_4) \in H$.
    Finally, $z' = z x_{-[\delta]}(s_4)^{-1} \in H$. Now consider, 
    $$ H \ni z_4 = [x_{[\delta]}(1), z'] = {}^{x_{[\gamma]}(t) x} \{ x_{[\gamma]+[\delta]} (\pm t) x_4 y_4 \}, $$
    where $x_4 =  x_{[\delta]}(t_{41}) x_{2[\gamma]+[\delta]}(t_{42})$ and $y_4 = x_{-[\gamma]}(s_{41}) x_{-2[\gamma]-[\delta]}(s_{42})$. Finally, we claim that $x_{-2[\gamma]-[\delta]}(s_{42}) \in H$. To see this consider
    $$ H \ni z_5 = [x_{[\gamma]+[\delta]}(1), x_{[\gamma]+[\delta]} (\pm t) x_4 y_4] = {}^{x_{[\gamma]+[\delta]} (\pm t) x_4 x_{-[\gamma]}(s_{41})} \{ x_{-[\gamma]}(\pm s_{42}) x_{[\delta]}(s_{52}) \}.$$
    But then $w_{[\gamma]}(1) \{ x_{-[\gamma]}(\pm s_{42}) x_{[\delta]}(s_{52}) \} w_{[\gamma]}(1)^{-1} = x_{[\gamma]}(\pm s_{42}) x_{2[\gamma] + [\delta]}(\pm s_{52}) \in H.$ By Lemma \ref{lemma:U cap H}, we have $x_{[\gamma]}(\pm s_{42}) \in H$, and hence, by Proposition \ref{z to Rz}, we have $x_{-2[\gamma]-[\delta]}(s_{42}) \in H$. Finally, we have $z'' = x_{[\gamma]+[\delta]} (\pm t) x_4 y_4 x_{[\gamma]}(\pm s_{42})^{-1} \in H$. But then 
    $$w_{[\gamma]}(1) z'' w_{[\gamma]}(1)^{-1} = x_{[\gamma]+[\delta]} (\pm t) x_{2[\gamma] + [\delta]} (\pm t_{41}) x_{[\delta]}(\pm t_{42}) x_{[\gamma]}(\pm s_{41}) \in H.$$ By Lemma \ref{lemma:U cap H}, $x_{[\gamma] + [\delta]} (t) \in H$ and hence, by Proposition \ref{z to Rz}, $x_{[\gamma]}(t) \in H$.
    
    \vspace{2mm}

    \noindent \textbf{Case C. $\Phi_\rho \sim {}^2 A_4$:} 
    Let $[\delta] \in \Delta_\rho$ be such that $\{[\gamma], [\delta]\}$ forms a base of $\Phi_\rho$. Note that $[\gamma]$ is a long root. The idea of the proof is the same as in Case A. We leave the details to the reader.
    We first consider 
    $$ z_1 = [x_{-[\gamma]}(1), z] = {}^{x_{[\gamma]}(t) x}(u_1 x_1 y_1),$$
    where $u_1 = [x_{[\gamma]}(t)^{-1}, x_{-[\gamma]}(1)],$ $x_1 = x_{[\delta]}(t_1) x_{[\gamma]+[\delta]} (t_2) x_{[\gamma] + 2[\delta]} (t_3)$ and 
    $y_1 = x_{-[\gamma]}(s_1)$ $x_{-[\gamma] - [\delta]}(s_2)$ $x_{-[\gamma] - 2[\delta]} (s_3)$.
    We than consider  
    \begin{align*}
        H \ni z_2 &= [x_{[\delta]}(1, 1/2), u_1 x_1 y_1] \\
        &= {}^{u_1 x_1 x_{-[\gamma]}(1)} \{ x_{[\delta]} (\pm t, (t \bar{t} \pm (t - \bar{t}))/2) x_2 y_2 \},
    \end{align*}
    where $x_2 = x_{[\gamma] + [\delta]}(t'_1) x_{[\gamma] + 2[\delta]} (t'_2)$ and $y_2 = x_{-[\gamma]}(s'_1) x_{-[\gamma] - [\delta]}(s'_2)$. Set $u_2 = x_{[\delta]} (\pm t, (t \bar{t} \pm (t - \bar{t}))/2)$ and $z'_2 = u_2 x_2 y_2$.
    Our next goal is to show that $x_{-[\gamma] - [\delta]}(s'_2) \in H$. Write $s'_2 = (a,b) \in R_{-[\gamma] - [\delta]}$. Now consider
    \begin{align*}
        H \ni z_3 &= [x_{[\delta]}(1, 1/2), z'_2] = {}^{u_2 x_2} \{ x_{[\gamma] + 2 [\delta]} (t''_1) x_{-[\gamma]} (\pm a') \}, 
    \end{align*}
    where $a'= a$ or $\bar{a}$ and $t''_1 \in R_{[\gamma] + 2 [\delta]}$. Since $z_3 \in H$ we have $x_{[\gamma] + 2 [\delta]} (t''_1) x_{-[\gamma]} (\pm a') \in H$. But then $w_{[\gamma]}(1) \{x_{[\gamma] + 2 [\delta]} (t''_1) x_{-[\gamma]} (\pm a')\} w_{[\gamma]}(1)^{-1} = x_{[\gamma] + 2 [\delta]} (t''_1) x_{[\gamma]} (\pm a') \in H$ (see Proposition \ref{prop wxw^{-1}}). By Lemma \ref{lemma:U cap H}, we have $x_{[\gamma]}(\pm a') \in H$. Next, we consider
    \begin{align*}
        H \ni z_4 &= [x_{[\gamma] + 2[\delta]}(1), z'_2] = {}^{u_2 x_2} \{ x_{[\delta]} (t''_2) x_{-[\gamma]} (\pm b') \}, 
    \end{align*}
    where $b'= b$ or $\bar{b}$ and $t''_2 \in R_{[\delta]}$. Since $z_3 \in H$ we have $x_{[\delta]} (t''_2) x_{-[\gamma]} (\pm b') \in H$. But then $w_{[\gamma]}(1) \{x_{[\delta]} (t''_2) x_{-[\gamma]} (\pm b')\} w_{[\gamma]}(1)^{-1} = x_{[\gamma] + [\delta]} (\pm t''_2) x_{[\gamma]} (\pm b') \in H$ (see Proposition \ref{prop wxw^{-1}}). By Lemma \ref{lemma:U cap H}, we have $x_{[\gamma]}(\pm b') \in H$. Since $x_{[\gamma]}(\pm a') \in H$ and $x_{[\gamma]}(\pm b') \in H$, by Proposition \ref{z to Rz}, we have $x_{-[\gamma]-[\delta]}(a,b) \in H$. But then 
    $$ z_2 x_{-[\gamma] - [\delta]}(a,b)^{-1} = x_{[\delta]}(\pm t, t \bar{t} \pm (t - \bar{t}))/2) x_{[\gamma] + [\delta]}(t'_1) x_{[\gamma] + 2[\delta]} (t'_2) x_{-[\gamma]}(s'_1) \in H.$$ 
    Finally, 
    \begin{gather*}
        w_{[\gamma]}(1) \{ x_{[\delta]}(\pm t, t \bar{t} \pm (t - \bar{t}))/2) x_{[\gamma] + [\delta]}(t'_1) x_{[\gamma] + 2[\delta]} (t'_2) x_{-[\gamma]}(s'_1) \} w_{[\gamma]}(1)^{-1} \\
        = x_{[\gamma] + [\delta]}(\pm t', t \bar{t} \pm (t - \bar{t}))/2) x_{[\delta]}(t'_{11}) x_{[\gamma] + 2[\delta]} (t'_{22}) x_{[\gamma]}(s'_{11}) \in H,
    \end{gather*}
    where $t' = t$ or $\bar{t}$, and $t'_{11} \in R_{[\delta]}, t'_{22} \in R_{[\gamma] + 2[\delta]}, s'_{11} \in R_{[\gamma]}$. Finally, by Lemma \ref{lemma:U cap H}, we have $x_{[\gamma] + [\delta]}(\pm t', t \bar{t} \pm (t - \bar{t}))/2) \in H$, and by Proposition \ref{z to Rz}, we have $x_{[\gamma]}(t) \in H$, as desired.
    
    \vspace{2mm}

    \noindent \textbf{Case D. $\Phi_\rho \sim {}^3 D_4$:} Let $[\delta] \in \Delta_\rho$ be such that $\{[\gamma], [\delta]\}$ forms a base of $\Phi_\rho$. Note that $[\gamma]$ is a long root. The idea of the proof is the same as in Case A. We leave the details to the reader.
    We first consider 
    $$ z_1 = [x_{-[\gamma]}(1), z] = {}^{x_{[\gamma]}(t) x}(u_1 x_1 y_1),$$
    where $u_1 = [x_{[\gamma]}(t)^{-1}, x_{-[\gamma]}(1)], x_1 = x_{[\delta]}(t_1) x_{[\gamma]+[\delta]} (t_2) x_{[\gamma] + 2[\delta]} (t_3) x_{[\gamma] + 3[\delta]} (t_4) x_{2[\gamma] + 3[\delta]} (t_5)$ and 
    $y_1 = x_{-[\gamma]}(s_1) x_{-[\gamma] - [\delta]}(s_2) x_{-[\gamma] - 2[\delta]} (s_3) x_{-[\gamma] - 3[\delta]} (s_4) x_{-2[\gamma] - 3[\delta]} (s_5)$.
    We than consider 
    \begin{align*}
        H \ni z_2 &= [x_{[\delta]}(1), u_1 x_1 y_1] \\
        &= {}^{u_1 x_1 x_{-[\gamma]}(1)} \{ x_{[\delta]} (\pm t) x_2 y_2 \},
    \end{align*}
    where $x_2 = x_{[\gamma] + [\delta]}(t'_1) x_{[\gamma] + 2[\delta]} (t'_2) x_{[\gamma] + 3[\delta]}(t'_3) x_{2[\gamma] + 3[\delta]} (t'_4)$ and $y_2 = x_{-[\gamma]}(s'_1)$ $x_{-[\gamma] - [\delta]}(s'_2)$ $x_{-[\gamma]-2[\delta]}(s'_3) x_{-2[\gamma] - 3[\delta]}(s'_4)$. Set $z'_2 = x_{[\delta]}(\pm t) x_2 y_2$.
    Our next goal is to show that $x_{-[\gamma] -2[\delta]}(s'_3) \in H$. Now consider 
    \begin{align*}
        H \ni z_3 &= [x_{[\gamma] + 3 [\delta]}(1), z'_2] \\
        &= {}^{x_{[\delta]}(\pm t) x_2 x_{-[\gamma]}(s'_1) x_{-[\gamma]-[\delta]}(s'_2) x_{-2[\gamma] - 3 [\delta]}(s'_4) } \{ x_{[\delta]} (\pm s'_3) x_{-[\gamma]} (\pm s'_4 \pm s'_3 \bar{s'_3} \bar{\bar{s'_3}}) \\
        & \hspace{20mm} x_{-[\gamma] - [\delta]} (\pm s'_3 \bar{s'_3}) x_{-2[\gamma] - 3[\beta]} (\pm s'_3 \bar{s'_3} \bar{\bar{s'_3}}) \}. 
    \end{align*}
    Since $z_3 \in H$ we have 
    $$x_{[\delta]} (\pm s'_3) x_{-[\gamma]} (\pm s'_4 \pm s'_3 \bar{s'_3} \bar{\bar{s'_3}}) x_{-[\gamma] - [\delta]} (\pm s'_3 \bar{s'_3}) x_{-2[\gamma] - 3[\beta]} (\pm s'_3 \bar{s'_3} \bar{\bar{s'_3}}) \in H.$$ 
    But then 
    \begin{gather*}
        w_{2[\gamma]+3[\delta]}(1) \{ x_{[\delta]} (\pm s'_3) x_{-[\gamma]} (\pm s'_4 \pm s'_3 \bar{s'_3} \bar{\bar{s'_3}}) x_{-[\gamma] - [\delta]} (\pm s'_3 \bar{s'_3}) x_{-2[\gamma] - 3[\beta]} (\pm s'_3 \bar{s'_3} \bar{\bar{s'_3}}) \} w_{2[\gamma]+3[\delta]}(1)^{-1} \\
        = x_{[\delta]} (\pm s'_3) x_{[\gamma] + 3[\delta]} (\pm s'_4 \pm s'_3 \bar{s'_3} \bar{\bar{s'_3}}) x_{[\gamma] + 2[\delta]} (\pm s'_3 \bar{s'_3}) x_{2[\gamma] + 3[\beta]} (\pm s'_3 \bar{s'_3} \bar{\bar{s'_3}}) \in H.
    \end{gather*}
    By Lemma \ref{lemma:U cap H}, we have $x_{[\delta]}(\pm s'_3), x_{[\gamma] + 3[\delta]} (\pm s'_4 \pm s'_3 \bar{s'_3} \bar{\bar{s'_3}}) \in H$ and hence, by Proposition \ref{z to Rz}, $x_{-[\gamma] - 2[\delta]} (s'_3), x_{[\gamma] + 3[\delta]}(\pm s'_3 \bar{s'_3} \bar{\bar{s'_3}}) \in H$. But then $x_{[\gamma] + 3[\delta]} (s'_4) \in H$, and again by Proposition \ref{z to Rz}, $x_{-2[\gamma]-3[\delta]}(s'_4) \in H$. Therefore
    \begin{align*}
        z''_2 &= z'_2 x_{-2[\gamma]-3[\delta]}(s'_4)^{-1} x_{-[\gamma]-2[\delta]}(s'_3)^{-1} \\
        &= x_{[\delta]}(\pm t) x_{[\gamma] + [\delta]}(t'_1) x_{[\gamma] + 2[\delta]} (t'_2) x_{[\gamma] + 3[\delta]}(t'_3) x_{2[\gamma] + 3[\delta]} (t'_4) x_{-[\gamma]}(s'_1) x_{-[\gamma] - [\delta]}(s'_2) \in H.
    \end{align*}
    Now replacing $z'_2$ by $w_{2[\gamma]+3[\delta]}(1) z'_2 w_{2[\gamma]+3[\delta]}(1)^{-1}$ in above process we can conclude that $x_{[\gamma]+[\delta]}(t'_1) \in H$. Therefore, we have $$z'''_2 = x_{[\gamma]+[\delta]}(t'_1)^{-1} z''_2 = x_{[\delta]}(\pm t) x_{[\gamma]+2[\delta]}(t''_2) x_{[\gamma] + 3[\delta]}(t''_3) x_{2[\gamma] + 3[\delta]} (t''_4) x_{-[\gamma]}(s'_1) x_{-[\gamma] - [\delta]}(s'_2) \in H.$$ 
    Finally, 
    \begin{align*}
        z_4 &= w_{[\gamma]}(1) w_{[\gamma]+[\delta]}(1) \{ z'''_2 \} w_{[\gamma]+[\delta]}(1)^{-1} w_{[\gamma]}(1)^{-1} \\ 
        &= x_{[\gamma]+2[\delta]}(\pm t) x_{[\gamma] + [\delta]}(\pm t''_2) x_{2[\gamma] + 3[\delta]}(t''_3) x_{[\gamma]} (t''_4) x_{[\gamma]+3[\delta]}(s'_1) x_{[\delta]}(s'_2) \in H,
    \end{align*}
    Thus, by Lemma \ref{lemma:U cap H}, we have $x_{[\gamma] + 2[\delta]}(\pm t) \in H$, and by Proposition \ref{z to Rz}, we have $x_{[\gamma]}(t) \in H$, as desired.
\end{proof}

\begin{rmk}
    The idea of the above proof is motivated by that of the Lemma in 3.4 of \cite{KS3}. However, that proof contains an error, which we rectify here. Specifically, the value of $s$ on page 11 is incorrect; the correct value should be $s = \pm a_1 a t^2$ instead of $s = \pm a_1 t + a_1 a t^2$. This leads to a mistake in the last paragraph of the proof of Case 1, where $x_{\beta}(\pm a_1 a t^2) \in H$ should appear instead of $x_{\beta}(\pm a_1 t \pm a_1 a t^2) \in H$. However, the corrected statement does not yield the required result.
\end{rmk}


\vspace{2mm}

\noindent \textit{Proof of Proposition \ref{prop:U(R) cap H subset U(J)}.} The first part is clear from Lemma \ref{lemma:U cap H}. For the second part, let $z = xhy \in U_\sigma (rad(R)) T_\sigma (R) U^{-}_\sigma (R) \cap H$, where $x \in U_\sigma (rad(R)), h \in T_\sigma(R)$ and $y \in U^{-}_\sigma (R)$. 

First assume that $x = 1 = y$. Then $z = h = h(\chi) \in T_\sigma (R)$ for some $\chi$. Since $h(\chi) \in H$, for any root $\alpha \in \Phi$ we have $[x_{[\alpha]}(1), h(\chi)] = x_{[\alpha]}(1 - \chi(\alpha))$ is an element of $H$. Thus, $1 - \chi(\alpha) \in J$, that is, $\chi(\alpha) \equiv 1$ (mod $J$) for every $\alpha \in \Phi$. Therefore, by Lemma \ref{lemma on T(J)}, we have $h(\chi) \in T_\sigma (R, J)$. 

We now assume that $x \neq 1$ or $y \neq 1$. Observe that, if we prove that every factor of $x$ and $y$ is in $H$ then we are done. Furthermore, by conjugating with elements of the Weyl group, applying Lemma~\ref{inUHV}, and using the Chevalley commutator formulas, it suffices to prove the following: 
\textit{for a fixed short root (respectively, a fixed long root) \( [\alpha] \), any factor of the form \( x_{[\alpha]}(t) \) appearing in the expression of an element \( x' \) or \( y' \), where \( z' = x' h' y' \in U_\sigma(R)\, T_\sigma(R)\, U^{-}_\sigma(R) \cap H \), is contained in \( H \)}.

By the Lemma \ref{lemma:UTV cap H} and the above observation, if $x_{[\alpha]}(t_{[\alpha]})$ is a factor of $x$ or $y$ with $[\alpha]$ being the same type as $[\gamma]$ then $x_{[\alpha]}(t_{[\alpha]}) \in H$. Now it remains to show that $x_{[\alpha]}(t_{[\alpha]}) \in H$ for a root $[\alpha]$ not of the type $[\gamma]$. Note that we can assume that both $x$ and $y$ contains only factors $x_{[\alpha]}(t_{[\alpha]})$ where $[\alpha]$ is not of the type $[\gamma]$. 

Suppose $\Phi_\rho \not \sim {}^2 A_{2n+1}$, then $[\gamma]$ is a long root and hence it is of the same type as the highest root $[\beta]$. In particular, by our assumption, $x$ and $y$ does not contains a factor $x_{[\beta]}(t_{[\beta]})$ and $x_{-[\beta]}(t_{[\beta]})$, respectively. Let $x_{[\alpha_1]}(t)$ be a factor of $x$ or $y$. Then $[\alpha_1]$ is of different type then $[\gamma]$. Choose a root $[\alpha_2] \in \Phi_\rho^{+}$ such that $-[\alpha_2]$ and $[\beta]$ generates the subsystem of type $B_2$. Since $[\alpha_1]$ and $[\alpha_2]$ is of the same type, we can conjugate $z$ by an element of the Weyl group in such a way that $x_{-[\alpha_2]}(t)$ is a factor of that new element, say $z_1$. Now let $z_2 = [x_{[\beta]}(1), z_1]$. Then $z_2 \in U_\sigma(R) \cap H$ and it contains a factor $x_{[\beta] - [\alpha_2]} (t')$, where $t' = t$ or $\bar{t}$ if $[\alpha_2] \not \sim A_2$; and $t' = (t_1, t_2), (t_1, \bar{t}_2)$ or $(\bar{t}_1, t_2)$ if $[\alpha_2] \sim A_2$ and $t=(t_1, t_2)$. By Lemma \ref{lemma:U cap H}, we have $x_{[\beta] - [\alpha_2]} (t') \in H$ and hence, by Proposition \ref{z to Rz}, $x_{[\alpha_1]}(t) \in H$. 

Now suppose $\Phi_\rho \sim {}^2 A_{2n+1}$, then $[\gamma]$ is a short root. Let $x_{[\alpha_1]}(t)$ be a factor of $x$ or $y$ with $[\alpha_1]$ being a long root. Choose a root $[\alpha_2] \in \Phi_\rho$ such that $[\alpha_1]$ and $[\alpha_2]$ form a subsystem of type $B_2$. Let $z_1 = [x_{[\alpha_2]}(1), z]$. Then $z_1 \in H$ contains a factor $x_{[\alpha_1] + [\alpha_2]}(t)$ with $[\alpha_1] + [\alpha_2]$ being a short root, that is, it is of the same type as $[\gamma]$. Hence from above observation, $x_{[\alpha_1] + [\alpha_2]}(t) \in H$ and hence, by Proposition \ref{z to Rz}, $x_{[\alpha]}(t) \in H$, as desired.
\qed


\section{Proof of Proposition \ref{local:H subset G(R,J)}}\label{sec:Pf of prop 3}


Let the notations be as in Section \ref{sec:Pf of main thm}. Define \( k_\mathfrak{m} = R / \mathfrak{m} \cong R_{\mathfrak{m}} / \mathfrak{m} R_{\mathfrak{m}} \), and similarly for \( k_{\bar{\mathfrak{m}}} \) and \( k_{\bar{\bar{\mathfrak{m}}}} \). Clearly, \( R_{\mathfrak{m}} \cong R_{\bar{\mathfrak{m}}} \cong R_{\bar{\bar{\mathfrak{m}}}} \) and \( k_\mathfrak{m} \cong k_{\bar{\mathfrak{m}}} \cong k_{\bar{\bar{\mathfrak{m}}}} \). For $S=S_\mathfrak{m}$, we set 
    \[
        I_S = \begin{cases}
            \mathfrak{m} & \text{if } \mathfrak{m} = \bar{\mathfrak{m}}, \\
            \mathfrak{m} \cap \bar{\mathfrak{m}} & \text{if } \mathfrak{m} \neq \bar{\mathfrak{m}} \text{ and } o(\theta) = 2, \\
            \mathfrak{m} \cap \bar{\mathfrak{m}} \cap \bar{\bar{\mathfrak{m}}} & \text{if } \mathfrak{m} \neq \bar{\mathfrak{m}} \text{ and } o(\theta) = 3;
        \end{cases} 
        \text{ and }  
        k_S = \begin{cases}
            k_\mathfrak{m} & \text{if } \mathfrak{m} = \bar{\mathfrak{m}}, \\
            k_\mathfrak{m} \times k_{\bar{\mathfrak{m}}} & \text{if } \mathfrak{m} \neq \bar{\mathfrak{m}} \text{ and } o(\theta) = 2, \\
            k_\mathfrak{m} \times k_{\bar{\mathfrak{m}}} \times k_{\bar{\bar{\mathfrak{m}}}} & \text{if } \mathfrak{m} \neq \bar{\mathfrak{m}} \text{ and } o(\theta) = 3.
        \end{cases}
    \]
Then $R/I_S \cong R_S / (I_S R_S) \cong k_S$.
Note that the ring automorphism $\theta: R \longrightarrow R$ also induces an automorphism of $k_S$, which is also denoted by $\theta$.


\begin{prop}\label{prop:U(RS) cap psi(H) subset U(JS)}
    \normalfont
    Let the notations be as in Section \ref{sec:Pf of main thm}.
    \begin{enumerate}[(a)]
        \item $U_\sigma (R_S) \cap \psi_\mathfrak{m}(H) \subset U_\sigma (J_S)$. 
        \item $U_\sigma (I_S R_S) T_\sigma (R_S) U^{-}_\sigma (R_S) \cap \psi_\mathfrak{m} (H) \subset U_\sigma (J_S) T_\sigma (R_S, J_S) U^{-}_\sigma (J_S).$
    \end{enumerate}
\end{prop}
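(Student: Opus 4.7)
The strategy is to adapt the proof of Proposition~\ref{prop:U(R) cap H subset U(J)} to the semilocal ring $R_S$. First observe that $R_S$ has Jacobson radical $I_S R_S$, and inherits from $R$ the hypotheses $1/2 \in R_S$ (and $1/3 \in R_S$ if $\Phi_\rho \sim {}^3 D_4$); the automorphism $\theta$ extends to $R_S$ in the obvious way, making $G_\sigma(R_S)$ well-defined. Since both the target ring and the standing hypotheses descend, the conclusions of Proposition~\ref{prop:U(R) cap H subset U(J)} are available over $R_S$, provided we supply a subgroup of $G_\sigma(R_S)$ normalized by $E'_\sigma(R_S)$ together with its associated ideal.

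The key subtlety is that $\psi_\mathfrak{m}(H)$ is only known to be normalized by $\psi_\mathfrak{m}(E'_\sigma(R)) \subset E'_\sigma(R_S)$, not by the whole of $E'_\sigma(R_S)$. I propose to mirror the proofs of Lemmas~\ref{lemma:U cap H} and~\ref{lemma:UTV cap H} step by step, verifying that every conjugation used there is by an element of the form $x_{[\beta]}(c)$ with $c$ a specific constant in $\{\pm 1,\ \pm 1/2,\ \pm 1/3,\ (1,1/2),\ \ldots\}$ or by a Weyl representative $w_{[\beta]}(1)$; each such element lies in $\psi_\mathfrak{m}(E'_\sigma(R))$ and hence normalizes $\psi_\mathfrak{m}(H)$. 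The original proofs invoke Proposition~\ref{z to Rz} as a black box to descend from a root of larger height to a simple neighbour (most notably in the inductive step $(P_n) \Rightarrow (P_{n+1})$ of Lemma~\ref{lemma:U cap H}); in our setting I would replace that invocation with the specific commutator identity of Lemma~\ref{z to Rz in phi'} taken with conjugator $r = 1$ (which again sits in $\psi_\mathfrak{m}(E'_\sigma(R))$). The output of this adaptation is: every factor $x_{[\alpha]}(t_{[\alpha]})$ of an element $x \in U_\sigma(R_S) \cap \psi_\mathfrak{m}(H)$ is itself in $\psi_\mathfrak{m}(H)$, and an analogous $U \cdot T \cdot U^-$ factorization holds for elements appearing in (b).

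It remains to verify that every such factor has coefficient in $J_S$. Set $J' := \bigcup_{[\alpha]}\{t \in (R_S)_{[\alpha]} : x_{[\alpha]}(t) \in \psi_\mathfrak{m}(H)\}$. The inclusion $J_S \subset J'$ is immediate from $E'_\sigma(R, J) \subset H$ (Proposition~\ref{prop:E(R,J) subset of H}), since then $x_{[\alpha]}(t/1) \in \psi_\mathfrak{m}(H)$ for every $t \in J_{[\alpha]}$ and the analogue of Proposition~\ref{z to Rz} over $R_S$ (using the restricted normalization just described) closes this under scalar multiplication by $R_S$. The reverse inclusion $J' \subset J_S$ is the main technical obstacle: given $\psi_\mathfrak{m}(h) = x_{[\alpha]}(r/s)$ with $h \in H$, $r \in R_{[\alpha]}$ and $s \in S_\theta$, I plan a denominator-clearing argument that lifts the identity back to an equation of root subgroup elements modulo $\ker(\psi_\mathfrak{m})$, invokes Proposition~\ref{prop:U(R) cap H subset U(J)}(a) applied to $H$ itself, and uses Lemma~\ref{lemma on T(J)} to handle the resulting torus correction, ultimately exhibiting $r' \in J_{[\alpha]}$ with $r'/1 = r/s$ in $R_S$. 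Part (b) then follows from the same machinery together with an analysis of the $T_\sigma(R_S, J_S)$-component via Lemma~\ref{lemma on T(J)} applied inside $R_S$.

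The most intricate step is the denominator-clearing argument of the previous paragraph. The difficulty lies in the fact that $h$ need not itself be a root subgroup element in $G_\sigma(R)$; extracting an honest constraint on $r$ from the bare identity $\psi_\mathfrak{m}(h) = x_{[\alpha]}(r/s)$ requires careful bookkeeping of the Bruhat components of $h$ together with the behaviour of $\psi_\mathfrak{m}$ on the torus, and is where the bulk of the section's technical labour will lie.
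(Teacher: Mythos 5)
Your reduction of the problem is on target: you correctly flag that $\psi_\mathfrak{m}(H)$ is only known to be normalized by $\psi_\mathfrak{m}(E'_\sigma(R))$, and your observation that the commutators and Weyl conjugations in Lemmas~\ref{lemma:U cap H} and~\ref{lemma:UTV cap H} use only constant coefficients (hence stay inside $\psi_\mathfrak{m}(E'_\sigma(R))$) is a sensible way to salvage the factor-splitting part of the argument. The gap is precisely in the step you yourself single out as the main technical obstacle: passing from $x_{[\alpha]}(t) \in \psi_\mathfrak{m}(H)$ with $t \in (R_S)_{[\alpha]}$ to $t \in (J_S)_{[\alpha]}$. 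Your sketch --- lift the identity $\psi_\mathfrak{m}(h) = x_{[\alpha]}(r/s)$ back to $G_\sigma(R)$ modulo $\ker(\psi_\mathfrak{m})$, apply Proposition~\ref{prop:U(R) cap H subset U(J)}(a) to $H$ itself, and correct with Lemma~\ref{lemma on T(J)} --- does not go through: $\psi_\mathfrak{m}$ is far from surjective, $x_{[\alpha]}(r/s)$ has in general no preimage in $G_\sigma(R)$, $\ker(\psi_\mathfrak{m})$ is not of the form $G_\sigma(I)$ for an ideal $I$, and the element $h \in H$ whose localization is unipotent need not lie in $U_\sigma(R)$ nor admit any Bruhat-type decomposition over an arbitrary commutative ring, so Proposition~\ref{prop:U(R) cap H subset U(J)}(a) cannot be applied to it. Nothing in your plan supplies a replacement for this missing mechanism.

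The paper's resolution is a device your proposal never invokes: the Taddei localization lemma (Lemma~\ref{lemma:GT}), used exactly as in the proof of Proposition~\ref{general:[x,g] in E(R,J)}. One repeats the commutator computations of Lemmas~\ref{lemma:U cap H} and~\ref{lemma:UTV cap H} over the polynomial rings $R[X]$ and $R_S[X]$ with denominators pre-cleared (for instance, commutating with $x_{-[\alpha]}(b\bar{b}r'X)$ when $t = a/b$), compares the resulting element of $G_\sigma(R[X])$ with its expected value under $\psi''_\mathfrak{m}$, and observes that the discrepancy $\epsilon(X)$ vanishes both under localization and at $X=0$; Taddei's lemma then yields $s' \in S_\theta$ with $\epsilon(s'X) = 1$, and setting $X = 1$ produces an honest identity in $G_\sigma(R)$ exhibiting $x_{[\alpha]}(s_{[\alpha]} \cdot t_{[\alpha]}) \in H$ for some $s_{[\alpha]} \in S_\theta$. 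Since that coefficient now comes from $R$, the definition of $J$ gives $s_{[\alpha]} t_{[\alpha]} \in J_{[\alpha]}$ and hence $t_{[\alpha]} \in (J_S)_{[\alpha]}$; part (b) is handled by the same mechanism grafted onto the computations of Lemma~\ref{lemma:UTV cap H}. Without this polynomial-ring/Taddei step (or an equivalent substitute), your argument cannot convert membership of a root element of $U_\sigma(R_S)$ in $\psi_\mathfrak{m}(H)$ into membership of its coefficient in $J_S$, so the proof as proposed is incomplete.
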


At first glance, the above proposition may appear to be an immediate consequence of Proposition \ref{prop:U(R) cap H subset U(J)}, but this is not the case. The crucial point is that the subgroup $\psi_\mathfrak{m}(H)$ of $G_\sigma(R_S)$ may not be normalized by $E'_\sigma(R_S)$. However, once we prove the following lemma, the proof of the above lemma will be similar to the proof of Proposition \ref{prop:U(R) cap H subset U(J)} and hence is omitted.

\begin{lemma}
    \normalfont
    Let the notations be as in Section \ref{sec:Pf of main thm}. Let $z \in H$. 
    \begin{enumerate}[(a)]
        \item If $\psi_\mathfrak{m} (z) = \prod_{[\alpha] \in \Phi_\rho^{+}} x_{[\alpha]}(t_{[\alpha]}) \in U_\sigma (R_S)$ with $t_{[\alpha]} \in (R_S)_{[\alpha]}$, then for each $[\alpha] \in \Phi^{+}_\rho$ there exists $s_{[\alpha]} \in S_\theta$ such that $x_{[\alpha]}(s_{[\alpha]} \cdot t_{[\alpha]}) \in H$.
        \item Let $[\beta]$ and $[\gamma]$ be as in Lemma \ref{lemma:UTV cap H}. If $\psi_\mathfrak{m}(z) = x_{[\gamma]}(t) xhy \in U_\sigma (R_S) T_\sigma (R_S) U^{-}_\sigma (R_S),$ where $x_{[\gamma]}(t) x \in U_\sigma (R_S),$ $x$ is a product of elements $x_{[\alpha]}(t_{[\alpha]})$ with $[\alpha] \neq [\gamma], [\alpha] \in \Phi^{+}_\rho$ and $t_{[\alpha]} \in (R_S)_{[\alpha]}, h \in T_\sigma (R_S)$ and $y \in U^{-}_\sigma (R_S)$. Then there exists $s \in S_\theta$ such that $x_{[\gamma]}(s \cdot t) \in H$.
    \end{enumerate}
\end{lemma}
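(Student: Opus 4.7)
The plan is to adapt the inductive arguments of Lemma~\ref{lemma:U cap H} and Lemma~\ref{lemma:UTV cap H} to the localization setting, carefully tracking multiplicative factors from $S_\theta$ at each step. The essential observation is that although $\psi_\mathfrak{m}(H)$ is in general not normalized by $E'_\sigma(R_S)$, the subgroup $H$ itself is normalized by $E'_\sigma(R)$, so for any $z\in H$, $[\alpha]\in\Phi_\rho$ and $r\in R_{[\alpha]}$ the commutator $[x_{[\alpha]}(r),z]$ remains in $H$. Hence every commutator with an element of $E'_\sigma(R)$ produces a new element of $H$ that we can feed back into the argument.

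For part (a), first I would fix a regular ordering of $\Phi_\rho^+$ and choose a common $s_0\in S_\theta$ divisible by a sufficiently high power of every denominator appearing in the coefficients $t_{[\alpha]}\in (R_S)_{[\alpha]}$, so that $s_0\cdot t_{[\alpha]}\in R_{[\alpha]}$ for every $[\alpha]\in\Phi_\rho^+$. I would then mimic the height-induction $(P_n)$ from Case~D of the proof of Lemma~\ref{lemma:U cap H} (together with the rank-$2$ Cases A, B, C for $\Phi_\rho\sim {}^2A_3, {}^2A_4, {}^3D_4$), but replace every commutator of the shape $[x_{[\gamma]}(1),\cdot]$ used there by $[x_{[\gamma]}(s),\cdot]$ for an appropriate $s\in S_\theta$. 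By the Chevalley commutator formulas of Section~\ref{subsec:CheComm}, each such commutator produces a factor of the form $x_{[\delta]+[\gamma]}(s\cdot t_{[\delta]})$ (up to signs and the involution $t\mapsto\bar t$) together with factors of strictly larger height which, by the inductive hypothesis, already belong to $H$ after multiplication by some element of $S_\theta$. Hence $x_{[\delta]+[\gamma]}(s'\cdot t_{[\delta]})\in H$ for some $s'\in S_\theta$, and Proposition~\ref{z to Rz} then delivers $x_{[\delta]}(s''\cdot t_{[\delta]})\in H$ with $s''\in S_\theta$. Taking the product of all the $S_\theta$-factors accumulated throughout the induction yields the required $s_{[\alpha]}$.

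For part (b), I would similarly follow the cascade of commutators in the proof of Lemma~\ref{lemma:UTV cap H}: every conjugation by $x_{\pm[\alpha]}(1)$ used there becomes conjugation by $x_{\pm[\alpha]}(s_i)$ for a suitable $s_i\in S_\theta$ that clears the relevant denominators in $\psi_\mathfrak{m}(z)$. Combining this with part (a) (applied to the auxiliary unipotent factors produced along the way) and with Proposition~\ref{z to Rz} for the final descent from a factor $x_{[\beta]-[\gamma]}(\cdot)\in H$ down to $x_{[\gamma]}(s\cdot t)\in H$ gives the desired conclusion, with $s\in S_\theta$ equal to the product of all $s_i$'s collected along the way.

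The main obstacle is the bookkeeping in the case where the relevant root class is of type $A_2$, that is when $\Phi_\rho\sim {}^2A_{2n}$, since the coefficient is then a pair $(t_1,t_2)\in\mathcal{A}(R_S)$ and the scalar action $s\cdot(t_1,t_2)=(st_1, s\bar s t_2)$ is non-standard. One must choose $s\in S_\theta$ that simultaneously clears denominators in both components and is compatible with the group law on $\mathcal{A}$; the relation $s=\bar s$ gives $s\bar s=s^2$, which helps, but the commutator formulas of types $(c\text{-}ii)$ and $(d\text{-}ii)$ still require a careful case analysis mirroring the rank-two Cases B and D of the proof of Lemma~\ref{lemma:U cap H}.
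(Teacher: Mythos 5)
There is a genuine gap: your plan treats ``clearing denominators'' as sufficient, but it ignores that the localization map $\psi_\mathfrak{m}: G_\sigma(R) \longrightarrow G_\sigma(R_S)$ need not be injective. The element $z \in H$ is \emph{not} a product of root elements over $R$; only its image $\psi_\mathfrak{m}(z)$ decomposes in $G_\sigma(R_S)$. Consequently, when you form $[x_{[\gamma]}(s), z] \in H$ with $s \in S_\theta$ chosen to clear denominators, the Chevalley commutator computation expressing this commutator as a product of root elements with coefficients $s \cdot t_{[\delta]}$ is an identity in $G_\sigma(R_S)$ only; the element of $H$ you actually hold may differ from that product by an element of $\ker \psi_\mathfrak{m}$. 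The same problem infects every cancellation step you import from Lemma~\ref{lemma:U cap H}: after multiplying two such elements of $H$ you obtain an element of $H$ whose $\psi_\mathfrak{m}$-image is $x_{[\beta]}(\pm 2 s t)$, which is strictly weaker than having the actual root element $x_{[\beta]}(a) \in H$ for some $a \in R$ mapping to $\pm 2st$ --- and it is the actual root element that you need in order to invoke Proposition~\ref{z to Rz} and to feed the height induction. So the argument as proposed does not close.

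The paper repairs exactly this defect by incorporating the method of Proposition~\ref{general:[x,g] in E(R,J)}: one lifts the computation to $G_\sigma(R[X])$, writes the discrepancy as $\epsilon(X) = [x_{-[\alpha]}(b\bar{b}r'X), z]\bigl(x_{[\beta]}(\pm b\bar{b}r'tX)\, x_{[\alpha]+2[\beta]}(\pm b\bar{b}r't\bar{t}X)\bigr)^{-1}$, observes that $\epsilon(0)=1$ and $\psi''_\mathfrak{m}(\epsilon(X))=1$, and applies Taddei's Lemma~\ref{lemma:GT} to produce $s' \in S_\theta$ with $\epsilon(s'X)=1$; specializing $X=1$ then yields a genuine identity in $G_\sigma(R)$, so the scaled root elements really lie in $H$ and the inductions of Lemmas~\ref{lemma:U cap H} and~\ref{lemma:UTV cap H} can proceed. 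Your proposal contains no device of this kind (neither the polynomial-ring trick nor any other way to control $\ker\psi_\mathfrak{m}$), and without it the key step fails whenever $R \longrightarrow R_S$ is not injective. The $A_2$-type bookkeeping you flag as the main obstacle is comparatively routine; the non-injectivity of $\psi_\mathfrak{m}$ is the real issue.
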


\begin{proof}
    The proofs of parts (a) and (b) follow a similar approach to Lemmas \ref{lemma:U cap H} and \ref{lemma:UTV cap H}, respectively, but also incorporate the method used in proving Proposition \ref{general:[x,g] in E(R,J)}. 
    Let $\psi''_\mathfrak{m}$ be as in the proof of Proposition \ref{general:[x,g] in E(R,J)}.
    
    \vspace{2mm}
    
    \noindent (a) Let $\psi_\mathfrak{m}(z) = \prod_{[\alpha] \in \Phi^+} x_{[\alpha]}(t_{[\alpha]}) \in U_\sigma(R_S) \subset U_\sigma(R_S[X])$, with $t_{[\alpha]} \in (R_S)_{[\alpha]} \subset (R_S[X])_{[\alpha]}$. We will first prove the result in the case where $\Phi_\rho \sim {}^2 A_3$. The other cases can be proven similarly using the same techniques (cf. Lemma \ref{lemma:U cap H}) and are therefore omitted.

    As in Lemma \ref{lemma:U cap H}, let $[\alpha]$ and $[\beta]$ be the simple roots, with $[\alpha]$ being the long root. We first claim that
    \begin{equation}\label{eq_aa}
        \begin{split}
            \text{if } \psi_\mathfrak{m}(z) = x_{[\alpha] + [\beta]}(t) x_{[\alpha] + 2[\beta]}(u) \in \psi_\mathfrak{m}(H), \text{ then } \\
            x_{[\alpha] + [\beta]}(s_1 \cdot t), x_{[\alpha] + 2[\beta]}(s_2 \cdot u) \in H \text{ for some } s_1, s_2 \in S_\theta.
        \end{split}
    \end{equation}
    Since $\psi''_\mathfrak{m} \mid_{G_\sigma(R_S)} = \psi_\mathfrak{m}$, we have $\psi_\mathfrak{m}(z) = \psi''_\mathfrak{m}(z) \in \psi''_\mathfrak{m}(H)$. For any $r \in (R_S[X])_{[\alpha]} = (R_S[X])_\theta$, we have
    \[ [x_{-[\alpha]}(r), \psi''_\mathfrak{m}(z)] = x_{[\beta]}(\pm rt) x_{[\alpha] + 2[\beta]}(\pm rt \bar{t}). \]
    Write $t = a/b$ where $a \in R$ and $b \in S$. Let $r = b \bar{b} r' X$ with $r' \in S_\theta$. Then
    \[ \psi''_\mathfrak{m}([x_{-[\alpha]}(b \bar{b} r' X), z]) = \psi''_\mathfrak{m}(x_{[\beta]} (\pm b \bar{b} r' t X) x_{[\alpha] + 2[\beta]} (\pm b \bar{b} r' t \bar{t} X)). \]
    Now let 
    \[ \epsilon(X) = [x_{-[\alpha]}(b \bar{b} r' X), z](x_{[\beta]} (\pm b \bar{b} r' t X) x_{[\alpha] + 2[\beta]} (\pm b \bar{b} r' t \bar{t} X))^{-1}. \]
    Then $\epsilon(X)$ satisfies the hypothesis of Lemma \ref{lemma:GT}, and hence there exists $s' \in S_\theta$ such that $\epsilon(s' X) = 1$. Thus we have
    \[ [x_{-[\alpha]}(b \bar{b} s' r' X), z] = x_{[\beta]} (\pm b \bar{b} s' r' t X) x_{[\alpha] + 2[\beta]} (\pm b \bar{b} s' r' t \bar{t} X). \]
    Setting $X = 1$, we get 
    \[ [x_{-[\alpha]}(b \bar{b} s' r'), z] = x_{[\beta]} (\pm b \bar{b} s' r' t) x_{[\alpha] + 2[\beta]} (\pm b \bar{b} s' r' t \bar{t}) \in H. \]
    Since $\psi''_\mathfrak{m}(z) \in \psi''_\mathfrak{m}(H)$, then so is $(\psi''_\mathfrak{m}(z))^{-1} = x_{[\alpha] + [\beta]}(-t) x_{[\alpha] + 2[\beta]}(-u)$. Therefore, we can replace $t$ and $u$ by $-t$ and $-u$ respectively, and we obtain 
    \[ x_{[\beta]} (\pm b \bar{b} s' r' (-t)) x_{[\alpha] + 2[\beta]} (\pm b \bar{b} s' r' t \bar{t}) \in H. \] 
    Thus, 
    \begin{equation*}
        \begin{split}
            x_{[\beta]}(\pm 2 b \bar{b} s' r' t) = \left\{ x_{[\beta]} (\pm b \bar{b} s' r' t) x_{[\alpha] + 2[\beta]} (\pm b \bar{b} s' r' t \bar{t}) \right\} \\
            \left\{ x_{[\beta]} (\pm b \bar{b} s' r' (-t)) x_{[\alpha] + 2[\beta]} (\pm b \bar{b} s' r' t \bar{t}) \right\}^{-1} \in H.
        \end{split}
    \end{equation*}
    Set $r' = \pm 1/2$ and let $s_1 = b \bar{b} s' \in S_\theta$, then $x_{[\beta]}(s_1 t) \in H$. By Proposition \ref{z to Rz}, we get $x_{[\alpha] + [\beta]}(s_1 t) \in H$. 
    Now, in $G_\sigma(R_S)$, for any $r_1 \in R$, we have 
    \[ [x_{-[\beta]}(r_1), \psi_\mathfrak{m}(z)] = x_{[\alpha]}(\pm (r_1 \bar{t} + \bar{r}_1 t)) x_{[\alpha] + [\beta]}(\pm r_1 u) x_{[\alpha]}(\pm r_1 \bar{r}_1 u) \in \psi_\mathfrak{m}(H). \]
    Put $r_1 = s_1$, then by the proof of Lemma \ref{z to Rz in phi'}, we have $x_{[\alpha]}(\pm (s_1 \bar{t} + \bar{s}_1 t)) \in \psi_\mathfrak{m}(H)$, and hence
    \begin{equation*}
        \begin{split}
            x_{[\alpha]}(\pm (s_1 \bar{t} + \bar{s}_1 t))^{-1} (x_{[\alpha]}(\pm (s_1 \bar{t} + \bar{s}_1 t)) x_{[\alpha] + [\beta]}(\pm s_1 u) x_{[\alpha]}(\pm s_1 \bar{s}_1 u)) \\
            = x_{[\alpha] + [\beta]}(\pm s_1 u) x_{[\alpha]}(\pm s_1 \bar{s}_1 u) \in \psi_\mathfrak{m}(H).
        \end{split}
    \end{equation*}
    Thus, 
    \begin{equation*}
        w_{[\beta]}(1) x_{[\alpha] + [\beta]}(\pm s_1 u) x_{[\alpha]}(\pm s_1 \bar{s}_1 u) w_{[\beta]}(1)^{-1} = x_{[\alpha] + [\beta]}(\pm s_1 u) x_{[\alpha] + 2[\beta]}(\pm s_1 \bar{s}_1 u) \in \psi_\mathfrak{m}(H).
    \end{equation*}
    Finally, by the above argument, we have $x_{[\alpha] + [\beta]}(s_2 u) \in H$ for some $s_2 \in S_\theta$. Therefore, by Proposition \ref{z to Rz}, we have $x_{[\alpha] + 2[\beta]}(s_2 u) \in H$. This proves (\ref{eq_aa}).

    Now let $x = x_{[\beta]}(t) x_{[\alpha]}(u) x_{[\alpha] + [\beta]}(v) x_{[\alpha] + 2[\beta]}(w) \in H$. By a similar argument as above and as in Lemma \ref{lemma:U cap H}, we can prove the desired result.

    \vspace{2mm}
    
    \noindent (b) This part can also be proven using the same reasoning as in the proofs of Lemma \ref{lemma:UTV cap H} and the methods of G. Taddei \cite{GT} (cf. Lemma \ref{lemma:GT}), similar to the approach taken for part (a). Therefore, we omit the detailed proof.
\end{proof}

\begin{rmk}
    To prove Proposition \ref{prop:U(RS) cap psi(H) subset U(JS)}, we must use not only the method of proof of Proposition \ref{prop:U(R) cap H subset U(J)}, but also the lemma by G. Taddei \cite[Lemma 3.14]{GT}, as we did in the proof of the previous lemma.
\end{rmk}

%


\vspace{2mm}

\noindent \textit{Proof of Proposition \ref{local:H subset G(R,J)}.} If $J_S = R_S$, the proof is complete. Therefore, we assume $J_S \neq R_S$. Since $J_S$ is a $\theta$-invariant proper ideal of $R_S$, we have $J_S \subset \operatorname{rad}(R_S) = I_S R_S$. By Corollary \ref{G(R,J)=UTV}, it follows that 
$$
G_\sigma(R_S, J_S) = U_\sigma(J_S) T_\sigma(R_S, J_S) U^{-}_\sigma(J_S).
$$ 
Assume, for the sake of contradiction, that $\psi_\mathfrak{m}(H) \not\subset G_\sigma(R_S, J_S)$. Under this assumption, we will show that there exists an element $z \in H$ such that $\psi_\mathfrak{m}(z) \not\in G_\sigma(R_S, J_S)$ and $\psi_\mathfrak{m}(z) \in U_\sigma(I_S R_S) T_\sigma(R_S) U^{-}_\sigma(R_S)$. This, however, leads to a contradiction with Proposition \ref{prop:U(RS) cap psi(H) subset U(JS)}.

Let $\pi: G_\sigma (R_S) \longrightarrow G_\sigma (k_S)$ be the canonical homomorphism. 
Suppose $\pi (\psi_\mathfrak{m} (H))$ is central. Then $\psi_\mathfrak{m} (H) \subset G_\sigma (R_S, I_S R_S)$ and hence, by Corollary~\ref{G(R,J)=UTV}, we are done. 
Now assume that $\pi(\psi_\mathfrak{m}(H))$ is non-central. Since $\pi \circ \psi_\mathfrak{m}$ is surjective on elementary subgroups, the subgroup $\pi(\psi_\mathfrak{m}(H))$ of $G_\sigma (k_S)$ is normalized by $E'_\sigma (k_S)$. We claim that $E'_\sigma (k_S) \subset \pi(\psi_\mathfrak{m}(H))$. Assuming the claim to be true for the moment, let us proceed to prove the rest of the result. 
For a given $x_{[\alpha]}(t + I_S) \in U^{-}_\sigma (k_S) \ (t \not \in I_S)$, by our claim, there exists $z \in H$ such that 
$$ \pi (\psi_\mathfrak{m} (z)) = x_{[\alpha]}(t + I_S) = \pi (\psi_\mathfrak{m} (x_{[\alpha]} (t))).$$
Therefore, $\psi_\mathfrak{m} (z) (\psi_\mathfrak{m} (x_{[\alpha]} (t)))^{-1} \in \ker{\pi} = G_\sigma (I_S R_S) = U_\sigma(I_S R_S) T_\sigma (I_S R_S) U^{-}_\sigma (I_S R_S)$, the last equality is due to Proposition \ref{G=UTV}. 
But then $$\psi_\mathfrak{m} (z) \in U_\sigma(I_S R_S) T_\sigma (I_S R_S) U^{-}_\sigma (R_S) \subset U_\sigma(I_S R_S) T_\sigma (R_S) U^{-}_\sigma (R_S),$$ as desired.

Now it only remains to prove the claim. To do this, we observe that $E'_\sigma(k_S) \cap \pi(\psi_\mathfrak{m}(H))$ is a normal subgroup of $E'_\sigma(k_S)$. However, the group $E'_\sigma(k_S)$ is simple over its center. 
To see this, assume first that $\mathfrak{m} = \bar{\mathfrak{m}}$. In this case, $k_S = k_\mathfrak{m}$ is a field, and our result follows from \cite[Theorem 34]{RS}. Now, consider the case where $\mathfrak{m} \neq \bar{\mathfrak{m}}$. By Proposition~\ref{prop:Chevalley as TChevalley}, we can deduce that $E'_\sigma(k_S)$ is isomorphic to $E_\pi(\Phi, k_\mathfrak{m})$. Therefore, applying \cite[Theorem 5]{RS}, we conclude that $E'_\sigma(k_S)$ is simple over its center.
Thus, we can conclude that either $E'_\sigma(k_S) \cap \pi(\psi_\mathfrak{m}(H)) \subset Z(G_\sigma(k_S))$ or $E'_\sigma(k_S) \cap \pi(\psi_\mathfrak{m}(H)) = E'_\sigma(k_S)$. Assume that $E'_\sigma(k_S) \cap \pi(\psi_\mathfrak{m}(H)) \subset Z(G_\sigma(k_S))$. Then, by Proposition \ref{G=G'}, we have $\pi(\psi_\mathfrak{m}(H)) \subset T_\sigma(k_S)$. Since $\pi(\psi_\mathfrak{m}(H))$ is non-central, there exists $h(\chi) \in \pi(\psi_\mathfrak{m}(H))$ with $\chi(\alpha) \neq 1$ for some $\alpha \in \Phi$. But then $[h(\chi), x_{[\alpha]}(1)] = x_{[\alpha]}(\chi(\alpha)-1) \in \pi(\psi_\mathfrak{m}(H))$, which is a contradiction to our assumption. Therefore, we must have $E'_\sigma(k_S) \cap \pi(\psi_\mathfrak{m}(H)) = E'_\sigma(k_S)$, that is, $E'_\sigma(k_S) \subset \pi(\psi_\mathfrak{m}(H))$. \qed


\appendix

\section{\texorpdfstring{$E'_\sigma (R)$}{E(R)} is a Characteristic Subgroup of \texorpdfstring{$G_\sigma(R)$}{G(R)}}\label{sec:char subgroup}

\begin{center}
  \textsc{by Pavel Gvozdevsky}\\
\end{center}

\vspace{2mm}

In this appendix, we present an application of the main theorems established in the paper. Specifically, we prove that \( E'_{\pi, \sigma}(\Phi, R) \) is a characteristic subgroup of \( G_{\pi, \sigma}(\Phi, R) \). A similar result for Chevalley groups was obtained by L.~N.~Vaserstein~\cite{LV}. In the twisted case, the result is established by the authors under the assumption that \( R \) is Noetherian. The extension to arbitrary commutative rings was later provided by Pavel Gvozdevsky, and this general case is the focus of the present appendix.

\begin{thm}
    \normalfont
    Let $R$ and $\Phi_\rho$ be as in Theorem~\ref{mainthm1}. 
	Let $H$ be a subgroup of $G_{\pi, \sigma} (\Phi, R)$ containing $E'_{\pi, \sigma} (\Phi, R)$.
    Then $E'_{\pi, \sigma} (\Phi, R)$ can be characterized as the smallest by inclusion among all the subgroups $K\le H$ that satisfy the following properties:
    \begin{enumerate}[(a)]
        \item $K$ is normal and is generated as a normal subgroup by a single element;
        \item $K=[K,K]$;
        \item the centralizer of $K$ in $H$ is abelian.
    \end{enumerate}
\end{thm}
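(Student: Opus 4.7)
My plan is in two parts: first verifying that $E' := E'_{\pi,\sigma}(\Phi, R)$ satisfies (a), (b), (c); then showing that every subgroup $K \le H$ satisfying these properties must contain $E'$.

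Property (b) for $E'$ is exactly Corollary \ref{cor:normalized}. Property (c) follows from Theorem \ref{thm:KS3}, which gives $C_{G_\sigma(R)}(E') = Z(G_\sigma(R))$, abelian; hence $C_H(E') = C_{G_\sigma(R)}(E') \cap H$ is abelian. For (a), I would choose a root $[\alpha] \in \Phi_\rho$ and an element $z \in R_{[\alpha]}$ for which the ideal $J$ attached to $z$ by Proposition \ref{z to Rz} is $R$ itself---namely $z = 1$ when $[\alpha] \sim A_1, A_1^2$, or $A_1^3$, and $z = (1, 1/2)$ when $[\alpha] \sim A_2$ (here $1/2 \in R$ ensures both membership in $\mathcal{A}(R)$ and that $Rz_1 + R\bar{z}_1 + R(z_2-\bar{z}_2) = R$). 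Proposition \ref{z to Rz} then says the normal closure of $x_{[\alpha]}(z)$ inside $E'$ equals $E'_\sigma(R, R) = E'$; and since $E' \triangleleft G_\sigma(R)$ by Corollary \ref{cor:KS2}, the normal closure of $x_{[\alpha]}(z)$ computed inside $H$ is still $E'$.

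For the minimality, suppose $K \le H$ satisfies (a)--(c). As $K \triangleleft H \supseteq E'$, $K$ is normalized by $E'$, and Theorem \ref{mainthm} furnishes a unique $\theta$-invariant ideal $J$ of $R$ with $E'_\sigma(R, J) \le K \le G_\sigma(R, J)$. Everything then reduces to showing $J = R$. A first reduction uses (b): the quotient $G_\sigma(R, J)/G_\sigma(J)$ embeds into $Z(G_\sigma(R/J))$, which is abelian, so the image of $K$ is both perfect and abelian, hence trivial. Therefore $E'_\sigma(R, J) \le K \le G_\sigma(J)$.

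The main obstacle is the last step, showing $J = R$; I expect this to be the hardest part of the argument. Assuming for contradiction that $J$ is a proper ideal, I plan first to tighten the sandwich to $K = E'_\sigma(R, J)$ and then produce non-commuting elements inside $C_H(E'_\sigma(R, J))$, contradicting (c). The first tightening requires the twisted $SK_1$-type statement that $G_\sigma(J)/E'_\sigma(R, J)$ is abelian, i.e.\ $[G_\sigma(J), G_\sigma(J)] \le E'_\sigma(R, J)$; this should follow from Whitehead-type manipulations extending the commutator calculations of Section \ref{sec:E(R,J)}, after which a second application of (b) forces the perfect subgroup $K/E'_\sigma(R, J)$ of this abelian quotient to collapse. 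For the second step, having identified $K$ with $E'_\sigma(R, J)$, I would construct two root unipotents $x_{[\beta]}(a), x_{[\beta']}(b) \in H$ with $[\beta]+[\beta'] \in \Phi_\rho$, both centralizing $E'_\sigma(R, J)$ but not commuting with each other; such elements should be obtainable for any proper $J$ by a case analysis on the type of $\Phi_\rho$ and passage to localizations $R_\mathfrak{m}$ at maximal ideals containing $J$, in the spirit of Sections \ref{sec:Pf of main thm}--\ref{sec:Pf of prop 3}. The delicate verification that these two sub-steps can be carried out uniformly in the type of $\Phi_\rho$ and over an arbitrary commutative ring $R$ (rather than a Noetherian one, which is precisely the extension contributed by Pavel Gvozdevsky) is what makes this the critical part of the proof.
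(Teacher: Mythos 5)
Your first half (that $E'_{\pi,\sigma}(\Phi,R)$ satisfies (a)--(c)) and the opening of the minimality argument (normality of $K$ plus Theorem~\ref{mainthm} giving $E'_\sigma(R,J)\subset K\subset G_\sigma(R,J)$, then killing the image in $Z(G_\sigma(R/J))$ by perfectness) agree with the paper. The gap is the step $J=R$, and the route you sketch for it would not go through. First, the tightening to $K=E'_\sigma(R,J)$ relies on $G_\sigma(J)/E'_\sigma(R,J)$ being abelian; the paper only proves $[E'_\sigma(R),G_\sigma(J)]\subset E'_\sigma(R,J)$ (Corollary~\ref{mixcom}), not $[G_\sigma(J),G_\sigma(J)]\subset E'_\sigma(R,J)$, so this is an unproven and nontrivial auxiliary claim. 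Second, and decisively, the intended contradiction with (c) is not available for a general proper $J$: if $R$ is a domain and $0\neq J\subsetneq R$, then $E'_\sigma(R,J)$ contains nontrivial elements of every root subgroup, and a Zariski-density argument over the fraction field shows that its centralizer in $G_\sigma(R)$ is central, hence abelian; so there are no non-commuting elements of $C_H(E'_\sigma(R,J))$ to be found, no matter how you localize. In that situation the condition that fails for $K=E'_\sigma(R,J)$ is perfectness (b), which your plan does not invoke at that stage; conversely, when $J$ is generated by a $\theta$-invariant idempotent, $E'_\sigma(R,J)$ is a direct-factor-type subgroup which \emph{is} perfect, and only then does (c) fail, via the complementary elementary factor. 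So neither of your two closing moves works uniformly, and your minimality argument never uses the ``generated as a normal subgroup by a single element'' part of (a) at all --- a telltale sign, since that hypothesis is exactly what makes the general (non-Noetherian) case work.

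The paper interleaves the three hypotheses precisely to handle this dichotomy: from (a), $J$ is finitely generated (it is generated by the entries of $\varpi_{ad}(g_0)-e$ and their $\theta$-twists, where $g_0$ is a single normal generator of $K$); from (b) together with the uniqueness of $J$ in Theorem~\ref{mainthm}, one gets $J=J^2$; Nakayama's lemma then yields $s\equiv 1 \pmod J$ with $sJ=0$, so that $E'_\sigma(sR)$ centralizes $K$, and (c) forces $s=0$, i.e.\ $1\in J$, giving $J=R$ and $E'_\sigma(R)=E'_\sigma(R,R)\subset K$. This finite-generation-plus-Nakayama mechanism is what is missing from your proposal and cannot be replaced by the localization case analysis you envisage.
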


\begin{proof}
    First let us show that $E'_{\pi, \sigma} (\Phi, R)$ satisfies the properties (a)--(c). It satisfies (a) by Proposition~\ref{z to Rz} applied to $z=1$; it satisfies (b) by Corollary~\ref{cor:normalized}; and it satisfies (c) by Theorem~\ref{thm:KS3}.
	
	Now let $K\le H$ be a subgroup that satisfies (a)--(c); we must prove that $E'_{\pi, \sigma} (\Phi, R)\le K$. Since by (a) $K$ is normal in $H$, it follows by Theorem~\ref{mainthm} that there exists a unique $\theta$-invariant ideal $J$ of $R$ such that 
	\[
	E'_\sigma(R, J) \subset K \subset G_\sigma(R, J).
	\]
	
	We claim that $J = R$. By (a) $K$ is generated as a normal subgroup by a single element $g_0$. Now, clearly $J$ is the smallest by inclusion $\theta$-invariant ideal such that $g_0\in G_{\sigma}(R,J)$; hence, $J$ is generated by all the entries of the matrices $\varpi_{ad}(g_0)-e$, $\overline{\varpi_{ad}(g_0)}-e$, and $\overline{\overline{\varpi_{ad}(g_0)}}-e$, where $\varpi_{ad}$ is the adjoint representation of the ambient Chevalley group, and $e$ is the identity matrix. Therefore, the ideal $J$ is finitely generated. Since by (b) the group $K$ is perfect, the uniqueness of the ideal $J$ implies that $J = JJ$. By Nakayama's Lemma, there exists $s \in R$ such that $s \equiv 1 \pmod{J}$ and $sJ = 0$. 
	Thus, $E'_\sigma(sR)$ is contained in the centralizer of $K$ in $H$; hence, (c) implies that $s=0$; hence, we have $J = R$; hence, we have  $E'_\sigma(R) \subset K$.
\end{proof}

\begin{cor}
    \normalfont
	Let $R$ and $\Phi_\rho$ be as in Theorem~\ref{mainthm1}. 
	Let $H$ be a subgroup of $G_{\pi, \sigma} (\Phi, R)$ containing $E'_{\pi, \sigma} (\Phi, R)$. 
    Then $E'_{\pi, \sigma} (\Phi, R)$ is a characteristic subgroup of $H$. 
    In particular, $E'_{\pi, \sigma} (\Phi, R)$ is a characteristic subgroup of $G_{\pi, \sigma} (\Phi, R)$.
\end{cor}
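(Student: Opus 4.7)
The plan is to derive the corollary as a direct formal consequence of the characterization established in the preceding theorem, which identifies $E'_{\pi,\sigma}(\Phi,R)$ as the unique smallest subgroup $K \le H$ satisfying the three conditions (a) normality together with being generated as a normal subgroup by a single element, (b) perfectness, and (c) abelian centralizer in $H$. The crucial point is that these three properties are intrinsic to the pair $(H,K)$ and depend only on the abstract group structure of $H$, so they are automatically preserved by any $\phi \in \operatorname{Aut}(H)$.

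First I would verify that if $\phi \in \operatorname{Aut}(H)$, then $\phi\bigl(E'_{\pi,\sigma}(\Phi,R)\bigr)$ is again a subgroup of $H$ satisfying (a)--(c). Normality is preserved because automorphisms permute the normal subgroups of $H$; if $E'_{\pi,\sigma}(\Phi,R)$ is generated as a normal subgroup by $g_0$, then $\phi\bigl(E'_{\pi,\sigma}(\Phi,R)\bigr)$ is generated as a normal subgroup by $\phi(g_0)$, since $\phi$ intertwines conjugation; perfectness transports along any group isomorphism; and the centralizer satisfies $C_H(\phi(K)) = \phi(C_H(K))$, which is abelian whenever $C_H(K)$ is.

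Next, invoking the minimality clause of the preceding theorem, I would conclude
\[
E'_{\pi,\sigma}(\Phi,R) \subseteq \phi\bigl(E'_{\pi,\sigma}(\Phi,R)\bigr).
\]
Applying the same argument to $\phi^{-1}$ yields $E'_{\pi,\sigma}(\Phi,R) \subseteq \phi^{-1}\bigl(E'_{\pi,\sigma}(\Phi,R)\bigr)$, and hitting both sides with $\phi$ gives the reverse inclusion. Hence $\phi$ fixes $E'_{\pi,\sigma}(\Phi,R)$ setwise, which is precisely the definition of a characteristic subgroup of $H$. The final assertion follows by specializing to $H = G_{\pi,\sigma}(\Phi,R)$, which trivially contains $E'_{\pi,\sigma}(\Phi,R)$.

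I do not anticipate any substantive obstacle: the corollary is essentially a formal repackaging of the theorem. The only care required is in checking that the property of being generated as a normal subgroup by a single element transports correctly — which it does, since $\phi$ commutes with the normal closure operation — and in being explicit about the two-sided inclusion argument, since minimality alone gives only one containment.
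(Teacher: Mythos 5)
Your proposal is correct and is exactly the argument the paper intends: the paper's proof simply says the family of subgroups of $H$ satisfying (a)--(c) is invariant under all automorphisms of $H$, and you have spelled out the details (transport of the three conditions under $\phi$, minimality applied to both $\phi$ and $\phi^{-1}$) that make this "clear" statement rigorous. No gap; same approach, just more explicit.
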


\begin{proof}
This is clear since the family of subgroups of $H$ satisfying conditions (a)--(c) is invariant under all automorphisms of $H$. 
\end{proof}

\begin{rmk}
	The Theorem above implies not only that $E'_\sigma(R)$ is characteristic, but also that any abstract isomorphism between (possibly different) twisted Chevalley groups must preserve elementary subgroups. 
\end{rmk}


\section*{Acknowledgements}
It is a pleasure to thank E. Bunina, P. Gvozdevsky, and D. Prasad for their comments on this paper. 
We also thank the referees for making many valuable suggestions that helped improve the paper. 



\end{document}